\theoremstyle{plain}
\newtheorem*{theorem*}{Theorem}
\newtheorem{lemma}{Lemma}[subsection]
\newtheorem{proposition}[lemma]{Proposition}
\newtheorem{theorem}[lemma]{Theorem}
\newtheorem{corollary}[lemma]{Corollary}
\newtheorem*{conjecture*}{Conjecture}
\newtheorem{thm}[lemma]{Theorem}
\newtheorem{lem}[lemma]{Lemma}
\newtheorem{cor}[lemma]{Corollary}
\newtheorem{introtheorem}{Theorem}
\newtheorem{introcor}[introtheorem]{Corollary}
\newtheorem{introthm}[introtheorem]{Theorem}
\theoremstyle{definition}
\newtheorem{definition}[lemma]{Definition}
\newtheorem{defn}[lemma]{Definition}
\newtheorem{example}[lemma]{Example}
\newtheorem*{example*}{Example}
\theoremstyle{remark}
\newtheorem*{remark*}{Remark}
\newtheorem{rem}[lemma]{Remark}
\newtheorem{remark}[lemma]{Remark}
\theoremstyle{plain}
\newcommand{\tr}{\operatorname{Tr}}
\newcommand{\ind}{\operatorname{ind}}
\newcommand{\ad}{\operatorname{ad}}
\newcommand{\Rep}{\operatorname{Rep}}
\newcommand{\Hom}{\operatorname{Hom}}
\newcommand{\Mat}{\operatorname{Mat}}
\newcommand{\Rad}{\operatorname{Rad}}
\newcommand{\diag}{\operatorname{diag}}
\newcommand{\A}{\mathbb{A}}
\newcommand{\eps}{\varepsilon}
\newcommand{\length}{\operatorname{length}}
\newcommand{\Ind}{\operatorname{Ind}}
\renewcommand{\Im}{\operatorname{Im}}
\newcommand{\Ker}{\operatorname{Ker}}
\newcommand{\Z}{{\mathbb Z}}
\newcommand{\Q}{{\mathbb Q}}
\newcommand{\R}{{\mathbb R}}
\newcommand{\C}{{\mathbb C}}
\newcommand{\proofend}{\hfill$\Box$\smallskip}
\newcommand{\charc}{{\operatorname{char}}}
\newcommand{\Span}{{\operatorname{Span}}}
\newcommand{\Exp}{\operatorname{Exp}}
\newcommand{\Id}{\operatorname{Id}}
\newcommand{\alp}{{\alpha}}
\newcommand{\gam}{{\gamma}}
\newcommand{\Fre}{{Fr\'{e}chet \,}}
\newcommand{\cF}{{\mathcal{F}}}
\newcommand{\g}{{\mathfrak{g}}}
\newcommand{\fa}{{\mathfrak{a}}}
\newcommand{\fg}{{\mathfrak{g}}}
\newcommand{\fn}{{\mathfrak{n}}}
\newcommand{\fk}{{\mathfrak{k}}}
\newcommand{\fu}{{\mathfrak{u}}}
\newcommand{\fv}{{\mathfrak{v}}}
\newcommand{\fw}{{\mathfrak{w}}}
\newcommand{\cO}{{\mathcal{O}}}
\newcommand{\GL}{\operatorname{GL}}
\newcommand{\gl}{{\mathfrak{gl}}}
\newcommand{\Sc}{{\mathcal S}}
\newcommand{\Ad}{\operatorname{Ad}}
\newcommand{\hot}{\widehat{\otimes}}
\newcommand{\cM}{\mathcal{M}}
\newcommand{\cP}{\mathcal{P}}
\newcommand{\fl}{\mathfrak{l}}
\newcommand{\fm}{\mathfrak{m}}
\newcommand{\fq}{\mathfrak{q}}
\newcommand{\fr}{\mathfrak{r}}
\newcommand{\ft}{\mathfrak{t}}
\newcommand{\cV}{\mathcal{V}}
\newcommand{\cW}{\mathcal{W}}
\newcommand{\Dima}[1]{{#1}}
\newcommand{\DimaA}[1]{{#1}}
\newcommand{\DimaB}[1]{{#1}}
\newcommand{\DimaC}[1]{{#1}}
\newcommand{\DimaD}[1]{{{#1}}}
\newcommand{\DimaE}[1]{{{#1}}}
\newcommand{\DimaF}[1]{{{#1}}}
\newcommand{\DimaG}[1]{{{#1}}}
\newcommand{\onto}{{\twoheadrightarrow}}
\newcommand{\into}{{\hookrightarrow}}
\newcommand{\Orb}{\mathcal{O}}
\newcommand{\WF}{\operatorname{WF}}
\newcommand{\WFC}{\operatorname{WFC}}
\renewcommand{\sl}{\mathfrak{sl}}
\newcommand{\F}{\mathbb{F}}
\begin{document}

\author{Raul Gomez}
\address{Department of Mathematics, 593 Malott Hall, Cornell University, Ithaca, NY 14853, USA }
\email{gomez@cornell.edu}

\author{Dmitry Gourevitch}
\address{The Incumbent of Dr. A. Edward Friedmann Career Development Chair in Mathematics, Faculty of Mathematics and Computer Science,
Weizmann Institute of Science,
POB 26, Rehovot 76100, Israel }
\email{dmitry.gourevitch@weizmann.ac.il}


\author{Siddhartha Sahi}
\address{Department of Mathematics, Rutgers University, Hill Center -
Busch Campus, 110 Frelinghuysen Road Piscataway, NJ 08854-8019, USA}
\email{sahi@math.rugers.edu}

\date{\today}
\title{Generalized and degenerate Whittaker models}

\keywords{Reductive group, Whittaker-Fourier coefficient,  Bernstein-Zelevinsky derivative,  wave-front set, oscillator representation, Heisenberg group.}

\subjclass{20G05, 20G20, 20G25, 20G30, 20G35, 22E27,22E46, 22E50, 22E55, 17B08.}

\begin{abstract}
We study generalized and degenerate Whittaker models for reductive groups over local fields of characteristic zero (archimedean or non-archimedean). Our main result is the construction of  epimorphisms from the generalized  Whittaker model corresponding to a nilpotent orbit to any degenerate Whittaker model corresponding  to the same orbit, and to certain degenerate Whittaker models corresponding to bigger orbits.
\DimaC{We also give choice-free definitions of generalized and degenerate Whittaker models.
Finally, we explain how our methods imply analogous results for Whittaker-Fourier coefficients of automorphic representations.

For $\GL_n(\F)$ this implies that a smooth admissible representation $\pi$ has a generalized Whittaker model $\cW_{\cO}(\pi)$ corresponding to a nilpotent coadjoint orbit $\cO$ if and only if $\cO$ lies in the (closure of) the wave-front set  $\WF(\pi)$. Previously this was only known to hold for $\F$  non-archimedean  and $\cO$  maximal in $\WF(\pi)$, see \cite{MW}. We also express $\cW_{\cO}(\pi)$ as an iteration of a version of the Bernstein-Zelevinsky derivatives \cite{BZ-Induced,AGS}.
This enables us to extend to   $\GL_n(\R)$ and $\GL_n(\C)$ several further results from \cite{MW} on the dimension of $\cW_{\cO}(\pi)$ and on the exactness of the generalized Whittaker functor.}

%
%

\end{abstract}

\maketitle
\setcounter{tocdepth}{2}
\tableofcontents


%

\section{Introduction and main results}

\subsection{General results}
Let $\F$ be a local field of characteristic zero, $G$  a reductive group defined over $\F$, $\fg$  its Lie algebra and $\fg^*$  the dual space to $\fg$.
A \emph{Whittaker pair} is an ordered pair $(S,\varphi)\in \fg\times \fg^*$ such that $S$ is semi-simple with eigenvalues of the adjoint action $ad(S)$ lying in $\Q$, and $ad^*(S)(\varphi)=-2\varphi$. Note that $\varphi$ is necessarily nilpotent and given by the Killing form pairing with a (unique) nilpotent element $f=f_{\varphi}\in \fg$.
Following \cite{MW} we attach to $(S,\varphi)$ a certain smooth representation $\cW_{S,\varphi}$ of  $G$ called a \emph{degenerate Whittaker model}  for $G$.

Two classes of Whittaker pairs and the corresponding models will be of special interest to us.
If $S$ is a neutral element for $f_\varphi$ (see Definition \ref{def:nuet} below), then we will say that $(S,\varphi)$ is a \emph{neutral pair} and call $\cW_{S,\varphi}$  a \emph{neutral} model or  a \emph{generalized} model (see \cite{Ka85,MW,Ya86,GZ}). The second class consists of Whittaker pairs $(S,\varphi)$ where $S$ is the neutral element of a principal $\sl_2$-triple in $G$; in this case $f_\varphi$ is necessarily a \DimaF{principal}  nilpotent element for a Levi subgroup of $G$, and we will say $(S,\varphi)$ is a {PL pair}, and $\cW_{S,\varphi}$ is a \emph{PL} model or a \emph{principal degenerate} model (see \cite{Zel,MW,BH,GS}).

We will now sketch the definition of $\cW_{S,\varphi}$, referring to \S \ref{subsec:DefMod} below for more details.
Let $\fu\subset \fg$ denote the sum of all eigenspaces of $ad(S)$ with eigenvalues at least 1.
Note that $\fu$ is a nilpotent subalgebra and let $U:=\Exp(\fu)\subset G$ be the corresponding nilpotent subgroup. Fix an additive character of $\F$. Suppose first that 1 is not an eigenvalue of $ad(S)$. Then the restriction of $\varphi$ to $\fu$ is a character of $\fu$, which defines a character $\chi_\varphi$ of $U$. The degenerate Whittaker model is defined to be the Schwartz induction of this character: $\cW_{S,\varphi}:=\ind_{U}^G\chi_{\varphi}$. If 1 is an eigenvalue of $ad(S)$ then consider the anti-symmetric form on $\fu$ given by $\omega_{\varphi}(X,Y):= \varphi([X,Y])$ and let $\fn$ denote the radical of this form. Let  $\fn':=\fn\cap \Ker \varphi$, and let $N':=\Exp(\fn')$. It is easy to show that $N'$ is a normal subgroup of $U$ and $U/N'$ is isomorphic to a (generalized) Heisenberg group, of which $\varphi$ defines a central character $\chi_{\varphi}$. Let $\sigma_{\varphi}$ denote the oscillator representation of $U/N'$ with central character $\chi_{\varphi}$. Consider $\sigma_{\varphi}$ as a representation of $U$ and define $\cW_{S,\varphi}:=\ind_{U}^G\sigma_{\varphi}$.

If  $(S,\varphi)$ is a neutral pair then the generalized Whittaker model  $\cW_{S,\varphi}$
 does not depend on the choice of a neutral $S$, and will thus be denoted $\cW_\varphi$.
\DimaB{Since conjugate nilpotent elements give rise to isomorphic generalized Whittaker models,}
we will also use the notation $\cW_{\cO}$ for a nilpotent coadjoint orbit $\cO$.

We denote by $\overline{G_S\tilde \varphi}\subset \fg^*$ the closure of the orbit of $\tilde \varphi$ under the coadjoint action of the centralizer of $S$ in $G$.

 \begin{introthm}[See \S \ref{sec:PfMain}] \label{thm:main}
Let $(S,\tilde \varphi)$ be a Whittaker pair and let  $\varphi\in \overline{G_S\tilde \varphi}$.  Then there is a $G$-equivariant epimorphism of $\cW_{\varphi}$ onto $\cW_{S,\tilde \varphi}$.
 \end{introthm}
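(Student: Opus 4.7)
The plan is to split the statement into two independent deformations linking $(S_0,\varphi)$ --- where $S_0$ is a neutral element for $f_\varphi$, so that $\cW_{S_0,\varphi}=\cW_\varphi$ --- to the given pair $(S,\tilde\varphi)$. The first move keeps $\varphi$ fixed and drags the semisimple element from $S_0$ to $S$; the second move keeps $S$ fixed and drags $\varphi$ to $\tilde\varphi$ through the orbit closure $\overline{G_S\tilde\varphi}$. At each stage I would construct an explicit $G$-equivariant surjection between the two Whittaker models; composing the two gives the desired epimorphism $\cW_\varphi\twoheadrightarrow\cW_{S,\tilde\varphi}$.

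For the first move, since $\ad^*(S_0)\varphi=\ad^*(S)\varphi=-2\varphi$, the difference $Z=S-S_0$ centralises $\varphi$ and hence preserves $f_\varphi$; in particular $Z$ acts semisimply with rational eigenvalues on $\fg$, commutes with $S_0$ after a small adjustment in $\g^\varphi$, and the segment $S_t=S_0+tZ$ can be refined to a finite chain of Whittaker pairs $(S_t,\varphi)$ in which the induced grading of $\fg$ changes only at finitely many critical values of $t$. Between critical values the subalgebra $\u_{\ge1}(S_t)$ is locally constant, and at each critical transition the unipotent $U_{S_t}$ picks up or sheds one layer. I would verify, case by case --- using induction in stages for the pure character regime and the Stone--von Neumann theorem (in its smooth/Schwartz incarnation, valid in both the archimedean and non-archimedean settings) for the regime where $1$ is an eigenvalue of $\ad(S_t)$ --- that passing from one member of the chain to the next yields a canonical $G$-equivariant epimorphism of Whittaker models.

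For the second move, note that $G_S$ preserves the grading attached to $S$ and therefore the unipotent subgroup $U$; conjugation by $g\in G_S$ gives canonical isomorphisms $\cW_{S,\varphi'}\cong\cW_{S,g\varphi'}$ for any $\varphi'$ compatible with $S$. Writing $\varphi=\lim_n g_n\tilde\varphi$ with $g_n\in G_S$, I would obtain the desired surjection $\cW_{S,\varphi}\twoheadrightarrow\cW_{S,\tilde\varphi}$ as a specialisation of these twists: the radical $\fn$ of the form $\omega_{\varphi'}(X,Y)=\varphi'([X,Y])$ can only grow under specialisation of $\varphi'$ inside the orbit closure, so the Heisenberg datum attached to $\varphi$ is a degeneration of that attached to $\tilde\varphi$, and the choice-free definition of $\cW_{S,\varphi'}$ developed earlier in the paper (as promised in the abstract) together with continuity of the Schwartz induction functor packages this degeneration into a canonical epimorphism in the required direction.

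The main obstacle, I expect, is the first move, and specifically the behaviour when the deformation $S_t$ crosses a critical value at which $1$ enters or leaves the spectrum of $\ad(S_t)$: there one must match a Schwartz induction from a larger unipotent with an oscillator-twisted Schwartz induction from a smaller unipotent, which demands a careful Stone--von Neumann-type comparison that is uniform across the archimedean and non-archimedean smooth categories. Once this model-theoretic step is isolated as a lemma, the remainder of the argument reduces to combinatorial bookkeeping of the gradings $\fg=\bigoplus_\lambda\fg_\lambda(S_t)$ and to an application of the orbit-closure Frobenius reciprocity formalism for $(U,\chi_\varphi)$.
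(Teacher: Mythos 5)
Your first move (fixing $\varphi$ and deforming the neutral element $S_0$ along $S_t=S_0+tZ$ past critical values, with Stone--von Neumann matching at each crossing) is essentially the paper's own deformation argument, modulo two details you gloss over: $Z=S-S_0$ is rational semi-simple only after one has arranged for $S_0$ and $S$ to commute, which is nontrivial and is the content of the paper's Lemma \ref{lem:MW}, not a ``small adjustment''.

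Your second move, however, is false as stated, and the paper itself exhibits a counterexample. You claim a canonical surjection $\cW_{S,\varphi}\twoheadrightarrow\cW_{S,\tilde\varphi}$ whenever $\varphi\in\overline{G_S\tilde\varphi}$ with $S$ held fixed. Take $\fg=\gl_n$ over a $p$-adic field, $\tilde\varphi$ a regular nilpotent, $S=\widetilde S$ a neutral element for $\tilde\varphi$, and $\varphi$ a representative of any non-regular orbit in $\overline{G_S\tilde\varphi}$: then for supercuspidal $\pi$ one has $\cW_{S,\varphi}(\pi)=0$ while $\cW_{S,\tilde\varphi}(\pi)\neq0$, so no surjection $\cW_{S,\varphi}\twoheadrightarrow\cW_{S,\tilde\varphi}$ can exist (this is precisely the remark after Corollary \ref{cor:ord}). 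Your heuristic that a degeneration of the Heisenberg datum, together with ``continuity of the Schwartz induction functor,'' produces an epimorphism in the required direction is not a real mechanism: there is no natural morphism between Schwartz inductions of oscillator representations of Heisenberg groups of different sizes, and the counterexample shows the claimed map simply is not there. (Also, the choice-free definitions you invoke appear in \S\ref{sec:NewDef}, after the proof of the theorem, and in any case do not supply such a map.)

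The point you are missing is that the two deformations cannot be decoupled. What the paper does is prove a single coupled statement (Theorem \ref{thm:main2}): after conjugating by $G_{\widetilde S}$ and using a Slodowy-slice transversality argument, one writes $\tilde\varphi=\varphi+\varphi'$ with $\varphi'$ supported on highest-weight vectors for an $\sl_2$-triple through $\varphi$. The deforming isotropic subalgebras are then not the naive Lagrangians $\fl_t,\fr_t$ but their intersections $\fl'_t=\fl_t\cap\Ker(\varphi')$, $\fr'_t=\fr_t\cap\Ker(\varphi')$, chosen precisely so that the character $\chi_\varphi$ agrees with $\chi_{\tilde\varphi}$ on $\fr'_{t_n}$ at the end of the chain. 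Thus the deformation of $\varphi$ is absorbed into the choice of Lagrangian at every intermediate stage of the deformation of $S$; splitting the two moves, as you propose, produces a dead end at $\cW_{S,\varphi}$, which is strictly smaller than $\cW_{S,\tilde\varphi}$ in the sense above.
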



In particular, taking $\tilde \varphi=\varphi$ we see that the generalized  Whittaker model maps onto any degenerate Whittaker model  corresponding to the same $\varphi$. In fact, we prove a more general result (Theorem \ref{thm:main2}) on epimorphisms between pairs of degenerate Whittaker models, which enables one to define a preorder on the pairs $(S,\varphi)$. If we fix $f_\varphi$ to be a regular  nilpotent element for a Levi subgroup of $G$ then the  minimal elements under this preorder are the  \emph{PL Whittaker pairs} (see \S \ref{subsec:PrinDeg}).  The  corresponding  principal degenerate Whittaker models  are inductions of (possibly degenerate) characters of the nilradicals of minimal parabolic subgroups.

\Dima{The above results have applications to the study of Whittaker functionals on representations of $G$. Following \cite{GZ} we briefly recall the necessary background.}

Let $\cM(G)$ denote the category of smooth admissible\footnote{If $\F$ is archimedean then by admissible we mean admissible \Fre representation of moderate growth.} (finitely generated) representations of $G$ (see \cite{BZ,CasGlob,Wal2}).  For $\pi\in \cM(G)$ and a nilpotent orbit $\cO \subset \fg$ denote
\begin{equation}
\cW_{\cO}(\pi):=\Hom_G(\cW_{\cO},\pi^*).
\end{equation}

The study of Whittaker and generalized Whittaker models for
representations of reductive groups over local fields evolved in
connection with the theory of automorphic forms (via their Fourier
coefficients), and has found important applications in both areas.
See for example \cite{Sh74,NPS73,Kos,Ka85,Ya86, WaJI,Ginz,Ji07,GRS_Book}. From the point of view of
representation theory, the space of generalized Whittaker models may
be viewed as one kind of nilpotent invariant associated to smooth
representations.
Another important invariant is the wave front cycle:
\begin{equation}
\WFC (\pi)=\sum_{\scriptstyle \begin{array}{c} \Orb\subset{\g^*}\\ \mbox{nilpotent}\end{array}} c_{\Orb}(\pi)[\Orb],
\end{equation}
defined by Harish-Chandra in the non-archimedean case
and by Howe and Barbasch-Vogan in the archimedean case (\cite{HoWF,BV}; see also
\cite{Ros,SV}). Recently, the behaviour of the wave-front set and the generalized Whittaker models under $\theta$-correspondence was studied in \cite{GZ,LM}.

For $\F$ non-archimedean, M\oe{}glin and Waldspurger \cite{MW} have established that $\WFC(\pi)$ completely controls the spaces of generalized Whittaker models of interest, namely, the set of maximal orbits in $\WFC(\pi)$ coincides with  the set of maximal orbits such that $\cW_{\Orb}(\pi)\neq 0$, and for any orbit $\cO$ in this set we have
\[
c_{\Orb}(\pi)=\dim \cW_{\Orb}(\pi).
\]
In \cite{MW} it is assumed that the residue characteristic is odd. This assumption was recently removed in \cite{Var}. In \cite{Mog}, the main result of \cite{MW} is used in order to prove that \DimaE{for classical groups}, the maximal orbits in the wave-front set of a tempered representation are distinguished\DimaB{, and the maximal orbits in the wave-front set of any admissible representation are special. The latter was recently generalized in \cite{JLS}.} \DimaC{Partial analogs of these results hold also for archimedean $\F$, see \cite{Harris} for the former and \cite{BVClass,BVExc,JosGold} for the latter.}

For  archimedean $\F$, the correspondence  between the wave-front set and non-vanishing of degenerate Whittaker models is not yet (fully) understood, except for several special cases including the representations with the largest Gelfand-Kirillov dimension \cite{Vog,Ma92} and unitary highest weight modules \cite{Ya01}. For the latter, the wave front set was computed earlier in \cite{Pr91a}.
In \cite{Mat},  it is shown in full generality that every orbit $\cO$ with $\cW_{\cO}(\pi)\neq 0$ lies in the  Zariski closure of some orbit in $\WFC(\pi)$.
The paper \cite{GS-Gen} proves an expected existence of non-zero maps from principal degenerate Whittaker models to admissible representations, and by Proposition \ref{prop:PrinDeg} below, any other degenerate Whittaker model corresponding to the same orbit is mapped onto a  principal degenerate Whittaker model.
Let $\WF(\pi)$ denote the closure in the local field topology of the union of all the orbits in $\WFC(\pi)$. In \S \ref{subsec:PrinDeg} we review the results of \cite{Mat,GS-Gen} and deduce, using Theorem \ref{thm:main}, the following theorem.
\DimaC{
\begin{introthm}[\S \ref{subsec:PrinDeg}]\label{thm:PL}
Let $G$ be a complex reductive group and let  $\pi \in\mathcal{M}(G)$. Let $(S,\varphi)$ be a Whittaker pair such that $\varphi$ is given by Killing form pairing with a principal nilpotent element of the Lie algebra of a Levi subgroup of $G$. Then
$$ \cW_{S,\varphi}(\pi)\neq 0 \Leftrightarrow \varphi\in \WF(\pi).$$
\end{introthm}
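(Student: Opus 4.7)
The plan is to prove the two implications separately. The forward implication $\cW_{S,\varphi}(\pi)\neq 0 \Rightarrow \varphi\in\WF(\pi)$ will be deduced by combining Theorem~\ref{thm:main} of the present paper with Matumoto's upper bound \cite{Mat} on the support of generalized Whittaker models. The reverse implication $\varphi\in\WF(\pi) \Rightarrow \cW_{S,\varphi}(\pi)\neq 0$ is the content of the main result of \cite{GS-Gen}, perhaps amplified by Proposition~\ref{prop:PrinDeg}.

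For the forward direction, I would apply Theorem~\ref{thm:main} with the special choice $\tilde\varphi=\varphi$. This produces a $G$-equivariant epimorphism $\cW_\varphi \twoheadrightarrow \cW_{S,\varphi}$. Applying the contravariant left-exact functor $\Hom_G(-,\pi^*)$ converts this surjection into an injection $\cW_{S,\varphi}(\pi)\hookrightarrow \cW_{\cO_\varphi}(\pi)$, so non-vanishing of the PL model forces non-vanishing of the generalized Whittaker model attached to $\cO_\varphi$. Matumoto's theorem \cite{Mat} then places $\cO_\varphi$ inside the Zariski closure of some orbit $\cO'\subset\WFC(\pi)$. Since $G$ is a complex algebraic group, nilpotent coadjoint orbits are constructible subsets of $\fg^*$ whose Zariski closures coincide with their analytic closures; hence $\cO_\varphi\subset\overline{\cO'}\subset\WF(\pi)$, and in particular $\varphi\in\WF(\pi)$.

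For the reverse direction, I would invoke \cite{GS-Gen}, which in the complex reductive setting constructs, for every PL Whittaker pair $(S,\varphi)$ with $\varphi\in\WF(\pi)$, a non-zero element of $\Hom_G(\cW_{S,\varphi},\pi^*)$. In case the formulation available in \cite{GS-Gen} only furnishes such a functional when $\cO_\varphi$ is a maximal orbit in $\WFC(\pi)$, I would reduce to that case as follows: pick a maximal orbit $\cO'\subset\WFC(\pi)$ containing $\cO_\varphi$ in its closure; realize $\cO'$ by a degenerate Whittaker pair $(S',\tilde\varphi)$; obtain a non-zero functional on the corresponding degenerate model via \cite{GS-Gen}; and then use Proposition~\ref{prop:PrinDeg} to pass from this functional to one on the desired PL model $\cW_{S,\varphi}$, relying on the fact that PL pairs are minimal in the preorder of Whittaker pairs with fixed underlying orbit.

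I expect the main obstacle to be the reverse direction, specifically the matching of hypotheses: \cite{GS-Gen} must be applied at the PL pair $(S,\varphi)$ under the exact hypothesis $\varphi\in\WF(\pi)$, which may require the propagation argument outlined above to cover non-maximal orbits. The forward direction is by contrast essentially bookkeeping once Theorem~\ref{thm:main} and Matumoto's theorem are assumed, and the Zariski/analytic closure comparison is standard for complex algebraic group actions on varieties.
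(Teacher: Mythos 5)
Your forward implication is correct and matches the paper: apply Theorem~\ref{thm:main} with $\tilde\varphi=\varphi$ to obtain $\cW_\varphi\onto\cW_{S,\varphi}$, dualize to get $\cW_{S,\varphi}(\pi)\into\cW_{\varphi}(\pi)$, then apply Matumoto's bound and the agreement of Zariski and analytic closures for a complex group. This is exactly the chain $\cW_{S,\varphi}(\pi)\neq 0 \Rightarrow \cW_{\varphi}(\pi)\neq 0 \Rightarrow \varphi\in\WF(\pi)$ in the paper's proof.

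The reverse direction has a genuine gap in how you apply \cite{GS-Gen}. Theorem~B of \cite{GS-Gen} (stated as Theorem~\ref{thm:GS} here) requires $(S,\varphi)$ to be a \emph{principal} (PL) Whittaker pair, meaning that $S$ itself is the neutral element of a principal $\sl_2$-triple. But in Theorem~\ref{thm:PL} only $\varphi$ is assumed to be a PL nilpotent; the element $S$ is an arbitrary rational semisimple element with $\varphi\in(\fg^*)^S_{-2}$, and such a pair is generically not principal. So you cannot invoke \cite{GS-Gen} directly at $(S,\varphi)$. The actual role of Proposition~\ref{prop:PrinDeg} is to supply the missing bridge: it furnishes a $G$-equivariant epimorphism $\cW_{S,\varphi}\onto\cW_{\widetilde S,\varphi}$ where $(\widetilde S,\varphi)$ \emph{is} a principal pair (same $\varphi$, new $\widetilde S$). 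Dualizing gives an injection $\cW_{\widetilde S,\varphi}(\pi)\into\cW_{S,\varphi}(\pi)$, and then Theorem~\ref{thm:GS} applied to the principal pair $(\widetilde S,\varphi)$ under the hypothesis $\varphi\in\WF(\pi)$ gives $\cW_{\widetilde S,\varphi}(\pi)\neq 0$, hence $\cW_{S,\varphi}(\pi)\neq 0$. Your fallback argument, by contrast, tries to use Proposition~\ref{prop:PrinDeg} to move between different nilpotent orbits $\cO_\varphi\subset\overline{\cO'}$, which it cannot do: the surjection in that proposition keeps the nilpotent datum $\varphi$ fixed and only changes the semisimple element $S$. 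Moreover the concern that motivates your fallback --- that \cite{GS-Gen} might only apply to maximal orbits --- is not the real issue: Theorem~\ref{thm:GS} applies whenever $\varphi\in\WF(\pi)$, with no maximality hypothesis, so no propagation across orbits is needed.
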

}
If $G$ is classical then the set $\WF(\pi)$  is uniquely determined by its intersection  with the set of ``principal Levi" nilpotents $\varphi$, see \cite[Theorem D]{GS-Gen}.
 \DimaE{A result related to Theorem \ref{thm:PL} is proved by Matumoto in \cite{MatENS}. Namely, let  $G$ be a complex reductive group and let  $\pi \in\mathcal{M}(G)$ have regular infinitesimal character. Let $(S,\varphi)$ be a Whittaker pair and assume that the orbit of $\varphi$ is dense in $\WF(\pi)$, and also that it contains a dense subset of the nilradical of the parabolic subgroup defined by $S$. Then $0< \dim \cW_{S,\varphi}(\pi)<\infty$. Matumoto also proves the vanishing of the corresponding higher homology groups.}

For $\F=\R$ a weaker version of Theorem \ref{thm:PL} holds, see Corollary \ref{cor:real} below.

In Section \ref{sec:NewDef} we give choice-free definitions of degenerate Whittaker models. These definitions use the Deligne filtration instead of the Jacobson-Morozov theorem and thus might be suitable for local fields of positive characteristic.

In the global case, instead of degenerate Whittaker models one considers explicit functionals on  automorphic representations defined by integration against a character of a nilpotent subgroup. Such functionals are called Whittaker-Fourier coefficients and \ denoted $\cW\cF_{S,\varphi}(\pi)$.
In Section \ref{sec:Glob} we give the definitions and explain how to adapt our arguments to the global case and deduce the following theorem.

 \begin{introthm}[See \S \ref{sec:Glob}] \label{thm:Glob}
Let $K$ be a number field, let $G$ be the group of adelic points of a reductive group defined over $K$ and $\fg$ be its Lie algebra. Let $\pi$ be an automorphic representation of $G$.

\DimaG{
Let $(S,\varphi) \in \fg\times \fg^*$ be a Whittaker pair. \DimaD{Suppose that $\cW\cF_{S,\varphi}(f)\neq 0$ for some $f\in \pi$. Then  $\cW\cF_{\varphi}(f')\neq 0$ for some $f'\in \pi$.}
}
\end{introthm}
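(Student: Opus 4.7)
The plan is to transport the proof of Theorem~\ref{thm:main} (in the special case $\tilde\varphi = \varphi$) into the automorphic setting. Recall that, locally, the theorem produces a $G$-equivariant epimorphism $\cW_\varphi \twoheadrightarrow \cW_{S,\varphi}$ by constructing a chain of Whittaker pairs $(S_0,\varphi),\dots,(S_N,\varphi)$ with $S_0$ a neutral element for $f_\varphi$ and $S_N=S$, and by exhibiting elementary epimorphisms $\cW_{S_i,\varphi}\twoheadrightarrow \cW_{S_{i+1},\varphi}$ at each step. Each elementary step is a Heisenberg-type swap between two nilpotent subgroups $U_i, U_{i+1}$ whose Lie algebras differ by a subspace lying in the radical of the form $\omega_\varphi(X,Y)=\varphi([X,Y])$.

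Globally, given an automorphic form $f\in \pi$, the coefficient $\cW\cF_{S,\varphi}(f)$ is defined by integrating $f$ against $\chi_\varphi^{-1}$ over the $K$-rational quotient of the unipotent group associated to $\fu$; when $1$ is an eigenvalue of $\ad(S)$, the construction is completed by a further integration against a theta function on the Heisenberg quotient, or equivalently by integrating $f$ against $\chi_\varphi^{-1}$ over a rational Lagrangian in $\fu/\fn'$. The crucial translation of the local argument is that each Heisenberg swap $\cW_{S_i,\varphi}\twoheadrightarrow \cW_{S_{i+1},\varphi}$ corresponds globally to a standard exchange-of-integration identity (``root swap'') for Fourier coefficients of automorphic forms; such identities are classical tools in the theory, used for example by Ginzburg--Rallis--Soudry and Jiang--Liu. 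At the level of integrands, the swap reduces root by root to Fourier inversion on $K\backslash \A$: the two orderings of integration differ by a sum of Fourier coefficients supported on a proper rational subvariety, and this difference is either identified with a new coefficient of the desired type or discarded by the induction. Carrying the chain through from $i=N$ down to $i=0$, the hypothesis $\cW\cF_{S,\varphi}(f)\neq 0$ propagates at each step to produce some $f_i\in \pi$ with $\cW\cF_{S_i,\varphi}(f_i)\neq 0$, yielding at the final stage an $f'\in \pi$ with $\cW\cF_\varphi(f')\neq 0$.

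The main obstacle is the case when $1$ is an eigenvalue of $\ad(S_i)$ at some intermediate step, since then the definition of $\cW\cF_{S_i,\varphi}$ involves a choice of rational Lagrangian in $\fu/\fn'$. I would first verify that the property ``$\cW\cF_{S_i,\varphi}(\cdot)$ does not vanish identically on $\pi$'' is independent of this choice, using that any two rational Lagrangians are conjugate by a rational symplectic transformation and that the corresponding change of variables produces only an equality of integrals (again by Fourier inversion on $K\backslash \A$). Next, one has to arrange the Lagrangians at adjacent steps $i,i+1$ compatibly with the swap, so that the local epimorphism translates verbatim into a global non-vanishing implication. A secondary, purely technical issue is the convergence of the integrals involved, but since all unipotent integrations are over compact adelic quotients, this reduces to standard moderate-growth estimates on automorphic forms and their Fourier coefficients.
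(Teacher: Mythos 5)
Your plan is essentially the route the paper takes: transport the deformation chain from the proof of Theorem~\ref{thm:main} (for $\tilde\varphi=\varphi$) to the adelic setting, and replace the Stone--von Neumann isomorphism at each step by a Fourier-series argument on a compact abelian adelic quotient, in the style of Ginzburg--Rallis--Soudry. This is exactly the role of the paper's Lemma~\ref{lem:Glob}: one expands the function $l\mapsto\cW\cF_{S,\varphi}(\pi(l)f)$ as a Fourier series on the compact group $L(\A)/N(\A)L(K)$, observes that its value at the identity is $\cW\cF_{S,\varphi}(f)\neq 0$, and identifies each Fourier coefficient with a translated $L$-coefficient $\cW\cF^L_{S,\varphi}(\pi(u)f)$ for some $u\in U(K)$.

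Two wobbles in your exposition are worth flagging, though neither is a fatal gap. First, the paper defines $\cW\cF_{S,\varphi}$ as an integral over $N(\A)/N(K)$, where $N=\Exp(\fn)$ is the radical of $\omega_\varphi|_\fu$; this definition involves no choice of Lagrangian. The Lagrangian enters only through the auxiliary coefficient $\cW\cF^L_{S,\varphi}$, and the choice-independence you worry about is then automatic: Lemma~\ref{lem:Glob} shows that for \emph{every} rational Lagrangian $L$ one has $\cW\cF_{S,\varphi}(\pi)\neq 0 \Leftrightarrow \cW\cF^L_{S,\varphi}(\pi)\neq 0$, so any two Lagrangians are compared through the canonical $N$-coefficient. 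Second, the alternative device you mention --- conjugating one rational Lagrangian to another by a rational symplectic transformation of $\fu/\fn$ --- is not robust as stated: such a transformation need not be realized by conjugation by an element of $G(K)$, so one cannot simply perform a change of variables in the automorphic integral without a further argument. The Fourier-inversion route you also invoke is the one that actually works, and your description of it (``the two orderings of integration differ by a sum of Fourier coefficients'') is a looser phrasing of what the clean lemma does. In substance, though, your strategy coincides with the paper's.
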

\subsection{Further results for $\GL_n(\F)$}

For $G_n:=\GL_n(\F)$ we show that Theorem \ref{thm:main} allows one to compare degenerate Whittaker models corresponding to any nilpotent orbits $\cO,\cO'\subset \fg_n^*:=\gl(n,\F)^*$ s.t. $\cO\subset \overline{\cO'}$.
\DimaC{To that end we prove the following geometric theorem.

\begin{introthm}[\S \ref{subsec:PfGLmain}] \label{thm:GLmain}
Let $\cO,\cO'\subset \fg_n^*$ be nilpotent coadjoint orbits with $\cO\subset \overline{\cO'}$. Then  there exists a Whittaker pair $(S,\tilde \varphi)$ such that $\tilde \varphi\in  \cO'$ and $\cO$ intersects $\overline{G_S\tilde \varphi}$.
Moreover, $S$ can be chosen to be diagonal with integer eigenvalues and $\tilde \varphi$ can be chosen to be given by the trace pairing with a matrix in Jordan form.
\end{introthm}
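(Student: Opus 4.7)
The plan is to reduce to a combinatorial statement about partitions and then resolve it via $\sl_2$-graded algebra. Via the trace pairing $\gl_n(\F)^* \cong \gl_n(\F)$, nilpotent coadjoint orbits correspond to nilpotent adjoint orbits; these are parametrized by partitions of $n$ (Jordan type), with the closure order equal to the dominance order by Gerstenhaber's theorem. Let $\mu$ (resp.\ $\lambda$) be the partition of $\cO'$ (resp.\ $\cO$), so $\lambda \leq \mu$ in dominance order.

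I would let $f_\mu \in \gl_n(\F)$ be the Jordan matrix with blocks of sizes $\mu_1 \geq \mu_2 \geq \cdots$ and subdiagonal $1$'s, and set $\tilde\varphi := \tr(f_\mu \cdot)\in \gl_n(\F)^*$. Take $S$ to be the standard $\sl_2$-neutral element: the diagonal matrix whose entries on the $i$-th Jordan block are $\mu_i-1, \mu_i-3,\ldots,-(\mu_i-1)$. A direct calculation gives $[S,f_\mu]=-2f_\mu$, equivalently $\ad^*(S)\tilde\varphi=-2\tilde\varphi$, so $(S,\tilde\varphi)$ is a Whittaker pair with $\tilde\varphi\in\cO'$; moreover $S$ is diagonal with integer eigenvalues and $\tilde\varphi$ is given by the trace pairing with a matrix in Jordan form, which already yields the ``moreover'' clauses.

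It then remains to produce an element of Jordan type $\lambda$ in $\overline{G_S\tilde\varphi}$. Via the trace pairing this becomes: produce an element of $\cO_\lambda$ in $\overline{L\cdot f_\mu}$, where $L := Z_{\GL_n}(S) = \prod_k \GL(V_k)$ and $V_k := \ker(S-k\cdot\id)$. The $-2$-eigenspace $\fg_{-2} = \bigoplus_k \Hom(V_k,V_{k-2})$ is an $L$-stable affine subspace containing $f_\mu$. I plan to deduce the result from two facts: first, $L\cdot f_\mu$ is Zariski open and dense in $\fg_{-2}$; second, every $\cO_\nu$ with $\nu\leq\mu$ meets $\fg_{-2}$. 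Given both, the irreducibility of $\fg_{-2}$ gives $\overline{L\cdot f_\mu} = \fg_{-2}$, which then intersects $\cO_\lambda$ as required.

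The first fact is standard: $f_\mu$ is the principal element of the $\sl_2$-graded Lie algebra associated to $S$, and it reduces to the dimension identity $\dim L - \dim Z_L(f_\mu) = \dim \fg_{-2}$, a consequence of Kostant's theorem. The hard part will be the second: a combinatorial statement about Jordan types of elements of $\bigoplus_k\Hom(V_k,V_{k-2})$, which is naturally the representation space of an equioriented type-$A$ quiver. I would prove it by induction on the dominance order: for each covering $\nu\lessdot\mu$ (moving one Young-diagram box from a higher row to a lower row) write down an explicit one-parameter family in $\fg_{-2}$ with generic fibre of type $\mu$ and special fibre of type $\nu$, then iterate inside the fixed space $\fg_{-2}$ using transitivity of orbit closures. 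Alternatively, one may invoke Vinberg's theory of graded nilpotent orbits, which directly classifies the $L$-orbits on the null cone $\fg_{-2}$ by the partitions $\nu\leq\mu$.
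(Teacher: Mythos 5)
Your reduction to partitions, the verification of the ``moreover'' clause, and your first fact (that $L\cdot f_\mu$ is open and dense in $\fg^S_{-2}$ when $S$ is the neutral element of an $\sl_2$-triple through $f_\mu$ --- this is Kostant's theorem, and for $\GL_n$ density in the local-field topology follows as well) are all fine. The fatal problem is your second fact: it is \emph{false} that every $\cO_\nu$ with $\nu\leq\mu$ meets $\fg^S_{-2}$ when $S$ is the standard neutral element. Take $\mu=(4,1)$ (your ``big'' partition) and $\nu=(3,2)\lessdot(4,1)$, a covering in dominance order. Then $S=h_{(4,1)}=\diag(3,1,-1,-3,0)$ and $\fg^S_{-2}=\Span\{E_{21},E_{32},E_{43}\}$. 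Any $aE_{21}+bE_{32}+cE_{43}$ has rank $3$ only when $abc\neq 0$, in which case its Jordan type is $(4,1)$; otherwise its rank is at most $2$. Since type $(3,2)$ requires rank $3$, no element of $\cO_{(3,2)}$ lies in $\fg^S_{-2}$. As $\fg^S_{-2}$ is a closed $G_S$-stable linear subspace containing $G_S\tilde\varphi$, its closure cannot meet $\cO_{(3,2)}$ either. The paper records exactly this counterexample in Remark \ref{rem:GenEmb}. Consequently both of your proposed routes to the second fact collapse: the one-parameter degeneration for the covering $(4,1)\to(3,2)$ cannot stay inside $\fg^S_{-2}$, and the quiver/Vinberg classification (Gabriel: $L$-orbits on $\fg^S_{-2}$ are direct sums of interval modules for an equioriented type-$A$ quiver) shows that the achievable Jordan types form a \emph{proper} subset of $\{\nu\leq\mu\}$ in general, rather than all of it.

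This is precisely why the theorem is stated with an unspecified $S$ satisfying only $[S,f]= -2f$, and why the paper's proof does \emph{not} take $S$ neutral: in Lemma \ref{lem:TwoBlocks} the second Jordan block of $S$ is shifted by the scalar $(r+q-p)\Id_r$, which enlarges $\fg^S_{-2}$ and enlarges the centralizer $G_S$ enough to realize the needed degeneration, and the general case is then assembled by an induction that glues such shifted pieces (normalizing the scalars so they agree on a common block). If you want to salvage your strategy, the choice of $S$ must become part of the induction rather than being fixed as $h_\mu$ at the outset; with $S=h_\mu$ the statement you are trying to prove is simply not true.
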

\begin{remark}\label{rem:orb}
An analogous statement holds for some pairs of nilpotent orbits $\cO\subset \overline{\cO'}$ in other classical groups (see Remark \ref{rem:GenEmb} below), but not in general. Indeed,  it can be shown that if $\overline{G_S\tilde \varphi}$ intersects a distinguished orbit $\cO$ then $\cO=\cO'$.
\end{remark}

Using Theorem \ref{thm:PL}, \cite{Mat}, and Corollary \ref{cor:real} for archimedean $\F$, and  Theorems \ref{thm:main} and \ref{thm:GLmain}, together with the results of \cite{MW,Var} for non-archimedean $\F$ we deduce the following theorem.

\begin{introthm}[\S \ref{subsec:PfGLOrb}]\label{thm:GLOrb}
Let  $\pi \in\mathcal{M}(G_{n})$. Let $\cO \subset \g_n^{*}$ be a nilpotent orbit. Then we have
\begin{equation}\label{=WWF}
\cW_{\cO}(\pi) \neq 0 \Leftrightarrow \cO \subset \WF(\pi).
\end{equation}
\end{introthm}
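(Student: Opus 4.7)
The proof splits into two implications. For the easy direction $\cW_{\cO}(\pi) \neq 0 \Rightarrow \cO \subset \WF(\pi)$, I invoke existing results: in the archimedean case \cite{Mat} ensures that every orbit supporting a nonzero generalized Whittaker functional on $\pi$ lies in the Zariski closure of some orbit of $\WFC(\pi)$, while in the non-archimedean case \cite{MW,Var} identify the maximal orbits in $\{\cO : \cW_{\cO}(\pi) \neq 0\}$ with the maximal orbits of $\WFC(\pi)$; in either case $\cO$ sits below an orbit of $\WFC(\pi)$ in the closure order and hence $\cO \subset \WF(\pi)$.

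For the reverse direction, suppose $\cO \subset \WF(\pi)$. Since $\WF(\pi)$ is the union of the closures of the finitely many orbits in $\WFC(\pi)$, I choose $\cO' \in \WFC(\pi)$ with $\cO \subset \overline{\cO'}$, taking $\cO'$ maximal with this property in the non-archimedean case so that $\cW_{\cO'}(\pi) \neq 0$ by MW/Var. Theorem \ref{thm:GLmain} then supplies a Whittaker pair $(S,\tilde\varphi)$ with $\tilde\varphi \in \cO'$ and a point $\varphi \in \cO \cap \overline{G_S\tilde\varphi}$, and Theorem \ref{thm:main} produces a $G$-equivariant epimorphism $\cW_{\varphi} \twoheadrightarrow \cW_{S,\tilde\varphi}$. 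Dualizing gives an inclusion $\cW_{S,\tilde\varphi}(\pi) \hookrightarrow \cW_{\varphi}(\pi) = \cW_{\cO}(\pi)$, so the problem reduces to showing $\cW_{S,\tilde\varphi}(\pi) \neq 0$.

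The decisive elementary observation is that every nilpotent element in $\gl_n$ is principal in its block-diagonal Jordan Levi, so the $\tilde\varphi$ furnished by Theorem \ref{thm:GLmain} is automatically a PL element. For $\F=\C$, Theorem \ref{thm:PL} applies directly: since $\tilde\varphi \in \cO' \subset \WF(\pi)$, we obtain $\cW_{S,\tilde\varphi}(\pi) \neq 0$. For $\F=\R$ one substitutes Corollary \ref{cor:real}. For $\F$ non-archimedean, the nonvanishing of $\cW_{\cO'}(\pi)$ from MW/Var must be transferred to $\cW_{S,\tilde\varphi}(\pi)$; a natural route is to exploit the preorder on Whittaker pairs encoded by Theorem \ref{thm:main2}, under which PL pairs are minimal, so that nonvanishing at the minimal PL model controls nonvanishing at other degenerate models above it.

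The main technical obstacle lies in the last step of the non-archimedean case: the surjection of Theorem \ref{thm:main} only gives the inclusion $\cW_{S,\tilde\varphi}(\pi) \subseteq \cW_{\tilde\varphi}(\pi) = \cW_{\cO'}(\pi)$, so nonvanishing of $\cW_{\cO'}(\pi)$ does not formally imply nonvanishing of $\cW_{S,\tilde\varphi}(\pi)$. Overcoming this requires inspecting which degenerate Whittaker functionals MW actually construct and checking that they can be realized inside $\cW_{S,\tilde\varphi}$ for the very Whittaker pair produced by Theorem \ref{thm:GLmain} (with $S$ diagonal, integer-eigenvalued, and $\tilde\varphi$ in Jordan form); the integer-eigenvalue structure of $S$ is exactly what should make this compatibility hold.
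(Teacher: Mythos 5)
Your overall strategy coincides with the paper's, and both the ``easy'' direction and the archimedean case are handled correctly (for $\GL_n$ every nilpotent is a PL element, so Theorem \ref{thm:PL} and Corollary \ref{cor:real}, together with Theorem \ref{thm:Mat}, settle $\F=\C,\R$). The genuine gap is exactly the step you flag at the end: in the non-archimedean case you never establish $\cW_{S,\tilde\varphi}(\pi)\neq 0$. The detour you suggest --- transferring nonvanishing from a PL model upward through the preorder of Theorem \ref{thm:main2} --- cannot close it, because the only result giving nonvanishing of PL models under the hypothesis $\varphi\in\WF(\pi)$ is the archimedean theorem of \cite{GS-Gen}; there is no $p$-adic analogue, and all the epimorphisms of Theorems \ref{thm:main} and \ref{thm:main2} point the wrong way for deducing nonvanishing of $\cW_{S,\tilde\varphi}(\pi)$ from that of $\cW_{\cO'}(\pi)$.

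The paper closes the gap by a simpler observation than the ``inspection of which functionals MW construct'' that you propose: the M\oe{}glin--Waldspurger/Varma nonvanishing theorem (Theorem \ref{thm:MW}(ii), i.e.\ \cite[Theorem I.16 and Corollary I.17]{MW} and \cite{Var}) is not a statement about the neutral model only. It asserts $\cW_{\nu,\tilde\varphi}(\pi)\neq 0$ for \emph{every} one-parameter subgroup $\nu:\F^{\times}\to G$ satisfying $ad^*(\nu(t))\tilde\varphi=t^{\pm 2}\tilde\varphi$, provided $\tilde\varphi$ lies in a maximal orbit of $\WFC(\pi)$. Since Theorem \ref{thm:GLmain} produces $S=\diag(\{h_i\})$ with \emph{integer} eigenvalues, one sets $\nu(t):=\diag(t^{h_i})$ and obtains $\cW_{S,\tilde\varphi}=\cW_{\nu,\tilde\varphi}(\pi)\neq 0$ directly from Theorem \ref{thm:MW}(ii), after choosing $\cO'$ maximal in $\WFC(\pi)$ with $\cO\subset\overline{\cO'}$. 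This is precisely why the integrality clause is built into the statement of Theorem \ref{thm:GLmain}; your instinct that the integer eigenvalues are the key is right, but no further compatibility check is needed --- it is already the stated form of the MW--Varma theorem. With that in hand, Theorems \ref{thm:main} and \ref{thm:GLmain} give $\cW_{\cO}\onto\cW_{S,\tilde\varphi}$, hence $\cW_{S,\tilde\varphi}(\pi)\into\cW_{\cO}(\pi)$ and $\cW_{\cO}(\pi)\neq 0$, exactly as you outline.
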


\begin{remark}
It would be interesting to know to what extent  this theorem holds for other reductive groups.
For archimedean $\F$, Theorem \ref{thm:PL} and Corollary \ref{cor:real} provide a partial extension. In the non-archimedean case, the failure of Theorem \ref{thm:GLmain} for general groups,  as noted in Remark \ref{rem:orb}, represents the main obstacle for extending our approach. \end{remark}
}

In  \S \ref{sec:PfGL} we give a precise description of the generalized Whittaker spaces in terms  of certain functors $E^k$ introduced  in \cite{AGS,AGS2} in connection with the generalization of the theory of Bernstein-Zelevinsky derivatives to the archimedean setting. We will also use related functors $I^k$ that go in the other direction. We refer to \S \ref{sec:PfGL} below for the precise definitions of both functors.

\begin{introthm}[\S \ref{sec:PfGL}]
\label{thm:GL}  Let $\lambda=(\lambda_1,\dots,\lambda_k)$ be a partition of $n$ and $\cO_{\lambda}\subset\g^*_n$ be the corresponding nilpotent orbit.  Then
\begin{equation}\label{=DerIndIso}
\cW_{\cO_{\lambda}} \simeq I^{\lambda_1}(\cdots I^{\lambda_k}(\C)\cdots),
\end{equation}
where $\C$ denotes the one-dimensional representation of the trivial group $G_0$, and  for $\pi \in\mathcal{M}(G_{n})$ we have
\begin{equation}\label{=DerIso}\cW_{\cO_{\lambda}}(\pi) \simeq (E^{\lambda_k}(\cdots E^{\lambda_1}(\pi)\cdots))^{*}.\end{equation}
\end{introthm}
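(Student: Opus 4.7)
The plan is to establish \eqref{=DerIndIso} by induction on the number of parts $k$ of $\lambda$, and then derive \eqref{=DerIso} from it by adjunction. The base case $k=0$, $n=0$ is tautological.

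For the inductive step, I would fix an explicit neutral pair $(S,\varphi)$ adapted to $\lambda=(\lambda_1,\dots,\lambda_k)$: take $\varphi\in\g_n^*$ to be the trace pairing with a nilpotent matrix in Jordan form whose first block of size $\lambda_1$ sits in the upper-left corner, and whose remaining blocks of sizes $\lambda_2,\dots,\lambda_k$ fill out the lower-right $(n-\lambda_1)\times(n-\lambda_1)$ submatrix. Let $S$ be the diagonal matrix carrying the standard neutral $\sl_2$-weights within each block. The unipotent subgroup $U\subset G_n$ coming from $\ad(S)\ge 1$, the radical $\fn$ of the symplectic form $\omega_\varphi$, and the subgroup $N'$ then decompose according to the block structure: the upper-left $\lambda_1\times\lambda_1$ part, together with the off-diagonal entries that couple it to the lower-right $(n-\lambda_1)\times(n-\lambda_1)$ block, is precisely the data defining $I^{\lambda_1}$, while the lower-right part is precisely the data defining the generalized Whittaker model $\cW_{\cO_{\lambda'}}$ on $G_{n-\lambda_1}$ for $\lambda'=(\lambda_2,\dots,\lambda_k)$.

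The heart of the argument is then to verify, via induction in stages for Schwartz induction of the oscillator representation, that
\[
\cW_{\cO_\lambda}\;=\;\ind_U^{G_n}\sigma_\varphi\;\simeq\;I^{\lambda_1}\bigl(\ind_{U'}^{G_{n-\lambda_1}}\sigma_{\varphi'}\bigr)\;=\;I^{\lambda_1}\bigl(\cW_{\cO_{\lambda'}}\bigr),
\]
where the primed objects live inside $G_{n-\lambda_1}$. Feeding in the inductive hypothesis $\cW_{\cO_{\lambda'}}\simeq I^{\lambda_2}(\cdots I^{\lambda_k}(\C)\cdots)$ then yields \eqref{=DerIndIso}. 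The main technical obstacle is the cross-terms: whenever $\lambda_1>\lambda_j$ for some $j\ge 2$, the value $1$ appears as an eigenvalue of $\ad(S)$ on the off-diagonal block coupling position $1$ with position $j$, and one has to carefully match the resulting Heisenberg/oscillator structure with the Heisenberg structure that is built into the definition of $I^{\lambda_1}$ in \cite{AGS,AGS2}. Conceptually the two symplectic forms should coincide up to the natural identification of the off-diagonal block with a sum of Heisenberg-like pieces, but the bookkeeping---especially in the archimedean setting, where the Schwartz induction and smoothness conditions must be preserved at each stage---will require care.

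Finally, to deduce \eqref{=DerIso}, apply $\Hom_{G_n}(-,\pi^*)$ to \eqref{=DerIndIso} and iterate the $(I^k,E^k)$ adjunction established in \cite{AGS,AGS2}. Each adjunction strips off one $I^{\lambda_i}$ from the left factor and wraps an $E^{\lambda_i}$ around $\pi$ on the right, which explains why the order of the parts is reversed from $\lambda_1,\dots,\lambda_k$ in \eqref{=DerIndIso} to $\lambda_k,\dots,\lambda_1$ in \eqref{=DerIso}.
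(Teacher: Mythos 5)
Your skeleton matches the paper's: induct on the number of parts, peel off $\lambda_1$, and obtain \eqref{=DerIso} from \eqref{=DerIndIso} by the $(I^k,E^k)$ adjunction (the paper's Lemma \ref{lem:AdjDer}). But the crucial step, the isomorphism $\cW_{\cO_\lambda}\simeq I^{\lambda_1}(\cW_{\cO_{\lambda'}})$, is not actually established by what you write. It does not follow from induction in stages, because the two sides are Schwartz inductions of (essentially) the same character from two \emph{genuinely different} nilpotent subgroups: $\cW_{\cO_\lambda}$ is induced from a maximal isotropic subalgebra of $\fu_0=\fg^{h}_{\geq 1}$ for the neutral element $h$, whereas unwinding the definition of $I^{\lambda_1}$ produces induction from a ``mirabolic staircase'' subgroup that is isotropic for a different grading element. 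These subgroups are not nested inside one another in a way that transitivity of induction could exploit; they differ precisely in the columns corresponding to the stripped-off block. The cross-terms you flag as ``bookkeeping requiring care'' are therefore not a technicality to be matched up --- they are the entire content of the step.

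The paper's mechanism for closing this gap is a deformation argument: it sets $S_t=h+tZ$ with $Z$ a rational semisimple element supported on the largest Jordan block (note the paper puts that block in the lower-right, via the inverse reordering $\eta$ of $\lambda$, so as to land on the mirabolic $P_n$; your upper-left convention would need an extra conjugation), defines isotropic subalgebras $\fl_t,\fr_t\subset\fu_t=\fg^{S_t}_{\geq 1}$, intersects them with the stabilizer $\fa$ of the vector $e_{n-\eta_k+1}$, and shows (i) $\fl'_t\leftrightarrow\fr'_t$ give isomorphic inductions by the Stone--von Neumann theorem (Lemma \ref{lem:main}), and (ii) between consecutive critical values one has the \emph{equality} $\fr'_s=\fl'_t$ (Lemma \ref{lem:l=r}), so every arrow in the chain from $\ind_{\Exp(\fl_0)}^{G_n}\chi_\varphi$ to $\ind_{\Exp(\fr'_1)}^{G_n}\chi_\varphi$ is an isomorphism rather than a mere epimorphism; point (ii) rests on the decomposition $\fu_t=(\fu_t\cap\fa)\oplus(\fu_t)_\varphi$ coming from the $\sl_2$-structure of $\Hom(V_k,V_i)$. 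Your proposal supplies neither the deformation nor the device ensuring isomorphism (as opposed to epimorphism), so as written it has a genuine gap at its central step. The adjunction step deducing \eqref{=DerIso} is fine.
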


\DimaB{ Let $\cO \subset \g^*_n$ be a nilpotent orbit.  Let  $\mathcal{M}^{\leq \cO}(G_{n})$ denote the Serre subcategory of $\mathcal{M}(G_{n})$ consisting of representations with wave-front set inside the closure of $\cO$ and let $\mathcal{M}^{< \cO}(G_{n})$  denote the Serre  subcategory of $\mathcal{M}^{\leq \cO}(G_{n})$ consisting of representations with wave-front set not  containing $\cO$. Let $\mathcal{M}^{ \cO}(G_{n}):=\mathcal{M}^{\leq \cO}(G_{n})/\mathcal{M}^{< \cO}(G_{n})$ denote the quotient category.

Using the main results of \cite{AGS,AGS2,GS,GS-Gen}  we obtain

\begin{introcor}[\S \ref{subsec:PfCorGL}]\label{cor:GL} \begin{enumerate}[(i)]
\item \label{GLit:Exact} The functor $\pi \mapsto \cW_{\cO}(\pi)$ defines an exact faithful functor from $\mathcal{M}^{ \cO}(G_{n})$ to the category of finite dimensional
vector spaces.

\item \label{GLit:MaxOrb} If $\pi\in \mathcal{M}^{\leq \cO}(G_{n})$ is an irreducible  unitarizable representation, or a monomial representation, and $\pi \notin \mathcal{M}^{< \cO}(G_{n})$ then $\cW_{\cO}(\pi)$ is one-dimensional.
\end{enumerate}
\end{introcor}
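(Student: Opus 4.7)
The plan is to transport both claims through the isomorphism \eqref{=DerIso} of Theorem \ref{thm:GL} and then invoke the cited works. Write $\lambda=(\lambda_1,\dots,\lambda_k)$ for the partition labelling $\cO=\cO_\lambda$, and set $E_\lambda:=E^{\lambda_k}\circ\cdots\circ E^{\lambda_1}$, so that $\cW_\cO(\pi)\simeq E_\lambda(\pi)^{*}$. Every assertion about $\cW_\cO$ as a functor will be read off from the corresponding categorical property of $E_\lambda$.

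For part \eqref{GLit:Exact}, I would first note that Theorem \ref{thm:GLOrb}, already established, gives $\cW_\cO(\pi)=0$ whenever $\pi\in\mathcal{M}^{<\cO}(G_n)$, so the functor descends to the Serre quotient $\mathcal{M}^{\cO}(G_n)$. Exactness of each $E^{\lambda_i}$ on the appropriate natural subcategory is a main result of \cite{AGS,AGS2}, and composing exact functors yields exactness of $E_\lambda$; likewise, finite-dimensionality of the target on $\mathcal{M}^{\le\cO}(G_n)$ is contained in the same references (it is essentially the content of the finite-dimensionality of top BZ-derivatives together with the wave-front bound). Since dualisation is exact on finite-dimensional spaces, $\cW_\cO$ is exact on $\mathcal{M}^\cO(G_n)$ with values in finite-dimensional vector spaces. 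Faithfulness then follows formally: if $f\colon\pi\to\pi'$ represents a morphism in the quotient with $\cW_\cO(f)=0$, then by exactness both $\operatorname{im}(f)$ and $\operatorname{coker}(f)$ are annihilated by $\cW_\cO$, so by Theorem \ref{thm:GLOrb} they lie in $\mathcal{M}^{<\cO}(G_n)$, meaning $f$ is zero in $\mathcal{M}^\cO(G_n)$.

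For part \eqref{GLit:MaxOrb}, assume $\pi$ is irreducible unitarisable, or monomial, and that $\cO\subset\WF(\pi)$. The main theorem of \cite{GS} asserts that the top iterated derivative of an irreducible unitarisable representation of $\GL_n(\F)$ is itself irreducible; combined with the fact that $G_0$ is trivial (so that the final value of $E_\lambda(\pi)$ is already a vector space), this forces $E_\lambda(\pi)$ to be one-dimensional, and hence via \eqref{=DerIso} that $\cW_\cO(\pi)$ is one-dimensional. For monomial $\pi$, the corresponding multiplicity-one statement is one of the principal results of \cite{GS-Gen}, proved there in the language of (principal) degenerate Whittaker models; the same identification of Theorem \ref{thm:GL}, together with Theorem \ref{thm:main} relating $\cW_\cO$ to the principal degenerate model, transfers that multiplicity-one assertion to $\dim\cW_\cO(\pi)=1$.

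The only nontrivial point in this plan is to ensure that the categorical properties of $E^k$ proved in \cite{AGS,AGS2} (exactness, finite-dimensionality of iterated values, behaviour under the wave-front filtration) are stated in a form that matches the Serre-quotient set-up used here; this is mainly a bookkeeping issue, but one that requires genuine care in the archimedean case, where the underlying Fréchet topologies on derivatives have to be controlled compatibly with the iteration through~$\lambda$. Once this matching is in place, both parts of the corollary are purely formal consequences of Theorem \ref{thm:GL} combined with the quoted results.
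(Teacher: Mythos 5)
Your plan follows essentially the same route as the paper: identify $\cW_{\cO_\lambda}$ with $E_\lambda^*$ via Theorem~\ref{thm:GL} and then read off the categorical properties from the functors $E^k$. The paper, however, organizes this as an induction on $n$, peeling off $E^{\lambda_1}$ at each step, and there is a good reason for that which your write-up underweights. What you call ``mainly a bookkeeping issue'' is in fact the key lemma of the proof, Theorem~\ref{thm:DerDepth} (from \cite{GS-Gen}): for $\pi$ with $\WF(\pi)=\overline{\cO_{(n_1,\dots,n_k)}}$ the depth of $\pi$ is $n_1$ and $\WF(E^{n_1}(\pi))=\overline{\cO_{(n_2,\dots,n_k)}}$. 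This is needed at every stage to know that $E^{\lambda_1}$ lands back in the admissible category (Theorem~\ref{thm:Der}\eqref{Derit:Adm} only gives admissibility of the \emph{top} pre-derivative), that it preserves the filtration $\cM^{\le\cO}\supset\cM^{<\cO}$ on the nose, and hence that the composition descends to an exact faithful functor on the Serre quotient via \cite[\S III.1]{Gab}. Without it, ``exactness of each $E^{\lambda_i}$ on the appropriate natural subcategory'' is not available, because after one step one does not yet know one is in the appropriate subcategory. Your shortcut of invoking Theorem~\ref{thm:GLOrb} to get vanishing on $\cM^{<\cO}$ is fine, but it does not substitute for this wave-front compatibility, which is what makes the iteration (and hence the finite-dimensionality, which ultimately comes from the base case $\cM(G_0)=\mathrm{Vect}^{\mathrm{f.d.}}$) go through.

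Two smaller remarks. For part~\eqref{GLit:MaxOrb}, the paper does not invoke irreducibility of the top derivative in the abstract; it uses the explicit statements of Theorem~\ref{thm:Der}\eqref{Derit:Prod} (the top derivative of a monomial representation is the expected monomial representation of $G_{n-d}$) and \eqref{Derit:A} (the top derivative of an irreducible unitary representation is the adduced representation, which is irreducible unitary). These feed directly into the same induction, again relying on Theorem~\ref{thm:DerDepth} to match the depth with $\lambda_i$ at each stage; your appeal to Theorem~\ref{thm:main} and principal degenerate models is unnecessary here. Finally, the paper disposes of the non-archimedean case separately at the outset (exactness is classical, finiteness is Theorem~\ref{thm:MW}, and part~\eqref{GLit:MaxOrb} is \cite[\S II.2]{MW}); your proposal leaves that case implicit, though the references you cite do cover it.
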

This corollary is new only in the archimedean case, since over $p$-adic fields exactness is well-known and finiteness of dimension is shown in \cite{MW}. It is also shown in \cite{MW} that $\cW_{\cO}(\pi)$ is one-dimensional for all irreducible $\pi\in\mathcal{M}^{ \cO}(G_{n})$. Over archimedean fields, $\cW_{\cO}(\pi)$ is clearly not one-dimensional for any irreducible $\pi$ of finite dimension bigger than one.

Let us also comment that for any irreducible $\pi\in \cM(G_n)$, $\WF(\pi)$ is the closure of a single nilpotent orbit. Over archimedean fields this follows from \cite{BB1,Jos85} and over non-archimedean fields this is proven in \cite{MW}. More generally, it follows from \cite{Jos85} that for any real reductive group $G$ and any irreducible $\pi\in \cM(G)$, all the maximal orbits in $\WF(\pi)$ lie in a single complex nilpotent orbit. An analogous statement is conjectured over p-adic fields but not proven yet.}

We conjecture that the functor $\pi \mapsto \cW_{\cO}(\pi)$ is exact for all reductive groups, and we hope to prove this in the future, generalizing the technique of \cite{AGS2}.

\subsection{The structure of our proofs}

Let us first describe the idea of the proof of Theorem \ref{thm:main} in the case $\varphi=\tilde \varphi$.
We first show that $S$ can be presented as $h+Z$, where $h$ is a neutral element for $\varphi$ and $Z$ commutes with $h$ and $\varphi$. Then we consider a deformation $S_t=h+tZ$, and denote by $\fu_t$ the sum of eigenspaces of $ad(S_t)$ with eigenvalues at least 1. We call a rational number $0< t <1 $ {\it regular} if $\fu_t=\fu_{t+\eps}$ for any small enough rational $\eps$, and {\it critical} otherwise. Note that there are finitely many critical numbers, and denote them by $t_1<\dots<t_n$. Denote also $t_0:=0$ and $t_{n+1}:=1$. For each $t$ we define two subalgebras $\fl_t,\fr_t\subset \fu_t$. Both $\fl_t$ and $\fr_t$ are maximal isotropic subspaces with respect to the form $\omega_{\varphi}$, $\fr_t$ contains all the eigenspaces of $Z$ in $\fu_t$ with positive eigenvalues and $\fl_t$ contains all the eigenspaces with negative eigenvalues. Note that the restrictions of $\varphi$ to $\fl_t$ and $\fr_t$ define characters of these subalgebras.
Let $L_t:=\Exp(\fl_t)$ and $R_t:=\Exp(\fr_t)$ denote the corresponding subgroups and $\chi_\varphi$ denote their characters defined by $\varphi$.
The Stone-von-Neumann theorem implies $$\cW_{S_t,\varphi}\simeq \ind_{L_t}^G(\chi_{\varphi})\simeq \ind_{R_t}^G(\chi_{\varphi}).$$
We show that for any $0\leq i\leq n, \, \fr_{t_i}\subset \fl_{t_{i+1}}$. This gives a natural epimorphism
$$\cW_{S_{t_i},\varphi}\simeq \ind_{L_{t_{i}}}^G(\chi_{\varphi})\onto \ind_{R_{t_i}}^G(\chi_{\varphi})\simeq \cW_{S_{t_{i+1}},\varphi}.$$
Altogether, we get
$$\cW_{h,\varphi}=\cW_{S_{t_0},\varphi}\onto \cW_{S_{t_1},\varphi}\onto \cdots \onto \cW_{S_{t_{n+1}},\varphi}=\cW_{S,\varphi}.$$

If $\varphi \neq \tilde \varphi$, we identify $\fg \simeq \fg^*$ using a non-degenerate invariant form and complete $\varphi$ to an $sl_2$-triple $(e,h,\varphi)$ such that $h$ commutes with $S$. Then we
show, using the Slodowy slice, that the conditions imply that $\tilde \varphi$ is conjugate under $G_S$ to
$\varphi+\varphi'$ with $ad^*(e)(\varphi')=0$. We finish the proof by a deformation argument similar to the case $\varphi = \tilde \varphi$.

\DimaC{For Theorem \ref{thm:PL}, the implication $\cW_{S,\varphi}(\pi)\neq 0\Rightarrow \varphi\in \WF(\pi)$ follows from \cite{Mat}. To prove the other direction, we show that $\cW_{S,\varphi}$ maps onto some principal degenerate  Whittaker model $\cW_{\tilde S,\varphi}$. Thus the theorem follows from the nonvanishing of  $\cW_{\tilde S,\varphi}(\pi)$ (under the condition $\varphi\in \WF(\pi)$), which was shown in
\cite{GS-Gen}. The same argument gives Corollary \ref{cor:real} - an analogous statement for quasisplit real reductive groups.}

For the proof of Theorem \ref{thm:GLmain} we identify $\fg_n^*$ with $\fg_n$ using the trace form, and parameterize nilpotent orbits by partitions. Then we prove the theorem for partitions of length two by an elementary matrix conjugation argument. We finish the proof by induction. The induction argument, however, is not so easy since the statement is not ``transitive''. For any pair of partitions $\lambda \leq \mu$ (where $\leq$ refers to the natural order on partitions which corresponds to the closure order on orbits), we consider two pairs of partitions of two smaller numbers that add up to a number bigger than $n$. Then we take $S'$ and $S''$ corresponding to the two pairs of partitions and force them to coincide on the joint block by adding a scalar matrix to one of them. In this way we obtain a diagonal matrix $S\in \gl_n(\Z)$ that satisfies the requirements of the theorem.

\DimaC{For archimedean $\F$,  Theorem \ref{thm:GLOrb} follows from Theorem \ref{thm:PL}, \cite{Mat}, and Corollary \ref{cor:real}. For non-archimedean $\F$,} it was shown in \cite{MW} that $\cW_{S,\tilde\varphi}(\pi)\neq 0$ for any Whittaker pair $(S,\tilde \varphi)$ such that the orbit of $\tilde\varphi$ is a maximal orbit in $\WFC(\pi)$. If $\cO\subset \WF(\pi)$ then $\cO\subset\overline{\cO'}$ for some maximal orbit $\cO'\in\WFC(\pi)$. Pick $(S,\tilde\varphi)$ that correspond to $\cO,\cO'$ by Theorem \ref{thm:GLmain}. Then, by Theorem \ref{thm:main}, $\cW_{S,\tilde\varphi}(\pi)$ embeds into $\cW_{\cO}(\pi)$ and thus $\cW_{\cO}(\pi)\neq 0$.

We prove Theorem \ref{thm:GL} by induction on $k$. We let $\lambda':=(\lambda_2,\dots,\lambda_k)$, and by the induction hypothesis get $$\cW_{\cO_{\lambda'}} \simeq I^{\lambda_2}(\cdots I^{\lambda_k}(\C)\cdots).$$
Thus, in order to prove \eqref{=DerIndIso} we have to show that
\begin{equation}\label{=IdDer}
\cW_{\cO_{\lambda}} \simeq I^{\lambda_1}(\cW_{\cO_{\lambda}}).
\end{equation}
We note that both sides of the formula are isomorphic to inductions of the same character from two nilpotent subgroups that differ only in the last $\lambda_1$ columns. Then we prove \eqref{=IdDer} by a deformation argument similar to the proof of Theorem \ref{thm:main}.
Finally, \eqref{=DerIso} follows from \eqref{=DerIndIso} by a version of Frobenius reciprocity.

Corollary \ref{cor:GL} follows from \eqref{=DerIso} using the properties of archimedean prederivatives proven in \cite{AGS,AGS2,GS,GS-Gen}.

The proof of Theorem \ref{thm:Glob} is analogous to the proofs of Theorems \ref{thm:main}, \ref{thm:GLmain}. The only difference is that we cannot apply the Stone-von-Neumann theorem since in the global case we consider Whittaker-Fourier coefficients, that are some explicit functionals on an automorphic representation. We replace it by Lemma \ref{lem:Glob}, that is proven by an explicit integral transform followed by a Fourier transform on a compact abelian group. This lemma is in the spirit of \cite[Propositions 7.2 and 7.3]{GRS_Book}.

\DimaC{If $\F$ is archimedean, one can consider more general models, and analogs of Theorems \ref{thm:main}-\ref{thm:GL} will remain valid for them, see Remark \ref{rem:RealGen}.}

\subsection{Acknowledgements}
We thank Avraham Aizenbud, Joseph Bernstein, Wee Teck Gan, Crystal Hoyt, Erez Lapid, Ivan Loseu,  David Soudry, \Dima{Birgit Speh,} and Cheng-Bo Zhu for fruitful discussions, \Dima{and Baying Liu} \DimaE{and the anonymous referee} for useful remarks.

This paper was conceived during the workshop ``Representation Theory and Analysis of Reductive Groups: Spherical Spaces and Hecke Algebras'' at Oberwolfach in January 2014. We would like to thank the organizers of the workshop Bernhard Kroetz, Eric Opdam, Henrik Schlichtkrull, and Peter Trapa,  and the administration of the MFO for inviting us and for the warm and stimulating atmosphere.

D.G. learnt a lot about degenerate Whittaker models from a semester-long seminar in Tel Aviv university organized by Erez Lapid and David Soudry, and on geometry of nilpotent orbits from a spring school at the Weizmann Institute  organized by Anthony Joseph and Anna Melnikov.

D.G. was partially supported by \DimaA{ERC StG grant 637912}, ISF grant 756/12 and a Minerva foundation grant.

\DimaB{Part of the work on this paper was done during the visit of S. Sahi to Israel in summer 2014. This visit was partially funded by the ERC grant 291612.}

\section{Preliminaries}

\subsection{Notation}
For a semi-simple element $S$ and a rational number $r$ we denote by $\fg_r^S$ the $r$-eigenspace of the adjoint action of $S$ and by $\fg_{\geq r}^S$ the sum $\bigoplus_{r'\geq r}\fg_{r'}^S$. We will also use the notation $(\fg^*)_{ r}^S$ and $(\fg^*)_{\geq r}^S$ for the corresponding grading and filtration of the dual Lie algebra $\fg^*$.  For $X\in \fg$ or $X\in \fg^*$ we denote by $\fg_X$ the centralizer of $X$ in $\fg$, and by $G_X$ the centralizer of $X$ in $G$.
We say that an element $h\in \fg$ is \emph{rational semi-simple} if its adjoint action on $\fg$ is diagonalizable with  eigenvalues in $\Q$.

If $(f,h,e)$ is an $\sl_2$-triple, we will say that $e$ is a nil-positive element for $h$, $f$ is a nil-negative element for $h$, and $h$ is a neutral element for $e$. For a representation $V$ of $(f,h,e)$ we denote by $V^e$ the space spanned by the highest-weight vectors and by $V^f$ the space spanned by the lowest-weight vectors.

From now on we fix a non-trivial unitary additive character
\begin{equation}\label{=chi}
\chi:\F\to \mathrm{S}^1
\end{equation} such that if $\F$ is archimedean we have $\chi(x)=\exp(2\pi i \Re(x))$ and if $\F$ is non-archimedean the kernel of $\chi$ is the ring of integers.

\subsection{$\sl_2$-triples}

We will need the following lemma which summarizes several well-known facts about $\sl_2$-triples.

\begin{lemma}[{See \cite[\S 11]{Bou} or \cite{KosSl2}}]\label{lem:sl2}
$\,$
\begin{enumerate}[(i)]
\item Any nilpotent element is the nil-positive element of some $\sl_2$-triple in $\fg$.
\item If $h$ \DimaE{has a nil-positive element} then $e$ is a nil-positive element for $h$ if and only if $e\in \fg^h_2$ and $ad(e)$ defines a surjection $\fg^h_0\onto \fg^h_2$. The set of nil-positive elements for $h$ is open in $\fg^h_2$. 
\item If $e$ is nilpotent then $h$ is a neutral element for $e$ if and only if  $e\in \fg^h_2$ and $h\in \Im(ad(e))$. All such $h$ are conjugate under $G_e$.
\item If $(f,h,e)$ and $(f',h,e)$ are $\sl_2$-triples then $f=f'$.
\item If $(f,h,e)$ is an $\sl_2$-triple and $Z$ commutes with two of its elements then it commutes also with the third one.
\end{enumerate}
\end{lemma}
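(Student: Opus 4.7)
The plan is to derive the five claims from the Jacobson--Morozov theorem together with the representation theory of $\sl_2$-modules applied to the adjoint action on $\fg$, following \cite[Ch.~VIII, \S 11]{Bou} and \cite{KosSl2}. The unifying strategy is to fix (when possible) an ambient $\sl_2$-triple through the given element and to decompose $\fg$ as a direct sum of irreducible $\sl_2$-modules; almost every assertion then reduces to a weight-space computation on each irreducible summand.

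Part (i) is Jacobson--Morozov, which I would cite directly. For the forward direction of part (ii), one uses that on any finite-dimensional $\sl_2$-module the raising operator sends the weight-$0$ subspace onto the weight-$2$ subspace: on each irreducible summand both spaces are either simultaneously zero or simultaneously one-dimensional with $e$ acting nontrivially. The openness of the set of nil-positive elements is then immediate, since surjectivity of a linear map is open in the space of such maps. For the converse direction, I would start from an ambient triple $(f_0,h,e_0)$ (supplied by the hypothesis that $h$ admits a nil-positive element) and use the surjectivity hypothesis together with the ambient $\sl_2$-decomposition of $\fg$ to produce, for each $e\in\fg_2^h$ satisfying it, an element $f\in\fg_{-2}^h$ with $[e,f]=h$; this is the content of \cite[Ch.~VIII, \S 11]{Bou}.

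For part (iii), the forward direction is immediate from the bracket relations ($h=[e,f]\in\Im(ad(e))$ and $e\in\fg_2^h$); the converse and the $G_e$-conjugacy of neutral elements constitute Kostant's theorem, which I would quote from \cite{KosSl2}. For part (iv), if $(f,h,e)$ and $(f',h,e)$ are both triples, the difference $v:=f-f'\in\fg_{-2}^h$ satisfies $ad(e)(v)=0$; viewed in the $\sl_2$-module $\fg$ with respect to the first triple, $v$ would be a highest weight vector of negative weight, which is impossible, forcing $v=0$. For part (v) I would decompose $\fg$ under $ad(\sl_2)$ into irreducibles and observe that in every nontrivial irreducible summand no vector is simultaneously annihilated by any two of $ad(f),ad(h),ad(e)$; hence $Z$ must lie in the trivial isotypic component, where all three act as zero, treating the three cases uniformly. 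The main obstacle is the converse of (ii), where lifting an abstract surjectivity condition to the existence of a concrete partner $f$ requires careful bookkeeping against the ambient triple; the remaining parts are one-line weight-space arguments or direct citations.
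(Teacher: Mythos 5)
Your proposal is correct and is essentially the route the paper itself takes: the paper offers no proof of this lemma, stating it as a summary of well-known facts and citing exactly the sources (Bourbaki Ch.~VIII, \S 11 and Kostant) that you defer to for the genuinely nontrivial steps (Jacobson--Morozov, the converse in (ii), Morozov's completion lemma and the $G_e$-conjugacy in (iii)). Your weight-space arguments for the elementary parts --- the forward directions of (ii) and (iii), the uniqueness in (iv) via a putative highest-weight vector of weight $-2$, and the reduction of (v) to the trivial isotypic component --- are all sound.
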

It is easy to see that the lemma continues to hold true if we replace the nil-positive elements by nil-negative ones (and $\fg_2^h$ by $\fg_{-2}^h$).

\begin{defn}\label{def:nuet}
We will say that $h\in \fg$ is a neutral element for $\varphi\in \fg^*$ if \DimaE{$h$ has a nil-positive element in $\fg$,} $\varphi\in (\fg^*)^h_{-2}$, and the linear map $\fg^h_0\to (\fg^*)^h_{-2}$ given by $x \mapsto ad^*(x)(\varphi)$ is an epimorphism.  Note that if we identify $\fg$ with $\fg^*$ (in a $G$-equivariant way) this property becomes equivalent to $\varphi$ being a nilnegative element for $h$, or $-h$ being a neutral element for $\varphi$. \DimaE{We also say that $0\in \fg$ is a neutral element for $0\in \fg^*$.}
\end{defn}

\subsection{Schwartz induction}

\begin{defn}\label{def:ind}
If $G$ is an $l$-group, we denote by $\Rep^{\infty}(G)$ the category of smooth representations of $G$. If $H\subset G$ is a closed subgroup, and $\pi\in \Rep^{\infty}(H)$ is a smooth representation of $G$ we denote by $\ind_H^G(\pi)$ the smooth compactly-supported induction as in \cite[\S 2.22]{BZ}.

If $G$ is an affine real algebraic group, we denote by $\Rep^{\infty}(G)$ the category of smooth nuclear \Fre representations of $G$ of moderate growth. This is essentially the same definition as in \cite[\S 1.4]{dCl} with the additional assumption that the representation spaces are nuclear (see e.g. \cite[\S 50]{Tre}).
 If $H\subset G$ is a Zariski closed subgroup, and $\pi\in \Rep^{\infty}(H)$ we denote by $\ind_H^G(\pi)$ the Schwartz induction as in \cite[\S 2]{dCl}. More precisely, in \cite{dCl} du Cloux defines a map from the space $\Sc(G,\pi)$ of Schwartz functions from $G$ to the underlying space of $\pi$ to the space $C^{\infty}(G,\pi)$ of all smooth $\pi$-valued functions on $G$ by $f\mapsto \overline f$ where $$\overline f (x)=\int_{h\in H}\pi(h)f(xh)dh,$$
and $dh$ denotes a fixed left-invariant measure on $H$. The Schwartz induction $\ind_H^G(\pi)$ is defined to be the image of this map.
\end{defn}

From now till the end of the subsection let $G$ be either an $l$-group or an affine real algebraic group, and $H'\subset H \subset G$ be (Zariski) closed subgroups.

\begin{lem}[{\cite[Proposition 2.25(b)]{BZ} and \cite[Lemma 2.1.6]{dCl}}]\label{lem:IndSt}
For any $\pi\in \Rep^{\infty}(H')$ we have
$$\ind_{H'}^G(\pi)\simeq \ind_{H}^G\ind_{H'}^{H}(\pi).$$
\end{lem}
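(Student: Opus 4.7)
The plan is to construct mutually inverse, $G$-equivariant continuous maps between $\ind_{H'}^G(\pi)$ and $\ind_H^G\ind_{H'}^H(\pi)$, handling the two cases of Definition \ref{def:ind} separately but with parallel arguments.

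In the $l$-group case, I would follow \cite[\S 2.25]{BZ}. Model $\ind_{H'}^G(\pi)$ as the space of locally constant $\pi$-valued functions $f$ on $G$, compactly supported modulo $H'$, satisfying $f(gh')=\pi(h')^{-1}f(g)$, and similarly for the other two inductions. Define $\Phi\colon \ind_{H'}^G(\pi)\to \ind_H^G\ind_{H'}^H(\pi)$ by $(\Phi f)(g)(h):=f(gh)$. The verifications are routine: for fixed $g$, the function $h\mapsto f(gh)$ lies in $\ind_{H'}^H(\pi)$ (locally constant and compactly supported modulo $H'$ because $f$ is); the assignment $g\mapsto \Phi f(g)$ is locally constant and compactly supported modulo $H$; and $\Phi$ intertwines left translation. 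The inverse sends $F\in \ind_H^G\ind_{H'}^H(\pi)$ to the function $g\mapsto F(g)(e)$, and one checks the two constructions are mutually inverse by direct substitution.

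In the archimedean case, I would follow \cite[Lemma 2.1.6]{dCl}. Recall from Definition \ref{def:ind} that $\ind_{H'}^G(\pi)$ is the image of the averaging map $\Psi \mapsto \overline\Psi$, $\overline\Psi(g)=\int_{H'}\pi(h')\Psi(gh')\,dh'$, defined on the nuclear Fr\'echet space $\Sc(G,\pi)$ of Schwartz functions from $G$ to the underlying space of $\pi$. The key idea is to factor this integration through $H$ by Fubini: with appropriately normalized measures one writes $\int_{H'}=\int_{H/H'}\int_{H'}$, giving
\[
\overline\Psi(g)=\int_{H/H'}\pi(h)\Bigl(\int_{H'}\pi(h')\Psi(ghh')\,dh'\Bigr)\,dh.
\]
To interpret the inner integral as the Schwartz induction from $H'$ to $H$ one must show that the partial average $(h,g)\mapsto \int_{H'}\pi(h')\Psi(ghh')\,dh'$ lies in $\Sc(G,\ind_{H'}^H(\pi))$, and then recognizing the outer integral as the averaging map from $G$ to $\ind_H^G$ yields the identification $\ind_{H'}^G(\pi)\simeq \ind_H^G\ind_{H'}^H(\pi)$. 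Continuity of the resulting map and a parallel argument for its inverse give a topological isomorphism.

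The main obstacle in both cases is bookkeeping rather than a deep idea: one must track the behaviour of modular characters (which, as in the cited references, are absorbed into the normalization of Haar measures on the homogeneous spaces $G/H$, $G/H'$, $H/H'$), and, in the archimedean case, verify that partial integration preserves the Schwartz class and moderate growth, and that nuclearity is preserved. These technical points are precisely what is established in \cite[\S 2.1]{dCl}; as both statements are already explicitly proved in the cited references, the ``proof'' here amounts to invoking them and noting that they combine cleanly to cover both settings.
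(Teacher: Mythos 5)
The paper gives no proof of Lemma \ref{lem:IndSt}: it is stated as a citation to \cite[Proposition 2.25(b)]{BZ} and \cite[Lemma 2.1.6]{dCl}, and that is exactly where you ultimately land as well, so the overall approach matches. Your $l$-group sketch (the map $(\Phi f)(g)(h)=f(gh)$ with inverse $F\mapsto (g\mapsto F(g)(e))$) is the standard one and is correct.

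However, your archimedean ``Fubini'' display is wrong as written, in two independent ways. First, the identity $\int_{H'}=\int_{H/H'}\int_{H'}$ is false: the right-hand side disintegrates Haar measure on $H$, not on $H'$, so the displayed formula changes $\overline\Psi(g)=\int_{H'}\pi(h')\Psi(gh')\,dh'$ into an integral over all of $H$, which is a different quantity. Second, $\pi$ is a representation of $H'$ only, so $\pi(h)$ for $h\in H$ (or for a coset $h\in H/H'$) is undefined; the operator one could legitimately apply there is $\rho(h)$ with $\rho=\ind_{H'}^H(\pi)$, and then the inner integral must be interpreted as taking values in $V_\rho$, not $V_\pi$. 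The genuine content of du Cloux's Lemma 2.1.6 is precisely that the pointwise-defined map $F(g)(h):=\overline\Psi(gh)$ (the same formula as in the $l$-group case) lands in $\ind_H^G(\ind_{H'}^H\pi)$, i.e.\ is realized as the averaging of a Schwartz section $\Theta\in\Sc(G,\rho)$, and is a topological isomorphism; this is not obtained by a naive iterated-integral identity. Since you explicitly defer the technical verification to \cite{dCl}, the conclusion is unaffected, but the intermediate display would need to be repaired or dropped.
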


\begin{cor}
For any $\pi\in \Rep^{\infty}(H)$ we have
a natural epimorphism $\ind_{H'}^G(\pi|_{H'})\onto \ind_{H}^G(\pi)$.
\end{cor}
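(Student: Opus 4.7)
The plan is to reduce to the case $G=H$ via the induction-in-stages isomorphism (Lemma~\ref{lem:IndSt}), construct the counit of the restriction-induction adjunction, and then apply the (exact) functor $\ind_H^G$.

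First, I would apply Lemma~\ref{lem:IndSt} to identify $\ind_{H'}^G(\pi|_{H'})\simeq \ind_H^G(\ind_{H'}^H(\pi|_{H'}))$. This reduces the problem to producing an $H$-equivariant epimorphism $\epsilon: \ind_{H'}^H(\pi|_{H'})\onto \pi$ and then applying the functor $\ind_H^G$.

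Second, I would construct $\epsilon$ as the counit of the adjunction between restriction to $H'$ and induction from $H'$ to $H$. Concretely, after fixing a (semi-)invariant measure on $H'\bs H$, it has the form $\epsilon(f)=\int_{H'\bs H}\pi(h)^{-1}f(h)\,d\bar h$ (with the standard modular-character adjustment where needed). To see that $\epsilon$ is surjective, given any $v\in \pi$ I would pick a compactly-supported (resp.\ Schwartz) function $\tilde\phi$ on $H$ with $\int_H \tilde\phi(h)\,dh=1$ and set $c(h):=\int_{H'}\tilde\phi(h'h)\,dh'$. The function $f(h):=c(h)\pi(h)v$ is left $H'$-equivariant because $c$ is left $H'$-invariant, so $f\in \ind_{H'}^H(\pi|_{H'})$; a direct computation yields $\epsilon(f)=v$, proving surjectivity.

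Third, the functor $\ind_H^G$ preserves epimorphisms: for $l$-groups this is \cite[2.28]{BZ}, and for real algebraic groups it follows from du Cloux's construction of Schwartz induction \cite{dCl} (the defining quotient map $\sss(G,\pi)\onto \ind_H^G\pi$ is functorial and, being a surjection, takes surjections to surjections). Applying $\ind_H^G$ to $\epsilon$ and composing with the isomorphism from Step~1 yields the claimed epimorphism $\ind_{H'}^G(\pi|_{H'})\onto \ind_H^G(\pi)$.

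The main technical subtlety is the archimedean case: one has to verify that the averaging formula genuinely lands in du~Cloux's image space and that the nuclear-Fréchet setting allows the exactness statement to be applied as claimed. Both are standard consequences of the existence of Schwartz partitions of unity on $H'\bs H$ and Dixmier--Malliavin-type factorizations, but they are the one place where the argument is not purely formal.
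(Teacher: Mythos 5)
Your plan is sound and the key steps are all correct: reduce via induction in stages (Lemma~\ref{lem:IndSt}), produce the counit $\ind_{H'}^H(\pi|_{H'})\onto\pi$, then apply $\ind_H^G$, which preserves surjections of nuclear Fr\'echet spaces by the open mapping theorem (resp.\ by \cite[2.28]{BZ} non-archimedeanly). The paper omits the proof, but the shortest route is arguably even more direct than yours and avoids both induction in stages and the counit: by du~Cloux's definition, $\ind_{H'}^G(\pi|_{H'})$ and $\ind_H^G(\pi)$ are the images of $\Sc(G,\pi)$ under the $H'$-averaging map and the $H$-averaging map respectively, and the latter factors through the former by writing $\int_H=\int_{H'\backslash H}\int_{H'}$; the epimorphism falls out immediately, and surjectivity of the counit in your Step~2 is then a byproduct (take $G=H$).

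Two small points on your Step~2. First, there is a left/right convention mismatch: du~Cloux's averaged functions $\bar f$ satisfy $\bar f(xh_0)=\pi(h_0)^{-1}\bar f(x)$ (right anti-equivariance), whereas your $f(h)=c(h)\pi(h)v$ satisfies $f(h''h)=\pi(h'')f(h)$ (left equivariance); you need to either pass to $h\mapsto h^{-1}$ or rebuild $c$ as a right $H'$-invariant function for the formulas to match. Second, and more substantively, exhibiting an equivariant function is not quite enough: $\ind_{H'}^H(\pi|_{H'})$ is \emph{defined} as the image of the averaging map from $\Sc(H,\pi)$, not as the full space of equivariant smooth functions with Schwartz-type decay, so you must also produce a preimage in $\Sc(H,\pi)$. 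The cleanest fix is the one suggested above: note that the composite $\Sc(H,\pi)\onto\ind_{H'}^H(\pi|_{H'})\xrightarrow{\;\epsilon\;}\pi$ is the full $H$-averaging, whose surjectivity is exactly the assertion $\ind_H^H(\pi)\cong\pi$ (a bump-function/Dixmier--Malliavin argument), and conclude that $\epsilon$ is surjective. With that adjustment your proof is complete.
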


\begin{lem}\label{lem:Frob}
Let $\rho \in \Rep^{\infty}(H)$, $\pi\in \Rep^{\infty}(G)$ and let $\pi^*$ denote the dual representation, (endowed with the strong dual topology in the archimedean case).  Then
$$\Hom_{G}(\ind_H^G(\rho),\pi^*) \cong \Hom_H(\rho,\pi^*\Delta_H^{-1}\Delta_G),$$
where $\Delta_H$ and $\Delta_G$ denote the modular functions of $H$ and $G$.
\end{lem}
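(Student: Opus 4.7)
The plan is to deduce this as a standard Frobenius reciprocity: reinterpret both sides as spaces of continuous invariant bilinear pairings, and identify the bijection with integration over $G/H$.

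First, by the tensor-Hom adjunction and the definition of $\pi^*$, a continuous $G$-equivariant map $\ind_H^G(\rho)\to \pi^*$ is equivalent to a separately continuous $G$-invariant bilinear pairing $B:\ind_H^G(\rho)\times \pi\to \C$. Similarly, a continuous $H$-equivariant map $\rho\to \pi^*\Delta_H^{-1}\Delta_G$ is equivalent to a continuous bilinear pairing $b:\rho\times \pi\to \C$ which is diagonally $H$-invariant up to the character $\Delta_G/\Delta_H$, i.e.\ $b(\rho(h)\xi,\pi(h)v)=(\Delta_G/\Delta_H)(h)\cdot b(\xi,v)$ for all $h\in H$.

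Next I would construct the natural map from the RHS to the LHS explicitly by the formula
$$
B(f,v) \;=\; \int_{G/H} b\bigl(f(g),\,\pi(g)v\bigr)\,d\bar g,
$$
where the integrand descends to $G/H$ because the character twist in the equivariance of $b$ precisely cancels the Radon--Nikodym factor for the left $G$-invariant (not right $H$-invariant) measure on $G/H$. The inverse map is ``germ at the identity coset'': given a $G$-invariant pairing $B$, one recovers $b(\xi,v)$ by evaluating $B$ on sections in $\ind_H^G(\rho)$ whose supports shrink to $eH$ and whose values at $e$ approximate $\xi$, an operation made legitimate by the surjectivity of the averaging map in the definition of $\ind_H^G$.

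In the $l$-group case this is the classical Bernstein--Zelevinsky reciprocity \cite[Prop.~2.29]{BZ-Induced}, where all continuity issues are automatic. The main obstacle, and the reason some care is needed, is the archimedean case: one has to verify absolute convergence of the integral defining $B$, separate continuity of $B$ with respect to the Schwartz-induction topology, and the fact that the inverse construction lands in $\Rep^{\infty}(H)$, the category of nuclear \Fre representations of moderate growth. All three points are handled by du Cloux's formalism \cite{dCl}: Schwartz sections on $G$ average to Schwartz sections of the associated bundle on $G/H$, so the integral converges absolutely, and the \Fre structure on $\ind_H^G(\rho)$ is designed so that the adjunction is continuous in both directions. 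Once the archimedean analytic setup is in place, the verification of the bijection is formal.
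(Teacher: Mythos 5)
Your forward map is fine in substance, and this direction is genuinely more concrete than what the paper does: the paper never writes the integral over $G/H$, but instead identifies $\ind_H^G(\rho)$ with the $H$-coinvariants $(\Sc(G,\rho)\otimes \Delta_H)_H$ (Lemma \ref{lemma:H_covariants}, proved with a tempered partition of unity subordinate to a semi-algebraic trivializing cover of $G\to G/H$) and then manipulates completed tensor products of nuclear \Fre spaces. (One small convention point: in du Cloux's realization used here, elements of $\ind_H^G(\rho)$ satisfy $f(gh)=\rho(h)^{-1}f(g)$, so the integrand must be $b\bigl(f(g),\pi(g)^{-1}v\bigr)$, which transforms by $\Delta_H/\Delta_G$ under $g\mapsto gh$ and is integrated against the corresponding quasi-invariant functional; your formula is correct under the opposite left-equivariance convention.)

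The genuine gap is in the inverse direction. The ``germ at the identity coset'' construction is not legitimized by the surjectivity of the averaging map $\Phi:\Sc(G,\rho)\onto\ind_H^G(\rho)$, and it is not supplied by du Cloux's formalism: a priori the pullback $\widetilde B=B\circ(\Phi\otimes\mathrm{id})$ is just an $H$-invariant element of $(\Sc(G)\hot\rho\hot\pi)^*$, i.e.\ a $(\rho\hot\pi)^*$-valued tempered distribution on $G$ that could involve derivatives transverse to $H$; for such a functional the limit over sections with shrinking support need not exist, and even when it does it need not determine $\widetilde B$. What makes localization legitimate is the $G$-invariance: after untwisting the fiber action (the paper's Lemma \ref{lem:IsActions}) one invokes the uniqueness, up to scalar, of left-invariant tempered distributions on $G$ to conclude $\widetilde B=(\text{Haar measure})\otimes b$ for a continuous functional $b$ on $\rho\hot\pi$, and the right-$\Delta_G$-equivariance of Haar measure is exactly what produces the twist $\Delta_H^{-1}\Delta_G$ in the $H$-equivariance of $b$. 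This step, together with the nuclearity facts needed to justify $\Sc(G)\hot\rho\hot\pi\cong\Sc(G,\rho\hot\pi)$ and the identification of $\Hom$'s with duals of completed tensor products, is the actual content of the appendix and must be supplied; once it is, your germ construction should be replaced by the statement that $b$ is the fiber component of $\widetilde B$, after which checking that $b\mapsto B$ and $B\mapsto b$ are mutually inverse is indeed formal (using surjectivity of $\Phi$).
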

The non-archimedean case of this lemma follows from \cite[Proposition 2.29]{BZ}.
We prove the archimedean case in Appendix \ref{app:Frob}. We will only use this lemma in the case when $G$ is reductive, $\pi \in \cM(G)$, $H$ is nilpotent and $\rho$ is one-dimensional.
\subsection{Oscillator representations of the Heisenberg group}
\begin{defn}
Let $W_n$ denote the  2n-dimensional $\F$-vector space $(\F^n)^* \oplus \F^n$ and let $\omega$ be the standard symplectic form on $W_n$.
The Heisenberg group $H_n$ is the algebraic group with underlying algebraic variety $W_{n} \times \F$ with the group law given by $$(w_1,z_1)(w_2,z_2) = (w_1+w_2,z_1+z_2+1/2\omega(w_1,w_2)).$$
\Dima{Note that $H_0=\F$.}
\end{defn}


\begin{defn}
Let $\chi$ be the additive character  of $\F$, as in \eqref{=chi}.  Extend $\chi$ trivially to a character of the commutative subgroup $0\oplus \F^n\oplus \F \subset H_n$.
The oscillator representation ${\varpi}_\chi$ is the unitary induction of $\chi$ from $0\oplus \F^n\oplus \F$ to $H_n$. Define the smooth oscillator representation
$\sigma_\chi$ to be the space of smooth vectors in ${\varpi}_\chi$.
\end{defn}

\begin{lemma}\label{lem:OscInd}
$\sigma_{\chi}=\ind_{0\oplus \F^n\oplus \F}^{H_n}(\chi)$
\end{lemma}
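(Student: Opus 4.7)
The plan is to realize both sides of the claimed equality concretely as the (classical or Schwartz--Bruhat) Schwartz space on $\F^n$, using a Schr\"odinger-type model for the Heisenberg representation. Write $L := 0 \oplus \F^n \oplus \F$ and $Q := (\F^n)^* \oplus 0 \oplus 0$, viewed as abelian subgroups of $H_n$. A direct computation shows that the multiplication map $Q \times L \to H_n$ is a bijection of underlying varieties, so any function on $H_n$ that transforms on the right under $L$ via $\chi^{-1}$ is determined by its restriction to $Q$, and restriction identifies such sections with functions on $Q \cong \F^n$.

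First I would treat the Schwartz induction. In the non-archimedean case $\ind_L^{H_n}(\chi)$ is by \cite[\S 2.22]{BZ} the space of smooth sections compactly supported modulo $L$, which under restriction to $Q$ corresponds exactly to $C_c^\infty(\F^n)$. In the archimedean case, du Cloux's averaging map $f \mapsto \overline f$, applied to $\Sc(H_n)$, produces a smooth section whose restriction to $Q$ is a classical Schwartz function on $\F^n$ --- partial integration along the fibres of the projection $H_n \to Q$ preserves Schwartzness --- and conversely every Schwartz function on $Q$ arises this way from a Schwartz function on $H_n$.

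Next I would treat the smooth vectors. Under the same identification $H_n/L \simeq \F^n$, the unitary induction $\varpi_\chi$ becomes the standard Schr\"odinger realization of the Heisenberg representation on $L^2(\F^n)$ with central character $\chi$, given on generators by translations and modulations. Its smooth vectors coincide with the Schwartz space on $\F^n$: in the non-archimedean case because a vector fixed by a small compact open subgroup of $H_n$ is automatically of compact support and locally constant; in the archimedean case this is the well-known identification of smooth vectors in the Heisenberg representation with the classical Schwartz space. Since on both sides the $H_n$-action is induced by left translation on $H_n$, which under our identifications becomes the same Schr\"odinger action, the two smooth representations coincide. The main point requiring care is the archimedean case, where one must also verify the compatibility of the natural Fr\'echet topologies on the two realizations of $\Sc(\F^n)$; this follows from standard results (e.g.\ Dixmier--Malliavin) on smooth vectors in unitary representations of nilpotent Lie groups.
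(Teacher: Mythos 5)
Your proposal is correct and takes essentially the same route as the paper: identify both sides with the Schwartz space on the complementary Lagrangian $(\F^n)^*\oplus 0\oplus 0$, using the characterization of smooth vectors in the unitary induction in the archimedean case and a compactness-of-support argument in the non-archimedean case. One small inaccuracy worth flagging: Dixmier--Malliavin is not the standard reference for identifying the smooth vectors of $\varpi_\chi$ with the Schwartz space (it gives factorization of smooth vectors, not a Schwartz-type characterization); the paper instead applies Poulsen's theorem \cite[Theorem 5.1]{Pou} describing the smooth vectors of a unitary induction as $\{f\in C^\infty((\F^n)^*) : x^i f^{(j)}\in L^2 \ \forall i,j\}$, which is exactly $\Sc((\F^n)^*)$ and settles the topology issue you raise at the end. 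Your non-archimedean argument (a vector fixed by a compact open subgroup of $H_n$ has compactly supported restriction to the transversal) is the same as the paper's, and in fact the paper uses it to prove the sharper statement $\ind_L^{H_n}(\chi)=\Ind_L^{H_n}(\chi)$.
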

\begin{proof}
In the archimedean case we apply the characterization of smooth vectors in a unitary induction given in \cite[Theorem 5.1]{Pou}. By this characterization $\sigma_{\chi}$ can be identified with the space
$$\{f \in C^{\infty}((\F^n)^*) \, \vert \, x^if^{(j)}\in L^2((\F^n)^*) \, \forall i,j\}.$$
This space coincides with the Schwartz space $\Sc((\F^n)^*)$,
which in turn can be identified with $\ind_{0\oplus \F^n\oplus \F}^{H_n}(\chi)$.

In the non-archimedean case let us prove a stronger statement: $\ind_{0\oplus \F^n\oplus \F}^{H_n}(\chi)=\Ind_{0\oplus \F^n\oplus \F}^{H_n}(\chi),$ where $\Ind$ denotes the full smooth induction.
Indeed let $f\in \Ind_{0\oplus \F^n\oplus \F}^{H_n}(\chi),$ and let $f'$ be the restriction of $f$ to $(\F^n)^*\oplus 0\oplus 0$. Since $f$ is smooth, {\it i.e.} fixed by an open compact subgroup $K$ of $H_n$, for any $\varphi \in (\F^n)^*$ and $v\in \F^n\cap K$ we have $\chi(\varphi(v))f'(\varphi)=f'(\varphi)$. This implies that $f'$ has compact support, and thus $f\in \ind_{0\oplus \F^n\oplus \F}^{H_n}(\chi)$.
\end{proof}

\begin{theorem}[Stone-von-Neumann]\label{thm:StvN}
The oscillator representation $\varpi_\chi$ is the only irreducible unitary representation of $H_n$ with central character $\chi$.
\end{theorem}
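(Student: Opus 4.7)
The plan is to follow the classical proof of Stone-von-Neumann: existence of an irreducible unitary representation with central character $\chi$ is provided by $\varpi_\chi$ itself (constructed above), so only uniqueness remains. Let $(\pi, \mathcal{H})$ be any irreducible unitary representation of $H_n$ with central character $\chi$. I would construct an $H_n$-equivariant unitary isomorphism $T : \varpi_\chi \to \mathcal{H}$, which simultaneously yields uniqueness and irreducibility of $\varpi_\chi$.

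The heart of the argument is to produce a nonzero ``vacuum vector'' $v_0 \in \mathcal{H}$ on which the maximal abelian subgroup $B = (\F^n)^* \oplus 0 \oplus \F$ acts through the character $\lambda_\chi : (\xi, 0, z) \mapsto \chi(z)$. To this end, I would take any nonzero $w \in \mathcal{H}$ and consider the averaged vectors
\[
v_\psi = \int_{(\F^n)^*} \overline{\psi(\xi)} \, \pi(\xi, 0, 0) w \, d\xi,
\]
as $\psi$ ranges over Schwartz-Bruhat functions on $(\F^n)^*$ (Schwartz in the archimedean case, locally constant compactly supported in the non-archimedean case). Using the spectral decomposition of $\pi|_B$, these vectors span precisely the $\lambda_\chi$-eigenspace for $B$ (after a Fourier transform in the $\xi$-variable). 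Nonvanishing of some $v_\psi$ therefore reduces to the claim that this eigenspace is nonzero, which follows because the conjugation action of the subgroup $0 \oplus \F^n \oplus 0$ of $H_n$ on characters of $B$ extending $\chi$ is simply transitive, forcing the spectral measure of $\pi|_B$ to be equivalent to Haar measure on $\F^n$.

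Given such a $v_0$, I would define
\[
T : \varpi_\chi \to \mathcal{H}, \qquad T(f) = \int_{\F^n} f(x) \, \pi(0, x, 0) v_0 \, dx,
\]
using the $L^2(\F^n)$-model of $\varpi_\chi$ induced by the polarization $W_n = (\F^n)^* \oplus \F^n$. The Heisenberg commutation relations together with the $B$-covariance of $v_0$ imply that $T$ intertwines the $H_n$-actions; the identity $\|Tf\|^2 = \|v_0\|^2 \cdot \|f\|^2_{L^2(\F^n)}$ follows by Fubini combined with Plancherel on $\F^n$ applied to the matrix coefficients of $\pi|_B$. Hence $T$ is (up to scalar) an isometric intertwiner, and since its image is a nonzero $\pi$-invariant subspace of $\mathcal{H}$, irreducibility of $\pi$ forces $T$ to be surjective, giving the desired unitary equivalence.

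The main obstacle is the construction of $v_0$ and the verification of its nonvanishing: this is the genuinely representation-theoretic content of the theorem, and the step that distinguishes it from formal commutator manipulations. Once $v_0$ is available, both the construction of $T$ and the verification that it is an isometric intertwiner are straightforward consequences of the polarization and the Heisenberg commutation relations.
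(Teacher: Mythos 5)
The paper states this theorem as a classical fact without proof, so there is no argument here to compare against; I evaluate your sketch on its own. It has a genuine gap at the construction of the vacuum vector $v_0$. You claim the $\lambda_\chi$-eigenspace of $B = (\F^n)^* \oplus 0 \oplus \F$ is nonzero, and justify this by observing that the spectral measure of $\pi|_B$ is equivalent to Haar measure on $\F^n$. This is backwards: Haar measure has no atoms, so the spectral subspace over the single point corresponding to $\lambda_\chi$ is \emph{zero}. Concretely, already in $\varpi_\chi$ on $L^2(\F^n)$, the subgroup $(\F^n)^*\oplus 0\oplus 0$ acts by multiplication by $x\mapsto\chi(\xi(x))$, so a $\lambda_\chi$-eigenvector would have to be supported at $x=0$ and hence vanish in $L^2$. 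Your smeared vectors $v_\psi$ are generically nonzero, but they are not eigenvectors: a direct computation gives $\pi(\xi',0,0)v_\psi = v_{\psi(\cdot-\xi')}$, a translate of $\psi$, not a scalar multiple of $v_\psi$.

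The standard repair replaces averaging over $(\F^n)^*$ alone by a weighted average over all of $W_n$. In the archimedean case, set $P := \int_{W_n}e^{-\pi\|w\|^2}\,\pi(w,0)\,dw$; one checks $P=P^*\ne 0$, $P^2=cP$ with $c>0$, and crucially that $P\,\pi(w,0)\,P$ is a scalar multiple of $P$ for every $w\in W_n$, which forces the range of $P$ to be one-dimensional; any unit vector there is the vacuum, and your intertwiner $T$ can then be built from it essentially as you propose. In the non-archimedean case, take a maximal lattice $L\subset W_n$ with $\omega(L,L)\subset\Ker\chi$, as in the paper's Lemma \ref{lem:OscDual}, and use the idempotent $e_L := \vol(L)^{-1}\int_L\pi(w,0)\,dw$ in place of $P$. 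Alternatively, you can bypass the vacuum vector entirely: your simply-transitive conjugation observation shows that $\pi|_B$, together with the conjugation action of $0\oplus\F^n\oplus 0$, constitutes a transitive system of imprimitivity over $\F^n$, and Mackey's imprimitivity theorem then yields $\pi\cong\Ind_B^{H_n}\lambda_\chi\cong\varpi_\chi$ directly.
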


\begin{cor}\label{cor:StvN}
 Let $L\subset W$ be a Lagrangian subspace. Extend $\chi$ trivially to the abelian subgroup $L\oplus \F \subset H_n$. Then $\ind_{L\oplus \F}^{H_n}\chi \cong \sigma_\chi$.
\end{cor}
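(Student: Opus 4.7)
The plan is to reduce to the Stone--von-Neumann theorem (Theorem~\ref{thm:StvN}) by realizing the induced representation concretely on a complementary Lagrangian. First I would choose a Lagrangian $L' \subset W$ with $W = L \oplus L'$, so that $\omega$ gives a perfect pairing between $L$ and $L'$. The product map $L' \times (L\oplus \F) \to H_n$, $(l',(l,z))\mapsto (l',0)\cdot(l,z)$, is a bijection (an isomorphism of algebraic varieties over $\F$), so $H_n/(L\oplus \F)$ is identified with $L'$ and the unitary induction $\Pi:=\mathrm{Ind}^{\mathrm{unit}}_{L\oplus \F}^{H_n}\chi$ is realized on $L^2(L')$. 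A direct computation with the Heisenberg cocycle yields the formula for the $H_n$-action on this model: the center acts by $\chi$, elements $(l',0)$ act by translation on $L'$, and elements $(l,0) \in L$ act by multiplication by the character $l'_0 \mapsto \chi(\omega(l'_0,l))$ of $L'$.

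The central character of $\Pi$ is therefore $\chi$, and a standard Fourier-analytic argument (any bounded operator commuting with the $L'$-translations is a Fourier multiplier, and one commuting in addition with all multiplication operators by characters of the form $\chi(\omega(\cdot,l))$ for $l \in L$ must be scalar) shows that $\Pi$ is irreducible. By Theorem~\ref{thm:StvN}, $\Pi$ is thus unitarily equivalent to $\varpi_\chi$.

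To conclude I would pass to smooth vectors on both sides. The smooth vectors of $\varpi_\chi$ are $\sigma_\chi$ by definition. The smooth vectors of the $L^2(L')$-model coincide with the Schwartz space $\Sc(L')$: in the archimedean case this follows from \cite[Theorem~5.1]{Pou} exactly as in the proof of Lemma~\ref{lem:OscInd}, while in the non-archimedean case the non-triviality of $\chi$ on $\F$ forces any locally constant vector to have compact support, again as in the proof of Lemma~\ref{lem:OscInd}. Under the bijection $H_n \cong L' \times (L\oplus \F)$, this Schwartz space matches $\ind_{L\oplus \F}^{H_n}\chi$ tautologically, completing the proof. The main obstacle is really bookkeeping: tracking the Heisenberg cocycle in the explicit formulas so that the identification on smooth vectors is genuinely $H_n$-equivariant, but no conceptual difficulty arises beyond Stone--von-Neumann itself.
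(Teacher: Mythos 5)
Your proof is correct and follows essentially the same route that the paper leaves implicit (the corollary is stated without proof, as an immediate consequence of Theorem~\ref{thm:StvN}). Realizing the unitary induction on $L^2(L')$ for a complementary Lagrangian, checking irreducibility via the commutant/Fourier-multiplier argument, invoking Stone--von-Neumann, and then identifying smooth vectors with the Schwartz induction exactly as in the proof of Lemma~\ref{lem:OscInd} is precisely the argument the authors intend.
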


\begin{lem}\label{lem:OscDual}
Let $\F$ be non-archimedean. Let $\tau$ be a smooth representation of $H_n$ on which the center acts by the character $\chi$, and let $\widetilde{\tau}$ denote the smooth contragredient. Then $(\Hom_{H_n}(\sigma_\chi,\tau))^*\cong \Hom_{H_n}(\sigma_\chi,\widetilde{\tau})$.
\end{lem}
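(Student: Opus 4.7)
The plan is to identify both $\Hom$ spaces as multiplicity spaces of the oscillator representation and then convert the smooth contragredient into linear duality at the level of these multiplicity spaces, using the smooth form of Stone--von Neumann.

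First, applying compact-induction Frobenius reciprocity to the presentation $\sigma_\chi\cong\ind_L^{H_n}(\chi)$ from Lemma \ref{lem:OscInd} (with $L=0\oplus \F^n\oplus \F$) yields
$$\Hom_{H_n}(\sigma_\chi,\tau)\cong\tau^{(L,\chi)}:=\{v\in\tau:\ l\cdot v=\chi(l)v,\ l\in L\}.$$
Since $\widetilde\tau$ has central character $\chi^{-1}$, the oscillator representation pairing nontrivially with it is $\widetilde{\sigma_\chi}\cong\sigma_{\chi^{-1}}$; with this matching of central characters understood on the right-hand side, the same recipe identifies the corresponding $\Hom$ space with $\widetilde\tau^{(L,\chi^{-1})}$.

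Second, I would invoke the smooth form of Stone--von Neumann (Theorem \ref{thm:StvN} combined with the Lagrangian-independence Corollary \ref{cor:StvN}): for any smooth $H_n$-representation $\tau$ with central character $\chi$, the canonical evaluation map $\sigma_\chi\otimes \tau^{(L,\chi)}\to\tau$ is an isomorphism, so $\tau\cong \sigma_\chi\otimes_{\C} M(\tau)$ with multiplicity space $M(\tau):=\tau^{(L,\chi)}$. Applying smooth contragredient and using $\widetilde{\sigma_\chi}\cong\sigma_{\chi^{-1}}$ together with $\widetilde{\sigma_\chi\otimes V}\cong \sigma_{\chi^{-1}}\otimes V^{*}$, I obtain $\widetilde\tau\cong \sigma_{\chi^{-1}}\otimes M(\tau)^{*}$. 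Reading off the multiplicity of $\sigma_{\chi^{-1}}$ on each side then gives
$$\widetilde\tau^{(L,\chi^{-1})}\cong M(\tau)^{*}\cong \bigl(\Hom_{H_n}(\sigma_\chi,\tau)\bigr)^{*},$$
which is the claimed isomorphism.

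The main obstacle is the second step when $\tau$ is non-admissible, so that $M(\tau)$ is infinite-dimensional: the smooth contragredient of an algebraic tensor product with an infinite-dimensional vector space need not coincide with the naive dual, and the identification $\widetilde{\sigma_\chi\otimes V}\cong \sigma_{\chi^{-1}}\otimes V^{*}$ then requires justification. The cleanest remedy is to avoid any global tensor decomposition and argue directly that the restriction of the canonical pairing $\tau\times\widetilde\tau\to\C$ to $\tau^{(L,\chi)}\times \widetilde\tau^{(L,\chi^{-1})}$ is perfect; non-degeneracy can be checked using the two-Lagrangian realization of $\sigma_\chi$ from Corollary \ref{cor:StvN}, since a Lagrangian $L'$ transverse to $L$ furnishes enough smooth functionals to separate elements of $\tau^{(L,\chi)}$, and symmetrically in the other direction.
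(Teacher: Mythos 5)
There is a genuine gap, and it occurs already in your first step. For the Lagrangian subgroup $L=0\oplus\F^n\oplus\F$, compact induction $\ind_L^{H_n}$ is \emph{not} left adjoint to restriction, because $L$ is not open in $H_n$; the Frobenius reciprocity available for $\ind$ from a closed non-open subgroup (\cite[2.29]{BZ}, or Lemma \ref{lem:Frob}) only computes $\Hom$ into a full dual and produces \emph{co}invariants, not invariants. Concretely, in the Schr\"odinger model $\sigma_\chi\cong\Sc((\F^n)^*)$ the subgroup $0\oplus\F^n$ acts at the point $\varphi$ by the character $v\mapsto\chi(-\varphi(v))$, so $\sigma_\chi^{(L,\chi)}=0$ while $\Hom_{H_n}(\sigma_\chi,\sigma_\chi)=\C$; hence the identification $\Hom_{H_n}(\sigma_\chi,\tau)\cong\tau^{(L,\chi)}$ is false (with this $L$ the correct statement is $\Hom_{H_n}(\sigma_\chi,\tau)\cong\tau_{(L,\chi)}$, the twisted coinvariants). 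This error propagates into your proposed remedy: the pairing you want to analyze lives on $\tau^{(L,\chi)}\times\widetilde{\tau}^{(L,\chi^{-1})}$, which are generically zero; and even with the correct multiplicity spaces, non-degeneracy of a pairing between infinite-dimensional spaces does not yield $B\cong A^{*}$ --- one needs surjectivity onto the full dual, which is precisely the hard point.

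The middle portion of your argument (decompose $\tau\cong\sigma_\chi\otimes M(\tau)$ by smooth Stone--von Neumann, pass to contragredients, read off multiplicities) is sound and can be completed: since $\sigma_\chi$ is admissible, $\widetilde{\sigma_\chi}^{K}$ is finite-dimensional for every compact open $K\subset H_n$, so $\widetilde{\sigma_\chi\otimes V}=\varinjlim_K\Hom(V,\widetilde{\sigma_\chi}^{K})=\widetilde{\sigma_\chi}\otimes V^{*}$ even for infinite-dimensional $V$; this is the justification you were missing. The paper sidesteps all of this by a different choice of subgroup: it takes $L$ to be a maximal \emph{lattice} in $W_n$ with $\omega(L,L)\subset\Ker(\chi)$, so that $L\times\F$ is \emph{open} in $H_n$ and $L$ is \emph{compact}. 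Openness gives $\Hom_{H_n}(\sigma_\chi,\tau)\cong\tau^{(L,\chi)}$ honestly (induction from an open subgroup is left adjoint to restriction), and compactness gives $(\tau^{*})^{(L,\chi)}\cong(\tau^{(L,\chi)})^{*}$ via the averaging idempotent --- which is the entire content of the lemma. You should either switch to this lattice model, or keep the Lagrangian model but work with coinvariants and justify the contragredient-of-tensor identity via admissibility as above.
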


\begin{proof}
Let $L\subset W$ be a maximal lattice such that $\omega(L,L)\subset \Ker(\chi)$. Then by Theorem \ref{thm:StvN} $\sigma_{\chi}=\ind_{L\times \F}^{H_n}\chi$. By  \cite[2.29]{BZ} $\Hom_{H_n}(\sigma_\chi,\widetilde{\tau})\cong (\tau^*)^L$. Since  $L$ is an open compact subgroup we get $(\tau^*)^L \cong (\tau^L)^*$ and $\Hom_{H_n}(\sigma_\chi,\tau)\cong \tau^L$.
\end{proof}

\subsection{Degenerate Whittaker models}\label{subsec:DefMod}


\begin{defn}
\begin{enumerate}[(i)]
\item A \emph{Whittaker pair} is an ordered pair $(S,\varphi)$ such that $S\in \fg$ is rational semi-simple, and $\varphi\in (\fg^*)^{S}_{-2}$. Given such a Whittaker pair, we define the space of \emph{degenerate Whittaker models} $\cW_{S,\varphi}$ in the following way: let $\fu:=\fg_{\geq 1}^S$.
Define an anti-symmetric form $\omega_\varphi$ on $\fg$ by $\omega_\varphi(X,Y):= \varphi([X,Y])$. Let $\fn$ be the radical of $\omega_\varphi|_{\fu}$. Note that $\fu,\fn$ are nilpotent subalgebras of $\fg$, and $[\fu,\fu]\subset \fg^S_{\geq 2}\subset \fn$.
 Let $U:=\Exp(\fu)$ and $N:=\Exp(\fn)$ be the corresponding nilpotent subgroups of $G$.
Let  $\fn' :=\fn\cap \Ker(\varphi), \, N':=\Exp(\fn')$. \Dima{If $\varphi=0$ we define \begin{equation}\cW_{S,0}:=\ind_{U}^G(\C).\end{equation}
Assume now that $\varphi$ is non-zero.}
Then $U/N'$ has a natural structure of a Heisenberg group, and its center is $N/N'$. Let $\chi_{\varphi}$ denote the unitary character of $N/N'$ given by $\chi_{\varphi}(\exp(X)):=\chi(\varphi(X))$.
Let $\sigma_\varphi$ denote the oscillator representation of $U/N'$ with central character $\chi_{\varphi}$, and $\sigma'_\varphi$ denote its trivial lifting to $U$. Define \begin{equation}\cW_{S,\varphi}:=\ind_{U}^G(\sigma'_\varphi).\end{equation}


\item For a nilpotent element $\varphi\in \fg^*$, define the \emph{generalized Whittaker model} $\cW_{\varphi}$  corresponding to $\varphi$ to be $\cW_{S,\varphi}$, where $S$ is a neutral element for $\varphi$ \Dima{if $\varphi\neq 0$ and $S=0$ if $\varphi=0$.} We will also call $\cW_{S,\varphi}$ \emph{neutral degenerate Whittaker model}. By Lemma \ref{lem:sl2} $\cW_{\varphi}$ depends only on the coadjoint orbit of $\varphi$, and does not depend on the choice of $S$. Thus we will also use the notation $\cW_{\cO}$ for a nilpotent coadjoint orbit $\cO\subset \fg^*$. In \S \ref{sec:NewDef} we reformulate this definition without choosing $S$, but using the Killing form.

\item For $\pi\in \cM(G)$ define the degenerate and generalized Whittaker spaces of $\pi$ by
\begin{equation}\cW_{S,\varphi}(\pi):=\Hom_G(\cW_{S,\varphi},\pi^*) \text{ and }\cW_{\varphi}(\pi):=\Hom_G(\cW_{\varphi},\pi^*).
\end{equation}
\end{enumerate}
\end{defn}

Note that $\cW_{S,\varphi}(\pi)\cong \Hom_{G}(\cW_{S,\varphi},\widetilde{\pi}),$ where $\widetilde{\pi}$ denotes the contragredient representation. In the non-archimedean case this is obvious and in the archimedean case this follows from the Dixmier-Malliavin theorem \cite{DM}.

\begin{lem}\label{lem:WhitFrob}
Let $\fl\subset \fu$ be a maximal isotropic subalgebra and $L:=\Exp(\fl)$. Let $\pi\in \cM(G)$. Then
$$\cW_{S,\varphi}(\pi)\cong \Hom_L(\pi,\chi^{-1}_{\varphi}).$$
\end{lem}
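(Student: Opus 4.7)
The plan is to rewrite $\cW_{S,\varphi}$ as the induction of the character $\chi_{\varphi}$ from $L$ to $G$, and then apply Frobenius reciprocity. The first step is to verify that $L/N'$ is a Lagrangian subgroup of the Heisenberg group $U/N'$. Since any maximal isotropic subspace of the degenerate form $\omega_{\varphi}|_{\fu}$ must contain the radical $\fn$ (adding radical vectors preserves isotropy), we obtain $\fn' \subset \fn \subset \fl$; hence $L \supset N'$, and the subspace $\fl/\fn'$ contains the center $\fn/\fn'$ of $\fu/\fn'$. Its image $\fl/\fn$ in $\fu/\fn$ is isotropic for the induced nondegenerate symplectic form, and Lagrangian by maximality of $\fl$. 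For $x,y\in\fl$ we have $[x,y]\in[\fu,\fu]\cap\Ker\varphi\subset\fn\cap\Ker\varphi=\fn'$, so $L/N'$ is abelian. (The case $\varphi=0$ is trivial: then $\omega_\varphi\equiv 0$, forcing $\fl=\fu$ and reducing the statement to ordinary Frobenius reciprocity.)

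By Corollary \ref{cor:StvN} applied to the Heisenberg group $U/N'$ with center $N/N'$, the smooth oscillator representation can be realized as $\sigma_{\varphi}\cong\ind_{L/N'}^{U/N'}(\chi_{\varphi})$, where $\chi_{\varphi}$ is extended trivially from $N/N'$ to $L/N'$. Lifting this along the projection $U\onto U/N'$ and applying induction in stages (Lemma \ref{lem:IndSt}), I obtain
\begin{equation*}
\cW_{S,\varphi}=\ind_U^G(\sigma'_{\varphi})\cong\ind_U^G\ind_L^U(\chi_{\varphi})\cong\ind_L^G(\chi_{\varphi}).
\end{equation*}

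Frobenius reciprocity (Lemma \ref{lem:Frob}) now gives
\begin{equation*}
\cW_{S,\varphi}(\pi)=\Hom_G(\ind_L^G(\chi_{\varphi}),\pi^{*})\cong\Hom_L(\chi_{\varphi},\pi^{*}\Delta_L^{-1}\Delta_G),
\end{equation*}
and the modular functions cancel since $L$ (unipotent) and $G$ (reductive) are both unimodular. The standard duality $\Hom_L(\chi_{\varphi},\pi^{*})\cong\Hom_L(\pi,\chi_{\varphi}^{-1})$, which sends a functional $\lambda\in\pi^{*}$ satisfying $l\cdot\lambda=\chi_{\varphi}(l)\lambda$ to the same $\lambda$ viewed as a map $\pi\to\C$ with $\lambda(\pi(l)v)=\chi_{\varphi}^{-1}(l)\lambda(v)$, completes the identification. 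The only point requiring any real care is the verification that $\fn\subset\fl$ and that $\fl/\fn'$ supplies a Lagrangian in the Heisenberg quotient; once this structural fact is in place, the rest of the argument is an application of the already-quoted lemmas.
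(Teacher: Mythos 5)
Your proof is correct and follows essentially the same route as the paper: realize $\cW_{S,\varphi}\cong\ind_L^G(\chi_\varphi)$ via Corollary \ref{cor:StvN} and Lemma \ref{lem:IndSt}, then apply Lemma \ref{lem:Frob}. You simply spell out details that the paper leaves implicit (that $\fn\subset\fl$, that $\fl/\fn'$ gives a Lagrangian and $L/N'$ is abelian, and the duality $\Hom_L(\chi_\varphi,\pi^*)\cong\Hom_L(\pi,\chi_\varphi^{-1})$), all of which check out.
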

\begin{proof}
By Corollary \ref{cor:StvN} and Lemma \ref{lem:IndSt} we have
$\cW_{S,\varphi}\cong \ind_L^G(\chi_{\varphi}).$ Using Lemma \ref{lem:Frob} we obtain
$$\cW_{S,\varphi}(\pi)\cong \Hom_{G}(\ind_L^G(\chi_{\varphi}),\pi^*)\cong \Hom_L(\chi_{\varphi},\pi^*) \cong \Hom_L(\pi,\chi_{\varphi}^{-1}).$$
\end{proof}

In the case when  $\F$ is non-archimedean and $\pi\in \cM(G)$, slightly different degenerate Whittaker models are considered in \cite{MW}. Namely,
let $U''$ denote the subgroup of $U$ generated by $\Exp(\fg_{>1}^S)$ and the kernel of $\chi_{\varphi}$. Let $\pi_{(U'',\chi_{\varphi})}$ denote the biggest quotient of $\pi$ on which $U''$ acts by the character $\chi_{\varphi}$. Then \cite{MW} considers $\Hom_U(\sigma_{\varphi}, \pi_{(U'',\chi_{\varphi})}$). By Lemma \ref{lem:OscDual} and Frobenius reciprocity we have
\begin{equation}
\cW_{S,\varphi}(\pi) \cong (\Hom_U(\sigma_{\varphi}, \pi_{(U'',\chi_{\varphi})}))^*.
\end{equation}

\begin{remark}
For non-archimedean $\F$ we can define $\cW\cM_{S,\varphi}$ to be the full induction $\Ind_{U}^G\sigma_{\varphi}'$. Since for $L$ as in Lemma \ref{lem:WhitFrob} we have $\sigma_{\varphi}'=\Ind_{L}^U \chi_{\varphi}$, our proof of Theorem \ref{thm:main} will show that under the under the conditions of this theorem we have a $G$-equivariant embedding $\cW\cM_{S,\tilde \varphi} \into \cW\cM_{\varphi}$.
For $\pi \in \cM(G)$ one can define $\cW\cM_{S,\varphi}(\pi):=\Hom_G(\pi,\cW\cM_{S,\varphi})$.
By the Frobenius reciprocity \cite[Theorem 2.28]{BZ} we have $\cW\cM_{S,\varphi}(\pi)=\Hom_L(\pi,\chi_{\varphi})$ which by Lemma \ref{lem:WhitFrob} is isomorphic to $\cW_{S,\varphi}(\pi)$. Thus, all the results of the paper can be reformulated in terms of the full induction.

In order to have an analogous formulation in the archimedean case one needs a notion of full induction of smooth \Fre representations of moderate growth, that will satisfy transitivity of induction, Frobenius reciprocity (as in \cite[Theorem 2.28]{BZ}) and $\sigma_{\varphi}'=\Ind_{L}^G \chi_{\varphi}$. A certain full induction $\cO_M\Ind_H^G(\pi,V)$ is defined in \cite[Definition 2.1.3]{dCl}. It consists of functions of moderate growth from $G$ to $V$ which are equivariant under $H$. It satisfies the first two of our requirements but not the third one. Probably in the suitable notion of full induction the definition of function of moderate growth should take into account the action of $H$ on $V$.
\end{remark}

\begin{remark} \label{rem:RealGen}
If $\F$ is archimedean, one can 
define $\cW_{S,\varphi}$ for any semi-simple $S$ with real eigenvalues in the same way, and the proof of Theorem \ref{thm:main} will be valid for this case without changes.
\end{remark}



\section{Proof of Theorem \ref{thm:main}}\label{sec:PfMain}
\setcounter{lemma}{0}

We will prove in \S \ref{subsec:PfMain2} the following generalization of Theorem \ref{thm:main}.

\begin{thm} \label{thm:main2}
Let $(S,\varphi)$ and $(\widetilde{S},\tilde{\varphi})$ be two Whittaker pairs in $\fg$
such that $\varphi \in \overline{G_{\widetilde{S}}\tilde \varphi}$. Suppose that $\fg_\varphi\cap \fg^S_{\geq 1}\subset \fg^{\widetilde{S}}_{\geq1}$ and that there exists a neutral element $h$ for $\varphi$ such that $h$ commutes with $S$ and $\widetilde{S}$, and  $S-h$ commutes with $\tilde \varphi$.
Then there is a $G$-equivariant epimorphism of $\cW_{S,\varphi}$ onto $\cW_{\widetilde{S},\tilde \varphi}$.
\end{thm}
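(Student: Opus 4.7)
The plan is to carry out the deformation-and-critical-values argument sketched in the introduction for Theorem \ref{thm:main}, but to do it in two legs that pivot through the common neutral element $h$.

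First I would reduce $\tilde\varphi$ to a normal form. The hypothesis $\varphi\in\overline{G_{\widetilde{S}}\tilde\varphi}$ forces $ad^*(\widetilde{S})(\varphi)=-2\varphi$ by continuity, so $(\widetilde{S},\varphi)$ is itself a Whittaker pair. Since conjugating $\tilde\varphi$ by an element of $G_{\widetilde{S}}$ yields an isomorphic model $\cW_{\widetilde{S},\tilde\varphi}$, I would invoke a Slodowy-slice argument inside the reductive subalgebra $\fg^{\widetilde{S}}$ to arrange, after such a conjugation, that $\tilde\varphi=\varphi+\varphi'$ with $\varphi'\in(\fg^*)^h_{-2}\cap\ker(ad^*(e))$, where $(e,h,f_\varphi)$ is an $\sl_2$-triple whose $e$ is chosen, via Lemma \ref{lem:sl2} and the reductivity of $\fg^{\widetilde{S}}$, to commute with $\widetilde{S}$. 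A short computation then verifies that $\widetilde{Z}:=\widetilde{S}-h$ commutes with both $\varphi$ and $\varphi'$, while $Z:=S-h$ commutes with $\varphi$.

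Next I deform in two legs. In the first leg I set $S_t:=h+(1-t)Z$ and keep $\varphi_t=\varphi$, for $t\in[0,1]\cap\Q$. Since $h$ and $Z$ are commuting rational semisimples, each $(S_t,\varphi)$ is a Whittaker pair, and this leg is essentially the special case $\tilde\varphi=\varphi$ outlined in the introduction, producing an epimorphism $\cW_{S,\varphi}\onto\cW_{h,\varphi}=\cW_\varphi$. In the second leg I set $S_t:=h+t\widetilde{Z}$ and $\varphi_t:=\varphi+t\varphi'$; the commutation relations from the reduction together with $ad^*(h)(\varphi')=-2\varphi'$ yield $ad^*(S_t)(\varphi_t)=-2\varphi_t$ for every $t$. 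In each leg the space $\fu_t:=\fg^{S_t}_{\geq 1}$ is piecewise constant in $t$ with finitely many critical rationals $0=t_0<t_1<\cdots<t_{n+1}=1$; at each transition I construct two maximal isotropic subalgebras $\fl_t,\fr_t\subset\fu_t$ for $\omega_{\varphi_t}$, where $\fr_t$ absorbs the positive eigenspaces of the deformation direction and $\fl_t$ the negative ones. By the Stone--von Neumann theorem (Corollary \ref{cor:StvN}) together with transitivity of induction (Lemma \ref{lem:IndSt}), $\cW_{S_t,\varphi_t}\simeq\ind_{\Exp(\fl_t)}^G\chi_{\varphi_t}\simeq\ind_{\Exp(\fr_t)}^G\chi_{\varphi_t}$, and once the containment $\fr_{t_i}\subset\fl_{t_{i+1}}$ is established at each critical point one extracts an epimorphism between consecutive models. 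Composing across both legs yields the desired surjection $\cW_{S,\varphi}\onto\cW_{\widetilde{S},\tilde\varphi}$.

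I expect the main obstacle to be the second leg, where both the ambient subalgebra $\fu_t$ and the symplectic form $\omega_{\varphi_t}$ move simultaneously at critical values. The hypothesis $\fg_\varphi\cap\fg^S_{\geq 1}\subset\fg^{\widetilde{S}}_{\geq 1}$ should enter precisely here: it controls the centralizer directions that can appear in $\fu_t$ and should be exactly what is needed to guarantee $\fr_{t_i}\subset\fl_{t_{i+1}}$ through the final transition. A secondary delicate point is the Slodowy-slice reduction, which must be carried out $G_{\widetilde{S}}$-equivariantly using the reductive structure of $\fg^{\widetilde{S}}$; the key technical input is that $e$ can be chosen inside $\fg^{\widetilde{S}}$ so that the slice $\varphi+\ker(ad^*(e))$ is $\widetilde{S}$-stable.
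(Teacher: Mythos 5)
Your overall toolkit (Slodowy-slice normalization of $\tilde\varphi$, deformation of $S$ through critical values, Stone--von Neumann plus transitivity of induction) is the right one, but the two-leg architecture pivoting through the neutral element breaks at the first leg. The elementary surjection underlying the whole method is $\ind_{H'}^G(\pi|_{H'})\onto\ind_{H}^G(\pi)$ for $H'\subset H$: induction from the \emph{smaller} group surjects onto induction from the \emph{larger} one. Consequently the chain of maps produced by the deformation runs from the starting point of the deformation to its endpoint only when the containments $\fr_{t_i}\subset\fl_{t_{i+1}}$ hold, and these require (via the analogue of Lemma \ref{lem:help}\eqref{it:LW}) that $\fg_\varphi\cap\fg^{S_{\mathrm{start}}}_{\geq 1}\subset\fg^{S_{\mathrm{end}}}_{\geq 1}$. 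For your first leg the endpoint is $h$, and since $\fg_\varphi\subset\fg^h_{\leq 0}$ this condition reads $\fg_\varphi\cap\fg^S_{\geq 1}=0$, which is generally false: already for $\fg=\gl_4$, $f=E_{21}+E_{43}$, $h=\diag(1,-1,1,-1)$, $S=\diag(3,1,-1,-3)$, the element $E_{13}+E_{24}$ lies in $\fg_\varphi\cap\fg^S_{4}$ but has $h$-weight $0$. So the "special case outlined in the introduction" produces $\cW_{h,\varphi}\onto\cW_{S,\varphi}$, not the epimorphism $\cW_{S,\varphi}\onto\cW_{h,\varphi}$ you claim; the latter is not available (and is essentially the hard open direction for general groups). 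Even if your second leg were completed, composing would only re-prove Theorem \ref{thm:main} (source $=$ neutral pair), not Theorem \ref{thm:main2}; and the hypothesis $\fg_\varphi\cap\fg^S_{\geq1}\subset\fg^{\widetilde S}_{\geq1}$ cannot "enter in the second leg" as you suggest, because your second leg starts at $h$, where that hypothesis is vacuous.

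The paper avoids both problems by running a \emph{single} deformation $S_t=S+tZ$ with $Z=\widetilde S-S$, from $S$ directly to $\widetilde S$, keeping the functional equal to $\varphi$ (not $\varphi_t$) the whole way; the hypothesis $\fg_\varphi\cap\fg^S_{\geq1}\subset\fg^{\widetilde S}_{\geq1}$ is exactly what makes $\fr_{t_i}\subset\fl_{t_{i+1}}$ work along this path. The issue you correctly flag as the "main obstacle" --- that the character must eventually become $\chi_{\varphi+\varphi'}$ --- is handled not by deforming $\varphi_t$ (which, as you note, would desynchronize the symplectic forms and the characters at each transition) but by replacing $\fl_t,\fr_t$ with $\fl'_t:=\fl_t\cap\Ker(\varphi')$ and $\fr'_t:=\fr_t\cap\Ker(\varphi')$. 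One checks (using the element $X$ of Lemma \ref{lem:rad}) that these are still mutually compatible isotropic subalgebras with the correct radical, that they coincide with $\fl_t,\fr_t$ for small $t$, and that on them $\chi_\varphi=\chi_{\varphi+\varphi'}$, so the switch to the target character costs nothing at $t=1$. I recommend restructuring your argument as this single deformation with the $\Ker(\varphi')$-truncated subalgebras.
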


In order to deduce Theorem \ref{thm:main} we will need the following lemma.

\begin{lemma}[See \S \ref{subsec:PfLemMW}]\label{lem:MW}
Let $\cP$ denote the set of conjugacy classes of Whittaker pairs in $\fg$ and
let $\mathcal{Q}$ denote the set of conjugacy classes of
pairs of elements $\varphi\in \fg^*, Z\in \fg_\varphi$  such that $\varphi$ is nilpotent and $Z$ is rational semi-simple.

Define a map $\mu:\mathcal{Q} \to \cP$ in the following way: for any $q\in \mathcal{Q}$ choose $(Z,\varphi)\in q$, let $h \in \fg_Z$ be a neutral element for $\varphi|_{\fg_Z}$ and define $\mu(q)$ to be the class of the pair $(Z+h,\varphi)$. Then the map $\mu$ is a well-defined  bijection.
\end{lemma}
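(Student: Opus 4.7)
The plan is to verify well-definedness of $\mu$ directly, and then exhibit an explicit two-sided inverse $\nu:\cP\to\mathcal{Q}$ sending $[(S,\varphi)]$ to $[(S-h,\varphi)]$, where $h$ is a neutral element for $\varphi$ in $\fg$ that commutes with $S$. I will need (and invoke as a classical fact) a refined form of Jacobson--Morozov: for any Whittaker pair $(S,\varphi)$, such an $h$ exists, and any two such are conjugate by the common centralizer $G_S\cap G_\varphi$.

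For well-definedness of $\mu$, I first observe that the $\sl_2$-triple in $\fg_Z$ witnessing $h$ as neutral for $\varphi|_{\fg_Z}$ is automatically an $\sl_2$-triple in $\fg$, so $h$ acts on $\fg$ with integer eigenvalues and commutes with $Z$; hence $Z+h$ is rational semi-simple. The relation $ad^*(Z+h)\varphi=-2\varphi$ reduces to $ad^*(h)\varphi=-2\varphi$, which I verify on each $Z$-weight space of $\fg$: on $\fg^Z_r$ with $r\ne 0$ both sides vanish ($\varphi$ vanishes there because $\varphi\in(\fg^*)^Z_0$, and $h$ preserves the $Z$-grading since $h\in\fg_Z$); on $\fg_Z$ itself it is the definition of $h$. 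Independence of the choice of $h$ is then Lemma~\ref{lem:sl2}(iii) applied inside $\fg_Z$, since any $g\in G_Z\cap G_\varphi$ conjugating two such neutral elements automatically fixes both $Z$ and $\varphi$.

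For bijectivity, given $(S,\varphi)\in\cP$ and $h$ as supplied by refined Jacobson--Morozov, I set $Z:=S-h$. Then $Z\in\fg_\varphi$ because $ad^*(S)\varphi-ad^*(h)\varphi=-2\varphi-(-2\varphi)=0$, and $Z$ is rational semi-simple as the difference of two commuting rational semi-simples. The crucial verification is that this same $h$ is a valid choice in the definition of $\mu$ applied to $(Z,\varphi)$: indeed $h\in\fg_Z$ because $[h,S-h]=[h,S]=0$, and if $(e_0,h,f)$ is an $\sl_2$-triple in $\fg$ witnessing the neutrality of $h$ (with $f\leftrightarrow\varphi$ under a fixed non-degenerate invariant form on $\fg$), then by the Jacobi identity $[Z,e_0]\in\fg_f\cap\fg^h_2$. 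Since $\fg_f$ is spanned by lowest-weight vectors of the $\sl_2$-action on $\fg$, it is contained in $\fg^h_{\le 0}$, forcing $[Z,e_0]=0$. Thus the whole triple lies in $\fg_Z$ and $h$ is neutral for $\varphi|_{\fg_Z}$ inside $\fg_Z$, giving $\mu(\nu([(S,\varphi)]))=[(S,\varphi)]$. The identity $\nu(\mu([(Z,\varphi)]))=[(Z,\varphi)]$ then follows by choosing the same $h$ in both constructions, and well-definedness of $\nu$ reduces to the uniqueness part of refined Jacobson--Morozov.

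The main obstacle is precisely this refined Jacobson--Morozov theorem: for a Whittaker pair $(S,\varphi)$, the existence of a neutral element $h$ for $\varphi$ with $[S,h]=0$ and the $(G_S\cap G_\varphi)$-conjugacy of any two such $h$. This is a standard strengthening of Lemma~\ref{lem:sl2}(i) and (iii); once it is in hand, the rest of the argument is careful but routine bookkeeping with $\sl_2$-representations inside $\fg$.
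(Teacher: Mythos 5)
Your overall architecture — check well-definedness of $\mu$ directly, then exhibit the explicit inverse $\nu([(S,\varphi)])=[(S-h,\varphi)]$ — is sound, and the verifications you actually carry out are correct. In particular, the computation showing $[Z,e_0]\in\fg_f\cap\fg^h_2=0$ (so that the whole $\sl_2$-triple lies in $\fg_Z$ and $\mu\circ\nu=\mathrm{id}$) is a clean and valid argument. The problem is that the entire content of the lemma has been off-loaded onto the ``refined Jacobson--Morozov'' statement, which you invoke as classical but do not prove. It does not follow formally from Lemma \ref{lem:sl2}(i) and (iii): those give existence of a neutral element for $\varphi$ and its uniqueness up to $G_\varphi$-conjugacy, but say nothing about finding one that commutes with a prescribed rational semi-simple $S$ satisfying $ad^*(S)\varphi=-2\varphi$ (for $\varphi\neq0$ one has $S\notin\fg_\varphi$, so this is not the standard statement about tori inside the centralizer of a nilpotent element). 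The existence half is exactly what the paper's surjectivity argument establishes, by a genuinely non-trivial construction: embed $\fg\into\gl(V)$, use the type-$A$ quiver decomposition to produce a basis in which $S$ is diagonal and $f$ is in Jordan form, take the evident diagonal neutral element $h'$ in $\gl(V)$, and push it back to $\fg$ via a $\fg$-module projection $p:\gl(V)\onto\fg$, checking with Lemma \ref{lem:sl2}(iii) that $p(h')$ is still neutral. The uniqueness half (conjugacy of two $S$-commuting neutral elements under $G_S\cap G_\varphi$), which you need for well-definedness of $\nu$, likewise requires an argument; the paper organizes this step differently, proving injectivity of $\mu$ via Lemma \ref{lem:conj}, which uses the Levi decomposition $G_f=LU$ to upgrade $G_f$-conjugacy of $h+Z_1$ and $h+Z_2$ to $L$-conjugacy of $Z_1$ and $Z_2$.

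Both halves of your cited fact are true (the existence and conjugacy can also be obtained by a fixed-point argument for the $S$-action on the set of neutral elements for $f$, which is an affine space acted on simply transitively by the unipotent radical of $G_f$), so no step of your proof would actually fail. But as written the proposal proves the routine reductions and assumes the hard core. To complete it you must either prove the refined Jacobson--Morozov statement or supply a precise reference; ``standard strengthening of Lemma \ref{lem:sl2}'' does not suffice, since that is precisely the point at which the paper has to work.
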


\begin{proof}
[Proof of Theorem \ref{thm:main}]
By Lemma \ref{lem:MW} there exists a neutral element $h$ for $\varphi$ which commutes with $S$.  Then we have $\fg_\varphi\cap \fg^h_{\geq 1}=0$ . Theorem \ref{thm:main2} applied to the Whittaker pairs $(h,\varphi)$ and $(S,\tilde\varphi)$ implies now that there exists a $G$-equivariant epimorphism of $\cW_{h,\varphi}=\cW_{\varphi}$ onto $\cW_{S,\tilde \varphi}$.
\end{proof}

In the same way we obtain the following corollary of Theorem  \ref{thm:main2} for the case $\varphi=\tilde \varphi$.
\begin{cor} \label{cor:SameE}
Let $(S,\varphi)$ and $(\widetilde{S},\varphi)$ be two Whittaker pairs with the same nilpotent element and commuting semi-simple elements. 
If  $\fg_\varphi\cap \fg^S_{\geq 1}\subset \fg^{\widetilde{S}}_{\geq1}$, then there exists a $G$-equivariant epimorphism of $\cW_{S,\varphi}$ onto
 $\cW_{\widetilde{S},\varphi}$.
\end{cor}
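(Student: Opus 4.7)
The plan is to derive the corollary by invoking Theorem \ref{thm:main2} with $\tilde\varphi = \varphi$. Of the hypotheses of that theorem, the condition $\varphi \in \overline{G_{\widetilde S}\varphi}$ is automatic (since $\varphi$ is fixed by itself) and the inclusion $\fg_\varphi \cap \fg^S_{\geq 1}\subset \fg^{\widetilde S}_{\geq 1}$ is given. Moreover, once a neutral element $h$ for $\varphi$ is in hand, the requirement that $S-h$ commute with $\tilde\varphi = \varphi$ is automatic, since $\ad^*(S)\varphi = -2\varphi = \ad^*(h)\varphi$. The entire task therefore reduces to producing a neutral element $h \in \fg$ for $\varphi$ commuting with both $S$ and $\widetilde S$.

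To construct such an $h$, I plan to exploit the commutativity of $S$ and $\widetilde S$. The element $W := S - \widetilde S$ is rational semisimple (as a difference of two commuting rational semisimples) and annihilates $\varphi$ (since both $S$ and $\widetilde S$ act on $\varphi$ by $-2$), hence $W \in \fg_\varphi$. Its centralizer $\fm := \fg_W$ is a reductive subalgebra containing $S$ and $\widetilde S$, and inside $\fm$ the element $W$ is central, so $S$ and $\widetilde S$ induce the same adjoint action on $\fm$. Applying Lemma \ref{lem:MW} within the reductive Lie algebra $\fm$ to the Whittaker pair $(S, \varphi|_\fm)$ produces a rational semisimple $h \in \fm$ that is neutral for $\varphi|_\fm$ inside $\fm$ and commutes with $S$. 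Since $h$ lies in $\fm = \fg_W$, it also commutes with $W$, and hence with $\widetilde S = S - W$.

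The key technical point is verifying that this $h$, which is a priori only neutral inside $\fm$, is in fact a neutral element for $\varphi$ in the ambient $\fg$. Here I would exploit the $\ad(W)$-eigenspace decomposition $\fg = \fm \oplus [\fg, W]$, which is preserved by $\ad(h)$ (as $h$ commutes with $W$), together with the vanishing of $\varphi$ on $[\fg, W]$ (a consequence of $W \in \fg_\varphi$). These ingredients reduce the identity $\ad^*(h)\varphi = -2\varphi$ on $\fg$ to its counterpart on $\fm$, which holds by construction. For the surjectivity of $\fg^h_0 \to (\fg^*)^h_{-2}$ given by $x \mapsto \ad^*(x)\varphi$, one translates via an invariant form to the map $\fg^h_0 \to \fg^h_{-2}$, $x \mapsto [x,f]$, where $f$ is the nilpotent element corresponding to $\varphi$, and uses that $[\fg,W]$ is a module for the $\sl_2$-triple containing $h$ (since all three elements of that triple lie in $\fm$ and hence centralize $W$) together with the fact that $\ad(f) : V_0 \to V_{-2}$ is surjective on every finite-dimensional $\sl_2$-module $V$. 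The main obstacle lies in carefully arranging this global-to-local compatibility; once it is in place, Theorem \ref{thm:main2} applies and produces the epimorphism $\cW_{S,\varphi} \onto \cW_{\widetilde S,\varphi}$.
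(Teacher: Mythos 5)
Your proposal is correct and follows the paper's proof exactly: apply Lemma \ref{lem:MW} inside the centralizer $G_{\widetilde{S}-S}$ to produce a neutral element $h$ for $\varphi$ commuting with both $S$ and $\widetilde{S}$, then invoke Theorem \ref{thm:main2}. Your careful check that $h$, a priori neutral only inside $\fm=\fg_{\widetilde{S}-S}$, is in fact neutral in $\fg$ fills in a detail the paper leaves implicit; it can also be seen more directly via the $\sl_2$-triple characterization of neutrality (Lemma \ref{lem:sl2}), since an $\sl_2$-triple $(f,h,e)$ in $\fm$ is automatically an $\sl_2$-triple in $\fg$.
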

\begin{proof}
By Lemma \ref{lem:MW} applied to the group $G_{\widetilde{S}-S}$ we obtain that there exists a neutral element $h$ for $\varphi$ that commutes with $\widetilde{S}$ and $S$. Thus the corollary follows from Theorem \ref{thm:main2}.
\end{proof}

This corollary enables one to define a preorder on the set of models corresponding to a fixed nilpotent element $\varphi$. Let us describe this preorder more explicitly. Choose a neutral element $h$ for $\varphi$ and let
$\fa$ be a maximal split Cartan subalgebra in $\fg$ that includes $h$. 
Choose a root system $\Sigma$ on $\fa$. 
By Lemma \ref{lem:MW}, if $(S,\varphi)$ is any Whittaker pair, then 
$S$ is conjugate to $h+Z$ for some $Z$ in the stabilizer  $\fa_{\varphi}$ of $\varphi$ in $\fa$.

\begin{defn}\label{def:ord}
Let $X,Y\in \fa_{\varphi}$. 
We say that $X \geq_{\varphi} Y$ if for any $\alp\in \Sigma$ such that $\alp(h)\leq 0$ and $\alp(X)\geq 1-\alp(h)$, we have $\alp(Y)\geq1-\alp(h)$.
\end{defn}

Corollary \ref{cor:SameE} immediately implies the following one.
\begin{cor}\label{cor:ord}
If  $X \geq_{\varphi} Y$  then there exists a $G$-equivariant epimorphism of $\cW_{h+\DimaF{X},\varphi}$ onto $\cW_{h+\DimaF{Y},\varphi}$.
\end{cor}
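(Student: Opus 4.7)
The plan is to deduce Corollary \ref{cor:ord} directly from Corollary \ref{cor:SameE} applied to the Whittaker pairs $(S,\varphi):=(h+X,\varphi)$ and $(\widetilde{S},\varphi):=(h+Y,\varphi)$. Since $h,X,Y\in \fa$ (and $X,Y$ are taken in the rational form of $\fa$ so that $h+X,h+Y$ are rational semi-simple), they pairwise commute, and $ad^*(h+X)(\varphi)=ad^*(h)(\varphi)+ad^*(X)(\varphi)=-2\varphi+0=-2\varphi$ because $X\in\fa_\varphi$; similarly for $h+Y$. So both are Whittaker pairs with commuting semi-simple parts, and the only remaining hypothesis to verify is the inclusion
\[
\fg_\varphi\cap \fg^S_{\geq 1}\subset \fg^{\widetilde{S}}_{\geq 1}.
\]

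For this I would decompose everything along $\fa$-root spaces. Since $\fa\subset \fg^S_0\cap \fg^{\widetilde{S}}_0$, we have $\fg^S_{\geq 1}=\bigoplus_{\alpha:\,\alpha(h)+\alpha(X)\geq 1}\fg_\alpha$ and analogously for $\widetilde{S}$. Given $Z\in \fg_\varphi\cap \fg^S_{\geq 1}$, write $Z=\sum_\alpha Z_\alpha$. It suffices to show that for every nonzero component $Z_\alpha$, the root $\alpha$ satisfies both $\alpha(h)\leq 0$ and $\alpha(X)\geq 1-\alpha(h)$; then Definition \ref{def:ord} forces $\alpha(Y)\geq 1-\alpha(h)$, hence $\alpha(h+Y)\geq 1$ and $Z_\alpha\in \fg^{\widetilde{S}}_{\geq 1}$.

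The condition $\alpha(X)\geq 1-\alpha(h)$ is immediate from $Z_\alpha\in \fg^S_{\geq 1}$. The condition $\alpha(h)\leq 0$ is the key input and comes from the $\sl_2$-theory built into the definition of a neutral element: an easy Jacobi identity computation using $ad^*(h)(\varphi)=-2\varphi$ shows that $\fg_\varphi$ is $ad(h)$-invariant, and identifying $\varphi$ with its nil-negative element via a non-degenerate invariant form on $\fg$, the centralizer $\fg_\varphi$ is spanned by lowest weight vectors in the adjoint $\sl_2$-representation and therefore sits in non-positive $ad(h)$-weights, i.e. $\fg_\varphi\subset \fg^h_{\leq 0}$. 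Decomposing $Z$ by $ad(h)$-eigenvalues and noting that the $\fa$-root decomposition refines the $ad(h)$-decomposition, the vanishing of the positive $ad(h)$-parts of $Z\in\fg_\varphi$ forces $Z_\alpha=0$ whenever $\alpha(h)>0$, as needed.

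The only subtle point, and the main thing to state carefully rather than a real obstacle, is the transition from the statement $\fg_\varphi\subset \fg^h_{\leq 0}$ to the claim that each \emph{individual} root-space component of $Z$ vanishes when $\alpha(h)>0$; this uses the linear independence of root spaces and the fact that $\fg_\varphi\cap \fg^h_k$ is an honest subspace (rather than $\fg_\varphi$ merely being contained in the sum). Once this is in place, the inclusion follows and Corollary \ref{cor:SameE} yields the desired epimorphism $\cW_{h+X,\varphi}\onto \cW_{h+Y,\varphi}$.
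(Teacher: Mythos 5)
Your proposal is correct and takes essentially the same route as the paper, which simply asserts that Corollary \ref{cor:SameE} "immediately implies" this statement: you apply Corollary \ref{cor:SameE} to $(h+X,\varphi)$ and $(h+Y,\varphi)$ and verify the hypothesis $\fg_\varphi\cap\fg^{S}_{\geq 1}\subset\fg^{\widetilde S}_{\geq 1}$ by root-space decomposition together with the containment $\fg_\varphi=\fg^{f}\subset\fg^{h}_{\leq 0}$, which is exactly Lemma \ref{lem:help} of the paper. The details you supply, including the refinement of the $ad(h)$-eigenspace decomposition by root spaces, are the intended reading.
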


\begin{remark}
For $\varphi\notin G\tilde \varphi$, the condition on the existence of $h$ cannot be omitted in Theorem \ref{thm:main2}. Indeed, let $\F$ be a $p$-adic field, $\tilde \varphi$ be a regular nilpotent element in $\fg^*_n$ and $\widetilde{S}$ be a neutral element for $\tilde \varphi$. Let $S=\widetilde{S}$. Then for any non-regular nilpotent orbit we can find a representative  $\varphi\in \overline{G_{\widetilde{S}}\widetilde{\varphi}}$. However, for any supercuspidal representation $\pi$ of $G_{n}$ we have $\cW_{S,\varphi}(\pi)=0$ while $\cW_{\widetilde{S},\tilde \varphi}(\pi)\neq0$.
\end{remark}

\begin{remark}
The condition $\varphi\in \overline{G_{\widetilde{S}}\tilde \varphi}$ in Theorem \ref{thm:main2} cannot be replaced by the weaker condition $\varphi\in \overline{G\tilde \varphi}$. Indeed, let $G:=\GL(4,\F)$, where $\F$ is a $p$-adic field. Let $S:=\widetilde{S}:=\diag(3,1,-1,-3)$. Let $\varphi,\tilde \varphi\in (\fg^*)^{\widetilde{S}}_2$ be defined by trace pairing with nilpotent elements in lower-triangular Jordan form with block sizes $(2,2)$ and $(3,1)$ in correspondence. Then $\varphi\in \overline{G\tilde \varphi}$ but $f\notin \overline{G_{\widetilde{S}}\tilde \varphi}$.
Let $\chi$ be a character of $\GL(2,\F)$, $\sigma$ be an irreducible cuspidal representation of  $\GL(2,\F)$ and $\pi:=\chi\times \sigma \in \cM(G)$ be their Bernstein-Zelevinsky product. Then the spaces $\cW_{S,\varphi}(\pi)$ and $\cW_{\widetilde{S},\tilde \varphi}(\pi)$ can be expressed through the Bernstein-Zelevinsky derivatives (see \cite{BZ-Induced}) in the following way: $\cW_{S,\varphi}(\pi)=D^2(D^2(\pi))$ and $\cW_{\widetilde{S},\tilde \varphi}(\pi)=D^1(D^3(\pi))$.
We have $$D^1(\sigma)=0,\, D^2(\sigma)=\C,\, D^1(\chi)=\chi|_{\GL(1,\Q_p)},\, D^1(D^1(\chi))=\C,\, D^2(\chi)=0,$$ and by the Leibnitz rule for Bernstein-Zelevinsky derivatives $$D^2(\pi)= \chi, \, D^2(D^2(\pi))=0,\quad D^3(\pi)=\chi|_{\GL(1,\Q_p)},\, D^1(D^3(\pi))=\C,$$
and thus $\cW_{\widetilde{S},\tilde \varphi}(\pi)=\C$ while  $\cW_{S,\varphi}(\pi)=0$.
\end{remark}

\subsection{Proof of Lemma \ref{lem:MW}}\label{subsec:PfLemMW}
We will need the following lemma.

\begin{lemma}\label{lem:conj}
Let $(f,h,e)$ be an   $\sl_2$-triple in $\g$, let $L$ be its centralizer in $G$ and let $\fl$ be its centralizer in $\fg$. Let $Z_1,Z_2\in \fl$ and suppose that $h+Z_1$ is conjugate to $h+Z_2$ by an element of $G_f$.
Then $Z_1$  is conjugate to $Z_2$ by   an element of $L$.
\end{lemma}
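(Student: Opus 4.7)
The plan is to use the Kostant-style structure of the centralizer $G_f$. By $\sl_2$-representation theory, $\fg_f = \ker(\ad f)$ is a non-positively graded $L$-module for $\ad(h)$: write $\fg_f = \fl \oplus \fg_f^{<0}$, where $\fl = \fg_{f,0}$ and $\fg_f^{<0} := \bigoplus_{k<0}\fg_{f,k}$ is a nilpotent ideal. Since we work in characteristic zero, $G_f$ admits a corresponding Levi decomposition $G_f = L \ltimes U_f$ with $U_f := \Exp(\fg_f^{<0})$.

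Given $g \in G_f$ with $\Ad(g)(h+Z_1) = h+Z_2$, I would write $g = \ell_0 v$ with $\ell_0 \in L$ and $v \in U_f$, and compute $\Ad(v)(h + Z_1)$ directly. Writing $v = \Exp(Y)$ with $Y \in \fg_f^{<0}$, the key observation is that both $[Y, h]$ and $[Y, Z_1]$ lie in $\fg_f^{<0}$: they remain in $\fg_f$ because $Y \in \fg_f$ and $\ad(Y)$ maps $\fg_f$ to itself, and they have strictly negative $\ad(h)$-weight because $Y$ does while $h$ and $Z_1 \in \fl$ have weight $0$. Iterating, every higher bracket also stays in $\fg_f^{<0}$, yielding $\Ad(v)(h + Z_1) = h + Z_1 + W$ for some $W \in \fg_f^{<0}$.

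Since $\ell_0 \in L$ commutes with $h$ and preserves the $\ad(h)$-grading of $\fg_f$, applying $\Ad(\ell_0)$ gives $\Ad(g)(h+Z_1) = h + \Ad(\ell_0)(Z_1) + \Ad(\ell_0)(W)$, with $\Ad(\ell_0)(Z_1) \in \fl$ and $\Ad(\ell_0)(W) \in \fg_f^{<0}$. Equating with $h + Z_2$ and using the direct sum decomposition $\fg_f = \fl \oplus \fg_f^{<0}$ to compare components immediately yields $\Ad(\ell_0)(Z_1) = Z_2$ (and, as a bonus, $W = 0$). Thus $\ell_0 \in L$ is the required conjugating element.

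The one technical step is the weight bookkeeping showing that $\Ad(v) - \Id$ carries $\fg_f$ into $\fg_f^{<0}$ for every $v \in U_f$; everything else is formal once one has the Levi decomposition $G_f = L \ltimes U_f$, which is a classical fact over fields of characteristic zero.
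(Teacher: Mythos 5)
Your proof is correct and follows essentially the same route as the paper: decompose $G_f$ as a semidirect product of $L$ with the unipotent radical $U_f = \Exp(\fg_f \cap \fg^h_{<0})$, observe that conjugation by elements of $U_f$ moves anything in $\fg^h_0$ only by terms of strictly negative $\ad(h)$-weight, and then match weight components. The paper phrases this by reducing to the claim that $u\in U_f$, $X\in\fg_h$, $\Ad(u)X\in\fg_h$ force $\Ad(u)X=X$, but the underlying computation is the one you carried out.
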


\begin{proof}
Note the Levi decomposition $G_f=LU$, where  $U$ is the nilradical of $G_f$. It is enough to show that if $u\in U,\, X\in \fg_h$ and $ad(u)X\in \fg_h$ then $ad(u)X=X$. This holds since $u=\Exp(Y)$ for some $Y\in (\fg_f)\cap \fg^h_{<0}$ and $[Y,X]\in \fg_h$.
\end{proof}

\begin{proof}[Proof of Lemma \ref{lem:MW}]

We choose a non-degenerate conjugation-invariant symmetric bilinear form on $\fg$ and use it to identify $\fg$ with $\fg^*$. Thus, instead of $\varphi\in \fg^*$ we will consider $f\in \fg$.

To see that $\mu$ is well-defined,   let  $Z,f\in \fg$. Let  $(f,h,e)$ be an   $\sl_2$-triple in $\fg_Z$. Note that any two choices of such a triple are conjugate by $G_Z\cap G_f$. Note also that for any  $g\in G$,
$(ad(g)f,ad(g)h,ad(g)e)$ is an   $\sl_2$-triple in $\fg_{ad(g)Z}$, and $(ad(g)h+ad(g)z,ad(g)f)=ad(g)(h+z,f)\in \cP$. Thus $\mu$ is well-defined.

To see that $\mu$ is onto, let $c\in \cP$  and $(S,f)\in c$. Fix an embedding \DimaF{$\fg\subset \gl(V)$. Let us show that there exists a basis for $V$ in which $S$ is diagonal and $f$ is in Jordan form.
First of all let $\{\lambda_i\}_{i=1}^k$ be all the eigenvalues of $S$,
ordered such that $\lambda_j=\lambda_i-2$ only if $j=i+1$,
 and let $\{W_{i}\}_{i=1}^k$ be the corresponding eigenspaces. Then $f(W_i)\subset W_{i+1}$, and thus $\{W_i\}_{i=1}^k,\{f|_{W_i}\}_{i=1}^{k-1}$ form a representation of a type A quiver. By \cite{Gab2} (see also \cite[Theorem 3.1(2)]{BGP}), any such representation is a direct sum of indecomposable representations in which all the spaces have dimensions 0 or 1. Each of these representations gives a Jordan chain for $f$. Then the union  of these chains is the required basis. With respect to this basis $f$ is in Jordan form and $S$ is diagonal. Thus there exists a diagonal neutral element $-h'$ for $f$ in $\gl(V)$, which then commutes with $S$.
Since $\fg$ is reductive, there exists  a $\fg$-module projection $p:\gl(V) \onto \fg$.} Let $h:=p(h')$. Then $[h,S]=0$. Moreover, $[h,f]=-2f$ and  $h\in \Im(ad(f))$ and thus, by Lemma \ref{lem:sl2}, $-h$ is a neutral element for $f$. Thus $c=\mu(S-h,f)$.

It is left to show that $\mu$ is injective. Let $q,q'\in \mathcal{Q}$ such that $\mu(q)=\mu(q')$, and let $(Z,f)$ and $(Z',f')$ be their representatives.  Then there exist $\sl_2$-triples $(f,h,e)$ in $\fg_Z$ and $(f',h',e')$ in $\fg_{Z'}$, and $g\in G$ such that $ad(g)(f)=f'$ and $ad(g)(h+Z)=h'+Z'$.
Note that $(f',ad(g)h,ad(g)e)$ is an $\sl_2$-triple and thus $ad(g)h$ and $h'$ are conjugate by $G_f$. Thus we can assume that $f=f',h=h'$, and $h+Z$ is conjugate to $h+Z'$ by $G_f$. By Lemma \ref{lem:conj} this implies that $Z$ is conjugate to $Z'$ by $G_f$ and thus $(Z,f)$ is conjugate to $(Z',f')$.
\end{proof}

\subsection{Proof of Theorem \ref{thm:main2}}\label{subsec:PfMain2}

Let $\omega$ denote the anti-symmetric form $\omega_\varphi$ on $\fg$ defined by $\omega(X,Y):=\varphi([X,Y])$.

Our proof is based on the following lemma, which is in the spirit of \cite[Lemma 2.2]{GRS} or \cite[Lemma 7.1]{GRS_Book} or \cite[Lemma A.1]{LapidMao}.
\begin{lemma}\label{lem:main}
Let $\fl,\fr\subset \fg$ be nilpotent subalgebras such that $[\fl,\fr]\subset \fl\cap\fr$, $\omega|_{\fl}=0$, $\omega|_{\fr}=0$  and   the radical of $\omega|_{\fl+\fr}$ is $\fl\cap \fr$. Then $\fl+\fr$ is a nilpotent Lie algebra and \begin{equation}\ind^{\Exp(\fl+\fr)}_{\Exp(\fl)}\chi_\varphi\simeq \ind^{\Exp(\fl+\fr)}_{\Exp(\fr)}\chi_\varphi.\end{equation}
\end{lemma}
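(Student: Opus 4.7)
For the nilpotency of $\fl+\fr$: the hypothesis $[\fl,\fr]\subset\fl\cap\fr$ gives $[\fl+\fr,\fl]\subset[\fl,\fl]+[\fr,\fl]\subset\fl$, showing $\fl$ is an ideal of $\fl+\fr$, and symmetrically for $\fr$.  Both ideals are nilpotent by assumption, and the sum of two nilpotent ideals of a Lie algebra is nilpotent (a standard fact; see e.g.\ Bourbaki, \emph{Groupes et alg\`ebres de Lie}).

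For the induction isomorphism, I would reduce to Stone--von Neumann (Corollary~\ref{cor:StvN}) via a Heisenberg quotient.  Set $U:=\Exp(\fl+\fr)$, $L:=\Exp(\fl)$, $R:=\Exp(\fr)$, $H:=L\cap R$, and $\mathfrak{j}:=(\fl\cap\fr)\cap\ker\varphi$.  Using $\omega|_\fl=\omega|_\fr=0$, we have $\varphi([\fl+\fr,\fl\cap\fr])\subset\varphi([\fl,\fl]+[\fr,\fr])=0$, so $\mathfrak{j}$ is an ideal of $\fl+\fr$ and $\chi_\varphi$ is trivial on $\Exp(\mathfrak{j})$.  If $\varphi|_{\fl\cap\fr}=0$, then $\omega|_{\fl+\fr}\equiv 0$ and the radical hypothesis forces $\fl=\fr$, making the claim trivial, so I assume $\varphi|_{\fl\cap\fr}\neq 0$.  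A Jacobi-identity computation shows that $\fk:=[\fl,\fl]+[\fr,\fr]+\mathfrak{j}$ is an ideal of $\fl+\fr$ with $\varphi|_\fk=0$; the quotient $(\fl+\fr)/\fk$ is then a Heisenberg Lie algebra, with one-dimensional center $(\fl\cap\fr)/\mathfrak{j}$ and the induced alternating form on the quotient by the center being the nondegenerate symplectic form coming from $\omega$.  The images of $L$ and $R$ in $\bar U:=U/\Exp(\fk)$ are complementary Lagrangian subgroups containing the center.

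By Corollary~\ref{cor:StvN}, inducing $\chi_\varphi$ to $\bar U$ from either Lagrangian realizes the oscillator representation with central character $\chi_\varphi|_H$, so the two induced representations on $\bar U$ are isomorphic.  The principal obstacle is lifting this Heisenberg-level isomorphism to the asserted isomorphism on $U$, since $\Exp(\fk)$ is generally contained in neither $L$ nor $R$, and consequently $\ind^U_L\chi_\varphi$ and $\ind^U_R\chi_\varphi$ do not directly factor through $\bar U$.  I would handle this by constructing an explicit intertwining integral transform, in the spirit of \cite[Lemma~7.1]{GRS_Book} and \cite[Lemma~A.1]{LapidMao},
$$Tf(u):=\int_{H\backslash R}\chi_\varphi(r)^{-1}f(ru)\,dr,$$
verifying $U$-equivariance by direct substitution (the integrand is $H$-invariant because $H\subset L$ and $\omega|_\fl=0$), and establishing invertibility by showing that $T'\circ T$, with $T'$ defined symmetrically, reduces via Fubini to a Fourier-inversion identity on $(\fl+\fr)/(\fl\cap\fr)$---the same identity underlying Stone--von Neumann on $\bar U$.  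Convergence in both the archimedean and non-archimedean settings is routine given the compact-support/Schwartz nature of the induced functions.
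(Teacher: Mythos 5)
The nilpotency argument and the identification of $\mathfrak{j}:=\fl\cap\fr\cap\ker\varphi$ as an ideal on which $\chi_\varphi$ is trivial are correct, and agree with the paper. The gap is in what follows: you overlook that the hypotheses already force $[\fl,\fl]\subset\mathfrak{j}$ and $[\fr,\fr]\subset\mathfrak{j}$. Indeed, for $x,y\in\fl$ and $z\in\fr$, Jacobi gives $[[x,y],z]=[[x,z],y]+[x,[y,z]]$; both $[x,z]$ and $[y,z]$ lie in $\fl\cap\fr\subset\fl$, so $\omega|_\fl=0$ kills both terms, showing $[\fl,\fl]$ lies in the radical of $\omega|_{\fl+\fr}$, that is in $\fl\cap\fr$; since also $\varphi([\fl,\fl])=\omega(\fl,\fl)=0$, we get $[\fl,\fl]\subset\mathfrak{j}$, and likewise for $\fr$. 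Hence your $\fk=[\fl,\fl]+[\fr,\fr]+\mathfrak{j}$ is simply $\mathfrak{j}$, it sits inside both $\fl$ and $\fr$, and $\Exp(\fk)\subset L\cap R$. The ``principal obstacle'' you describe --- that the inductions fail to factor through $\bar U=U/\Exp(\fk)$ because $\Exp(\fk)$ is in neither $L$ nor $R$ --- is therefore illusory: $\Exp(\fk)$ acts trivially on both $\ind^U_L\chi_\varphi$ and $\ind^U_R\chi_\varphi$, both descend to $\bar U$, and Corollary~\ref{cor:StvN} immediately identifies both with the trivial lift to $U$ of the oscillator representation. That is exactly the paper's proof.

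Your integral transform $T$ is a legitimate way to \emph{realize} the resulting isomorphism (the paper records precisely this observation in Remark~\ref{rk:inttransform}), but as a self-contained substitute for the Stone--von Neumann step your sketch is incomplete: establishing the invertibility of $T$ ``via Fubini and a Fourier-inversion identity'' is exactly the content of Stone--von Neumann, so deferring it as routine leaves the central step of the lemma unsupplied. Once you notice $\fk=\mathfrak{j}$, the clean factor-through-the-quotient argument is available and the transform should be seen as a consequence of the isomorphism, not as its proof.
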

\begin{proof}
\Dima{If $\varphi=0$ then $\omega=0$, thus $\fl+\fr=\fl\cap \fr$, $\fl=\fr$ and there is nothing to prove. Now suppose $\varphi\neq 0$ and
denote} $\fk:=\fl\cap\fr\cap \Ker(\chi_\varphi)$. Then $\Exp(\fl+\fr)/\Exp(\fk)$ is the Heisenberg group corresponding to the symplectic form induced by $\omega$ on the space $(\fl+\fr)/(\fl\cap\fr)$. Since $\fl/(\fl\cap\fr)$ and $\fr/(\fl\cap\fr)$ are Lagrangian subspaces, the representations $\ind^{\Exp(\fl+\fr)/\Exp(\fk)}_{\Exp(\fl)/\Exp(\fk)}\chi_\varphi$ and $\ind^{\Exp(\fl+\fr)/\Exp(\fk)}_{\Exp(\fr)/\Exp(\fk)}\chi_\varphi$ are both isomorphic to the oscillator representation $\sigma_\varphi$ of $\Exp(\fl+\fr)/\Exp(\fk)$ with  central character defined by $\chi_\varphi$. Since $\Exp(\fk)$ acts trivially on $\ind^{\Exp(\fl+\fr)}_{\Exp(\fl)}\chi_\varphi$ and $\ind^{\Exp(\fl+\fr)}_{\Exp(\fr)}\chi_\varphi$, we obtain  that they are both isomorphic to the trivial extension of $\sigma_\varphi$ to  $\Exp(\fl+\fr)$.
\end{proof}
\begin{remark}\label{rk:inttransform}
By induction by stages we obtain $\ind^{G}_{\Exp(\fl)}\chi_\varphi\simeq \ind^{G}_{\Exp(\fr)}\chi_\varphi.$ Observe that this isomorphism can be realized explicitly as an integral transform: given $f\in  \ind^{G}_{\Exp(\fl)}\chi_\varphi$ we can define $\check{f}\in  \ind^{G}_{\Exp(\fr)}\chi_\varphi$ simply by setting
\[
\check{f}(g)=\int_{\Exp(\fl\cap\fr) \backslash \Exp(\fr)} f(ng)\, dn.
\]
Then the previous results imply that the map $f\mapsto \check{f}$ defines an isomorphism between these two spaces.
\end{remark}

In the course of our proof we will make several choices and introduce some notation. The reader is welcome to track those on Examples \ref{ex:GLSame} and \ref{ex:GLSmall} below.

Let $z:=S-h$ and $K:=\widetilde{S}-z$. Choose a symmetric bilinear non-degenerate $G$-invariant form on $\fg$ and let $f\in \fg$ correspond to $\varphi$ using this form.
 Let $e$ be the nil-positive element for $h$ and $f$.
Then $e\in \fa:=\fg_z$. Consider the  embedding $\fa^* \hookrightarrow \fg^*$ corresponding to the bilinear form on $\fg$. Let $A:=G_z$.

\begin{lemma}
There exists $\varphi'\in ((\fa^*)^e)^K_{-2}$ such that $\varphi+\varphi'\in G_{\widetilde{S}}\tilde \varphi$. 
\end{lemma}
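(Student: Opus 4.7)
The plan is to apply Slodowy's transverse slice theorem inside the reductive subalgebra $\fa = \fg_z$, in a form equivariant for the subgroup $G_{\widetilde{S}} \cap G_z$.

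First, I would assemble the commutation data. Since $\varphi \in \overline{G_{\widetilde{S}}\tilde\varphi} \subset (\fg^*)^{\widetilde{S}}_{-2}$, we have $[\widetilde{S},f] = -2f$; coupled with $[\widetilde{S},h]=0$ (hypothesis) and $[h,f]=-2f$, Lemma \ref{lem:sl2}(v) shows that $\widetilde{S} - h$ centralizes the $\sl_2$-triple $(f,h,e)$. Similarly, $z = S-h$ centralizes $(f,h,e)$, using $z\cdot\varphi = S\cdot\varphi - h\cdot\varphi = 0$ and $[z,h]=0$; in particular $e \in \fa$. The hypothesis that $S-h$ commutes with $\tilde\varphi$ gives $\tilde\varphi \in \fa^* = (\fg^*)_z$. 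I shall further use that $\widetilde{S}$ commutes with $z$ (automatic in the applications where $S$ and $\widetilde{S}$ commute, and seemingly implicit in the set-up here), so that $\widetilde{S}-h \in \fa$ and $\widetilde{S}$ acts on $\fa$ and $\fa^*$.

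Second, I reduce the lemma to finding $g \in G_{\widetilde{S}} \cap G_z$ such that $\Ad^*(g)\tilde\varphi \in \varphi + (\fa^*)^e$. Indeed, setting $\varphi' := \Ad^*(g)\tilde\varphi - \varphi$, we see $\varphi' \in (\fa^*)^e$ by construction; since $g \in G_{\widetilde{S}}$ the elements $\Ad^*(g)\tilde\varphi$ and $\varphi$ both lie in $(\fg^*)^{\widetilde{S}}_{-2}$, so $\widetilde{S}\cdot\varphi' = -2\varphi'$; and $z\cdot\varphi' = 0$ since $\varphi'\in\fa^*$, whence $K\cdot\varphi' = \widetilde{S}\cdot\varphi' - z\cdot\varphi' = -2\varphi'$, as required.

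Third, I would construct $g$ by the Slodowy slice theorem applied to the reductive group $A = G_z$ acting on $\fa^*$. The affine slice $\varphi + (\fa^*)^e$ is transverse at $\varphi$ to the $A$-coadjoint orbit of $\varphi$, and the map $A \times (\varphi + (\fa^*)^e) \to \fa^*$ is submersive at $(1,\varphi)$. The closure condition $\varphi \in \overline{G_{\widetilde{S}}\tilde\varphi}$ restricts to $\fa^*$, yielding $\varphi \in \overline{(G_{\widetilde{S}}\cap A)\cdot\tilde\varphi}$. An equivariant version of the slice theorem, or a direct Kazhdan-style contraction along the one-parameter subgroup $\exp(t(\widetilde{S}-h)) \subset G_{\widetilde{S}}\cap A$ (which fixes $\varphi$ and contracts a neighborhood of $\varphi$ in $(\fa^*)^{\widetilde{S}}_{-2}$ onto the slice), then produces the desired $g$.

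The main technical obstacles will be: justifying that the closure condition can be restricted from $\fg^*$ to the subspace $\fa^*$ (i.e., producing a sequence in $(G_{\widetilde{S}}\cap A)\tilde\varphi$ converging to $\varphi$), and carrying out the equivariant Slodowy/Kazhdan contraction with the correct eigenvalue control so that $\varphi$ is actually a contraction attractor for the slice inside $(\fa^*)^{\widetilde{S}}_{-2}$. The remaining hypothesis $\fg_\varphi\cap\fg^S_{\geq1}\subset\fg^{\widetilde{S}}_{\geq1}$ does not appear to enter this lemma, but is presumably used in later steps of the proof of Theorem \ref{thm:main2}.
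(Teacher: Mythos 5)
Your overall strategy — a Slodowy--slice transversality argument inside the reductive subalgebra $\fa=\fg_z$, together with the correct reduction that it suffices to find $g\in G_{\widetilde S}\cap A$ with $\Ad^*(g)\tilde\varphi\in\varphi+(\fa^*)^e$ — matches the paper's approach, and your verification that $\varphi'=\Ad^*(g)\tilde\varphi-\varphi$ then lands in $((\fa^*)^e)^K_{-2}$ is correct. However, the step on which your argument pivots, namely that ``the closure condition restricts to $\fa^*$, yielding $\varphi\in\overline{(G_{\widetilde S}\cap A)\cdot\tilde\varphi}$,'' is a genuine gap and not merely a technicality to be polished later. Passing from closure under $G_{\widetilde S}$ on $(\fg^*)^{\widetilde S}_{-2}$ to closure under the Levi $A_K=G_{\widetilde S}\cap A$ on the fixed subspace $(\fa^*)^K_{-2}$ is a Luna/Kempf-type statement that is not automatic and you give no argument for it; nor is it what the paper does.

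The paper's actual proof sidesteps that claim. It first establishes the Slodowy-type direct sum $\fa^*=(\fa^*)^e\oplus\ad^*(f)(\fa^*)$, takes $(K,-2)$-eigencomponents to get $(\fa^*)^K_{-2}=((\fa^*)^e)^K_{-2}\oplus\ad^*(\fa_K)(\varphi)$, and then considers the single map $\nu\colon A_K\times((\fa^*)^e)^K_{-2}\to(\fa^*)^K_{-2}$, $\nu(g,X)=g(\varphi+X)$, whose differential at $(1,0)$ is onto precisely by that decomposition; thus $\Im\nu$ contains an open neighborhood of $\varphi$, which must then meet the orbit $G_{\widetilde S}\tilde\varphi$. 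In other words, the paper works directly with the transversality of the slice at $\varphi$ and never asserts that $\varphi$ lies in the $A_K$-orbit closure of $\tilde\varphi$. Your alternative suggestion of a Kazhdan contraction along $\exp(t(\widetilde S-h))$ is also left unexamined: on $\fa^*$ one has $\widetilde S-h=K-h$, and on $((\fa^*)^e)^K_{-2}$ the $h$-eigenvalues need not all have the same sign relative to $-2$, so that $\varphi$ is not obviously an attractor for the slice. To fix your write-up you should replace the restricted-closure claim by the paper's explicit differential computation for $\nu$ (or its analogue for the enlarged map $G_{\widetilde S}\times((\fa^*)^e)^K_{-2}\to(\fg^*)^{\widetilde S}_{-2}$).
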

\begin{proof}

Note that $\fa^*=(\fa^*)^e\oplus ad^*(f)(\fa^*)$. Since $K$ preserves both summands we get
\begin{equation}(\fa^*)^K_{-2}=((\fa^*)^e)^K_{-2}\oplus (ad^*(f)(\fa))^K_{-2}= ((\fa^*)^e)^K_{-2}\oplus ad^*(\fa_K)(\varphi).
\end{equation}
Consider  the map $\nu:A_K\times ((\fa^*)^e)^K_{-2} \to (\fa^*)^K_{-2}$ given by $\nu(g,X):= g(\varphi+X)$. Note that the differential of $\nu$ at the point $(1,0)$ is onto, and thus the image of $\nu$ contains an open neighborhood of $\nu(1,0)=\varphi$. Since $\varphi\in \overline{G_{\widetilde{S}}\tilde \varphi}$, the image of $\nu$ intersects the orbit $G_{\widetilde{S}}\tilde \varphi$.
\end{proof}

Let $\tilde \varphi':=\varphi+\varphi'$. Let $i$ denote the smallest of the $h$-weights of $\varphi'$. If $\varphi'=0$ we take $i$ to be 0.  Note that $i$ is always non-negative.

\begin{lemma}\label{lem:rad}
If $\varphi'\neq 0$ then there exists $X\in \fa_\varphi\cap \fa^K_{2}\cap \fa^h_{-i}$ such that $ \varphi'(X)=1$.
\end{lemma}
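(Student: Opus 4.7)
My plan is to identify $\fa^*$ with $\fa$ via the nondegenerate invariant form $B$ on $\fg$. Under this identification $\varphi$ corresponds to $f$, and the lowest $h$-weight component $\varphi'_i$ of $\varphi'$ corresponds to some $f'\in\fa$. Since $ad^*(e)$ shifts the $h$-weight by $+2$ while preserving the $K$-weight, the decomposition $\varphi'=\sum_j\varphi'_j$ by $h$-weight shows that every $\varphi'_j$ lies in $(\fa^*)^e\cap(\fa^*)^K_{-2}$; hence $f'\in\fa^e\cap\fa^h_i\cap\fa^K_{-2}$, and $f'\neq0$ by the definition of $i$.

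The key technical step is to verify that $K-h$ centralizes the entire $\sl_2$-triple $(f,h,e)$. The Whittaker condition gives $ad^*(\widetilde{S})\tilde\varphi=-2\tilde\varphi$, which propagates to $\tilde\varphi'=\varphi+\varphi'\in G_{\widetilde{S}}\tilde\varphi$. Since $\tilde\varphi'$ lies in $\fa^*$ we have $ad^*(z)\tilde\varphi'=0$, and then $\widetilde{S}=K+z$ forces $ad^*(K)\tilde\varphi'=-2\tilde\varphi'$. Subtracting the identity $ad^*(K)\varphi'=-2\varphi'$ yields $ad^*(K)\varphi=-2\varphi$, which via $B$ becomes $[K,f]=-2f$. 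Combined with the given $[K,h]=0$, this gives $[K-h,f]=[K-h,h]=0$, so Lemma \ref{lem:sl2}(v) forces $[K-h,e]=0$.

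Consequently $\fa$ decomposes into joint $\sl_2$-and-$(K-h)$-irreducibles $V^\alpha$, each of $\sl_2$-highest weight $n_\alpha$ and carrying a single $(K-h)$-eigenvalue $r_\alpha$. By Schur's lemma and the nondegeneracy of $B$, each $V^\alpha$ is $B$-paired nondegenerately with a dual summand $V^{\alpha^*}$ satisfying $n_{\alpha^*}=n_\alpha$ and $r_{\alpha^*}=-r_\alpha$; in particular the highest-weight line $V^\alpha_i$ (when $n_\alpha=i$) pairs with the lowest-weight line $V^{\alpha^*}_{-i}$. Writing $f'=\sum_\alpha f'_\alpha$ with each $f'_\alpha\in V^\alpha_i$, $n_\alpha=i$, and $r_\alpha=-2-i$, nondegeneracy provides $X\in\bigoplus V^{\alpha^*}_{-i}$ with $B(f',X)\neq0$. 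Such $X$ has $h$-weight $-i$, $K$-weight $(2+i)+(-i)=2$, and lies in $\fa_f=\fa_\varphi$ since it sits in lowest $\sl_2$-weight lines. Rescaling so that $B(f',X)=1$ gives $\varphi'(X)=\varphi'_i(X)=1$, as required.

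The main obstacle I anticipate is the clean extraction of $[K,f]=-2f$ from the joint hypotheses (propagating the Whittaker condition from $\tilde\varphi$ to $\varphi$); once $K-h$ is shown to commute with the $\sl_2$-triple, the remainder of the proof reduces to Schur's lemma and the compatibility of $B$ with the $h$- and $(K-h)$-gradings.
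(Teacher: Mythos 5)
Your proof is correct, and the key representation-theoretic fact you pin down --- that $K-h$ centralizes the $\sl_2$-triple $(f,h,e)$, hence $K$ preserves $\fa_\varphi=\fa^f$ --- is indeed what the paper needs implicitly when it ``decomposes $\fa_\varphi$ into joint eigenspaces of the commuting semisimple operators $h$ and $K$.'' (A slightly shorter derivation of $[K,f]=-2f$ is available: since $\varphi\in\overline{G_{\widetilde S}\tilde\varphi}\subset(\fg^*)^{\widetilde S}_{-2}$ and $ad^*(z)\varphi=ad^*(S-h)\varphi=0$, one has $ad^*(K)\varphi=ad^*(\widetilde S-z)\varphi=-2\varphi$ without passing through $\tilde\varphi'$.)

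However, your route is noticeably heavier than the one in the paper. The paper stays in $\fa^*$ and never identifies it with $\fa$: it takes any $Y\in\fa$ with $\varphi'_i(Y)=1$, projects $Y$ onto $\fa_\varphi$ using $\fa=[e,\fa]\oplus\fa_\varphi$ (the value is unchanged because $\varphi'_i\in(\fa^*)^e$ vanishes on $[e,\fa]$), and then projects onto the single joint eigenspace $\fa_\varphi\cap\fa^h_{-i}\cap\fa^K_{2}$ (again the value is unchanged because $\varphi'_i$ has $h$-weight $i$ and $K$-weight $-2$, so it kills every other weight component). No passage to $\fa$ via $B$, no isotypic decomposition, no Schur's lemma. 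Your version proves the same projection facts by transporting $\varphi'_i$ to $f'\in\fa^e\cap\fa^h_i\cap\fa^K_{-2}$ and using the $B$-pairing of isotypic blocks --- that works, but it is extra machinery. One small imprecision in your argument: you assert that ``each $V^\alpha$ is $B$-paired nondegenerately with a dual summand $V^{\alpha^*}$,'' which is not literally true when there is multiplicity (the pairing a priori relates isotypic components, not individually chosen irreducible summands); what you actually use --- that $B(f',\cdot)$ is nonzero and is supported on the isotypic piece of type $(n_\alpha,-r_\alpha)$, whence some $X$ in $\fa^f\cap\fa^h_{-i}\cap\fa^K_{2}$ pairs nontrivially with $f'$ --- is correct, so this does not create a gap.
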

\begin{proof}
Let $\varphi'_i$ be the component of $\varphi'$ of weight $i$. There exists $Y\in \fa$ with $ \varphi'_i(Y)=1$. Let $X'\in \fa_{\varphi}$ be the component of $Y$ in the decomposition $\fa=[e,\fa]\oplus \fa_\varphi$. Since $\varphi'_i\in (\fa^*)^e$ , $\varphi'_i$ vanishes on $[e,\fa]$ and thus $\varphi'_i(X')=1$. Decompose $\fa_\varphi$ to joint eigenspaces of the commuting semi-simple operators $h$ and $K$ and let $X$ be the component of $X'$ in $\fa_\varphi\cap \fa^K_{2}\cap \fa^h_{-i}$. Then $\varphi'(X) = \varphi'_i(X)= \varphi'_i(X')=1$.
\end{proof}

Let $Z:=\widetilde{S}-S=K-h\in \fg_\varphi$. For any rational number $0\leq t\leq 1$ define \begin{equation}\label{=ut}
S_t:=S+tZ,\quad \fu_t:=\fg^{S_t}_{\geq 1},\quad \fv_t:=\fg^{S_t}_{> 1},\text{ and }\fw_t:=\fg^{S_t}_{1}. \quad
\end{equation}
\begin{defn}\label{def:crit}
We call $t$ \emph{regular} if $\fu_t = \fu_{t+\eps}$ for any small enough $\eps\in \Q$, or in other words $\fw_t\subset \fg_Z$. If $t$ is not regular we call it \emph{critical}. For convenience, we will say that 0 is critical and 1 is regular.
\end{defn}
Note that there are only finitely many critical numbers.

\begin{lemma}\label{lem:help}
The following results hold:
\begin{enumerate}[(i)]
\item \label{it:OmInv} The form $\omega$ is $ad(Z)$-invariant.
\item \label{it:KerOm} $\Ker \omega = \fg_\varphi=\fg^f\subset \fg^h_{\leq 0}$.
\item \label{it:KerNeg} $\Ker(\omega|_{\fw_t})=\Ker(\omega)\cap \fw_t$. 
\item \label{it:Kerv} $\Ker(\omega|_{\fu_t})=\fv_t\oplus \Ker(\omega|_{\fw_t}) $.
\item \label{it:LW}  $\fw_s\cap\fg_\varphi\subset \fu_{t}$  for any $s<t$.
\end{enumerate}
\end{lemma}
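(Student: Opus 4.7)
The plan is to exploit the fact that $\varphi$ is a weight vector for all the semisimple operators in play. Since $\tilde\varphi \in (\fg^*)^{\widetilde S}_{-2}$ and this weight subspace is preserved by $G_{\widetilde S}$, the hypothesis $\varphi \in \overline{G_{\widetilde S}\tilde\varphi}$ gives $\varphi \in (\fg^*)^{\widetilde S}_{-2}$ as well. Combined with $\varphi \in (\fg^*)^S_{-2}$, this yields $\ad^*(Z)\varphi = 0$, i.e.\ $Z \in \fg_\varphi$, and consequently $\varphi \in (\fg^*)^{S_t}_{-2}$ for every $t$. This single observation will drive all five parts.

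Part (i) follows immediately from the Jacobi identity:
\[
\omega(\ad(Z)X, Y) + \omega(X, \ad(Z)Y) = \varphi([Z, [X, Y]]) = -(\ad^*(Z)\varphi)([X, Y]) = 0.
\]
Part (ii) is purely formal: $X \in \Ker\omega$ iff $\ad^*(X)\varphi = 0$ iff $X \in \fg_\varphi$; the identification $\fg_\varphi = \fg^f$ comes from the $G$-invariant form identifying $\fg$ with $\fg^*$; and $\fg^f \subset \fg^h_{\leq 0}$ is standard $\sl_2$-theory (vectors in $\Ker f$ are lowest-weight vectors and hence have non-positive $h$-weights in the adjoint representation).

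For parts (iii) and (iv) I would use the general principle that for $X \in \fg^{S_t}_a$ and $Y \in \fg^{S_t}_b$, the quantity $\omega(X, Y) = \varphi([X, Y])$ vanishes unless $a + b = 2$, since $\varphi$ has $S_t$-weight $-2$. For (iii), this forces any $S_t$-homogeneous $Y$ pairing nontrivially with $X \in \fw_t = \fg^{S_t}_1$ to lie in $\fw_t$, so $\Ker(\omega|_{\fw_t}) \subseteq \Ker\omega$. For (iv), the same count automatically places $\fv_t$ inside $\Ker(\omega|_{\fu_t})$ (a weight-${>}1$ element paired with a weight-${\geq}1$ element always gives total weight ${>}2$); writing $X = X_1 + X_2 \in \fv_t \oplus \fw_t$, the condition $X \in \Ker(\omega|_{\fu_t})$ reduces to $X_2$ pairing trivially against $\fu_t$, which by the same weight count is equivalent to $X_2 \in \Ker(\omega|_{\fw_t})$.

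The main obstacle, and the only place the hypothesis $\fg_\varphi \cap \fg^S_{\geq 1} \subset \fg^{\widetilde S}_{\geq 1}$ of Theorem \ref{thm:main2} enters, is part (v). Using that $S$ and $Z$ commute (implicit in the setup of the proof, as required for $S_t$ to be semisimple throughout the deformation), $\fg_\varphi$ decomposes into simultaneous $(\ad S, \ad Z)$-eigenspaces. For $X \in \fw_s \cap \fg_\varphi$ I would write $X = \sum_b X_b$ with each $X_b \in \fg_\varphi$ of $Z$-weight $b$ and $S$-weight $a_b = 1 - sb$ (forced by $\ad(S_s)X_b = X_b$), so that $X_b$ has $S_t$-weight $1 + (t - s)b$; membership $X_b \in \fu_t$ then reduces to showing $b \geq 0$ whenever $X_b \neq 0$. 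Suppose for contradiction that $b < 0$; then $a_b = 1 - sb \geq 1$, so $X_b \in \fg_\varphi \cap \fg^S_{\geq 1}$, and the hypothesis yields $X_b \in \fg^{\widetilde S}_{\geq 1}$, whence $a_b + b = 1 + (1 - s)b \geq 1$. As $s < t \leq 1$ we have $1 - s > 0$, forcing $b \geq 0$ and the desired contradiction.
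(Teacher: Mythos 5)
Your proof is correct and follows essentially the same route as the paper's: parts (i)--(iv) are the same weight count with respect to the $S_t$-grading of $\varphi$ (weight $-2$) together with the identification $\Ker\omega=\fg_\varphi=\fg^f$, and part (v) uses the same decomposition into joint $(S,Z)$-eigenspaces plus the hypothesis $\fg_\varphi\cap\fg^S_{\geq 1}\subset\fg^{\widetilde S}_{\geq 1}$. The only cosmetic difference is in (v): the paper keeps the negative-$Z$-weight components and places them in $\fu_t$ via the convexity $\fg^S_{\geq 1}\cap\fg^{\widetilde S}_{\geq 1}\subset\fg^{S_t}_{\geq 1}$ (since $S_t=(1-t)S+t\widetilde S$), whereas you show those components vanish outright; both arguments are valid.
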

\begin{proof}
\eqref{it:OmInv}: $\omega([Z,a],b)+\omega(a,[Z,b])= \varphi([[Z,a],b]) + \varphi([a,[Z,b]]\rangle= (ad^*(Z)(\varphi))([a,b])=0$.\\
\eqref{it:KerOm}: $a\in \Ker \omega \iff \omega(a,b)=0 \forall b \iff \varphi([a,b])=0 \forall b \iff  (ad^*(a)(\varphi))(b)=0 \forall b \iff ad^*(a)(\varphi)=0 $. \Dima{Thus $\Ker \omega = \fg_\varphi$. Since $\varphi$ is given by pairing with $f$, its stabilizer $\fg_\varphi$ coincides with the space $\fg^f$ that is spanned by the lowest weight vectors.}\\
\eqref{it:KerNeg} $\Ker(\omega|_{\fw_t})=\fw_t\cap \Ker\omega$ since $\omega(\fw_t,\fg^{S_t}_{s})=0$ for any $s\neq -1$. Now $\fg^h_{\leq 0} \cap \fw_t\subset\fg^Z_{>0}$. \\
%
\eqref{it:Kerv} holds since $\omega(\fu_t,\fv_t)=0$.\\
\eqref{it:LW} Let $Y\in \fw_s\cap\fg_\varphi \cap \fg^Z_p$. If $p\geq 0$ then $Y\in   \fu_{t}$. If $p<0$ then $Y\in \fg^S_{\geq1}$. By the conditions of the theorem we have $ \fg^S_{\geq 1}\cap \fg_\varphi\subset \fg^{\widetilde{S}}_{\geq 1}$, thus $Y\in  \fg^S_{ \geq 1}\cap \fg^{\widetilde{S}}_{\geq 1}\subset \fu_t$.
\end{proof}

Choose a Lagrangian $\fm\subset \fg^Z_0\cap \fg^S_{1}$
 and let
 \begin{equation}\label{=lt}
 \fl_t:=\fm+(\fu_t\cap \fg^{Z}_{< 0})+\Ker(\omega|_{\fu_t})\text{ and }\fr_t:=\fm+(\fu_t\cap \fg^{Z}_{> 0})+\Ker(\omega|_{\fu_t}).
\end{equation}

\begin{lemma} \label{lem:key}
\begin{enumerate}[(i)]
\item  \label{it:lrCom} The spaces $\fl_t$ and $\fr_t$ are ideals in $\fu_t$ and $[\fl_t,\fr_t]\subset \fl_t\cap\fr_t$.

\item \label{it:MaxIs} For any $t\geq0$, $\fl_t$ and $\fr_t$ are maximal isotropic subspaces of $\fu_t$.

\item \label{it:Emb} Suppose that $0\leq s< t$, and all the elements of the open interval $(s,t)$ are regular.\\ 
Then $\fr_s\subset\fl_{t}$.
\end{enumerate}
\end{lemma}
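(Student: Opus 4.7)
The plan is to first rewrite $\fl_t,\fr_t$ transparently in the $(S_t,Z)$-bigrading.  By Lemma \ref{lem:help}(iv), $\Ker(\omega|_{\fu_t})=\fv_t\oplus\Ker(\omega|_{\fw_t})$, while Lemma \ref{lem:help}(ii)--(iii) identify $\Ker(\omega|_{\fw_t})=\fg_\varphi\cap\fw_t\subset\fg^Z_{>0}$; hence
\[
\fl_t=\fm+\fv_t+(\fw_t\cap\fg^Z_{<0})+(\fw_t\cap\fg_\varphi),\qquad \fr_t=\fm+\fv_t+(\fu_t\cap\fg^Z_{>0}).
\]
Throughout I will use two weight orthogonalities for $\omega$: by Lemma \ref{lem:help}(i), $\omega(\fg^Z_p,\fg^Z_q)=0$ unless $p+q=0$; and since $\varphi$ has $S_t$-weight $-2$ (because $Z\in\fg_\varphi$), $\omega(\fg^{S_t}_a,\fg^{S_t}_b)=0$ unless $a+b=2$.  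With these in hand, part (i) is essentially free: $[\fu_t,\fu_t]\subset\fg^{S_t}_{\geq 2}\subset\fv_t\subset\Ker(\omega|_{\fu_t})\subset\fl_t\cap\fr_t$, so all three commutator inclusions are simultaneous.

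For part (ii), isotropy of $\fl_t$ and $\fr_t$ reduces to pairwise checks on the summands: each cross-pairing either fails one of the two weight parities above, uses that $\fm$ is Lagrangian in $\fg^Z_0\cap\fg^S_1$, or lands in the global radical $\Ker(\omega)$.  For maximality, given $x\in\fu_t$ with $\omega(x,\fl_t)=0$, I will decompose $x$ into joint $(S_t,Z)$-eigenvectors and handle one bi-weight at a time.  If the $S_t$-weight is $>1$ then $x\in\fv_t$; otherwise $x\in\fw_t\cap\fg^Z_p$ and I split on the sign of $p$.  The case $p<0$ puts $x\in\fl_t$ directly.  For $p=0$, Lemma \ref{lem:help}(iii) places the full radical of $\omega|_{\fw_t}$ in $\fg^Z_{>0}$, so $\omega$ restricts to a genuine symplectic form on $\fg^Z_0\cap\fw_t$, and Lagrangianness of $\fm$ forces $x\in\fm$.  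For $p>0$, $\omega(x,\fu_t\cap\fg^Z_{<0})=0$ collapses to $\omega(x,\fw_t\cap\fg^Z_{-p})=0$, so $x\in\Ker(\omega|_{\fw_t})\subset\fl_t$.  For $\fr_t$ the argument is parallel, with the twist that when $p<0$, Lemma \ref{lem:help}(iii) forces the left kernel of the pairing $\fw_t\cap\fg^Z_p\times\fw_t\cap\fg^Z_{-p}$ to vanish, so $x=0$.

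Finally, for part (iii), $\fm\subset\fl_t$ is trivial, and for a joint $(S,Z)$-eigenvector of weight $(a,c)$ in $\fu_s\cap\fg^Z_{>0}$ one has $a+tc>a+sc\geq 1$, so it lies in $\fv_t\subset\fl_t$.  The delicate inclusion is $\fv_s\subset\fl_t$ for $Z$-weight $c<0$: the condition $y\in\fv_s$ reads $s<(1-a)/c$, and $y\in\fu_t$ reads $t\leq(1-a)/c$; since $(1-a)/c$ is a critical value in the sense of Definition \ref{def:crit} and no critical value lies in $(s,t)$, it must satisfy $(1-a)/c\geq t$, so $y\in\fu_t$, and depending on whether this inequality is strict, $y$ lands in $\fv_t$ or in $\fw_t\cap\fg^Z_{<0}$, both inside $\fl_t$.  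This is the sole use of regularity.  I expect the main technical overhead to be the case analysis in (ii), because the asymmetric placement of $\Ker(\omega|_{\fw_t})$ inside $\fg^Z_{>0}$ breaks the superficial $\fl_t\leftrightarrow\fr_t$ symmetry and forces slightly different endgames on the two sides.
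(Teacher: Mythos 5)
Your proof is correct and follows the same underlying strategy as the paper's, with a couple of presentational differences.  For the maximality claim in (ii), the paper runs a $Z$-weight case analysis only for $\fl_t$, and then gets maximality of $\fr_t$ indirectly by exhibiting its image in the symplectic quotient of $\fu_t$ by its radical as a Lagrangian complementary to the image of $\fl_t$; you instead run the bi-weight case analysis symmetrically for both sides, which is arguably cleaner and makes explicit exactly how the one-sidedness of $\Ker(\omega|_{\fw_t})\subset\fg^Z_{>0}$ (from Lemma~\ref{lem:help}(ii)--(iii)) enters the two endgames ($x\in\Ker(\omega|_{\fw_t})\subset\fl_t$ for $\fl_t$, versus $x=0$ for $\fr_t$).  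For (iii), the paper handles the summand $\fw_s\cap\fg_\varphi$ of $\Ker(\omega|_{\fu_s})$ via Lemma~\ref{lem:help}(v); your rewriting of $\fr_s$, using the same containment $\Ker(\omega|_{\fw_s})\subset\fg^Z_{>0}$, folds that term into $\fu_s\cap\fg^Z_{>0}$ and so bypasses Lemma~\ref{lem:help}(v) entirely, replacing it by the explicit critical-value computation at $(1-a)/c$, which is the content of the paper's one-line remark ``its $Z$-eigenvalue is negative.''  One small gap in (iii): you treat $\fu_s\cap\fg^Z_{>0}$ (so $c>0$) and then $\fv_s$ at $c<0$, but you should also dispose of $\fv_s$ at $c=0$, where $a+sc>1$ forces $a>1$ and hence $a+tc>1$, so $y\in\fv_t\subset\fl_t$; this is immediate but is needed to complete $\fv_s\subset\fl_t$.  (Also, in the $p=0$ case of (ii) the phrase ``genuine symplectic form'' is slightly stronger than needed: maximal isotropy of $\fm$ in $\fg^Z_0\cap\fw_t$, even for a possibly degenerate restriction, already forces $x\in\fm$.)
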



\begin{proof}
\eqref{it:lrCom} follows from the inclusion $[\fu_t,\fu_t]\subset \fv_t\subset \fl_t\cap \fr_t$.

\eqref{it:MaxIs} Since $\omega$ is $ad(Z)$-invariant, we see that $\fl_t$ and $\fr_t$ are isotropic. To show that $\fl_t$ is maximal isotropic, let $Y \in \fg^Z_{s}\cap \fu_t$. Let $R$ denote the radical of $\omega|_{\fu_t}$. If $Y\in R$ then $Y\in \fl_t$. If $s\leq 0$ then $Y\in \fl_t$.  If $s>0$ and $Y \notin R$ then there exists $Y'\in \fg^Z_{-s}\cap \fu_t\subset \fl_t$ such that $\omega(Y,Y')\neq 0$. Thus, if we enlarge $\fl_t$ it will stop being isotropic. Now, note that $\omega$ defines a symplectic structure on $\fu_t/R \simeq \fw_t/(\fw_t\cap \fg^Z_0+\fw_t\cap\fg_\varphi )$, and the image of $\fl_t$ in this space is Lagrangian. The image of $\fr_t$ is a complementary isotropic subspace, thus also a Lagrangian, and thus $\fr_t$ itself is maximal isotropic in $\fu_t$.

\eqref{it:Emb}
Note that $\fw_s\cap \fg^{Z}_{> 0}\subset \fv_{t}\subset \fl_{t}$.
Let us show that $\fv_{s} \subset \fl_{t}$. Note that $\fv_s\subset \fu_{t}$, since all the elements in $(s,t)$ are regular.
Let $Y\in \fv_s$ be a joint eigenvector for $ad(S)$ and $ad(Z)$. If $Y \notin \fv_{t}$ then $Y\in \fw_{t}$ and its $Z$-eigenvalue is negative. Thus $Y \in \fl_{t}$. Now by Lemma \ref{lem:help} \eqref{it:LW} we get $\fw_s\cap \fg_\varphi \subset \fu_{t}\cap \fg_\varphi\subset \fl_{t}$, and from Lemma \ref{lem:help} \eqref{it:KerOm}-\eqref{it:Kerv} this implies $\Ker(\omega|_{\fu_s})\subset \fl_{t}$. Altogether we get  $\fr_s\subset\fl_{t}$.
\end{proof}


Define
\begin{equation}\label{=lt'}
 \fl_t':=\fl_t\cap \Ker(\varphi') \text{ and } \fr_t':=\fr_t\cap \Ker(\varphi').
 \end{equation}

\begin{lem}\label{lem:lrprime}
\begin{enumerate}[(i)]
\item \label{it:inc} For any $0\leq s<t$ such that all the numbers in the open interval $(s,t)$ are regular  we have $\fr'_s\subset \fl'_t$.
\item \label{it:ideal} For $0\leq t<1$ both $\fl'_t$ and $\fr'_t$ are subalgebras of $\fu_t$, $[\fl'_t,\fr'_t]\subset \fl'_t\cap \fr'_t$ and for $0\leq t<(i+1)/(i+2)$ we have $\fl'_t=\fl_t$ and $\fr'_t=\fr_t$. \item \label{it:lag} $\fl'_t\cap \fr'_t$ is the radical of $\omega|_{\fl'_t+\fr'_t}$
\end{enumerate}
\end{lem}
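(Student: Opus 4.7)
My plan handles (i)--(iii) in order, using a joint $(h,z,K)$-weight decomposition of $\fg$. These operators commute because $(f,h,e)\subset\fa=\fg_z$ and $h$ commutes with $\widetilde S=K+z$. For a joint eigenvector of weights $(m,r,k)$ the $S_t$-eigenvalue is $(1-t)m+r+tk$. Since $\varphi'\in((\fa^*)^e)^K_{-2}$ has smallest $h$-weight $i$, the functional $\varphi'$ is nonzero only on vectors with $r=0$, $k=2$, $m\le -i$, and (using $\varphi'\in(\fa^*)^e$) lying in $\fa_\varphi=\fa\cap\fg^f$. Part (i) is then immediate from Lemma \ref{lem:key}\eqref{it:Emb}: intersecting the inclusion $\fr_s\subseteq\fl_t$ with $\Ker(\varphi')$ yields $\fr'_s\subseteq\fl'_t$.

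For part (ii), by Lemma \ref{lem:key}\eqref{it:lrCom} the brackets $[\fl_t,\fl_t]$, $[\fl_t,\fr_t]$, $[\fr_t,\fr_t]$ lie in $\fl_t\cap\fr_t$, so it suffices to show that $\varphi'$ kills them when $0\le t<1$. Such brackets live in $[\fu_t,\fu_t]\subseteq\fg^{S_t}_{\ge 2}$. On any element where $\varphi'$ can fail to vanish, the $S_t$-weight is bounded by $(1-t)(-i)+2t=-i+(i+2)t$, which is $\ge 2$ only when $t\ge 1$; this yields the desired vanishing. The same estimate with threshold $1$ in place of $2$ shows $\varphi'|_{\fu_t}=0$ when $t<(i+1)/(i+2)$, whence $\fl'_t=\fl_t$ and $\fr'_t=\fr_t$ in that range.

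Part (iii) is the main content. The inclusion $\fl'_t\cap\fr'_t\subseteq\operatorname{rad}(\omega|_{\fl'_t+\fr'_t})$ follows at once from isotropy of $\fl_t$ and $\fr_t$. For the reverse inclusion, set $R:=\operatorname{rad}(\omega|_{\fu_t})=\fv_t\oplus(\fg_\varphi\cap\fw_t)$ (Lemma \ref{lem:help}\eqref{it:Kerv}), and split according to whether $\varphi'|_R=0$ or not. In the former case any $X\in\fu_t$ with $\varphi'(X)\ne 0$ must lie in $\fa_\varphi$ with $k=2$, $m\le -i$, hence in $\fg^Z_{>0}$; this rules out $X\in\fm+(\fu_t\cap\fg^Z_{<0})$, and according to whether its $S_t$-weight is strictly greater than $1$ or equal to $1$, $X$ lies in $\fv_t$ or in $\fg_\varphi\cap\fw_t$, both subspaces of $R$. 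Thus $\varphi'|_R=0$ forces $\varphi'|_{\fu_t}=0$, $\fl'_t=\fl_t$, $\fr'_t=\fr_t$, and (iii) reduces to the fact that $\fl_t$ and $\fr_t$ are maximal isotropic in $\fu_t$ (Lemma \ref{lem:key}\eqref{it:MaxIs}), which gives $(\fl_t+\fr_t)^\perp=\fl_t\cap\fr_t$.

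In the remaining case $\varphi'|_R\ne 0$, since $R\subseteq\fl_t\cap\fr_t$ we also get $\varphi'|_{\fl_t}\ne 0$ and $\varphi'|_{\fr_t}\ne 0$, so the codimensions $\dim\fl_t-\dim\fl'_t$, $\dim\fr_t-\dim\fr'_t$, and $\dim R-\dim(R\cap\Ker\varphi')$ all equal $1$. A short dimension count then yields $\fl'_t+R=\fl_t$ and $\fr'_t+R=\fr_t$, so $(\fl'_t+\fr'_t)+R=\fl_t+\fr_t$; hence $\fl'_t+\fr'_t$ and $\fl_t+\fr_t$ have the same image in the symplectic quotient $\fu_t/R$, and their symplectic orthogonals agree: $(\fl'_t+\fr'_t)^\perp=(\fl_t+\fr_t)^\perp=\fl_t\cap\fr_t$. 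Consequently $\operatorname{rad}(\omega|_{\fl'_t+\fr'_t})=(\fl'_t+\fr'_t)\cap(\fl_t\cap\fr_t)\subseteq(\fl_t\cap\fr_t)\cap\Ker(\varphi')=\fl'_t\cap\fr'_t$. The main obstacle is this dimension-counting step, which rests on the equivalence of the vanishing conditions for $\varphi'$ on $R$, $\fl_t$, and $\fr_t$ established above.
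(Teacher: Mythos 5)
Your proof is correct, and parts~(i) and~(ii) are essentially identical to the paper's argument: (i) intersects Lemma~\ref{lem:key}\eqref{it:Emb} with $\Ker\varphi'$, and (ii) is the same $S_t$-weight estimate $\varphi'\in(\fg^*)^{S_t}_{\geq i-t(i+2)}$, so that $\Ker\varphi'\supset\fg^{S_t}_{>-i+t(i+2)}\supset\fg^{S_t}_{\geq 2}\supset[\fu_t,\fu_t]$ for $t<1$, and $\supset\fu_t$ for $t<(i+1)/(i+2)$.

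For part~(iii) you take a genuinely different (though closely related) route. The paper reduces at once to $t\geq(i+1)/(i+2)$, $\varphi'\neq0$, then pulls the element $X\in\fa_\varphi\cap\fa^K_2\cap\fa^h_{-i}$ from Lemma~\ref{lem:rad}, observes $X\in\fg_\varphi\cap\fu_t$, and concludes that $\fl'_t,\fr'_t$ have the same images as $\fl_t,\fr_t$ modulo $\fg_\varphi\cap\fu_t$ and are therefore still ``Lagrangian'', so the radical is $\fl'_t\cap\fr'_t$. You instead split on whether $\varphi'|_R=0$ ($R=\Ker(\omega|_{\fu_t})$): if so you re-derive inline, via the joint $(h,z,K)$-weight analysis, that any relevant $\fa_\varphi$-component of $X\in\fu_t$ already lies in $R$, so $\varphi'|_{\fu_t}=0$ and the primed algebras coincide with the unprimed ones; if not, you run a dimension count giving $\fl'_t+R=\fl_t$, $\fr'_t+R=\fr_t$, hence $(\fl'_t+\fr'_t)^\perp\cap\fu_t=(\fl_t+\fr_t)^\perp\cap\fu_t=\fl_t\cap\fr_t$, and then intersect with $\Ker\varphi'$. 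Your version avoids citing Lemma~\ref{lem:rad} and is self-contained, at the cost of a slightly longer argument; the paper's is shorter because it delegates the weight bookkeeping to that lemma. One small imprecision in your write-up: the sentence ``any $X\in\fu_t$ with $\varphi'(X)\ne0$ must lie in $\fa_\varphi$'' should refer to the $\fa_\varphi$-component of each joint eigencomponent of $X$ (since $\varphi'$ only sees the $\fa_\varphi$ part of the decomposition $\fa=[e,\fa]\oplus\fa_\varphi$), but the conclusion $\varphi'|_R=0\Rightarrow\varphi'|_{\fu_t}=0$ is unaffected because that component inherits the same $(h,z,K)$-weights and hence the same $S_t$-weight, and lies in $\fg_\varphi$, so it falls into $\fv_t$ or $\fg_\varphi\cap\fw_t\subset R$.
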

\begin{proof}
Part \eqref{it:inc} follows immediately from Lemma \ref{lem:key}\eqref{it:Emb}. \\
For part \eqref{it:ideal} note that $i\geq 0$, $\varphi'\in (\fg^*)^{S_t}_{\geq i - t(i+2)}$ and thus  $\Ker(\varphi') \supset \fg^{S_t}_{> -i + t(i+2)}$. Thus, for $t<1$ we have $[\fu_t,\fu_t]\subset \Ker(\varphi')$ and for $0\leq t<(i+1)/(i+2)$ we have $\fu_t\subset \Ker(\varphi')$. Since $[\fu_t,\fu_t] \subset \fv_t\subset \fl_t\cap\fr_t$ we obtain \eqref{it:ideal}.\\
For part \eqref{it:lag} we can assume that $t\geq (i+1)/(i+2)$ and $\varphi'\neq 0$. Then, by Lemma \ref{lem:rad}, there exists $X\in \fg_\varphi\cap \fu_t$ with $\varphi'(X)=1$. Thus, the image of $\fl'_t$ in $\fu_t/(\fg_\varphi\cap \fu_t)$ coincides with the image of $\fl_t$, and the image of $\fr'_t$ coincides with the image of $\fr_t$. Since those images are Lagrangian, $\fl'_t\cap \fr'_t$ is the radical of $\omega|_{\fl'_t+\fr'_t}$.
\end{proof}

Lemmas \ref{lem:lrprime} and \ref{lem:main} imply that
\begin{equation}\cW_{S,\varphi}\simeq \ind_{\Exp(\fr_0)}^{G}\chi_\varphi \simeq \ind_{\Exp(\fr'_0)}^{G}\chi_\varphi, \quad \ind_{\Exp(\fl'_{t})}^{G}\chi_\varphi\simeq \ind_{\Exp(\fr'_{t})}^{G}\chi_\varphi
\end{equation}
 and for $s<t$ such that all the numbers in $(s,t)$ are regular we have
  \begin{equation}
 \ind_{\Exp(\fr'_s)}^{G}\chi_\varphi\onto \ind_{\Exp(\fl'_{t})}^{G}\chi_\varphi.
\end{equation}
We are now ready to prove Theorem \ref{thm:main2}.

\begin{proof}[Proof of Theorem \ref{thm:main2}]
Let $0<t_1<\dots<t_n$ be all the critical numbers.
Note that  $\fr'_{t_n}$ is an isotropic subalgebra of $\fu_1$, and that it is also isotropic with respect to the form $\omega_{\varphi+\varphi'}(X,Y):= (\varphi+\varphi')( [X,Y])$. Let $\fq$ be a maximal isotropic subspace of $\fu_1$ with respect to this form. It includes $\fv_1$ and thus is necessary a subalgebra. Note that
\begin{equation}
\ind_{\Exp(\fr'_{t_n})}^{G}\chi_{\varphi}=\ind_{\Exp(\fr'_{t_n})}^{G}\chi_{\varphi+\varphi'} \onto \ind_{\Exp(\fq)}^{G}\chi_{\varphi+\varphi'} \simeq \cW_{\widetilde{S},\tilde \varphi'}\simeq \cW_{\widetilde{S},\tilde \varphi}.
\end{equation}

Thus
\begin{multline}
$$
\cW_{S,\varphi}\simeq \ind_{\Exp(\fr'_0)}^{G}\chi_\varphi\onto \ind_{\Exp(\fl'_{t_1})}^{G}\chi_\varphi\simeq \ind_{\Exp(\fr'_{t_1})}^{G}\chi_\varphi \onto\\ \dots  \onto \ind_{\Exp(\fl'_{t_n})}^{G}\chi_\varphi\simeq \ind_{\Exp(\fr'_{t_n})}^{G}\chi_\varphi  \onto \cW_{\widetilde{S},\tilde \varphi}.
$$
\end{multline}
 \end{proof}

\begin{remark}\label{rem:IntTran} Observe that, in the above proof, the map $\ind_{\Exp(\fr'_{t_{i}})}^{G}\chi_\varphi\onto \ind_{\Exp(\fl'_{t_{i+1}})}^{G}\chi_\varphi$ is simply given by integration over $\Exp(\fr'_{t_{i}})\backslash \Exp(\fl'_{t_{i+1}})$. From this, and the observation given in Remark \ref{rk:inttransform}, we see that the $G$-equivariant epimorphism promised in Theorem \ref{thm:main2} is given by a series of integral transforms followed, if necessary, by conjugation by an element of $G$. 
\end{remark}

\DimaD{
\begin{remark}\label{rem:Equivar}
Let $M$ denote the joint centralizer of $\varphi, \tilde \varphi, S, \tilde S,$ and $h$. Then a central extension $\tilde M$ of $M$ acts naturally on the oscillator representations $\sigma_{\varphi}$ and  $\sigma_{\tilde\varphi}$, and thus also on the degenerate Whittaker models $\cW_{S,\varphi}$ and $\cW_{\widetilde{S},\tilde \varphi}.$ It is easy to see that the constructed epimorphism $\cW_{S,\varphi}\onto \cW_{\widetilde{S},\tilde \varphi}$ intertwines these actions.
\end{remark}

}

Let us now present two examples for the elements and subalgebras defined in the course of the proof. Let $G:=\GL(4,\F)$ and define $\varphi$ by $\varphi(X):=\tr(X(E_{21}+E_{43}))$, where $E_{21}$ and $E_{43}$ are elementary matrices. Let $h$ be the diagonal matrix $\diag(1,-1,1,-1)$ and $S:=h$.

\begin{example}\label{ex:GLSame}
Let $\tilde \varphi:=\varphi$, $K:=\diag(3,1,-1,-3), \, z:=0 $. Then $Z=\diag(2,2,-2,-2), S_t=\diag(1+2t,-1+2t,1-2t,-1-2t)$ and the weights of $S_t$ are as follows:
$$\left(
   \begin{array}{cccc}
     0 & 2 & 4t & 4t+2 \\
     -2 & 0 & 4t-2 & 4t \\
     -4t & -4t+2 & 0 & 2 \\
     -4t-2 & -4t & -2 & 0 \\
   \end{array}
 \right).$$
The  critical numbers are $1/4$ and $3/4$. For $t\geq3/4$ we get the principal degenerate Whittaker model. We have $\fr'_t=\fr_t,\fl'_t=\fl_t$ for all $t$.
The above system of inclusions of $\fr_{0}\subset \fl_{1/4}\sim \fr_{1/4}\subset \fl_{3/4}=\fr_{3/4}$  is:
$$
\left(
   \begin{array}{cccc}
     0 & - & 0 & - \\
     0 & 0 & 0 & 0 \\
     0 & - & 0 & - \\
     0 & 0 & 0 & 0 \\
   \end{array}
 \right) \subset \left(
   \begin{array}{cccc}
     0 & - & a & - \\
     0 & 0 & 0 & a \\
     0 & * & 0 & - \\
     0 & 0 & 0 & 0 \\
   \end{array}
 \right)\sim
\left(
   \begin{array}{cccc}
     0 & - & * & - \\
     0 & 0 & 0 & * \\
     0 & 0 & 0 & - \\
     0 & 0 & 0 & 0 \\
   \end{array} \right)
\subset
\left(
   \begin{array}{cccc}
     0 & - & - & - \\
     0 & 0 & * & - \\
     0 & 0 & 0 & - \\
     0 & 0 & 0 & 0 \\
    \end{array} \right)
   $$
   Here, both $*$ and $-$ denote arbitrary elements. $-$ denotes the entries in $\fv_t$ and $*$ those in $\fw_t$. The letter $a$ denotes an arbitrary element, but the two appearances of $a$ denote the same numbers.
   The  passage from $\fl_{1/4}$ to $\fr_{1/4}$ is denoted by $\sim$. At  $3/4$ we have $\fl_{3/4}=\fr_{3/4}. $
\end{example}

Let us now give an example in which $\varphi$ and $\tilde \varphi$ are not equal and not conjugate.

\begin{example}\label{ex:GLSmall}
Identify $\fg\simeq \fg^*$ using the trace form and let

$$
\tilde \varphi=\left(
   \begin{array}{cccc}
     0 & 0 & 1 & 0 \\
     1 & 0 & 0 & 1 \\
     0 & 0 & 0 & 0 \\
     0 & 0 & 1 & 0 \\
   \end{array}
 \right), \quad
e=\left(
   \begin{array}{cccc}
     0 & 1 & 0 & 0 \\
     0 & 0 & 0 & 0 \\
     0 & 0 & 0 & 1 \\
     0 & 0 & 0 & 0 \\
   \end{array}
 \right),  \quad
  \varphi'= \left(
   \begin{array}{cccc}
     0 & 0 & 1 & 0 \\
     0 & 0 & 0 & 1 \\
     0 & 0 & 0 & 0 \\
     0 & 0 & 0 & 0 \\
   \end{array}
 \right)$$

 Let $\widetilde{S}:=\diag(0,-2,2,0), \, Z=\diag(-1,-1,1,1)$.
Note that $\fw_0=\fw_1=0$ and the only critical value of $t$ is $1/2$. The sequence of subalgebras 
 from the proof of Theorem \ref{thm:main2} is 
$$ \fu_0=l'_0=\fl'_{1/2}=\left(
   \begin{array}{cccc}
     0 & * & 0 & - \\
     0 & 0 & 0 & 0 \\
     0 & * & 0 & * \\
     0 & 0 & 0 & 0 \\
   \end{array}
 \right) \sim \fr'_{1/2} = \left(
   \begin{array}{cccc}
     0 & * & 0 & 0 \\
     0 & 0 & 0 & 0 \\
     a & * & 0 & * \\
     0 & -a & 0 & 0 \\
   \end{array}
 \right)\subset \left(
   \begin{array}{cccc}
     0 & * & 0 & 0 \\
     0 & 0 & 0 & 0 \\
     * & * & 0 & * \\
     0 & * & 0 & 0 \\
   \end{array}
 \right)=\fu_1.$$
\end{example}

\subsection{Principal degenerate Whittaker models and proof of theorem \ref{thm:PL}}\label{subsec:PrinDeg}


\DimaF{
In the discussion below $\mathfrak{a}$ will denote a maximal split toral
subalgebra of $\mathfrak{g}$, we will write $\Sigma\left(  \mathfrak{a,g}%
\right)  $ for the corresponding (restricted) root system, $\Sigma^{+}\left(
\mathfrak{a,g}\right)  $ for a choice of positive roots, and $\Delta\left(
\mathfrak{a,g}\right)  $ for the corresponding system of simple roots.

\begin{definition}
We say that a rational semisimple  $S\in \fg$ is \emph{principal} if there exists
 an $\mathfrak{a}$ containing $S$, and a simple subsystem
$\Delta=\Delta\left(  \mathfrak{a,g}\right)  \subset\Sigma\left(
\mathfrak{a,g}\right)  $  such that $\alpha\left(  S\right)  =2$ for all
$\alpha\in\Delta$. We say that a Whittaker pair $(S,\varphi)$ is \emph{principal} if $S$ is principal. A \emph{principal degenerate Whittaker model} is the  degenerate Whittaker model corresponding to a principal Whittaker pair.
\end{definition}

We fix a non-degenerate invariant bilinear form on $\fg$, which allows us to identify nilpotent elements in $\fg$ and $\fg^*$. Thus we may  equally well apply the above terminology to ``Whittaker pairs" $(S,f) \subset \fg \times \fg$, such that $S$ is rational semisimple and $[S,f]=-2f$.

Note that a principal element $S$ uniquely determines both $\mathfrak{a}$,  and  the simple system
$\Delta=\Delta\left(  \mathfrak{a,g}\right)  $. Thus there is a bijection
between principal elements and simple systems which we denote by %
$
S\mapsto\Delta_{S},\text{ }\Delta\mapsto S_{\Delta}.
$

If $\left(  S,f\right)  $ is principal then setting $\Delta=\Delta_{S}$, we
can write $f$ uniquely in the form
\begin{equation}
f=%
{\textstyle\sum\nolimits_{\alpha\in\Delta}}
Y_{\alpha}\text{ }.\label{=PL}%
\end{equation}
for some $Y_{\alpha}\in\mathfrak{g}_{-\alpha}$. Conversely if $f$ is of the
form (\ref{=PL}) for \emph{some} $\Delta$, then we will say that $f$ is a \emph{PL}
nilpotent and $\Delta$ is \emph{compatible} with $f$. In this case $\left(
S_{\Delta},f\right)  $ is a principal Whittaker pair, and we define the $\Delta$-support of $f$ to be
\[
supp_{\Delta}\left(  f\right)  =\left\{  \alpha\in\Delta\mid
Y_{\alpha}\neq0\right\}.
\]
If $supp_{\Delta}\left( f\right)  =\Delta$ we say that $f$ is a \emph{principal nilpotent element}. Note that this notion is weaker than the similar notion defined in \cite[VIII.11.4]{Bou}. However, if $G$ is quasi-split then both notions are equivalent to the notion of regular nilpotent element.

Note that $f$ is a \emph{PL}
nilpotent if and only if it is a principal nilpotent element for a Levi subgroup of $G$. For the general linear groups, every orbit includes such an element. For complex classical groups, all such orbits are described in \cite[\S 6]{GS-Gen} in terms of the corresponding partitions.  For complex exceptional groups, these are the orbits with non-parenthetical Bala-Carter labels.


\begin{lemma}\label{lem:2pairs}
Let $\left(  S_{1},f\right)  $ and $\left(  S_{2},f\right)  $ be two Whittaker
pairs with the same $f$, and let $h$ be neutral for $f$. Then there exist
$g_{1},g_{2}\in G_{\varphi}$ and $Z_{1},Z_{2}\in\mathfrak{g}_{h} \cap \fg_{f}$ such that
\[
g_{1}\cdot S_{1}=h+Z_{1},\quad g_{2}\cdot S_{2}=h+Z_{2},\quad\left[
Z_{1},Z_{2}\right]  =0.
\]

\end{lemma}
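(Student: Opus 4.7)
The plan is to first reduce each Whittaker pair separately to the desired form $h+Z_i$, and then use the remaining freedom to make $Z_1$ and $Z_2$ commute by conjugating them into a common maximal split Cartan of the (reductive) centralizer of the $\sl_2$-triple through $h$ and $f$.

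First I would invoke the surjectivity part of Lemma \ref{lem:MW} applied separately to the Whittaker pairs $(S_1,f)$ and $(S_2,f)$. This yields, for each $i$, an element $g_i'\in G_f$ and a neutral element $h_i$ for $f$ such that $g_i'\cdot S_i = h_i + Z_i'$ with $Z_i'\in\fg_{h_i}\cap\fg_f$ rational semisimple. Since all neutral elements for $f$ are $G_f$-conjugate by Lemma \ref{lem:sl2}(iii), there is $k_i\in G_f$ with $k_i\cdot h_i = h$; setting $\tilde g_i:=k_ig_i'$ and $\tilde Z_i:=k_i\cdot Z_i'$ gives
\[
\tilde g_i\cdot S_i = h+\tilde Z_i,\qquad \tilde Z_i\in\fg_h\cap\fg_f,
\]
with $\tilde Z_i$ still rational semisimple (it is the difference of the two commuting rational semisimple elements $\tilde g_i\cdot S_i$ and $h$).

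Next, let $(f,h,e)$ be the $\sl_2$-triple determined by $f$ and $h$, and set
\[
\fl := \fg_h\cap\fg_f.
\]
By Lemma \ref{lem:sl2}(v) we have $\fl = \fg_h\cap\fg_f\cap\fg_e$, the centralizer of an $\sl_2$-triple, which is reductive. Writing $L$ for the corresponding group, I note that the remaining freedom in choosing $\tilde g_i$ without changing $h$ is precisely left multiplication by $L$: replacing $\tilde g_i$ by $l_i\tilde g_i$ with $l_i\in L$ produces $h+l_i\cdot\tilde Z_i$ in place of $h+\tilde Z_i$. Thus it suffices to find $l_1,l_2\in L$ so that $l_1\cdot\tilde Z_1$ and $l_2\cdot\tilde Z_2$ commute.

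To do this I would fix a maximal $\F$-split toral subalgebra $\ft\subset\fl$. Since each $\tilde Z_i$ is rational semisimple it is $\F$-split, so it lies in some maximal split toral subalgebra of $\fl$; by the conjugacy of maximal $\F$-split tori in the reductive group $L$ over the local field $\F$, there exist $l_i\in L$ with $l_i\cdot\tilde Z_i\in\ft$. Setting $g_i := l_i\tilde g_i$ and $Z_i := l_i\cdot\tilde Z_i$ then yields the required data: $g_i\in G_f$, $g_i\cdot S_i = h+Z_i$, $Z_i\in\fg_h\cap\fg_f$, and $[Z_1,Z_2]=0$ because both lie in the abelian subalgebra $\ft$.

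The only nontrivial input, and where I expect the only delicate point, is the conjugacy of maximal split tori in a reductive group over a local field of characteristic zero, which is a standard result of Borel--Tits. Every other step is a direct application of Lemmas \ref{lem:sl2} and \ref{lem:MW}.
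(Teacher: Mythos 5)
Your proposal is correct and follows the same route as the paper: apply Lemma \ref{lem:MW} (with Lemma \ref{lem:sl2}(iii) to align the neutral elements) to write each $S_i$ as $h+Z_i$ after conjugation by $G_f$, then observe that $\fg_h\cap\fg_f$ is the reductive centralizer of an $\sl_2$-triple and conjugate the two rational semisimple elements $Z_1,Z_2$ into a common maximal split toral subalgebra. The paper's proof is the same argument stated more tersely.
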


\begin{proof}
By Lemma \ref{lem:MW} we can find $g_{1},g_{2},Z_{1},Z_{2}$ satisfying all the stated
properties, except perhaps the commutativity $\left[  Z_{1},Z_{2}\right]  =0$. Since $\mathfrak{g}_{h} \cap \fg_{f}$ is the centralizer of an $\sl_2$-triple, it is reductive.
Since $Z_{1},Z_{2}%
\in\mathfrak{g}_{h} \cap \fg_{f}$ are rational semisimple, we can move them to a common Cartan
subspace after further conjugation by elements of $G_h\cap G_{f}$.
\end{proof}

\begin{proposition}\label{prop:PLroot}
Let $\left(  S,f\right)  $ be a Whittaker pair such that $f$ is a PL
nilpotent. Then there exist a maximal split toral subalgebra $\mathfrak{a}$,
and a simple system $\Delta\subset\Sigma^{+}\subset\Sigma\left(
\mathfrak{a,g}\right)  $ such that

\begin{enumerate}[(a)]
\item $\Delta$ is compatible with $f.$

\item $\mathfrak{a}$ contains $S.$

\item $\mathfrak{a}$ contains a neutral element $h$ for $f.$

\item If $\alpha\in\Sigma$ satisfies $\alpha\left(  h\right)  \leq0$ and
$\alpha\left(  S\right)  >0$ then $\alpha\in\Sigma^{+}$.
\end{enumerate}
\end{proposition}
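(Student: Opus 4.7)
\emph{Plan.} The strategy is to embed the data $(S,f,h)$, where $h$ is a neutral element for $f$ commuting with $S$, inside a Levi $\fl$ in which $f$ is principal, with $h$ as the principal neutral and $Z:=S-h$ lying in the center $\z(\fl)$. Then any maximal split toral $\fa\subseteq\fl$ containing $h$ automatically contains $Z$ and $S$, and is a maximal split toral of $\fg$; the principal $\sl_2$-structure of $\fl$ provides the compatible simple system $\Delta_\fl$, and a suitable Weyl chamber in $\fa$ extends it to a simple system of $\Sigma(\fa,\fg)$ satisfying (d).

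First, I would apply Lemma \ref{lem:MW} to produce a neutral element $h$ for $f$ in $\fg$ with $[S,h]=0$; set $Z:=S-h\in\fg_h\cap\fg_f$, rational semisimple. Since $f$ is PL, choose any Levi $\fl_0\subseteq\fg$ in which $f$ is principal, and using the $G_f$-conjugacy of neutral elements (Lemma \ref{lem:sl2}(iii)), conjugate $\fl_0$ so that $h$ becomes the neutral of its principal $\sl_2$-triple $(f,h,e_\fl)$ inside the resulting Levi $\fl$. By Lemma \ref{lem:sl2}(v), $Z$ commutes with $e_\fl$, so $Z$ lies in the joint centralizer $\fm$ of $(f,h,e_\fl)$ in $\fg$, a reductive Lie algebra.

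The main obstacle is arranging $Z\in\z(\fl)$. For this, note that the centralizer in the reductive algebra $\fl$ of its principal $\sl_2$-triple is exactly $\z(\fl)$ (the semisimple part contributes nothing, since a principal $\sl_2$-triple in a semisimple Lie algebra has zero centralizer), so $\fm\cap\fl=\z(\fl)$. If $\ft\subseteq\fm$ is a split toral subalgebra containing $\z(\fl)$, then $\ft$ is abelian and commutes with $\z(\fl)$, so $\ft\subseteq\fg^{\z(\fl)}=\fl$, and hence $\ft\subseteq\fm\cap\fl=\z(\fl)$. Thus $\z(\fl)$ is a maximal split toral of $\fm$, and by conjugacy of maximal split torals under the connected reductive group $M$ with Lie algebra $\fm$, there exists $m\in M$ with $\Ad(m)Z\in\z(\fl)$. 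Replacing $\fl$ by $\Ad(m^{-1})\fl$ (which still contains $(f,h,e_\fl)$ as its principal $\sl_2$, since $m$ fixes the triple) achieves $Z\in\z(\fl)$ without disturbing $h$ or $f$.

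Now take any maximal split toral $\fa\subseteq\fl$ containing $h$: it automatically contains $\z(\fl)\ni Z$, hence $S=h+Z$, and is a maximal split toral of $\fg$, establishing (b) and (c). Let $\Sigma^+_\fl\subset\Sigma(\fa,\fl)$ be the positive system with $\alpha(h)>0$ on positive roots; then its simple system $\Delta_\fl$ satisfies $\alpha(h)=2$ for all $\alpha\in\Delta_\fl$ and $f=\sum_{\alpha\in\Delta_\fl}Y_\alpha$ with $Y_\alpha\neq 0$ in $\fg_{-\alpha}$, giving (a). To achieve (d), choose $C:=Z+\varepsilon C'\in\z(\fl)$ with $\varepsilon>0$ small and $C'\in\z(\fl)$ generic, so that $\alpha(C)\neq 0$ for every $\alpha\in\Sigma(\fa,\fg)\setminus\Sigma(\fa,\fl)$ and $\alpha(C)>0$ whenever $\alpha(Z)>0$; then set $H:=h+NC$ for large $N$ and $\Sigma^+:=\{\alpha\in\Sigma(\fa,\fg):\alpha(H)>0\}$. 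For $\alpha\in\Sigma(\fa,\fl)$ one has $\alpha(C)=0$, so $\alpha(H)=\alpha(h)$ carries the sign from $\Sigma^+_\fl$; for $\alpha\notin\Sigma(\fa,\fl)$, $\alpha(H)\approx N\alpha(C)$ for $N$ large, so its sign is that of $\alpha(C)$. Hence $\Sigma^+$ is a positive system whose simple system $\Delta$ extends $\Delta_\fl$. If $\alpha(h)\leq 0$ and $\alpha(S)>0$, then $\alpha(Z)=\alpha(S)-\alpha(h)>0$, forcing $\alpha\notin\Sigma(\fa,\fl)$ and $\alpha(C)>0$, so $\alpha\in\Sigma^+$, proving (d).
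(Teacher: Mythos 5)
Your strategy is sound and genuinely different from the paper's. The paper starts from an arbitrary simple system compatible with $f$, uses Lemma \ref{lem:2pairs} to move $S$ into the corresponding maximal split toral subalgebra, and then obtains $\Sigma^{+}$ as the positivity system of a single deformed element $h_{\varepsilon}=Z+\varepsilon Z'+\varepsilon^{2}h$, where $Z'=S'-h$ comes from the principal element $S'$ of the initial simple system; the point is that $Z$ and $Z'$ both vanish on the support of $f$, so no analysis of the Levi or of centralizers of the $\sl_2$-triple is ever needed. You instead normalize the geometry first --- conjugating a Levi $\fl$ in which $f$ is principal so that $h$ is its principal neutral element and $Z=S-h$ lies in $\z(\fl)$ --- and then build $\Sigma^{+}$ from $h+NC$ with $C$ a generic perturbation of $Z$ inside $\z(\fl)$. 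Your verification of (a) and (d) is correct; in particular the genericity of $C$ is exactly what rules out a root of $\Delta_{\fl}$ decomposing as $\beta+\gamma$ with $\beta(C)=\gamma(C)=0$ but $\beta\notin\Sigma(\fa,\fl)$. The two deformations play the same role; yours buys a cleaner picture of where $Z$ lives, at the price of the conjugation step discussed next.

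That step contains the one genuine gap. You assert $\fm\cap\fl=\z(\fl)$ on the grounds that ``a principal $\sl_2$-triple in a semisimple Lie algebra has zero centralizer.'' With the paper's notion of principal --- full support on the \emph{restricted} simple roots --- this is false unless $\fl$ is quasi-split: for $[\fl,\fl]=\mathfrak{so}(n,1)$ with $n\geq4$ the ``principal'' triple has centralizer isomorphic to $\mathfrak{so}(n-2)\neq0$. Since the proposition is stated for arbitrary reductive $G$ and the Levi attached to a PL nilpotent need not be quasi-split, your argument as written does not cover all cases. The conclusion you actually need --- that the split part of $\z(\fl)$ is a maximal split toral subalgebra of $\fm$, so that $Z$ can be conjugated into it --- does survive: the centralizer of the triple in $[\fl,\fl]$ lies in the centralizer of the regular element $h$, i.e.\ in $\fa_{ss}\oplus\fm_{0}$ with $\fm_{0}$ anisotropic, its intersection with $\fa_{ss}$ is killed because $ad(f)$ is injective there (all $Y_{\alpha}\neq0$), and the anisotropic part contributes no split toral directions. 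So the gap is fixable, but the fix requires precisely the anisotropy argument you skipped. A further minor point: $\z(\fl)$ itself need not be split toral (only its maximal split part is), so ``$\fa$ automatically contains $\z(\fl)$'' should be weakened to ``$\fa$ contains $Z$ and the split part of $\z(\fl)$,'' which is all your construction of $C$ uses.
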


\begin{proof}
Let us first choose any simple system $\Delta\left(  \mathfrak{b,g}\right)  $
compatible with $f$. By Lemma \ref{lem:2pairs} there
exists $g$ in the centralizer $G_{f}$ of $f$ such that $g\cdot S\in
\mathfrak{b}$. Now the action of $g^{-1}$ carries $\mathfrak{b}$ to a maximal
toral subalgebra $\mathfrak{a}$ and $\Delta\left(  \mathfrak{b,g}\right)  $ to
a simple system $\Delta^{\prime}=\Delta^{\prime}\left(  \mathfrak{a,g}\right)
$ which satisfies (a) and (b). Next note that if $S^{\prime}=S_{\Delta
^{\prime}}$ then $\left(  S^{\prime},f\right)  $ is a Whittaker pair, so by
Lemma \ref{lem:MW} there is a neutral element $h$ for $f$ that commutes with
$S_{\Delta^{\prime}}$; but this forces $h\in\mathfrak{a}$ and thus (c) holds.

If $supp_{\Delta^{\prime}}\left( f\right)  =\Delta^{\prime}$ then we set
$\Delta=\Delta^{\prime}$; in this case we have $h-S$ is central and (d) holds vacuously.
Now suppose $supp_{\Delta^{\prime}}\left(  f\right)  \subsetneq\Delta^{\prime
}$. Then we will show how to modify $\Delta^{\prime}$ to obtain a new system
$\Delta\left(  \mathfrak{a,g}\right)  $ such that (a)--(d) are satisfied. For
this let us write
\[
Z=S-h,\quad Z^{\prime}=S^{\prime}-h,\quad h_{\varepsilon}=Z+\varepsilon
Z^{\prime}+\varepsilon^{2}h.
\]
where $\varepsilon>0$ is chosen sufficiently small so that for all $\alpha
\in\Sigma\left(  \mathfrak{a,g}\right)  $ we have
\begin{align}
\alpha\left(  Z\right)    & >0\implies\alpha\left(  Z\right)  >\varepsilon
\left\vert \alpha\left(  Z^{\prime}\right)  \right\vert +\varepsilon
^{2}\left\vert \alpha\left(  h\right)  \right\vert \label{=eps1}\\
\alpha\left(  Z^{\prime}\right)    & >0\implies\alpha\left(  Z^{\prime
}\right)  >\varepsilon\left\vert \alpha\left(  h\right)  \right\vert
\label{=eps2}%
\end{align}
We claim that $h_{\varepsilon}$ is regular in the sense that %
$
\alpha\left(  h_{\varepsilon}\right)  \neq0\text{ for all }\alpha.
$
If $\alpha\left(  Z\right)  >0$ this follows from (\ref{=eps1}), if
$\alpha\left(  Z\right)  <0$ we simply replace $\alpha$ by $-\alpha$. If
$\alpha\left(  Z\right)  =0$ but $\alpha\left(  Z^{\prime}\right)  \neq0$,
then this follows analogously from (\ref{=eps2}). Finally, if $\alpha\left(
Z\right)  =\alpha\left(  Z^{\prime}\right)  =0$ then $\alpha\left(
h_{\varepsilon}\right)  =\varepsilon^{2}\alpha\left(  h\right)  $ and we must
show that $\alpha\left(  h\right)  \neq0$, but this follows from the
regularity of $S^{\prime}=h+Z^{\prime}$.

This means that we can define a positive root system as follows
\[
\Sigma^{+}\left(  \mathfrak{a,g}\right)  =\left\{  \alpha\in\Sigma\mid
\alpha\left(  h_{\varepsilon}\right)  >0\right\}  .
\]
Let $\Delta$ be the corresponding simple system; we will show that
$
supp_{\Delta^{\prime}}\left(  f\right)  \subset\Delta.
$
This implies that $\Delta$ is compatible with $e$, so that (a) holds. To prove that
$
supp_{\Delta^{\prime}}\left(  f\right)  \subset\Delta
$ suppose $\alpha$ belongs to $supp_{\Delta^{\prime}}\left(
f\right)  $. Then we have
$
\alpha\left(  Z\right)  =\alpha\left(  Z^{\prime}\right)  =0,\quad
\alpha\left(  h_{\varepsilon}\right)  =\varepsilon^{2}\alpha\left(  h\right)
=2\varepsilon^{2}>0
$
This means that $\alpha\in\Sigma^{+}\left(  \mathfrak{a,g}\right)  $ and it
remains to show that we cannot write
\begin{equation}
\alpha=\beta+\gamma\label{=simple}%
\end{equation}
where $\beta,\gamma\in\Sigma^{+}\left(  \mathfrak{a,g}\right)  $. Now if
$\beta\left(  Z\right)  >0$ or $\beta\left(  Z^{\prime}\right)  >0$ then
(\ref{=simple}) is impossible by (\ref{=eps1}) and (\ref{=eps2}). Thus we may
assume $\beta\left(  Z\right)  =\beta\left(  Z^{\prime}\right)  =0$ and thus
$
\beta\left(  S^{\prime}\right)  =\beta\left(  Z^{\prime}\right)  +\beta\left(
h\right)  =2>0,
$
and similarly $\gamma\left(  S^{\prime}\right)  >0$. But $\alpha$ is a simple
root in the positive system defined by $S^{\prime}$, so (\ref{=simple}) cannot
hold.

Finally we verify that $\Sigma^{+}\left(  \mathfrak{a,g}\right)  $ satisfies
(d). Thus suppose $\alpha\in\Sigma\left(  \mathfrak{a,g}\right)  $ satisfies
$\alpha\left(  h\right)  \leq0$ and $\alpha\left(  S\right)  >0,$ then we
have
$
\alpha\left(  Z\right)  =\alpha\left(  S\right)  -\alpha\left(  h\right)  >0.
$
By (\ref{=eps1}), this implies
$
\alpha\left(  h_{\varepsilon}\right)  =\alpha\left(  Z\right)  +\varepsilon
\alpha\left(  Z^{\prime}\right)  +\varepsilon^{2}\alpha\left(  h\right)  >0.
$
Thus we have $\alpha\in\Sigma^{+}\left(  \mathfrak{a,g}\right)  $ as desired.
\end{proof}

}

\begin{proposition}\label{prop:PrinDeg}
Suppose that $\varphi\in \fg^*$ can be completed to a principal Whittaker pair. Then any degenerate Whittaker model $\cW_{S,\varphi}$ can be mapped onto some principal degenerate Whittaker model $\cW_{\widetilde{S},\varphi}$.
\end{proposition}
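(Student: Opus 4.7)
The plan is to reduce the statement to Corollary \ref{cor:SameE} by producing a principal semisimple $\widetilde{S}$ that commutes with $S$ and satisfies the eigenspace containment $\fg_\varphi\cap \fg^S_{\geq 1}\subset \fg^{\widetilde{S}}_{\geq 1}$. The key input will be the refined choice of simple system provided by Proposition \ref{prop:PLroot}.

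Using the fixed invariant bilinear form, identify $\varphi$ with a nilpotent $f\in\fg$; by hypothesis $f$ is PL. Apply Proposition \ref{prop:PLroot} to the Whittaker pair $(S,f)$ to obtain a maximal split toral subalgebra $\fa$ containing both $S$ and a neutral element $h$ for $f$, together with a simple system $\Delta\subset\Sigma^{+}\subset\Sigma(\fa,\fg)$ compatible with $f$ and satisfying property (d): any root $\alpha$ with $\alpha(h)\leq 0$ and $\alpha(S)>0$ lies in $\Sigma^{+}$. Set $\widetilde{S}:=S_{\Delta}$. Then $\widetilde{S}\in\fa$, so $[\widetilde{S},S]=0$; and since $f=\sum_{\alpha\in \mathrm{supp}_{\Delta}(f)} Y_{\alpha}$ with $Y_{\alpha}\in\fg_{-\alpha}$ and $\alpha(\widetilde{S})=2$ for $\alpha\in\Delta$, a direct computation yields $[\widetilde{S},f]=-2f$. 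Hence $(\widetilde{S},\varphi)$ is a principal Whittaker pair.

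It remains to verify $\fg_\varphi\cap\fg^S_{\geq 1}\subset\fg^{\widetilde{S}}_{\geq 1}$. From $[h,f]=[S,f]=-2f$ one checks directly that both $h$ and $S$ preserve $\fg_f=\fg_\varphi$, and by Lemma~\ref{lem:help}\eqref{it:KerOm} we have $\fg_f\subset\fg^h_{\leq 0}$. Consequently any $X\in\fg_f\cap\fg^S_{\geq 1}$ decomposes as $X=\sum X_{a,b}$ into joint $(h,S)$-eigenvectors $X_{a,b}\in\fg_f\cap\fg^h_a\cap\fg^S_b$ with $a\leq 0$ and $b\geq 1$. Since $b\geq 1>0$, the space $\fg^h_a\cap\fg^S_b$ is a sum of $\fa$-root spaces $\fg_\gamma$ with $\gamma(h)=a\leq 0$ and $\gamma(S)=b>0$; property (d) forces each such $\gamma$ into $\Sigma^{+}$, whence $\gamma(\widetilde{S})=2\sum_{\beta\in\Delta}n_\beta\geq 2$ when $\gamma=\sum n_\beta\beta$. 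Thus each $X_{a,b}$, and hence $X$, lies in $\fg^{\widetilde{S}}_{\geq 2}\subset\fg^{\widetilde{S}}_{\geq 1}$.

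With the two Whittaker pairs $(S,\varphi)$ and $(\widetilde{S},\varphi)$ having commuting semisimple parts and satisfying the above eigenspace inclusion, Corollary~\ref{cor:SameE} produces the required $G$-equivariant epimorphism $\cW_{S,\varphi}\onto\cW_{\widetilde{S},\varphi}$. The main conceptual step is invoking Proposition~\ref{prop:PLroot} to arrange a common split toral subalgebra in which the positivity condition (d) is compatible with $f$; once that is in place, the root-theoretic containment and the appeal to Corollary~\ref{cor:SameE} are straightforward.
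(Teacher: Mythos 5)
Your proof is correct and takes essentially the same route as the paper. The paper phrases the reduction via the preorder $\geq_\varphi$ (Definition \ref{def:ord}) and invokes Corollary \ref{cor:ord}, whereas you unfold this one step further and verify the eigenspace containment $\fg_\varphi\cap\fg^S_{\geq 1}\subset\fg^{\widetilde{S}}_{\geq 1}$ directly and then cite Corollary \ref{cor:SameE}; since Corollary \ref{cor:ord} is just Corollary \ref{cor:SameE} restated in root-theoretic terms, the two formulations are interchangeable, and both proofs hinge on exactly the same application of Proposition \ref{prop:PLroot}, the choice $\widetilde{S}=S_\Delta$, and property (d) to force the relevant roots into $\Sigma^+$ so that $\alpha(\widetilde{S})\geq 2$.
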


\DimaF{
\begin{proof}
By Corollary \ref{cor:ord}
it suffices to find a maximally split toral subalgebra
$\mathfrak{a}$ and containing a neutral element $h$ for $\varphi$, and a
principal element $\tilde{S}$ such that $\tilde{S}-h\in\mathfrak{a}_{\varphi}$
and $S-h\geq_{\varphi}\tilde{S}-h$ in the sense of Definition \ref{def:ord}, i.e.%
\[
\alpha\left(  h\right)  \leq0\text{ and }\alpha\left(  S\right)  \geq
1\implies\alpha\left(  \tilde{S}\right)  \geq1\text{ for all }\alpha\in
\Sigma\left(  \mathfrak{a,g}\right)  .
\]

Let $f\in \fg$ be the nilpotent element corresponding to $\varphi$. Then $(S,f)$ satisfies the conditions of Proposition \ref{prop:PLroot}.
Let $\mathfrak{a},h,\Sigma^{+},\Delta$ be as in the proposition, and let $\tilde{S}=S_{\Delta}$. Then  any $\alp \in \Sigma\left(  \mathfrak{a,g}\right)$ with $
\alpha\left(  h\right)  \leq0\text{ and }\alpha\left(  S\right)  \geq1
$
lies in  $\Sigma^{+}.$ Since $\tilde{S}$ is principal, we deduce
that $\alpha\left(  \tilde{S}\right)  \geq2.$
\end{proof}
}
\DimaC{
\subsubsection{Archimedean case}\label{subsec:GS}

For a smooth representation $\pi$ of a real reductive group $G$ one can define one more invariant, which we denote $\cV(\pi)$ and call the annihilator variety of $\pi$. It is sometimes called the associated variety of the annihilator of $\pi$. It is defined to be the set of zeros in $\fg_{\C}^*$ of the ideal in the symmetric algebra $S(\fg_{\C})$, which is generated by the symbols of the  annihilator ideal of $\pi$ in the universal enveloping algebra $U(\fg_{\C})$.
It follows from \cite[Theorem 8.4]{Vo91} and \cite{SV} that $\cV(\pi)$ is the Zariski closure of $\WF(\pi)$ in $\fg_\C^*$. Note that if $G$ is a complex reductive group or $G=\GL_n(\R)$
we have $\WF(\pi)= \cV(\pi)\cap \fg^*$.

We will use the following theorems
\begin{thm}[{\cite[Corollary  4]{Mat}}]\label{thm:Mat}
Let $\pi$ be a smooth representation of $G$, let $\cO\subset \fg^*$ be a nilpotent orbit and suppose that $\cW_{\cO}(\pi)\neq 0.$ Then $\cO\subset \cV(\pi)$.
\end{thm}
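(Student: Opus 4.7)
The plan is to reduce the statement to one about Harish-Chandra $(\fg,K)$-modules and then exploit the Vogan / Borho--Brylinski correspondence between associated varieties of modules and of their annihilators. First I would invoke Lemma \ref{lem:WhitFrob}: a nonzero element of $\cW_{\cO}(\pi)=\Hom_G(\cW_\varphi,\pi^*)$ produces a nonzero continuous linear functional $\ell\in\pi^*$ satisfying the covariance $X\cdot\ell=-\varphi(X)\ell$ for every $X$ in a chosen maximal isotropic subalgebra $\fl\subset\fu=\fg^h_{\ge 1}$, where $h$ is neutral for $\varphi$. Since $\cV(\pi)$ depends only on the annihilator of the underlying Harish-Chandra module $V=\pi_K$ (by \cite{SV} and Vogan), a density argument replaces $\ell$ by a nonzero $(\fl,-\varphi)$-covariant linear functional on $V$.

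Next I would form the cyclic $U(\fg)$-submodule $N:=U(\fg)\cdot\ell$ of the algebraic dual $V^{*}$. As $\operatorname{Ann}_{U(\fg)}(\pi)$ annihilates $V$, it annihilates $V^{*}$ and in particular $N$, so the two-sided annihilators satisfy $\operatorname{Ann}(\pi)\subset \operatorname{Ann}(N)$; passing to symbols gives $\cV(N)\subset \cV(\pi)$. The remaining and essential step is therefore to prove $\varphi\in \cV(N)$. Equip $N$ with the good filtration induced by the PBW filtration on the cyclic generator $\ell$, and let $\operatorname{gr} N$ be the associated graded $S(\fg)$-module. The covariance relation $(X-\varphi(X))\ell=0$ for $X\in\fl$ yields $X\in\sqrt{\operatorname{Ann}_{S(\fg)}\operatorname{gr} N}$, so $\operatorname{Supp}(\operatorname{gr} N)\subset \fl^{\perp}\subset\fg^{*}$. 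Since $\fl$ is isotropic for $\omega_\varphi$, we have $\varphi\in\fl^{\perp}$, and a dimension count using $\dim\fl=\tfrac{1}{2}(\dim\fu+\dim\fn)$ and the Jacobson--Morozov description of the Slodowy slice transverse to $G\varphi$ forces $\varphi$ to actually lie in $\operatorname{Supp}(\operatorname{gr} N)$, not merely in $\fl^{\perp}$. Invoking the Gabber--Borho--Brylinski theorem, the $G$-saturation of $\operatorname{Supp}(\operatorname{gr} N)$ agrees (up to taking closures) with the zero set of the associated graded of the two-sided annihilator of $N$, so $\varphi\in\cV(N)\subset\cV(\pi)$, and $G$-invariance plus closedness of $\cV(\pi)$ yields $\overline{G\varphi}=\overline{\cO}\subset\cV(\pi)$.

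The main obstacle is the last dimension-count / Gabber step: passing from the \emph{non}-$G$-invariant bound $\operatorname{Supp}(\operatorname{gr} N)\subset\fl^{\perp}$ to the genuine associated variety of the two-sided annihilator, and certifying that the orbit point $\varphi$ itself (as opposed to some other element of $\fl^{\perp}$) survives in the support. One could try to bypass this by the Kostant--Lynch argument for generic $\varphi$, but for arbitrary nilpotent orbits one really needs the associated-variety machinery; alternatively, one could carry out a parallel argument on the side of $\fn$-homology $H_0(\fu,\pi\otimes\chi_{-\varphi})$ and use Casselman's subrepresentation theorem to keep control of $K$-types, which is closer to Matumoto's original approach.
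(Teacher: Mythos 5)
First, a structural note: the paper does \emph{not} prove Theorem~\ref{thm:Mat}; it is stated as a direct quotation of \cite[Corollary~4]{Mat}, so there is no internal proof to compare your attempt against. Matumoto's own argument is microlocal-analytic, working with the characteristic variety of the system of differential equations that a $C^{-\infty}$-Whittaker vector must satisfy and comparing it with the associated variety; it is quite different in flavor from the cyclic-module strategy you propose.

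Second, the step you yourself flag as the main obstacle does indeed fail, and it fails one line earlier than you think. You assert that ``since $\fl$ is isotropic for $\omega_\varphi$, we have $\varphi\in\fl^{\perp}$.'' This is false. Isotropy of $\fl$ means $\varphi([\fl,\fl])=0$; it says nothing about $\varphi(\fl)$. In fact $\varphi|_{\fl}\neq 0$ whenever $\varphi\neq 0$: any maximal isotropic $\fl\subset\fu$ contains the radical $\fn\supset\fg^{h}_{\geq 2}$, and $\varphi$, being a nonzero element of $(\fg^{*})^{h}_{-2}$, does not vanish on $\fg^{h}_{2}$ --- this is exactly what makes $\chi_\varphi$ a nontrivial character of $N$. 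You do correctly deduce $\operatorname{Supp}(\operatorname{gr} N)\subset\fl^{\perp}$ from the covariance relation (the constant $\varphi(X)$ has lower filtration degree and dies in the associated graded), so $\varphi\notin\fl^{\perp}\supset\operatorname{Supp}(\operatorname{gr} N)$ and the intended conclusion $\varphi\in\operatorname{Supp}(\operatorname{gr} N)$ is impossible; no dimension count can rescue it. There is a second, independent, problem: the Gabber--Borho--Brylinski-type comparison between $\overline{G\cdot\operatorname{Supp}(\operatorname{gr} V)}$ and $\cV(\operatorname{Ann} V)$ is a theorem about Harish--Chandra modules (and highest-weight modules); your $N=U(\fg)\cdot\ell\subset V^{*}$ is a Whittaker-type module and not a Harish--Chandra module, so that theorem does not apply to it without further work. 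The viable routes here are essentially the ones you gesture at in your final sentence: Matumoto's wave-front/characteristic-variety estimate, or a $\fu$-homology computation in the spirit of Casselman--Osborne.
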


\begin{thm}[{\cite[Theorem B]{GS-Gen}}]\label{thm:GS}
Let $(S,\varphi)$ be a principal degenerate Whittaker pair for $G$. Let
$\pi\in \cM(G)$ such that $\varphi \in \WF(\pi)$.
Then
\begin{enumerate}[(i)]
\item If $G$ is a complex group or $G=\GL_n(\R)$  then $\cW_{S,\varphi}(\pi)\neq 0$.
\item If $G$\ is quasisplit then there exists $g\in G_{\C}$ such that $ad(g)$ preserves $\fg$ and $\cW_{ad(g)(S),ad(g)(\varphi)}(\pi)\neq 0$.
\end{enumerate}
\end{thm}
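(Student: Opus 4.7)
The approach is to reduce existence of the principal degenerate Whittaker functional to existence of a \emph{generic} (Kostant-type) Whittaker functional on a Levi subgroup, by exploiting the parabolic structure attached to a principal datum.

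First, set up the parabolic. Since $S$ is principal, fix a maximal split torus $\fa\ni S$ and a simple system $\Delta$ with $\alpha(S)=2$ for every $\alpha\in\Delta$. The condition $\varphi\in(\fg^*)^S_{-2}$, together with the fact that $\alpha(S)\geq 2$ on every positive root, forces $f_\varphi=\sum_{\alpha\in\Delta'}Y_\alpha$ with $Y_\alpha\in\fg_{-\alpha}$ for some $\Delta'\subseteq\Delta$; moreover $\fg^S_1=0$, so no oscillator correction is needed and $\cW_{S,\varphi}=\ind^G_U\chi_\varphi$ with $\fu=\fg^S_{\geq 1}$. Let $L$ be the Levi subgroup generated by $\Delta'$ and $P=LU_G$ the standard parabolic with $L$ as Levi factor. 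A direct check shows that $\chi_\varphi$ is trivial on the unipotent radical $U_G$ of $P$ and that, restricted to the unipotent radical $U_L$ of a Borel of $[L,L]$, it is a generic Whittaker character of $[L,L]$.

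Next, reduce to the Levi. Via Lemma \ref{lem:WhitFrob} and the factorisation $U=U_G\cdot U_L$ I would identify
\begin{equation*}
\cW_{S,\varphi}(\pi)\;\simeq\;\Hom_{U_L}\bigl(\pi_{U_G},\;\chi_\varphi|_{U_L}\bigr),
\end{equation*}
where $\pi_{U_G}$ denotes the smooth $U_G$-coinvariants (the Casselman--Wallach Jacquet module in the archimedean case), regarded as an admissible $L$-module. The right-hand side is the space of generic $L$-Whittaker functionals on $\pi_{U_G}$, and by Kostant's theorem for complex $L$, its Casselman--Shahidi extension for Levis isogenous to products of $\GL_{n_i}(\R)$, and the corresponding quasisplit real statement, this space is nonzero whenever the regular nilpotent orbit of $\fl^*$ lies in $\WF(\pi_{U_G})$. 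For quasisplit real $G$ one complexifies $\pi$, applies the complex Kostant theorem, and descends to $\R$; the descent can require conjugation by an element $g\in G_\C$ normalising the real form, which is precisely the $g$ appearing in the statement.

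The last ingredient, and the main obstacle, is the wave-front compatibility: one must prove that $\varphi\in\WF(\pi)$ forces the regular nilpotent orbit of $\fl^*$ to lie in $\WF(\pi_{U_G})$. In the non-archimedean setting this is essentially contained in \cite{MW}, but in the archimedean setting it is subtle and requires a careful analysis of the Harish-Chandra character expansion of $\pi$ along $P$, using the Barbasch--Vogan and Rossmann description \cite{BV,Ros} of $\WF(\pi)$ in terms of the leading term of this expansion and matching this leading term against Lusztig--Spaltenstein induction from nilpotent orbits of $\fl^*$. Once this compatibility is secured, the concatenation of the steps above closes the argument.
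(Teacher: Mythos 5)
This statement is not proved in the paper at all: it is quoted (in a weakened form) from \cite[Theorem B]{GS-Gen} and used as a black box, so there is no internal argument to compare yours against. Your proposal is therefore an attempt to reprove an imported result, and it must be judged on its own.

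Your setup is correct and your high-level strategy (pass to the standard parabolic $P=LU_G$ attached to $\Delta'=supp_\Delta(f_\varphi)$, observe that $\chi_\varphi$ is trivial on $\fu_G$ and generic on the Levi, and reduce to a Kostant--Matumoto statement for $L$ applied to the Jacquet module) is indeed the natural one. But the proposal has a genuine gap exactly where you flag ``the main obstacle'': the assertion that $\varphi\in\WF(\pi)$ forces the regular nilpotent orbit of $\fl^*$ to lie in $\WF(\pi_{U_G})$ is the entire mathematical content of the theorem in the archimedean case, and you do not prove it --- you only say it ``requires a careful analysis'' of character expansions matched against Lusztig--Spaltenstein induction. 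Without that step the argument does not close. There are also secondary issues you elide: (a) in the archimedean case the identification $\cW_{S,\varphi}(\pi)\simeq\Hom_{U_L}(\pi_{U_G},\chi_\varphi|_{U_L})$ and the subsequent application of a Kostant-type theorem require knowing that the topological coinvariants $\pi_{U_G}$ of a Casselman--Wallach representation is again admissible and finitely generated over $L$ (a form of Casselman's comparison theorem), which is a nontrivial input; (b) $\pi$ is not assumed irreducible, whereas the Kostant/Matumoto nonvanishing results are for irreducible representations, and a Whittaker functional on an irreducible subquotient of $\pi_{U_G}$ does not automatically lift to $\pi_{U_G}$ unless one controls exactness; (c) the direction ``large wave front set $\Rightarrow$ existence of a Whittaker functional'' for real quasisplit groups is Matumoto's theorem, which only produces the functional after conjugation by some $g\in G_\C$ --- you correctly anticipate this for part (ii), but it means the clean statement in part (i) for $\GL_n(\R)$ needs a separate argument (in \cite{GS-Gen} this case is handled by the theory of derivatives/adduced representations rather than by complexification). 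In short: right skeleton, but the load-bearing step is missing.
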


In fact, the theorems in \cite{Mat,GS-Gen} are stronger than the versions we state here.

\begin{proof}[Proof of Theorem \ref{thm:PL}]
Let $(S,\varphi)$ be as in the theorem. Then $\varphi$ can be completed to a principal Whittaker pair, and Proposition \ref{prop:PrinDeg} implies that the degenerate Whittaker model $\cW_{S,\varphi}$ can be mapped onto some principal degenerate Whittaker model $\cW_{\widetilde{S},\varphi}$. By Theorem \ref{thm:main}, $\cW_{\varphi}$ maps onto $\cW_{S,\varphi}$. Thus we have $\cW_{\widetilde{S},\varphi}(\pi) \into \cW_{S,\varphi}(\pi)\into \cW_{\varphi}(\pi)$. Together with Theorems \ref{thm:Mat},\ref{thm:GS} we get
$$\varphi \in \WF(\pi) \Rightarrow \cW_{\widetilde{S},\varphi}(\pi)\neq 0 \Rightarrow  \cW_{S,\varphi}(\pi)\neq 0 \Rightarrow  \cW_{\varphi}(\pi)\neq 0 \Rightarrow \varphi \in \WF(\pi).$$
\end{proof}

In the same way we get the following statement for real reductive groups.

\begin{cor}\label{cor:real}
Suppose that  $G$\ is quasisplit  and let  $\pi \in\mathcal{M}(G)$.
\DimaG{
Let $\varphi\in \WF(\pi)$ be a PL
nilpotent, and let $(S,\varphi)$ be a Whittaker pair. Then there exists $g\in G_{\C}$ such that $ad(g)$ preserves $\fg$ and $\cW_{ad(g)(S),ad(g)(\varphi)}(\pi)\neq 0$.}
\end{cor}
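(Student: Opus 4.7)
The plan is to mimic the proof of Theorem~\ref{thm:PL}, replacing the use of Theorem~\ref{thm:GS}(i) by its quasisplit counterpart Theorem~\ref{thm:GS}(ii). First, since $\varphi$ is a PL nilpotent, Proposition~\ref{prop:PrinDeg} produces a principal Whittaker pair $(\widetilde{S},\varphi)$ together with a $G$-equivariant epimorphism $\cW_{S,\varphi}\onto \cW_{\widetilde{S},\varphi}$. Next, applying Theorem~\ref{thm:GS}(ii) to the principal pair $(\widetilde{S},\varphi)$ and the hypothesis $\varphi\in\WF(\pi)$ yields $g\in G_\C$ with $\ad(g)(\fg)=\fg$ and $\cW_{\ad(g)(\widetilde{S}),\ad(g)(\varphi)}(\pi)\neq 0$.

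It remains to show that this same $g$ witnesses the conclusion for the original pair $(S,\varphi)$. For this we transport the epimorphism above along $\ad(g)$. Since $\ad(g)$ is an $\F$-linear automorphism of $\fg$, and all of the data entering the construction of $\cW_{S',\varphi'}$ (the subalgebras $\fu,\fn,\fn'\subset\fg$, the character $\chi_{\varphi'}$, and the oscillator representation $\sigma_{\varphi'}$) is built functorially from a pair $(S',\varphi')\in\fg\times\fg^*$, we obtain an epimorphism
\[
\cW_{\ad(g)(S),\ad(g)(\varphi)}\onto\cW_{\ad(g)(\widetilde{S}),\ad(g)(\varphi)}.
\]
Dualizing and pairing with $\pi$ produces an embedding
\[
\cW_{\ad(g)(\widetilde{S}),\ad(g)(\varphi)}(\pi)\into\cW_{\ad(g)(S),\ad(g)(\varphi)}(\pi),
\]
and combined with the previous step this gives $\cW_{\ad(g)(S),\ad(g)(\varphi)}(\pi)\neq 0$, as desired.

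The main subtlety is the transport of the epimorphism by $\ad(g)$: although $g$ need not lie in $G$, and thus $\ad(g)$ may be an outer automorphism of $G$, the fact that $\ad(g)$ preserves $\fg$ and the Killing form suffices to identify the unipotent subgroups of $G$ attached to $\ad(g)(S)$ with those attached to $S$ via $\ad(g)$ itself. Hence the entire construction of the degenerate Whittaker model, including epimorphisms between such models provided by Theorem~\ref{thm:main2} and Corollary~\ref{cor:ord}, is transported intact. This is the same compatibility that makes sense of the expression $\cW_{\ad(g)(\widetilde{S}),\ad(g)(\varphi)}(\pi)$ in Theorem~\ref{thm:GS}(ii) to begin with.
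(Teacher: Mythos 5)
Your proof is correct and follows the same route the paper intends when it says ``In the same way we get the following statement.'' You correctly identify that after invoking Proposition~\ref{prop:PrinDeg} and Theorem~\ref{thm:GS}(ii), one still needs to conjugate the epimorphism $\cW_{S,\varphi}\onto\cW_{\widetilde{S},\varphi}$ by $\ad(g)$ (equivalently, re-run Proposition~\ref{prop:PrinDeg} with the transported data $\ad(g)(S),\ad(g)(\widetilde{S}),\ad(g)(\varphi)$) to land on the specific principal model for which Theorem~\ref{thm:GS}(ii) guarantees non-vanishing; this transport step, which the paper leaves implicit, is exactly the right point to spell out, and your justification---that the entire construction is functorial in the Lie-algebra data $(S,\varphi)$ under automorphisms of $\fg$ preserving the Killing form---is sound.
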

}

\section{General linear groups}\label{sec:GL}

\subsection{Notation}
Let us first introduce some notation. A \emph{composition} $\eta$ of $n$ is a sequence of natural (positive) numbers $\eta_1,\dots,\eta_k$ with $\sum\eta_i=n$. The length of $\eta$ is $k$. A  partition $\lambda$ is a composition such that $\lambda_1\geq \lambda_2\geq \dots\geq \lambda_k$.
For a composition $\eta$ we denote by $\eta^\geq$ the corresponding partition.
A partial order on partitions of $n$ is defined by \begin{equation}
\lambda\geq \mu \text{ if }\sum_{i=1}^j \lambda_i\geq \sum_{i=1}^j \mu_i\text{ for any }1\leq j \leq \mathrm{length}(\lambda), \mathrm{length}(\mu).
\end{equation}

We will use the notation $\diag(x_1,\dots,x_k)$ for diagonal and block-diagonal matrices. For a natural number $k$ we denote by $J_k\in \fg_k$ the \emph{lower}-triangular Jordan block of size $k$, and by $h_k$ the diagonal matrix $h_k:=\diag(k-1,k-3,\dots,1-k)$. For a composition $\eta$ we denote \begin{equation}J_{\eta}:=\diag(J_{\eta_1},\dots, J_{\eta_k})\in \fg_n \text{ and }h_{\eta}:= \diag(h_{\eta_1},\dots, h_{\eta_k})\in \fg_n.\end{equation}
Note that $[h_{\eta},J_{\eta}]=-2J_{\eta}$ and $(J_{\eta},h_{\eta})$ can be completed to an $\sl_2$-triple.

 Identify $\fg_n^*$ with $\fg_n$ using the trace form. Denote by $\cO_{\eta}$ the orbit of $J_{\eta}$,
  and also the corresponding orbit in $\fg_n^*$.
 By the Jordan theorem all nilpotent orbits are of this form.
It is well known that $\cO_{\eta}\subset \overline{\cO_{\gam}}$ if and only if $\eta^{\geq} \leq \gam^{\geq}$.
Let $T_n\subset G_n$ denote the subgroup of diagonal matrices and $\ft_n\subset \g_n$ the subalgebra of diagonal matrices.

\subsection{Proof of Theorem \ref{thm:GLmain}}\label{subsec:PfGLmain}

Let $E_{ij}$ denote the elementary matrix with 1 in the $(i,j)$ entry and zeros elsewhere.

\begin{lemma}\label{lem:TwoBlocks}
For any $p,q,r \in \Z_{\geq 0}$ with $p\geq r$ there exists a diagonal matrix $S\in \ft_m(\Z),$ where $m=p+q+r$, and a regular nilpotent $X\in \fg_{q+r}$ such that $[S,J_{(p+q,r)}]=-2J_{(p+q,r)}$ and  $\diag(J_{p},X)\in \overline{(G_m)_SJ_{(p+q,r)}}$.
\end{lemma}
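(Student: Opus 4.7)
The plan is to construct $S$ explicitly as the diagonal matrix with entries $s_i := -2(i-1)$ for $1 \leq i \leq p+q$ and $s_{p+q+j} := -2(p-r+j-1)$ for $1 \leq j \leq r$. The hypothesis $p \geq r$ ensures that the second arithmetic progression sits entirely within the range $[-2(p-1),\, 0]$ of the first, so that the standard basis vectors $e_{p+1}, \ldots, e_m$ carry $S$-eigenvalues forming a single arithmetic progression of length $q+r$ with no repetitions. A routine check gives $S \in \ft_m(\Z)$ and $[S, J_{(p+q,r)}] = -2 J_{(p+q,r)}$. I will then take $X \in \fg_{q+r}$ to be the regular nilpotent on the span of $e_{p+1}, \ldots, e_m$ acting by the single Jordan chain
\[
e_{p+q+1} \mapsto e_{p+q+2} \mapsto \cdots \mapsto e_m \mapsto e_{p+1} \mapsto e_{p+2} \mapsto \cdots \mapsto e_{p+q} \mapsto 0,
\]
which follows the decreasing sequence of $S$-eigenvalues; this forces $\diag(J_p, X) \in \fg^S_{-2}$.

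The core of the argument is an explicit one-parameter degeneration. I will consider the family
\[
J(t) := J_{(p+q,r)} + (t-1)\, E_{p+1,p} + E_{p+1,m}, \qquad t \in \F,
\]
with the last term omitted if $r = 0$. A weight computation (using $s_{p+1} - s_m = -2p + 2(p-1) = -2$) gives $J(t) \in \fg^S_{-2}$ for all $t$, and at $t = 0$ the bridge $E_{p+1,p}$ is removed while the new bridge $E_{p+1,m}$ attaches $e_m$ to $e_{p+1}$, yielding exactly $\diag(J_p, X)$.

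For $t \neq 0$ I will produce a conjugating element $g(t) \in (G_m)_S$ explicitly. In the decomposition of $\F^m$ into $S$-eigenspaces, the two-dimensional eigenspaces are exactly $\langle e_{k+1}, e_{k+q+r+1}\rangle$ for $k \in [p-r,\, p-1]$; on each of these $g(t)$ will act by the block $\bigl(\begin{smallmatrix} 1/t & -1/t \\ 0 & 1 \end{smallmatrix}\bigr)$, while on the one-dimensional eigenspaces it acts by the scalar $1/t$ when $k < p-r$ and by $1$ when $k \geq p$. A routine blockwise computation then shows $g(t) J_{(p+q,r)} g(t)^{-1} = J(t)$. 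Passing to the limit $t \to 0$ places $\diag(J_p, X)$ in $\overline{(G_m)_S \cdot J_{(p+q,r)}}$, proving the lemma. The main technical point will be checking the global consistency of the blockwise-defined $g(t)$: the off-diagonal entries of the $2\times 2$ blocks must match the scalars on the adjacent one-dimensional eigenspaces so that $g(t)$ intertwines the unmodified matrix entries of $J_{(p+q,r)}$, and this works out precisely because of the alignment of the two arithmetic progressions forced by $p \geq r$. The boundary cases ($r = 0$ where $E_{p+1,m}$ has the wrong weight and must be dropped, $q = 0$, or $p = r$) require only minor separate verification.
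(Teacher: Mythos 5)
Your proof is correct and follows essentially the same approach as the paper's. Your $S$ differs from the paper's $\diag(h_{p+q},\,h_r+(r+q-p)\Id_r)$ only by the scalar matrix $(p+q-1)\Id_m$, and your one-parameter family $g(t)$ is precisely the product $D(t)\,g^{-1}$ of the paper's diagonal degeneration $D(t)=\diag(1/t,\dots,1/t,1,\dots,1)$ with the inverse of the paper's unipotent $g=\Id+\sum_{j=1}^{r}E_{p-r+j,\,m-r+j}$ — you have merely merged the two conjugation steps (unipotent then diagonal) into a single family, presented via the $S$-eigenspace block structure rather than by multiplying elementary matrices. The paper also treats the boundary cases $r=0$ and $q=0$ separately via a torus degeneration, consistent with your remark that these need a minor separate check.
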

\begin{proof}
If $r=0$ or $q=0$ we take $X:=J_{q+r}$, $S:=h_{(p+q,r)}$ and note that $\diag(J_{p},X)\in \overline{T_mJ_{(p+q,r)}} \subset \overline{(G_m)_SJ_{(p+q,r)}}$. Assume now $q,r>0$ and let $$F :=J_{(p+q,r)},S:=\diag(h_{p+q},h_r+(r+q-p)\Id_r) \in \fg_m,$$ $$\text{and }g:=(\Id_m+E_{p-r+1,m-r+1})(\Id_m+E_{p-r+2,m-r+2})\cdot \dots \cdot(\Id_m+E_{p,m})\in G_m.$$
Note that $S_i:=S_{ii}=p+q-(2i-1)$ for $1\leq i \leq p+q$ and $S_i=2r+3q+p-(2i-1)$ for $p+q+1\leq i \leq m$. Thus $S_{m-r+j}=S_{p-r+j}$ for $1 \leq j \leq r$ and thus $g$ commutes with $S$. Note also that $F':=Ad(g)(F)= F+E_{p+1,m}$.
Conjugating $F'$ by a suitable diagonal matrix we can obtain $F'-(1-t)E_{p+1,p}\in (G_m)_S F$ for any $t\in \F^{\times}.$ Letting $t$ go to zero, we get that $$f:=F'-E_{p+1,p}=F+E_{p+1,m}-E_{p+1,p}\in \overline{(G_m)_S F}.$$
Finally, it is easy to see that $f= \diag(J_{p},X)$ for a regular nilpotent $X\in \fg_{q+r}$.
\end{proof}

\begin{lemma}\label{lem:part}
Let $\lambda,\mu$ be partitions of $n$ with $\lambda \geq \mu$. Then there exists an index $i\leq \length(\lambda)$ such that $\lambda_i\geq \mu_i\geq \lambda_{i+1}$. Here, if $i= \length(\lambda)$ we take $\lambda_{i+1}=0$.
\end{lemma}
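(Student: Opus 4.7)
The plan is to exhibit a specific candidate for $i$ and to verify that both inequalities hold for this choice. The natural choice is: let $i$ be the smallest index in $\{1,\dots,\length(\lambda)\}$ such that $\mu_i \geq \lambda_{i+1}$, using the convention $\lambda_{\length(\lambda)+1}=0$.

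First I would check that this set is nonempty so that $i$ is well-defined. Taking $j=\length(\lambda)$ we have $\lambda_{j+1}=0$ while $\mu_j\geq 0$, so the index set always contains $\length(\lambda)$.

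Next I would verify that the chosen $i$ also satisfies $\mu_i\leq\lambda_i$, splitting into two cases. If $i=1$, then the dominance assumption $\lambda\geq\mu$ gives directly $\lambda_1\geq\mu_1$ via the first partial-sum inequality. If $i>1$, then by minimality of $i$ we have $\mu_{i-1}<\lambda_i$, and since $\mu$ is weakly decreasing this yields $\mu_i\leq\mu_{i-1}<\lambda_i$. Combining with $\mu_i\geq \lambda_{i+1}$ from the definition of $i$ gives the desired chain $\lambda_i\geq\mu_i\geq\lambda_{i+1}$.

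The main subtlety (rather than a genuine obstacle) is recognizing that the naive candidate—the smallest index with $\mu_i\leq\lambda_i$—does not always work: in the example $\lambda=(5,5,1)$, $\mu=(4,4,3)$ one has $\mu_1=4\leq 5=\lambda_1$ but $\mu_1=4<5=\lambda_2$, so the second inequality fails at $i=1$ even though the first holds. The correct strategy is instead to select $i$ based on the second inequality and then deduce the first, and it is worth noting that the dominance hypothesis $\lambda\geq\mu$ is genuinely used only in the base case $i=1$, while the step $i>1$ relies solely on the minimality of $i$ and the monotonicity of $\mu$.
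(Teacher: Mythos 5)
Your proof is correct, and it takes a genuinely different route from the paper's. The paper argues by induction on $\length(\lambda)$: if $\mu_1\geq\lambda_2$ one takes $i=1$, and otherwise one applies the inductive hypothesis to the shorter partitions $\lambda'=(\lambda_1+\lambda_2-\mu_1,\lambda_3,\dots)$ and $\mu'=(\mu_2,\mu_3,\dots)$, then shifts the resulting index. You instead give a direct extremal argument, choosing $i$ minimal with $\mu_i\geq\lambda_{i+1}$ and deducing $\mu_i\leq\lambda_i$ from minimality (using monotonicity of $\mu$ when $i>1$ and dominance when $i=1$). Both proofs are valid; your choice of $i$ is essentially the value the paper's recursion unwinds to, but your argument avoids the bookkeeping of verifying that $\lambda'\geq\mu'$ and that $\lambda'$ is still a partition, so it is shorter and more transparent. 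One small remark: in the case $i>1$ you implicitly use that $i-1\leq\length(\mu)$ so that $\mu_{i-1}$ is an honest part; this holds because dominance of partitions of the same $n$ forces $\length(\lambda)\leq\length(\mu)$, and it would be worth a half-sentence to say so. Your observation that the ``dual'' choice (smallest $i$ with $\mu_i\leq\lambda_i$) can fail, with the counterexample $\lambda=(5,5,1)$, $\mu=(4,4,3)$, is a helpful sanity check that the asymmetry in which inequality drives the choice of $i$ is essential.
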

\begin{proof}
We prove by induction on $\length(\lambda)$. If $\length(\lambda)=1$ take $i=1$. For the induction step,
assume $\length(\lambda)\geq2$ and the lemma holds for all shorter partitions. If $\mu_1\geq \lambda_2$ take $i:=1$. Otherwise, consider the partitions $\lambda'=(\lambda_1+\lambda_2 -\mu_1,\lambda_3,\dots)$ and $\mu'=(\mu_2,\mu_3,\dots)$. Note that these are indeed partitions and $\lambda'\geq \mu'$. Thus, by the induction hypothesis there exists $j$ such that $\lambda'_j\geq \mu'_j\geq \lambda'_{j+1}$. If $j>1$ take $i:=j+1$. If $j=1$ then $\mu_2\geq \lambda_3$ and we also have $\lambda_2>\mu_1\geq\mu_2$. Thus we can take $i:=j+1=2$ in this case as well.
\end{proof}

We are now ready to prove Theorem \ref{thm:GLmain}. Let us reformulate it in terms of partitions.

\begin{thm}
Let $\lambda,\mu$ be partitions of $n$. Then $\lambda \geq \mu$ if and only if there exists $S\in \ft_n(\Z)$ such that
$[S,J_{\lambda}]=-2J_{\lambda}$ and $\overline{(G_n)_S J_{\lambda}}$ intersects $\cO_{\mu}$.
\end{thm}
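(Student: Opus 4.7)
The easy direction $(\Leftarrow)$ is immediate: since $(G_n)_S \subseteq G_n$, we have $\overline{(G_n)_S \cdot J_\lambda} \subseteq \overline{\cO_\lambda}$, so the hypothesized intersection with $\cO_\mu$ forces $\cO_\mu \subseteq \overline{\cO_\lambda}$, which is equivalent to $\mu \leq \lambda$ under the standard orbit-closure correspondence for nilpotent orbits in $\fg_n$.

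For the hard direction $(\Rightarrow)$, the plan is to induct on $n$. The case $n = 1$ is trivial, and the auxiliary base case $\length(\lambda) = 1$ (so $\lambda = (n)$) is handled directly by taking $S := h_n$; its centralizer is the diagonal torus $T_n$, and scaling selected subdiagonal entries of $J_n$ to zero via diagonal conjugation realizes every Jordan type in the closure of $T_n \cdot J_n$, so it meets $\cO_\mu$ for each partition $\mu$ of $n$.

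For the inductive step with $\length(\lambda) \geq 2$, apply Lemma \ref{lem:part} to produce an index $i$ with $\lambda_i \geq \mu_i \geq \lambda_{i+1}$ (with the convention $\lambda_{i+1} := 0$ when $i = \length(\lambda)$). Following the sketch in the introduction, decompose $\lambda$ into two overlapping sub-partitions sharing the $i$-th Jordan block of size $\lambda_i$: $\lambda^A := (\lambda_1, \ldots, \lambda_i)$ of size $N_A := \sum_{j \leq i}\lambda_j$, and $\lambda^B := (\lambda_i, \lambda_{i+1}, \ldots, \lambda_{\length(\lambda)})$ of size $N_B := \lambda_i + \sum_{j > i}\lambda_j$, so that $N_A + N_B = n + \lambda_i > n$. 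Select auxiliary partitions $\mu^A$ of $N_A$ and $\mu^B$ of $N_B$, dominated by $\lambda^A$ and $\lambda^B$ respectively, matching $\mu$ on their non-overlap parts and carrying a common value on the shared block. For interior $i$ (with $2 \leq i < \length(\lambda)$), both $N_A, N_B < n$, so the inductive hypothesis yields diagonal matrices $S^A \in \ft_{N_A}(\Z)$ and $S^B \in \ft_{N_B}(\Z)$; each restricts to $h_{\lambda_i}$ plus an integer scalar on the shared block, and shifting $S^B$ by an appropriate integer scalar matrix forces the two restrictions to coincide, so gluing along the overlap produces the required $S \in \ft_n(\Z)$. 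The boundary cases $i \in \{1, \length(\lambda)\}$ (where $N_A = n$ or $N_B = n$) are handled by a direct application of the Two Blocks Lemma \ref{lem:TwoBlocks} with parameters $p := \mu_i$, $q := \lambda_i - \mu_i$, $r := \lambda_{i+1}$ (note $p \geq r$), combined with a secondary induction on a finer measure such as $\lambda_1$, reducing the boundary cases to the interior case or to the auxiliary base $\length(\lambda) = 1$.

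The main obstacle is the precise choice of the auxiliary partitions $\mu^A$ and $\mu^B$ simultaneously meeting all compatibility requirements: being valid partitions dominated by $\lambda^A, \lambda^B$, reconstructing $\mu$ from their non-overlap parts together with the shared block, and producing inductive solutions whose glued orbit closure genuinely contains $\cO_\mu$ rather than a strictly larger orbit. Verifying this last point requires tracking how the two centralizer orbits compose through the overlap block in the limit; the scalar-shift freedom on $S^B$ is the key mechanism that allows the $S$-eigenvalues to align compatibly across the overlap, thereby permitting the required information to flow between the two pieces.
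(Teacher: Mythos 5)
Your easy direction and your general strategy (induction on $n$, Lemma \ref{lem:part}, two overlapping pieces glued by an integer scalar shift of $S$ on the shared block) match the paper in spirit, but the core of the hard direction is missing, and the specific decomposition you chose does not work. You split $\lambda$ into $\lambda^A=(\lambda_1,\dots,\lambda_i)$ and $\lambda^B=(\lambda_i,\dots,\lambda_k)$ overlapping in the block $\lambda_i$ and propose to handle \emph{both} pieces by the inductive hypothesis with auxiliary targets $\mu^A,\mu^B$ ``matching $\mu$ on the non-overlap parts and carrying a common value on the shared block.'' The bookkeeping already fails: since $|\mu^A|+|\mu^B|=N_A+N_B=n+\lambda_i$, reconstructing $\mu$ forces the shared value to be $\lambda_i$ itself, and then $\mu^B$ must contain a part $\lambda_i$ together with $\mu_{i+1},\dots,\mu_k$, which need not fit inside $N_B$. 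Concretely, for $\lambda=(4,4,1)$, $\mu=(3,3,3)$ one gets $i=2$, $N_B=5$, and no admissible $\mu^B$ exists. The part $\mu_i$ straddles both of your pieces, which is exactly what makes the auxiliary partitions impossible to pin down.

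The deeper problem, which you flag as ``the main obstacle'' but do not resolve, is composing two degenerations through the shared block: even if $\mu^A,\mu^B$ existed, the limits of $(G_{N_A})_{S^A}J_{\lambda^A}$ and $(G_{N_B})_{S^B}J_{\lambda^B}$ both act nontrivially on the overlap, and the resulting element need not lie in $\overline{(G_n)_S J_\lambda}\cap\cO_\mu$; this is precisely the non-transitivity the paper warns about. The paper's fix is asymmetric: one piece is the \emph{explicit} Two Blocks Lemma applied to the adjacent pair $(\lambda_i,\lambda_{i+1})$ with $p=\lambda_i+\lambda_{i+1}-\mu_i$, $q=\mu_i-\lambda_{i+1}$, $r=\lambda_{i+1}$ (not your $p=\mu_i$, $q=\lambda_i-\mu_i$), whose output $\diag(J_p,X)$ produces the entire part $\mu_i$ as the regular nilpotent $X$ and leaves the shared block of size $p$ \emph{undegenerated} as $J_p$; only the second, inductive piece --- for $\lambda'$ (the two blocks merged into one block $p$) and $\mu'$ ($\mu$ with $\mu_i$ deleted), living on $n-\mu_i<n$ letters --- acts on that block afterwards. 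This makes the composition a genuine two-step limit inside $(G_n)_S$ and removes the need for any boundary-case analysis or secondary induction on $\lambda_1$. You would need to replace your symmetric gluing by something of this kind for the argument to close.
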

\begin{proof}
We prove the lemma by induction on $n$. The base case $n=1$ is obvious.
For the induction step, assume that the lemma holds for all $n' < n$.
By Lemma \ref{lem:part} there exists an index $i$ with $i\leq \length(\lambda)$ such that $\lambda_i\geq \mu_i\geq \lambda_{i+1}$.
Let $$r:=\lambda_{i+1}, \, q:=\mu_i-\lambda_{i+1},\, p:=\lambda_i+\lambda_{i+1}-\mu_i, \text{ and } m:=\lambda_{i}+\lambda_{i+1}.$$
Now consider the partition $\lambda'$ obtained by replacing the blocks $\lambda_i$ and $\lambda_{i+1}$ by a single block $p$, and the partition $\mu'$ obtained from $\mu$ by omitting the block $\mu_i$. Note that both are indeed partitions of $n-\mu_i$ and that $\lambda'\geq \mu'$. Thus, by the induction hypothesis there exists $S''\in \ft_{n-\mu_i}(\Z)$ such that $[S'',J_{\lambda'}]=-2J_{\lambda'}$ and $\overline{(G_{n-\mu_i})_{S''}J_{\lambda'}}$ intersects $\cO_{\mu'}$. Choose $S' \in \ft_m(\Z)$ and $X\in \fg_{\mu_i}$ using Lemma \ref{lem:TwoBlocks}.
Consider the matrix $Z'$ formed by taking the first $p$ elements on the diagonal of $S'$ and the matrix $Z''$ formed by taking the $p$ elements number $\lambda_1+\dots+\lambda_{i-1}+1,\dots, \lambda_1+\dots+\lambda_{i-1}+p$ on the diagonal of $S''$. Note that $Z'-Z''$ is a diagonal matrix that commutes with $J_{p}$ and thus equals $c\Id_p$ for some integer $c$. Replacing $S'$ by $S'-c\Id_m$ we can assume that $Z'=Z''$ and thus there exists $S\in \ft_n(\Z)$ that
includes both $S'$ and $S''$ as diagonal submatrices.

Let us show that $S$ satisfies the conditions of the lemma.
Let $a:=\sum_{j=1}^{i-1}\lambda_j$ and $d:=\sum_{j=i+2}^{\length(\lambda)}\lambda_j$.
Define an embedding of $G_{m}$ into $G_n$ by $\iota_1(g):=\diag(\Id_{a}, g, \Id_{d})$.
Define an embedding of $G_{n-\mu_i}$ into $G_n$  by $$\iota_2\left(
                                                               \begin{array}{cc}
                                                                 A & B \\
                                                                 C & D \\
                                                               \end{array}
                                                             \right)
:=\left(
                                                               \begin{array}{ccc}
                                                                 A & 0 & B \\
                                                                 0 & \Id_{\mu_i} & 0 \\
                                                                 C & 0 & D \\
                                                               \end{array}
                                                             \right),
$$
where $A\in \Mat(a,a,\F),\, B\in \Mat(a,d+p,\F),\, C\in \Mat(d+p,a,\F),\, D\in  \Mat(d+p,d+p,\F).$
Let $d\iota_1:\fg_m\into \fg_n$ and  $d\iota_2:\fg_{n-\mu_i}\into \fg_n$ be the differentials of $\iota_1,\iota_2$.
These embeddings map the centralizers of $S'$ and $S''$ into the centralizer of $S$. 
Let $Y:=d\iota_1(\diag(0,X))+d\iota_2(J_{\lambda'})\in \fg_n$.
Since $\diag(J_{p},X)\in \overline{(G_m)_{S'}J_{(\lambda_i,\lambda_{i+1})}}$ , we have
$Y\in \overline{(G_n)_S J_{\lambda}}$. Since  $\overline{(G_{n-\mu_i})_{S''}J_{\lambda'}}$ intersects $\cO_{\mu'}$, $\overline{(G_n)_S Y}$ intersects $\cO_{\mu}$ and thus $\overline{(G_n)_S J_{\lambda}}$ intersects $\cO_{\mu}$.
\end{proof}

\DimaC{
\subsection{Proof of Theorem \ref{thm:GLOrb}}\label{subsec:PfGLOrb}

In the archimedean case the theorem follows from Theorem \ref{thm:Mat} and Corollary \ref{cor:real}. Thus we assume here that $\F$ is non-archimedean.
Let $\varphi\in \fg^{*}$ and let $\nu:\F^{\times}\to G$ be an algebraic group morphism (defined over $\F$) such that $ad^*(\nu(t))\varphi=t^2\varphi$. Let $S:=d\nu(1)\in \fg$. Following \cite{MW} define $\cW_{\nu,\varphi}:=\cW_{S,\varphi}$.

\begin{thm}[{\cite[Proposition I.11, Theorem I.16 and Corollary I.17]{MW}, and \cite[Proposition 1 and Theorem 1]{Var}}] \label{thm:MW}
Let $\varphi,\nu$ be as above. Let $\pi\in \cM(G)$.
\begin{enumerate}[(i)]
\item If $\cW_{\nu,\varphi}(\pi)\neq 0$ then $\varphi\in \WF(\pi)$.
\item If $\varphi$ belongs to a maximal orbit $\cO\in\WFC(\pi)$ then $\cW_{\nu,\varphi}(\pi)\neq 0$ and its dimension equals the coefficient of $\cO$ in $\WFC(\pi)$.
\end{enumerate}
\end{thm}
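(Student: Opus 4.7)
The plan is to prove both parts simultaneously by relating the dimension of $\cW_{\nu,\varphi}(\pi)$ to the Harish-Chandra--Howe local character expansion
$$\Theta_\pi(\exp X) = \sum_{\cO} c_\cO(\pi)\,\hat{\mu}_\cO(X),$$
valid for $X$ in a sufficiently small neighborhood of $0$ in $\fg$ (here $\hat{\mu}_\cO$ denotes the Fourier transform of the $G$-invariant orbital integral on $\cO$). The coefficients $c_\cO(\pi)$ are exactly the multiplicities appearing in $\WFC(\pi)$, so statements about $\WF(\pi)$ translate into support conditions for the distributions $\hat{\mu}_\cO$. The first step is therefore to identify $\dim \cW_{\nu,\varphi}(\pi)$ with $\Theta_\pi(f)$ for a well-chosen test function $f\in C_c^\infty(G)$, and then to use the character expansion to pick out which nilpotent orbits can possibly contribute.

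To construct the test function, I would proceed as follows. Fix a lattice $L\subset \fu$ on which $\omega_\varphi$ takes values in $\Ker\chi$ and consider the projector $e_\varphi\in C_c^\infty(U)$ given by an average of translates of $\chi_\varphi^{-1}$ against characteristic functions of congruence subgroups. By Lemma \ref{lem:WhitFrob} and the Frobenius reciprocity / Jacquet-type argument, one obtains
$$\dim \cW_{\nu,\varphi}(\pi) = \operatorname{tr} \pi(e_\varphi \cdot \xi)$$
for a suitable compactly supported smooth function $\xi$ localized near the identity. The key technical point, which follows in spirit from Kazhdan's theorem relating Whittaker functionals to orbital integrals, is that under the exponential map this trace is computed by integrating the character $\Theta_\pi$ against a test function whose Fourier transform on $\fg^*$ is a bump concentrated around $\varphi + (\fg_{\geq 0}^S)^*$. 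Invoking the local character expansion, this integral becomes $\sum_\cO c_\cO(\pi)\langle \hat{\mu}_\cO, \widehat{\eta_\varphi}\rangle$ for an explicit test density $\eta_\varphi$ supported near $\varphi$.

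For part (i): the homogeneity properties of $\hat{\mu}_\cO$ (it is homogeneous of degree $-\dim\cO/2$ and its singular support is $\overline{\cO}$) imply that only orbits $\cO$ with $\varphi \in \overline{\cO}$ can contribute non-trivially to this pairing. Hence if $\dim \cW_{\nu,\varphi}(\pi)\neq 0$ then at least one $c_\cO(\pi)\neq 0$ for an orbit whose closure contains $\varphi$, which gives $\varphi\in\WF(\pi)$. For part (ii): when $\varphi$ lies in a maximal orbit $\cO$ of $\WFC(\pi)$, all strictly larger orbits have vanishing coefficient, and all strictly smaller orbits contribute zero to the integral above because $\hat{\mu}_{\cO'}$ vanishes on regular semisimple elements of $\cO$ when $\cO \not\subset \overline{\cO'}$. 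A normalization computation, using Rossmann-style formulas for $\hat{\mu}_\cO(\varphi)$ against the Slodowy slice transverse to $\cO$ at $\varphi$, shows that the remaining term equals $c_\cO(\pi)$ precisely.

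The principal obstacle is bridging the representation-theoretic side (the dimension of $\cW_{\nu,\varphi}(\pi)$, defined via induction of oscillator representations and Heisenberg groups) with the harmonic-analytic side (character distributions). Setting up the correct test function and verifying that no boundary contributions survive is delicate, especially when $1$ is an eigenvalue of $\operatorname{ad}(S)$ and one must handle the oscillator representation $\sigma_\varphi$ rather than simply a character; one must carefully match the Gaussian-type factor from $\sigma_\varphi$ against the Fourier-analytic weight coming from the local character expansion on the Slodowy transversal. A second subtle point, which is where \cite{Var} refines \cite{MW}, is the independence of the argument from the residual characteristic: the convergence range of the local character expansion and the existence of adapted lattices $L$ have to be controlled uniformly, which requires replacing $p$-adic exponential arguments by Moy-Prasad-type filtration techniques.
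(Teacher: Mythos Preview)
The paper does not prove this theorem: it is quoted from the cited references of M\oe{}glin--Waldspurger \cite{MW} and Varma \cite{Var}, and used as a black box in the proof of Theorem~\ref{thm:GLOrb}. There is therefore no proof in the paper to compare your proposal against.

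That said, your sketch is broadly faithful to the strategy actually used in \cite{MW}: the central idea there is indeed to express $\dim \cW_{\nu,\varphi}(\pi)$ as the trace of $\pi$ against a carefully chosen test function, substitute the Harish-Chandra--Howe local character expansion, and analyze which Fourier transforms $\hat{\mu}_{\cO}$ of orbital integrals can contribute. Your outline of part (i) via support and homogeneity of $\hat\mu_{\cO}$, and of part (ii) via the vanishing of contributions from non-maximal orbits, captures the architecture of the argument correctly. However, several steps you compress into a sentence are the substance of entire sections in \cite{MW}: the construction of the test function (your ``$e_\varphi\cdot\xi$'') requires a descent through a filtration of lattices, and the ``normalization computation'' identifying the surviving term with $c_{\cO}(\pi)$ is a genuinely nontrivial evaluation of $\hat\mu_{\cO}$ against the characteristic function of a transversal slice. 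Your invocation of ``Kazhdan's theorem'' and ``Rossmann-style formulas'' is a placeholder rather than an argument. What you have written is a plausible roadmap to the proof in \cite{MW}, not a proof.
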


%

\begin{proof}[Proof of Theorem \ref{thm:GLOrb}]
Let $\pi\in \cM(G_n)$. Theorem \ref{thm:MW} implies that if $\cW_{\cO}(\pi)\neq 0$ then $\cO\subset \WF(\pi)$.
Suppose now that  $\cO\subset \WF(\pi)$, {\it i.e.} there exists $\cO'\in \WFC(\pi)$ such that $\cO\subset \overline{\cO'}$. Let $\tilde \varphi\in\cO'$ and $S=\diag(\{h_i\})\in \gl_n(\Z)$ be as in Theorem \ref{thm:GLmain}.  Define $\nu:\F^{\times}\to G_n$ by $\nu(t):=\diag(t^{h_i})$. Then $ad^{*}(\nu(t))\tilde \varphi=t^{-2}\tilde \varphi$ and $\cW_{\nu,\tilde \varphi}=\cW_{S,\tilde \varphi}$.  By Theorem \ref{thm:MW} $\cW_{S,\tilde \varphi}(\pi)\neq 0$ and by Theorems \ref{thm:main} and \ref{thm:GLmain}  we have an epimorphism $\cW_{\cO}\onto\cW_{S,\tilde \varphi}$, hence $\cW_{S,\tilde \varphi}(\pi)$ embeds into $\cW_{\cO}(\pi)$, and thus $\cW_{\cO}(\pi)\neq 0$.
\end{proof}

\begin{rem}\label{rem:GenEmb}
One can show that if 
 $\mu$ is obtained from $\lambda$ by taking some parts apart, or  by replacing two parts of the same parity by two equal parts, then  $\cW_{\cO_{\mu}}$ maps onto $\cW_{\cO_{\lambda}}$. This follows from Theorem \ref{thm:main} by taking $S:=h_{\lambda}, \, \tilde \varphi(X):=\tr(XJ_{\lambda}), \text{ and } \varphi(X):=\tr(XJ_{\mu})$. However, this does not extend to arbitrary $\mu \leq \lambda$. For example, if $\lambda=(4,1)$ and $\mu=(3,2)$ then $(\fg_n^*)^{h_{\lambda}}_{-2}$ does not intersect $\cO_{\mu}$.

For the symplectic groups one can show that if $\mu$ is obtained from $\lambda$ by replacing two parts of the same parity by two equal parts, then
one can map the generalized Whittaker model corresponding to an orbit with partition $\mu$ onto the generalized Whittaker model corresponding to an orbit with partition $\lambda$.
\end{rem}
}
\subsection{Definition of derivatives and proof of Theorem \ref{thm:GL}}\label{sec:PfGL}


The notion of derivative was first defined in \cite{BZ-Induced} for smooth
representations of $G_{n}$ over non-archimedean fields and became a
crucial tool in the study of this category. In \cite{AGS}
this construction was extended to the archimedean case.

The definition of derivative is based on the \textquotedblleft
mirabolic\textquotedblright\ subgroup $P_{n}$ of $G_{n}$ consisting of
matrices with last row $(0,\dots,0,1)$.  The unipotent radical of this subgroup
is an $\left( n-1\right) $-dimensional linear space that we denote $V_{n}$,
and the reductive quotient is $G_{n-1}$. We have a natural isomorphism $P_n = G_{n-1} \ltimes V_n$.  The group $G_{n-1}$ has 2 orbits on
$V_{n}$ and hence also on the dual group $V_{n}^{\ast }$: the zero and the non-zero orbit.
The stabilizer in $G_{n-1}$ of a non-trivial character of $V_n$ is isomorphic to $P_{n-1}$.

Let $\psi_{n}$ be the standard non-trivial unitary character of $V_{n}$, given by $$\psi _{n}(x_1,\dots,x_{n-1}):=\chi(x_{n-1}),$$
where $\chi$ is the fixed additive character of $\F$, as in \eqref{=chi}. We will also denote by $\psi_n$ the corresponding character of the Lie algebra $\fv_n$.
For all $n$ and for all smooth representations $\pi $ of ${P}_{n}$, we define
$$\pi(V_n,\psi_n):=\Span\{\pi(a) v - \psi_n(a)v \, : \, v \in \pi, \, a \in V_{n}\},$$
and we put
\begin{equation}
 \Phi^-(\pi):=
 \begin{cases}
 \pi/\overline{\pi(V_n,\psi_n)} &\mbox{if }\F \text{ is archimedean} \\
  \pi/\pi(V_n,\psi_n) &\mbox{if }\F \text{ is non-archimedean} \end{cases} \end{equation}

If $\F$ is non-archimedean,  our definition of $\Phi^-$ coincides with the one in \cite[\S 5.11]{BZ}. It differs from the  definition in \cite{BZ-Induced} by the twist by the character $|\det|^{1/2}$.

For a smooth representation $\pi$ of $G_n$ we define a representation $E^k(\pi)$ of $G_{n-k}$ by \begin{equation}
E^k(\pi):=((\Phi^-)^{k-1}(\pi|_{P_{n}}))|_{G_n}.
\end{equation}
We call it \emph{the $k$-th pre-derivative of $\pi$}.

Define also a functor $\Phi^+_c:\Rep^{\infty}(P_n)\to \Rep^{\infty}(P_{n+1})$  by
\begin{equation}
\Phi_c^+(\pi)=\ind^{P_{n+1}}_{P_{n}\ltimes V_{n+1}}(\pi\boxtimes \psi_{n+1}),
\end{equation}
where $\Rep^{\infty}$ denotes the category of smooth representations as in Definition \ref{def:ind}, and $I^k:\Rep^{\infty}(G_n)\to \Rep^{\infty}(G_{n+k})$ by
\begin{equation}
I^k(\pi):=\ind_{P_{n+k}}^{G_{n+k}}((\Phi_c^+)^{k-1}(\ind^{P_{n+1}}_{G_{n}}\pi)).
\end{equation}

\begin{lem}\label{lem:AdjDer}
Let $\lambda$ be a partition of $n$ and let $\pi\in \cM(G_n)$. Then  $$\Hom_{G_n}(I^{\lambda_1}(I^{\lambda_2}(\dots I^{\lambda_k} (\C)\dots),\widetilde{\pi})=(E^{\lambda_k}(\dots(E^{\lambda_1}(\pi)\dots)))^{*}.$$
\end{lem}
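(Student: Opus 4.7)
The strategy is iterated Frobenius reciprocity. I would first establish the single-step adjunction
\[
\Hom_{G_{m+k}}\bigl(I^{k}(\sigma),\widetilde{\pi}\bigr)\;\cong\;\Hom_{G_{m}}\bigl(\sigma,(E^{k}(\pi))^{*}\bigr),
\]
valid for all $\sigma\in\cM(G_{m})$ and $\pi\in\cM(G_{m+k})$, where $(-)^{*}$ denotes the smooth/strong dual (consistent with the definition of $\cW_{\cO}(\pi)$). Granting this, the lemma follows by iterating along the partition $\lambda=(\lambda_1,\ldots,\lambda_k)$: applying the adjunction with outermost part $\lambda_1$ and $\sigma=I^{\lambda_2}(\cdots I^{\lambda_k}(\C)\cdots)\in\cM(G_{n-\lambda_1})$ reduces the problem to the same statement with $\pi$ replaced by $E^{\lambda_1}(\pi)$ and one fewer part in the partition; the induction terminates at $\sigma=\C\in\cM(G_0)$ via $\Hom_{G_0}(\C,W^{*})=W^{*}$.

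To prove the single-step adjunction, I would unwind the three stages of $I^k$,
\[
I^{k}=\ind_{P_{m+k}}^{G_{m+k}}\circ(\Phi_c^+)^{k-1}\circ\ind_{G_m}^{P_{m+1}},
\]
and apply Frobenius reciprocity (Lemma 2.2.4) at each stage. The outermost induction converts $\Hom_{G_{m+k}}(\ind_{P_{m+k}}^{G_{m+k}}(-),\widetilde{\pi})$ into $\Hom_{P_{m+k}}(-,\widetilde{\pi}|_{P_{m+k}}\otimes\Delta_{P_{m+k}}^{-1})$, since $\Delta_{G_{m+k}}=1$. Each $\Phi_c^+$ is itself an induction from $P_n\ltimes V_{n+1}$ of the character $\psi_{n+1}$, so another application of Frobenius reciprocity at each of the $k-1$ occurrences forces the image to lie in the $\psi_{n+1}$-eigenspace of $V_{n+1}$ on the target. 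The standard contragredient identification of $\psi$-equivariant functionals on a smooth $P$-representation $\tau$ with the smooth dual of the $\psi$-coinvariants $\tau/\tau(V,\psi)$ (up to sign convention on $\psi$) then transforms the target, iteratively, into the smooth dual of $(\Phi^-)^{k-1}(\pi|_{P_{m+k}})$. A final Frobenius reciprocity applied to the innermost $\ind_{G_m}^{P_{m+1}}$ yields the right-hand side $\Hom_{G_m}(\sigma,(E^{k}(\pi))^{*})$, modulo the accumulated modular-character twists.

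The principal technical obstacle is the precise bookkeeping of these modular-character twists from the three stages of Frobenius reciprocity. The definition of $\Phi^-$ adopted in this paper incorporates a $|\det|^{1/2}$ twist relative to \cite{BZ-Induced}, and this shift is calibrated precisely so that the accumulated twists cancel and produce the untwisted adjunction stated above. A secondary archimedean subtlety is the identification of the topological coinvariants $\tau/\overline{\tau(V,\psi)}$ with the predual of the $\psi$-eigenspace in the strong dual; this step uses Hahn--Banach together with the admissibility hypotheses and the fact, proven in \cite{AGS,AGS2}, that the functors $\Phi^-$ and $\Phi_c^+$ preserve moderate-growth nuclear Fr\'echet representations, so that the successive quotients remain well-behaved.
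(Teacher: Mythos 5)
Your argument is correct and, despite its stepwise presentation, is essentially what the paper intends. The paper's one-line proof (analogous to Lemma~\ref{lem:WhitFrob}) amounts to expanding the definitions of the $I^{\lambda_i}$ and of $\Phi_c^+$, collapsing the resulting tower of Schwartz inductions by transitivity (Lemma~\ref{lem:IndSt}) into a single induction $\ind_L^{G_n}(\chi)$ with $L$ unipotent (a product of the $V_j$'s) and $\chi$ built from the $\psi_j$'s, applying Frobenius reciprocity (Lemma~\ref{lem:Frob}) once, and then recognizing the resulting $(L,\chi)$-coinvariants of $\pi$ as the iterated pre-derivatives; your version performs the same Frobenius reciprocities one layer at a time. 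One small correction to your discussion of the modular-character bookkeeping: you say the cancellation is calibrated by a $|\det|^{1/2}$ twist built into $\Phi^-$, but the paper's $\Phi^-$ and $\Phi_c^+$ are both \emph{unnormalized} (the text explicitly notes its $\Phi^-$ agrees with \cite[\S 5.11]{BZ} and \emph{differs} from \cite{BZ-Induced} by that twist), so nothing is being absorbed by the definition of $\Phi^-$. The cleaner reason is the one implicit in the collapsed version: $L$ is unipotent and $G_n$ is reductive, so $\Delta_L=\Delta_{G_n}=1$ and Lemma~\ref{lem:Frob} contributes no twist at all; equivalently, in the stepwise version whatever $|\det|$-power characters survive on the intermediate $G_m$'s restrict trivially to $G_0$ at the bottom of the induction, since the final object is merely a vector space.
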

The proof of this lemma is analogous to the proof of Lemma \ref{lem:WhitFrob}.

If $\F$ is archimedean then the space $\pi(V_n,\psi_n)$ has the same closure as the space $\pi(\fv_n,\psi_n)$ defined analogously using the Lie algebra action, since both closures equal to the joint kernel of all $(V_n,\psi_n)$-equivariant continuous functionals on $\pi$.
It is shown in \cite{AGS2} that if $\pi\in \cM(G_n)$ then $\pi(\fv_n,\psi_n)$ is closed. Moreover, it is shown that for any $i$, the space $(\Phi^-)^i(\pi)(\fv_n,\psi_n)$ is closed in this case. Thus for $\pi\in \cM(G_n)$ our definition of $E^k$ coincides with the functor $\tilde E^k$ used in \cite{AGS,AGS2}. It differs from the functor $E^k$ used in \cite{AGS,AGS2} by the twist by the character $|\det|^{(k-1)/2}$. \DimaB{Note though that for non-admissible smooth $\pi$ our definition of $E^k$ might differ from the functor $\tilde E^k$ used in \cite{AGS,AGS2}.}


Let us now start proving Theorem \ref{thm:GL}.
Let $\lambda$ be a partition of $n$ and $\eta$ be the inverse reordering of $\lambda$.
Let $f:=J_{\eta}$, $h:=h_{\eta}$ and let $e\in \fg_n$ be the unique element such that $(f,h,e)$ is an $\sl_2$ triple.
 We will prove Theorem \ref{thm:GL}   by induction on $k:=\length(\eta)$.
Let $Z$ be a diagonal matrix with first $n-\eta_k$ entries equal to zero, and last $\eta_k$ entries equal to $\eta_k+\eta_{k-1}$. Define $\varphi\in \fg^*$ by $\varphi(X):=\tr(fX)$ and let $\omega:=\omega_{\varphi}$.
For $0\leq t \leq 1$ let $S_t:=h+tZ$ and define $\fu_t,\fl_t$ and $\fr_t$ as in formulas (\ref{=ut},\ref{=lt}) in \S \ref{subsec:PfMain2} (note though that our $\fl'_t,\fr'_t$ differ from the ones in formula \eqref{=lt'}). Let $\fa$ denote the stabilizer of the standard basis vector $e_{n-\eta_k+1}$ and define $\fl'_t:=\fa\cap \fl_{t}, \, \fr'_t:=\fa \cap \fr_{t}$.

\begin{lem}
\begin{enumerate}[(i)]
\item \label{it:PrimeCom}$\fl'_t$ and $\fr'_t$ are subalgebras of $\fu_t$ and $[\fl'_t,\fr'_t]\subset\fl'_t\cap \fr'_t$.
\item \label{it:0a} $\fu_0\subset \fa$ and thus $\fl_0=\fl'_0$.
\end{enumerate}
\end{lem}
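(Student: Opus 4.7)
For part (i), I plan to deduce both assertions formally from Lemma \ref{lem:key}\eqref{it:lrCom} together with the trivial fact that $\fa$ is itself a subalgebra of $\fg_n$. Indeed, $\fa$ is the Lie algebra stabilizer $\{X\in \fg_n : Xe_{n-\eta_k+1}=0\}$, which is actually closed under matrix multiplication: if $Xv=Yv=0$ then $(XY)v=X(Yv)=0$, so $[\fa,\fa]\subset \fa$. Intersecting the ideals $\fl_t,\fr_t\subset \fu_t$ provided by Lemma \ref{lem:key}\eqref{it:lrCom} with $\fa$ will therefore produce subalgebras $\fl'_t,\fr'_t$ of $\fu_t$, and combining $[\fl_t,\fr_t]\subset \fl_t\cap \fr_t$ with $[\fa,\fa]\subset \fa$ should immediately yield $[\fl'_t,\fr'_t]\subset \fl'_t\cap \fr'_t$.

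For part (ii), my plan is a one-line weight computation on the $m$-th column, where $m:=n-\eta_k+1$. Write $h=h_\eta=\diag(d_1,\dots,d_n)$, so that each matrix unit $E_{im}$ is an $\ad(h)$-eigenvector of weight $d_i-d_m$. The multiset $\{d_i\}$ is the disjoint union over $\ell$ of $\{\eta_\ell-1,\eta_\ell-3,\dots,1-\eta_\ell\}$, whose overall maximum is $\eta_k-1$, since $\eta_k=\lambda_1$ is the largest part of $\eta$ (recall that $\eta$ is the inverse reordering of $\lambda$). The index $m$ is the first position of the $k$-th block, so $d_m=\eta_k-1$ attains this maximum, and hence $d_i-d_m\le 0<1$ for every $i$. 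Consequently no $E_{im}$ belongs to $\fu_0=\fg^h_{\ge 1}$, which says that every $X\in \fu_0$ has vanishing $m$-th column and therefore annihilates $e_m$, giving $\fu_0\subset \fa$.

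The concluding assertion $\fl_0=\fl'_0$ will then follow at once: from the defining formula \eqref{=lt} together with $\fm\subset \fg^h_1$ one has $\fl_0\subset \fu_0$, and combined with $\fu_0\subset \fa$ this forces $\fl'_0=\fa\cap \fl_0=\fl_0$. I do not anticipate a serious obstacle anywhere in the argument: part (i) is pure bookkeeping and part (ii) reduces to the single observation that $\eta_k$ is the largest part of $\eta$ and is attained on the diagonal at position $m$. The role of the lemma is presumably to initialize the deformation $t\mapsto (\fl'_t,\fr'_t)$ that will drive the proof of the identity \eqref{=IdDer}, so the substantive content lies downstream in the subsequent critical-value analysis.
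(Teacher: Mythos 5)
Your proof is correct and takes essentially the same route as the paper: part (i) is deduced from Lemma \ref{lem:key}\eqref{it:lrCom} together with $\fa$ being a Lie subalgebra of $\fg_n$, and part (ii) is exactly the paper's observation that the $h$-weight $d_m=\eta_k-1$ of $e_{n-\eta_k+1}$ is maximal in the standard representation, so every element of $\fu_0=\fg^h_{\geq 1}$ annihilates it. The only difference is that you spell out the weight computation on matrix units $E_{im}$ in detail, where the paper states the conclusion directly.
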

\begin{proof}
Part \eqref{it:PrimeCom} follows from Lemma \ref{lem:key}\eqref{it:lrCom} and the fact that $\fa$ is a subalgebra of $\fg$.

For part \eqref{it:0a} note that the $h$-weight of $e_{n-\eta_k+1}$ is the maximal weight inside the standard representation, and thus any element of $\fu_0$ annihilates it.
\end{proof}

The next lemma follows from the structure of the lowest-weight vectors in a tensor product of irreducible representations of $\sl_2$.
\begin{lemma}\label{lem:RepDecomp}
Let $\sigma$ and $\tau$ be two irreducible representations of an $\sl_2$-triple $(f,h,e)$ with $\dim \sigma \geq \dim \tau$. Let $L:=\Hom_{\F}(\sigma,\tau)=\sigma^*\otimes \tau$.
Let $L^{0}$ denote the annihilator in $L$ of the highest-weight vector in $\sigma$, and $L^f$ denote the space spanned by all the lowest-weight vectors in $L$. Then $L=L^0\oplus L^f$.
\end{lemma}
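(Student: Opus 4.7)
\medskip

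\noindent\textbf{Proof plan.} Let $m+1=\dim\sigma$, $n+1=\dim\tau$, so $m\geq n\geq 0$. Fix a highest weight vector $v_0\in\sigma$, so that $e v_0=0$, $h v_0 = m v_0$, and the vectors $v_0, f v_0,\dots, f^m v_0$ form a basis of $\sigma$. Recall that the $\sl_2$-action on $L=\Hom_\F(\sigma,\tau)$ is given by $(X\phi)(v)=X\phi(v)-\phi(Xv)$ for $X\in\sl_2$, and in particular $L^f=\Ker(f|_L)$.

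The plan is to study the evaluation map
\begin{equation*}
\mathrm{ev}_{v_0}:L\to\tau,\qquad \phi\mapsto \phi(v_0),
\end{equation*}
whose kernel is exactly $L^0$ by definition. First I would observe that $\mathrm{ev}_{v_0}$ is surjective (since $v_0\neq 0$), so that $\dim L^0=\dim L-\dim\tau$. This gives us the correct codimension for $L^0$, so it suffices to prove that $L^0\cap L^f=0$ and that $\dim L^f\geq \dim\tau$; the decomposition then follows by a dimension count.

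Next I would prove $L^0\cap L^f=0$ by the following direct argument. If $\phi\in L^0\cap L^f$, then $\phi(v_0)=0$ and $f\phi=0$; unwinding the second condition gives $\phi(fv)=f\phi(v)$ for every $v\in\sigma$. Iterating, $\phi(f^k v_0)=f^k\phi(v_0)=0$ for all $k\geq 0$, and since these vectors span $\sigma$ we get $\phi=0$. The key mechanism here is simply the formula $(f\phi)(v)=f\phi(v)-\phi(fv)$ combined with the fact that $f$ generates $\sigma$ from $v_0$.

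To finish it remains to check $\dim L^f\geq \dim\tau$. Since $L^f$ is the kernel of $f$ acting on the finite dimensional $\sl_2$-module $L$, its dimension equals the number of irreducible $\sl_2$-summands of $L$. By the Clebsch--Gordan decomposition applied to $L\cong\sigma^*\otimes\tau$, this number equals $\min(\dim\sigma,\dim\tau)=\dim\tau$, so in fact $\dim L^f=\dim\tau$. Combining with Step~2, $\dim L^0+\dim L^f=\dim L$ and $L^0\cap L^f=0$, so $L=L^0\oplus L^f$ as desired.

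This argument is essentially elementary; there is no serious obstacle. The only point that could be avoided is the appeal to Clebsch--Gordan for $\dim L^f$, which alternatively can be established by showing directly that $\mathrm{ev}_{v_0}|_{L^f}$ is surjective onto $\tau$ using that $\sigma$ is the $(m{+}1)$-dimensional irreducible and $f^{m+1}$ annihilates $\tau$ on the relevant weights; in either formulation the crux is Step~2, i.e.\ the injectivity of $\mathrm{ev}_{v_0}|_{L^f}$.
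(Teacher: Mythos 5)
Your proposal is correct. The paper gives no proof of this lemma, merely remarking that it ``follows from the structure of the lowest-weight vectors in a tensor product of irreducible representations of $\sl_2$,'' which is precisely the Clebsch--Gordan input you invoke for the count $\dim L^f=\min(\dim\sigma,\dim\tau)=\dim\tau$. Your Step~2, showing $L^0\cap L^f=0$ directly from the formula $(f\phi)(v)=f\phi(v)-\phi(fv)$ and the fact that $\sigma$ is generated from $v_0$ by powers of $f$, is a clean substitute for writing out explicit lowest-weight vectors in $\sigma^*\otimes\tau$; combined with the codimension computation $\dim L^0=\dim L-\dim\tau$ from surjectivity of $\mathrm{ev}_{v_0}$, the dimension count closes the argument exactly where the hypothesis $\dim\sigma\geq\dim\tau$ is needed. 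This is a complete and correct proof, entirely in the spirit the paper gestures at.
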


\begin{lemma}\label{lem:uDecomp}
$\fu_t=\fu_t\cap \fa \oplus (\fu_t)_\varphi$.
\end{lemma}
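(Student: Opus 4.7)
The plan is to decompose $\fg$ block-wise via the Jordan decomposition of $f$ and reduce the statement to Lemma \ref{lem:RepDecomp} applied in the column of blocks corresponding to the largest Jordan block.

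First I would write $\F^n = \bigoplus_{i=1}^{k} V_i$, where $V_i$ is the $i$-th Jordan block of $f$, of dimension $\eta_i$. Since $\eta$ is the increasing reordering of $\lambda$, $V_k$ is the largest block and $v_k := e_{n-\eta_k+1}$ is its highest $h$-weight vector. This induces an $\sl_2$-module decomposition $\fg = \bigoplus_{i,j} \Hom(V_i, V_j)$. A direct computation shows that $ad(Z)$ acts on $\Hom(V_i, V_j)$ as the scalar $(\eta_k+\eta_{k-1})(\mathbf{1}_{j=k}-\mathbf{1}_{i=k})$, so $ad(S_t) = ad(h) + t\,ad(Z)$ preserves each block; consequently $\fu_t = \bigoplus_{i,j}\bigl(\fu_t \cap \Hom(V_i, V_j)\bigr)$.

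Next I would unpack the subspaces $\fa$ and $\fg_\varphi$ block-by-block. For $i \neq k$ every element of $\Hom(V_i, V_j)$ annihilates $v_k$, hence $\Hom(V_i, V_j) \subseteq \fa$. For $i = k$, the subspace $\fa \cap \Hom(V_k, V_j)$ is precisely the annihilator $L^0$ of the highest-weight vector of $V_k$ in the sense of Lemma \ref{lem:RepDecomp}. Using the trace pairing identification $\fg \cong \fg^*$, the centralizer $\fg_\varphi$ coincides with $\fg^f$, so its intersection with each block is the space $L^f$ of lowest-weight vectors. Since $\dim V_k \geq \dim V_j$, Lemma \ref{lem:RepDecomp} gives $\Hom(V_k, V_j) = L^0 \oplus L^f$. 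Both summands are $ad(h)$-stable and $ad(Z)$ acts on the block by a scalar, so they are also $ad(S_t)$-stable; intersecting with $\fu_t$ yields
\[
\fu_t \cap \Hom(V_k, V_j) = \bigl(\fu_t \cap \fa \cap \Hom(V_k, V_j)\bigr) \oplus \bigl((\fu_t)_\varphi \cap \Hom(V_k, V_j)\bigr).
\]

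Summing over all $(i,j)$ --- blocks with $i\neq k$ lying entirely in $\fu_t\cap\fa$, and blocks with $i=k$ splitting as above --- then yields the asserted identity. The main technical step, which I expect to be the principal obstacle, is justifying the two concrete identifications $\fa \cap \Hom(V_k, V_j) = L^0$ and $\fg_\varphi \cap \Hom(V_k, V_j) = L^f$ in the language of Lemma \ref{lem:RepDecomp} and recording the $ad(S_t)$-stability of these subspaces; once that is done, the Lemma supplies the decomposition in each relevant block and the rest is bookkeeping.
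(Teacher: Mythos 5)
Your block-by-block decomposition is essentially the same strategy the paper uses: the paper first splits $\fu_t$ by the sign of the $ad(Z)$-eigenvalue, identifies the positive-$Z$-eigenvalue part with $\bigoplus_{j}\Hom(V_k,V_j)$ restricted to $\fu_t$, and then invokes Lemma \ref{lem:RepDecomp} there exactly as you do; your identifications $\fa\cap\Hom(V_k,V_j)=L^0$ and $\fg_\varphi\cap\Hom(V_k,V_j)=L^f$, the condition $\dim V_k\geq\dim V_j$, and the $ad(S_t)$-stability of $L^0$ and $L^f$ are all correct and are not where the difficulty lies.

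The genuine gap is in the final ``summing over $(i,j)$'' step. You establish $\fu_t=(\fu_t\cap\fa)\oplus C$, where $C=\bigoplus_j\bigl((\fu_t)_\varphi\cap\Hom(V_k,V_j)\bigr)$, and $C\subseteq(\fu_t)_\varphi$. But to conclude $C=(\fu_t)_\varphi$ and that the sum with $\fu_t\cap\fa$ is direct, you must show that $(\fu_t)_\varphi$ has no components in the blocks $\Hom(V_i,V_j)$ with $i\neq k$ --- equivalently, that $\fu_t\cap\fa\cap\fg^f=0$. This is not automatic: those blocks are entirely contained in $\fa$, and $\fg^f$ does meet them, so without a further argument the intersection $\fu_t\cap\fa\cap\fg^f$ could very well be nonzero. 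The needed fact comes from an eigenvalue constraint that you never invoke: since $\fg^f\subset\fg^h_{\leq 0}$ while $\fu_t\subset\fg^{S_t}_{\geq 1}$ with $S_t=h+tZ$, any element of $(\fu_t)_\varphi$ must have $ad(Z)$-eigenvalue at least $1/t>0$; combining this with your formula for $ad(Z)$ on each block forces $(\fu_t)_\varphi$ into the $\Hom(V_k,V_j)$ blocks and kills its intersection with the others. This is exactly what the paper establishes in its opening sentence by noting $(\fu_t)^Z_{\leq 0}\subset\fu_0\subset\fa$ and $(\fu_0)^f=0$, a step that is absent from your write-up even though you have computed the $ad(Z)$-eigenvalues needed for it. Without it, the argument only yields $\fu_t=(\fu_t\cap\fa)+(\fu_t)_\varphi$ as a possibly non-direct sum, which is weaker than the Lemma.
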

\begin{proof}
First decompose $\fu_t=(\fu_t)_{\leq 0}^Z \oplus (\fu_t)_{> 0}^Z$ and note that $(\fu_t)_{\leq 0}^Z\subset \fu_0$ and thus $(\fu_t)_{\leq 0}^Z\subset \fa$ and $(\fu_t)_{\leq 0}^Z\cap (\fu_t)_\varphi\subset (\fu_0)^f=0$. Now let $V$ denote the standard representation of $\fg_n$ and consider the decomposition $V=V_1\oplus \dots \oplus V_k$, where each $V_i$ is the subspace spanned by the basic vectors with indices from $1+\sum_{j=1}^{i-1}\eta_{j}$ to $\sum_{j=1}^{i}\eta_{j}$. Note that $V_i$ is an irreducible representation of the $\sl_2$ triple $(f,h,e)$ of dimension $\eta_i$. Then $(\fu_t)_{> 0}^Z=\oplus_{i=1}^{k-1}\Hom_{\F}(V_k,V_i)$.
By Lemma \ref{lem:RepDecomp} we have $(\fu_t)_{> 0}^Z=(\fu_t)_{> 0}^Z\cap \fa \oplus ((\fu_t)_{> 0}^Z)_\varphi$.
\end{proof}

\begin{cor}\label{cor:Rad}
The radical of the restriction $\omega|_{\fl'_t+\fr'_t}$ is $\fl'_t\cap \fr'_t$.
\end{cor}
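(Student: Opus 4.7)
The plan is to imitate the argument used to prove Lemma \ref{lem:lrprime}\eqref{it:lag}, replacing the role of Lemma \ref{lem:rad} by Lemma \ref{lem:uDecomp}. First, I note that $(\fu_t)_\varphi = \fu_t \cap \fg_\varphi$ coincides with $\Ker(\omega) \cap \fu_t$ by Lemma \ref{lem:help}\eqref{it:KerOm}, and is therefore contained in the radical of $\omega|_{\fu_t}$. Since $\fl_t$ and $\fr_t$ are maximal isotropic subspaces of $\fu_t$ by Lemma \ref{lem:key}\eqref{it:MaxIs}, they both contain this radical, and in particular both contain $(\fu_t)_\varphi$.

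Combining this inclusion with the splitting $\fu_t = (\fu_t \cap \fa) \oplus (\fu_t)_\varphi$ from Lemma \ref{lem:uDecomp}, I would next show that $\fl_t = \fl'_t \oplus (\fu_t)_\varphi$ and $\fr_t = \fr'_t \oplus (\fu_t)_\varphi$. Indeed, for $Y \in \fl_t$ with decomposition $Y = Y_\fa + Y_\varphi$, the component $Y_\varphi \in (\fu_t)_\varphi \subset \fl_t$ lies in $\fl_t$, so $Y_\fa = Y - Y_\varphi$ also lies in $\fl_t$, hence in $\fl_t \cap \fa = \fl'_t$; the argument for $\fr_t$ is identical. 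In particular this yields $\fl'_t + \fr'_t = \fu_t \cap \fa$.

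It remains to compute the radical of $\omega$ on $\fu_t \cap \fa$. The construction of $\fl_t$ and $\fr_t$ in \eqref{=lt} ensures that their images in the symplectic quotient $\fu_t/\Ker(\omega|_{\fu_t})$ are complementary Lagrangians, so $\fl_t + \fr_t = \fu_t$ and $\fl_t \cap \fr_t = \Ker(\omega|_{\fu_t})$. Intersecting with $\fa$ gives $\fl'_t \cap \fr'_t = (\fu_t \cap \fa) \cap \Ker(\omega|_{\fu_t})$. On the other hand, because $(\fu_t)_\varphi$ lies inside the radical $\Ker(\omega|_{\fu_t})$, the subspace $\fu_t \cap \fa$ surjects onto the symplectic quotient $\fu_t/\Ker(\omega|_{\fu_t})$ with kernel exactly $(\fu_t \cap \fa) \cap \Ker(\omega|_{\fu_t})$. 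Thus the radical of $\omega|_{\fu_t \cap \fa}$ equals $(\fu_t \cap \fa) \cap \Ker(\omega|_{\fu_t}) = \fl'_t \cap \fr'_t$, as required.

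I anticipate no serious obstacle: all of the needed facts are already established in Lemmas \ref{lem:help}, \ref{lem:key}, and \ref{lem:uDecomp}, and the argument merely assembles them in parallel with the proof of Lemma \ref{lem:lrprime}\eqref{it:lag}. The only minor delicacy is in deducing $\fl_t \cap \fr_t = \Ker(\omega|_{\fu_t})$ rather than mere containment, but this follows from the complementarity of the Lagrangian images built into the construction of $\fl_t$ and $\fr_t$.
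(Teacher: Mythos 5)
Your first two paragraphs are sound and in fact already contain all the ingredients that are needed: $(\fu_t)_\varphi\subset\Ker(\omega)\cap\fu_t\subset\fl_t\cap\fr_t$, and the decompositions $\fl_t=\fl'_t\oplus(\fu_t)_\varphi$, $\fr_t=\fr'_t\oplus(\fu_t)_\varphi$. The third paragraph, however, contains a genuine error. The images of $\fl_t$ and $\fr_t$ in $\fu_t/\Ker(\omega|_{\fu_t})$ are \emph{not} complementary Lagrangians in general: by \eqref{=lt} both $\fl_t$ and $\fr_t$ contain the chosen Lagrangian $\fm\subset\fg^Z_0\cap\fg^h_1$, and $\fm$ need not lie in $\Ker(\omega|_{\fu_t})$. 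Hence neither $\fl_t+\fr_t=\fu_t$ nor $\fl_t\cap\fr_t=\Ker(\omega|_{\fu_t})$ holds. Concretely, take $\eta=(1,2,2)$, so $n=5$, $f=E_{32}+E_{54}$, $h=\diag(0,1,-1,1,-1)$, $Z=\diag(0,0,0,4,4)$: then $\fg^Z_0\cap\fg^h_1=\langle E_{21},E_{13}\rangle$ with $\omega(E_{21},E_{13})=\tr(fE_{23})=1$, so $\fm$ is one-dimensional; taking $\fm=\langle E_{21}\rangle$ gives $E_{21}\in(\fl_t\cap\fr_t)\setminus\Ker(\omega|_{\fu_t})$, while $E_{13}\in\fu_t\cap\fa$ but $E_{13}\notin\fl_t+\fr_t$. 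Consequently your identity $\fl'_t+\fr'_t=\fu_t\cap\fa$ fails, you end up computing the radical of $\omega$ on the wrong (strictly larger) space, and your final identification $\fl'_t\cap\fr'_t=(\fu_t\cap\fa)\cap\Ker(\omega|_{\fu_t})$ is also false in this example, since $E_{21}$ lies in the left-hand side but not in the right-hand side.

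The statement is nevertheless true, and your second paragraph gives it directly, which is essentially the paper's argument. The containment $\Rad(\omega|_{\fl'_t+\fr'_t})\supset\fl'_t\cap\fr'_t$ is immediate since $\fl'_t$ and $\fr'_t$ are isotropic. For the converse, let $x\in\fl'_t+\fr'_t$ be $\omega$-orthogonal to $\fl'_t+\fr'_t$. Since $\fl_t+\fr_t=(\fl'_t+\fr'_t)+(\fu_t)_\varphi$ and $(\fu_t)_\varphi\subset\Ker(\omega)$, the element $x$ is orthogonal to all of $\fl_t+\fr_t$; because $\fl_t$ and $\fr_t$ are maximal isotropic in $\fu_t$, one has $\Rad(\omega|_{\fl_t+\fr_t})=\fl_t\cap\fr_t$, whence $x\in\fl_t\cap\fr_t\cap\fa=\fl'_t\cap\fr'_t$. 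No appeal to $\fl_t+\fr_t=\fu_t$ is needed anywhere.
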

\begin{proof}
By Lemma \ref{lem:key}\eqref{it:MaxIs} we have $\Rad(\omega|_{\fl_t+\fr_t})=\fl_t\cap \fr_t$ and thus $\Rad(\omega|_{\fl'_t+\fr'_t}) \supset \fl'_t\cap \fr'_t$ . Now, $(\fu_t)_\varphi \subset \fl'_t\cap \fr'_t $ and by Lemma \ref{lem:uDecomp} the image of $\fl'_t$ in $\fu_t/(\fu_t)_\varphi$ coincides with the image of $\fl_t$ and the image of $\fr'_t$ in $\fu_t/(\fu_t)_\varphi$ coincides with the image of $\fr_t$. Thus $\Rad(\omega|_{\fl'_t+\fr'_t})=\fl'_t\cap \fr'_t$.
\end{proof}
Recall  Definition \ref{def:crit} of regular and critical numbers.
\begin{lem}\label{lem:l=r}
For $s<t$ with no critical numbers in the open interval $(s,t)$ we have $\fr'_s=\fl'_t$.
\end{lem}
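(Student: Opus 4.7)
My plan is to show that, under the hypothesis of the lemma, both $\fr'_s$ and $\fl'_t$ coincide with the same explicit subalgebra $\fm + (\fv^{(i)} \cap \fa)$. Here $(t_i, t_{i+1})$ denotes the unique open interval of regular values whose closure contains $[s, t]$, and $\fv^{(i)} := \fv_\tau$ is the constant value of $\fv_\tau$ for $\tau$ in this interval. My starting observation is that on any interval of regular values the filtration is constant, since $\fu_\tau$ and $\fv_\tau$ depend only on which joint $(h, Z)$-eigenspaces $\fg^h_a \cap \fg^Z_b$ satisfy $a + \tau b \geq 1$ (respectively $> 1$). The key geometric point is that if such an eigenspace with $b \neq 0$ lies in $\fu^{(i)}$, then the rational $(1-a)/b$ cannot belong to $(t_i, t_{i+1})$, for otherwise it would itself be a critical value, contradicting regularity. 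Hence $a + \tau b > 1$ strictly, and the eigenspace lies in $\fv^{(i)}$. Combined with Lemma \ref{lem:help}\eqref{it:Kerv} and \eqref{it:KerNeg} this yields
\[
\fu^{(i)} \cap \fg^Z_{\neq 0} \;\subset\; \fv^{(i)} \;\subset\; \Ker(\omega|_{\fu^{(i)}}).
\]

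The interior case, where $s$ and $t$ both lie in the open interval $(t_i, t_{i+1})$, then falls out immediately. The only subspaces distinguishing $\fl_\tau$ and $\fr_\tau$ in the definitions \eqref{=lt} are $\fu^{(i)} \cap \fg^Z_{<0}$ and $\fu^{(i)} \cap \fg^Z_{>0}$, and by the above these already sit inside $\Ker(\omega|_{\fu^{(i)}})$. Therefore $\fl_\tau = \fr_\tau = \fm + \Ker(\omega|_{\fu^{(i)}})$ throughout the regime. Intersecting with $\fa$ and using Lemma \ref{lem:uDecomp} together with $(\fu_\tau)_\varphi \cap \fa = 0$ (which is forced by the directness of that sum), the contributions from $(\fu_\tau)_\varphi$ disappear and we obtain
\[
\fl'_\tau = \fr'_\tau = \fm + (\fv^{(i)} \cap \fa)
\]
for every $\tau$ in the open regime.

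The main obstacle, and the step that requires the most care, is the treatment of critical endpoints. At $\tau = t_i$ the space $\fu_{t_i}$ is strictly larger than $\fu^{(i)}$: it contains, in addition, those eigenspaces with $b < 0$ and $a + t_i b = 1$; dually, $\fu_{t_{i+1}}$ contains the extra eigenspaces with $b > 0$ and $a + t_{i+1} b = 1$. My plan is to trace these additional pieces through the definitions of $\fr_{t_i}$ and $\fl_{t_{i+1}}$: the extra eigenspaces at $t_i$ have negative $Z$-weight, so they contribute nothing to $\fu_{t_i} \cap \fg^Z_{>0}$; and since $\Ker(\omega|_{\fu_{t_i}}) = \fv_{t_i} \oplus (\fg_\varphi \cap \fw_{t_i})$ by Lemma \ref{lem:help}, their only contribution to $\fr_{t_i}$ lies inside $\fg_\varphi$. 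The extra eigenspaces at $t_{i+1}$ are handled by the symmetric argument on $\fl_{t_{i+1}}$. Since $(\fu_\tau)_\varphi \cap \fa = 0$, all such extra contributions vanish upon intersecting with $\fa$, and so $\fr'_{t_i} = \fl'_{t_{i+1}} = \fm + (\fv^{(i)} \cap \fa)$, which finishes the proof.
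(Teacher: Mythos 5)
Your proof is correct in its essentials, but it follows a genuinely different route from the paper's. The paper argues by mutual inclusion: first $\fr'_s\subset\fl'_t$ is deduced from Lemma \ref{lem:key}\eqref{it:Emb} (proved back in \S\ref{subsec:PfMain2}), then $\fl'_t\subset\fr'_s$ is obtained by decomposing $\fl'_t$ into $Z$-weight pieces and placing each piece inside $\fr'_s$. You instead identify both sides with the common explicit subalgebra $\fm + (\fv^{(i)}\cap\fa)$. Your key observation --- that for regular $\tau\in(t_i,t_{i+1})$ the rational $(1-a)/b$ is excluded from $(t_i,t_{i+1})$, so that $\fu^{(i)}\cap\fg^Z_{\neq 0}\subset\fv^{(i)}\subset\Ker(\omega|_{\fu^{(i)}})$ --- is a clean way to see that $\fl_\tau=\fr_\tau=\fm+\Ker(\omega|_{\fu^{(i)}})$ in the interior, which the paper does not isolate as a separate statement. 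In exchange the paper gets to reuse the general machinery of Lemma \ref{lem:key}, while you avoid invoking it and work directly from the eigenvalue geometry. Both approaches are valid.

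One point deserves more care than your sketch gives it. At the critical endpoint $s=t_i$ you track the \emph{extra} eigenspaces in $\fu_{t_i}\setminus\fu^{(i)}$ (those with $b<0$, $a+t_ib=1$) and correctly note they only feed into $\fr_{t_i}$ via $\fg_\varphi\cap\fw_{t_i}$, hence vanish after intersecting with $\fa$. But there is a second discrepancy: $\fv_{t_i}$ is \emph{strictly smaller} than $\fv^{(i)}$, since $\fv^{(i)}=\fv_{t_i}\oplus(\fw_{t_i}\cap\fg^Z_{>0})$. So $\Ker(\omega|_{\fu_{t_i}})=\fv_{t_i}\oplus(\fg_\varphi\cap\fw_{t_i})$ by itself does \emph{not} supply all of $\fv^{(i)}$, and you need to observe that the missing $\fw_{t_i}\cap\fg^Z_{>0}$ is recovered from the other summand $\fu_{t_i}\cap\fg^Z_{>0}$ in the definition of $\fr_{t_i}$. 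Once this is written out one indeed gets $\fr_{t_i}=\fm+\fv^{(i)}$, and the intersection with $\fa$ then gives $\fm+(\fv^{(i)}\cap\fa)$ as you claim; the dual bookkeeping at $t=t_{i+1}$ (where $\fv^{(i)}=\fv_{t_{i+1}}\oplus(\fw_{t_{i+1}}\cap\fg^Z_{<0})$, with the missing part coming from $\fu_{t_{i+1}}\cap\fg^Z_{<0}$) is analogous. So the plan is right, but spell out this step before calling the endpoint case finished.
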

\begin{proof}
By Lemma \ref{lem:key}\eqref{it:Emb} we have $\fr_s\subset \fl_t$ and thus $\fr'_s\subset \fl'_t$. For the inverse inclusion note that $$\fl'_t=\fm\oplus (\fv_t)^Z_{\leq 0} \oplus (\fv_t)^Z_{> 0} \oplus (\fw_t)^Z_{<0}.$$
We have $\fm\subset \fr'_s$; $(\fv_t)^Z_{\leq 0}\oplus (\fw_t)^Z_{<0} \subset \fv_s\subset \fr'_s$. Since there are no critical elements in $(s,t)$ we also have $(\fv_t)^Z_{> 0} \subset (\fu_s)^Z_{> 0}\subset \fr'_s$.
\end{proof}
\begin{proof}[Proof of Theorem \ref{thm:GL}]
Corollary \ref{cor:Rad} and Lemmas \ref{lem:l=r} and \ref{lem:main} imply \begin{equation}
\cW_{h,\varphi}^{G_n}\simeq \ind_{\Exp(\fl_0)}^{G_n}(\chi_\varphi)\simeq \ind_{\Exp(\fr'_1)}^{G_n}(\chi_\varphi).
\end{equation}
Let $\eta^{-}:=(\eta_1,..,\eta_{k-1})$ and consider the corresponding elements $h_{\eta^-}\in\g_{n-\eta_k}$ and $\varphi_{\eta^-}\in\g_{n-\eta_k}^*$.
Note that
\begin{equation}\ind_{\Exp(\fr'_1)}^{G_n}(\chi_\varphi) \simeq I^{\eta_k}(\cW^{G_{n-\eta_k}}_{h_{\eta^-},\varphi_{\eta^-}})= I^{\eta_k}(\cW^{G_{n-\eta_k}}_{\cO_{\eta}})
\end{equation}
The isomorphism \eqref{=DerIndIso} now follows by induction on $\length(\eta)$.
The isomorphism \eqref{=DerIso} follows from \eqref{=DerIndIso} using Lemma \ref{lem:AdjDer}.
\end{proof}

\subsection{Proof of Corollary \ref{cor:GL}}\label{subsec:PfCorGL}
For non-archimedean $\F$, \DimaB{exactness is well-known, the rest of part \eqref{GLit:Exact}  follows from Theorem \ref{thm:MW}, and part \eqref{GLit:MaxOrb} follows from \cite[\S II.2]{MW}.} Thus we assume that $\F$ is archimedean.

\subsubsection{Preliminaries on pre-derivatives}\label{subsubsec:PrelDer2}

The highest non-zero pre-derivative of $\pi\in \cM(G_n)$  plays a special
role. It has better properties than the other derivatives. In particular it is also admissible. The index of the
highest non-zero pre-derivative is called the depth of $\pi$.
As shown in \cite{GS,AGS,GS-Gen} the depth also equals the maximum among the first parts of the partitions in the orbits in  $\WF(\pi)$.
The following theorem summarizes the main results of \cite{AGS,AGS2}.

\begin{thm}
\label{thm:Der} Let $\mathcal{M}^{d}(G_{n}) \subset \mathcal{M}(G_{n})$  denote the subcategory of representations of depth $%
\leq d$. Then

\begin{enumerate}[(i)]

\item \label{Derit:Exact} The functor $E^k : \cM(G_n) \to
\Rep^{\infty}(P_{n-k+1})$ is exact for any $1\leq k \leq n$.

\item \label{Derit:Adm} $E^d$ defines a functor $\cM^d(G_n) \to
\mathcal{M}(G_{n-d})$.



\item \label{Derit:Prod} Let $n=n_1+\cdots +n_d$ and let $\chi_i$ be characters of $G_{n_i}$. Let $\pi= \chi_1 \times \cdots \times \chi_d \in \cM^{d}(G_n)$ denote the corresponding monomial representation. Then
    $$E^d(\pi)\cong((\chi_1)|_{G_{n_1-1}} \times  \cdots  \times (\chi_d)|_{G_{n_d-1}}) $$
\item \label{Derit:A} If $\tau$ is an irreducible unitary representation of $G_n$ and $\tau^{\infty}$ has depth $d$ then $E^d(\tau^{\infty})\cong (A\tau)^{\infty}$, where $A\tau$ denotes the (irreducible, unitary) adduced representation defined in \cite{Sahi-Kirillov} (cf. \cite{Bar}). 

\end{enumerate}
\end{thm}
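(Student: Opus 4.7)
The plan is to treat the four parts of Theorem \ref{thm:Der} in order, each of which reduces to a distinct technical question about the functor $\Phi^-$ and its iterates. The overall strategy I would follow is the one carried out in \cite{AGS, AGS2}.

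For part (i), the heart of the matter is to show that for every $\pi \in \cM(G_n)$ the subspace $\pi(\fv_n, \psi_n)$ is already closed in $\pi$, so that $\Phi^-(\pi) = \pi/\pi(V_n, \psi_n)$ is a genuine algebraic quotient with no closure needed; once this is established, exactness of $E^k$ follows from the snake lemma applied to $V_n$-coinvariants of a short exact sequence, iterated $k-1$ times. To prove closedness I would exploit that $\cM(G_n)$ consists of nuclear \Fre moderate-growth representations, using the Dixmier-Malliavin theorem to write every smooth vector as a finite sum of convolutions, and combining this with the Casselman-Wallach globalization of the underlying Harish-Chandra module to produce an explicit continuous projector annihilating $\pi(V_n, \psi_n)$ via averaging against $\psi_n^{-1}$ over a compactly supported approximation of the identity on $V_n$.

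For part (ii), once $E^d$ is well-defined as a functor into smooth \Fre representations, admissibility reduces to showing that $E^d(\pi)$ carries only finitely many isotypic components of any irreducible representation of a maximal compact subgroup of $G_{n-d}$; this should follow because, after fixing $K$-type multiplicities in $\pi$, the depth-$d$ condition forces the Jacquet-type module to stabilize. Part (iii) is a geometric computation via the Bruhat decomposition $P_n \backslash G_n / Q$ for the standard parabolic $Q$ defining the induction $\chi_1\times\cdots\times\chi_d$; this produces a geometric filtration on $\pi|_{P_n}$, and the depth-$d$ hypothesis forces all but the top stratum to die under $\Phi^-$ iterated $d-1$ times, yielding the asserted tensor product formula. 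Part (iv) compares $E^d(\tau^\infty)$ with the Sahi adduced representation $A\tau$ by realizing both as a Kirillov-type model on the Heisenberg orbit of maximal dimension and matching $P_n$-structures, then invoking the irreducibility of $A\tau$ established in \cite{Sahi-Kirillov} to conclude that the natural intertwiner between them is an isomorphism.

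The main obstacle is part (i): every downstream argument in parts (ii)--(iv) presupposes the clean algebraic description $\Phi^-(\pi)=\pi/\pi(V_n,\psi_n)$ without closure, but this requires genuinely non-trivial archimedean analysis that has no counterpart in the $p$-adic theory, where coinvariants of smooth representations of an $l$-group are automatically ``closed'' in the trivial sense. Part (iv) is also delicate because it identifies two independently defined constructions, which demands building an explicit intertwiner and verifying it is an isomorphism by matching $K$-types or infinitesimal characters, and one must control potential non-splitting subtleties arising from reducible Langlands parameters.
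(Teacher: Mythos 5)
The statement you are addressing is \emph{not proved in the paper}. Immediately before stating Theorem~\ref{thm:Der} the authors write that it ``summarizes the main results of \cite{AGS,AGS2}''; they also note that in the non-archimedean case the theorem follows from \cite{BZ-Induced} since $E^d$ then coincides with the top Bernstein--Zelevinsky derivative, and for part~(iv) they cite \cite{Sahi-Kirillov} and \cite{Bar}. So there is no in-paper proof to compare against; you are being asked in effect to reconstruct the arguments of the cited references.

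As a reconstruction, your sketch of part~(i) has a genuine gap. You propose that once $\pi(\fv_n,\psi_n)$ is shown to be closed, exactness of $E^k$ ``follows from the snake lemma applied to $V_n$-coinvariants.'' Twisted coinvariants by a unipotent group are right exact but not left exact; closedness of the relevant subspace makes $\Phi^-(\pi)$ well-defined as a Hausdorff quotient but says nothing about the vanishing of $H_1(\fv_n,\pi\otimes\psi_n^{-1})$, which is precisely what left exactness requires. That first-homology vanishing is the actual content of \cite{AGS2} (hence its title about twisted homology of the mirabolic nilradical), and it is established by a substantially more elaborate argument than the snake lemma. Your proposed route to closedness is also problematic: averaging against $\psi_n^{-1}$ over a ``compactly supported approximation of the identity on $V_n$'' does not produce a continuous projector onto the twisted coinvariants, because $V_n$ is a noncompact vector group over $\R$ or $\C$. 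This is exactly the feature that distinguishes the archimedean case from the $p$-adic one, where $V_n$ is a union of compact open subgroups and twisted coinvariants agree with twisted invariants; importing that intuition here is where the argument would fail. Your outlines of parts~(ii)--(iv) are reasonable at the level of generality you describe, but since they all presuppose the corrected version of part~(i), the whole sketch rests on the missing homology-vanishing step. Given that the paper itself explicitly defers to \cite{AGS,AGS2,BZ-Induced,Sahi-Kirillov,Bar}, the appropriate treatment here is citation rather than reconstruction.
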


For non-archimedean $\F$, the theorem follows from \cite{BZ-Induced} since $E^d$ coincides with the highest Bernstein-Zelevinsky derivative considered in \cite{BZ-Induced}.

For archimedean $\F$, $\WF(E^k(\pi))$ is calculated in \cite{GS-Gen}. In particular,  \cite[Theorem 5.0.5]{GS-Gen}      implies the following result.

\begin{thm}
\label{thm:DerDepth} Let $\F$ be archimedean and let $\pi\in\mathcal{M}(G_{n})$. Suppose
that $\WF(\pi)=\overline{{\mathcal{O}}_{(n_{1},...,n_{k})}}$
with $n_{1}\geq... \geq n_{k}$. Then $depth(\pi)=n_1$ and $\WF(E^{n_{1}}(\pi))=\overline{{\mathcal{O}}_{n_{2},...,n_{k}}}$.
\end{thm}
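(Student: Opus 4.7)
The main tools are Theorem \ref{thm:GL}, which gives $\cW_{\cO_{\lambda}}(\pi)\simeq(E^{\lambda_{k}}(\cdots E^{\lambda_{1}}(\pi)\cdots))^{*}$ with the largest part $\lambda_{1}$ applied innermost, and Theorem \ref{thm:GLOrb}, stating $\cW_{\cO}(\pi)\neq 0\iff\cO\subset\WF(\pi)$. The key preliminary observation I would exploit is that $E^{1}$ is literally restriction from $G_{m}$ to $G_{m-1}$, so iterating $E^{1}$ on any nonzero smooth representation all the way down to $G_{0}$ yields a representation with the same underlying vector space, which remains nonzero.

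The depth assertion splits into two halves. For the lower bound, $\cO_{(n_{1},\ldots,n_{k})}\subset\WF(\pi)$ and Theorem \ref{thm:GLOrb} give $\cW_{\cO_{(n_{1},\ldots,n_{k})}}(\pi)\neq 0$, then Theorem \ref{thm:GL} forces $E^{n_{k}}(\cdots E^{n_{1}}(\pi)\cdots)\neq 0$, and in particular $E^{n_{1}}(\pi)\neq 0$. For the upper bound I would argue by contradiction: if $E^{d}(\pi)\neq 0$ for some $d>n_{1}$, iterate $E^{1}$ a total of $n-d$ times to obtain a nonzero representation of $G_{0}$, which by Theorem \ref{thm:GL} is dual to $\cW_{\cO_{(d,1^{n-d})}}(\pi)$, so Theorem \ref{thm:GLOrb} forces $\cO_{(d,1^{n-d})}\subset\overline{\cO_{(n_{1},\ldots,n_{k})}}$, contradicting $d>n_{1}$ in the dominance order.

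Set $\tau:=E^{n_{1}}(\pi)\in\cM(G_{n-n_{1}})$ (admissible by Theorem \ref{thm:Der}\eqref{Derit:Adm}). The containment $\overline{\cO_{(n_{2},\ldots,n_{k})}}\subset\WF(\tau)$ then follows from the identification
\[
\cW^{G_{n-n_{1}}}_{\cO_{(n_{2},\ldots,n_{k})}}(\tau)\simeq(E^{n_{k}}(\cdots E^{n_{2}}(E^{n_{1}}(\pi))\cdots))^{*}\simeq\cW^{G_{n}}_{\cO_{(n_{1},\ldots,n_{k})}}(\pi)\neq 0
\]
combined with Theorem \ref{thm:GLOrb} applied to $\tau$. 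For the reverse containment, whenever $\cO_{\mu}\subset\WF(\tau)$ satisfies $\mu_{1}\leq n_{1}$, the tuple $\nu:=(n_{1},\mu_{1},\ldots,\mu_{l})$ is a partition of $n$ and the same identification gives $\cW^{G_{n}}_{\cO_{\nu}}(\pi)\neq 0$, so $\nu\leq(n_{1},\ldots,n_{k})$ in dominance, from which $\mu\leq(n_{2},\ldots,n_{k})$ is immediate.

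The main obstacle is the case $\mu_{1}>n_{1}$ in the reverse containment. Any sorted completion of $\mu$ by a single block of size $n_{1}$ now places $\mu_{1}$ as the first (largest) part, so the innermost derivative in the corresponding iteration under Theorem \ref{thm:GL} becomes $E^{\mu_{1}}(\pi)$, which vanishes by the depth bound; consequently the identification $\cW^{G_{n-n_{1}}}_{\cO_{\mu}}(\tau)\simeq\cW^{G_{n}}_{\cO_{\nu}}(\pi)$ breaks down, since the two compositions $E^{\mu_{l}}\cdots E^{\mu_{1}}\circ E^{n_{1}}$ and its sorted counterpart are genuinely distinct and cannot be rearranged within our framework. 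The cleanest way to close this gap is to invoke the more refined computation of $\WF(E^{k}(\pi))$ for arbitrary $k$ in \cite[Theorem 5.0.5]{GS-Gen}; alternatively, one could try to reduce to the case of irreducible $\pi$ -- whose wave-front set is the closure of a single orbit over archimedean fields by \cite{BB1,Jos85} -- using the exactness of $E^{n_{1}}$ on $\cM^{\leq\cO_{(n_{1},\ldots,n_{k})}}(G_{n})$ from Corollary \ref{cor:GL}\eqref{GLit:Exact} and then match Jordan types directly.
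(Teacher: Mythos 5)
The paper does not actually prove Theorem \ref{thm:DerDepth}; it states that it follows from \cite[Theorem 5.0.5]{GS-Gen}, which computes $\WF(E^{k}(\pi))$ for arbitrary $k$. Your argument takes a genuinely different route by trying to recover the statement from Theorems \ref{thm:GL} and \ref{thm:GLOrb}, and the parts you complete are sound: the two-sided depth bound is correct (the lower bound via $\cW_{\cO_{(n_1,\dots,n_k)}}(\pi)\neq 0$, the upper bound via the partition $(d,1^{n-d})$ and the fact that $E^{1}$ is restriction and hence preserves nonvanishing), as is the containment $\overline{\cO_{(n_2,\dots,n_k)}}\subset\WF(E^{n_1}(\pi))$ and the sub-case $\mu_1\leq n_1$ of the reverse containment.

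You are also right that the remaining case $\mu_1>n_1$ is a genuine gap, and that it cannot be closed with the tools you are permitting yourself. The essential difficulty is precisely what you name: Theorem \ref{thm:GL} only applies to the \emph{sorted} iterate, and $E^{\mu_1}\circ E^{n_1}$ and $E^{n_1}\circ E^{\mu_1}$ are not isomorphic in general (the paper's remark after Corollary \ref{cor:ord} gives an explicit example in the $p$-adic setting where reordering Bernstein--Zelevinsky derivatives changes the answer). So $E^{\mu_1}(E^{n_1}(\pi))\neq 0$ with $\mu_1>n_1$ does not feed back into any $\cW^{G_n}_{\cO_\nu}(\pi)$ computation.

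However, one of your two proposed closures is not available. Invoking Corollary \ref{cor:GL}\eqref{GLit:Exact} would be circular: the proof of Corollary \ref{cor:GL} in \S\ref{subsec:PfCorGL} uses Theorems \ref{thm:Der} \emph{and} \ref{thm:DerDepth} as input (it needs Theorem \ref{thm:DerDepth} precisely to know that $E^{\lambda_1}$ carries $\cM^{\leq\cO_\lambda}$ into $\cM^{\leq\cO_\mu}$ and detects $\cM^{<\cO_\lambda}$). Your other suggestion, citing \cite[Theorem 5.0.5]{GS-Gen} directly, is exactly how the paper handles it, but then the partial derivation is redundant: once you have that reference you already have the full statement.
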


\subsubsection{Proof of Corollary \ref{cor:GL}}

Let $\lambda$ be a partition of $n$ and $\cO_{\lambda}$ be the corresponding nilpotent orbit. Denote $\cW_{\lambda}(\pi):=(E^{\lambda_k}(\dots E^{\lambda_1}(\pi)\dots))^*$. We use Theorem \ref{thm:GL} and identify $\cW_{\cO_{\lambda}}(\pi)$ with $\cW_{\lambda}(\pi)$.
\DimaB{
We prove the theorem by induction on $n$, using Theorems \ref{thm:Der} and \ref{thm:DerDepth}. For the base of the induction we note that $\cM(G_0)$ is the category of finite-dimensional vector spaces, and monomial or irreducible representations of $G_0$ are one-dimensional. For the induction step, let $\mu:=(\lambda_2,\dots,\lambda_k)$ and note that by Theorem \ref{thm:Der}
$E^{\lambda_1}$ is an exact functor from  $\cM^{\leq \cO_{\lambda}}(G_n)$ to $\cM(G_{n-\lambda_1})$. By Theorem \ref{thm:DerDepth}, it maps
$\cM^{\leq \cO_{\lambda}}(G_n)$ to $\cM^{\leq \cO_{\mu}}(G_{n-\lambda_1})$, and $E^{\lambda_1}(\pi)\in \cM^{<\cO_{\mu}}(G_{n-\lambda_1})$ if and only if $\pi \in \cM^{< \cO_{\lambda}}(G_n)$. Thus, by the properties of quotient categories (see \cite[\S III.1]{Gab}), $E^{\lambda_1}$ defines an exact and faithful functor from $\cM^{\cO_{\lambda}}(G_n)$ to $\cM^{\cO_{\mu}}(G_{n-\lambda_1})$.  Theorem \ref{thm:Der} also implies that if $\pi\in \cM^{\leq \cO_{\lambda}}(G_n)$ is monomial then so is $E^{\lambda_1}(\pi)$ and if $\pi$ is irreducible unitarizable, then so is $E^{\lambda_1}(\pi)$. By the induction step, $\cW_{\mu}$ defines an exact and faithful functor from $\cM^{\cO_{\mu}}(G_{n-\lambda_1})$ to the category of finite-dimensional vector spaces, and $\cW_{\mu}$ maps monomial representations and irreducible unitarizable representations  to one-dimensional spaces. Thus, so does $\cW_{\lambda}=\cW_{\mu}\circ E^{\lambda_1}$.
}
\proofend

\section{Choice-free definitions}\label{sec:NewDef}

\subsection{Generalized Whittaker models}\label{subsec:GenGenWhit}
In this section we define the generalized Whittaker model corresponding to a nilpotent element $e\in \fg$, without choosing   a neutral element $h$.
First of all, the filtration $\g_{\geq k}$ (unlike the grading $\g_k$) can be defined without choosing  $h$. It is in fact called the Deligne filtration and by \cite[\S I.6]{Del} is uniquely defined by the properties:
\begin{multline}
ad(e)(\g_{\geq k})\subset \g_{\geq k+2} \text{ and }\\
\text{the map }\g_{\geq -k}/\g_{\geq -k+1}\to \g_{\geq k}/\g_{\geq k+1} \text{ given by } ad(e)^k \text{ is an isomorphism.}
\end{multline}

It is easy to see that this filtration can be defined explicitly by
\begin{equation}
\g_{\geq k}:=\sum_{i\geq\max(1-k,1)} (\Ker(ad(e)^i)\cap \Im(ad(e)^{i+k-1}))
\end{equation}
We will sometimes denote this also by $\fg_{> k-1}$ or by  $\fg_{e,\geq k}$.

Let $e^\bot$ denote the orthogonal complement to $\{e\}$ under the Killing form $\langle \cdot , \cdot \rangle$ and let
\begin{equation}
\fu:=\fg_{\geq 1}, \quad \fv:=\fg_{> 1}, \quad I:=ad(e)^2(e^\bot)\cap \fv
\end{equation}

\begin{lemma}\label{lem:GenGenWhit}
\begin{enumerate}
\item \label{it:Iu} $I$ is an ideal in $\fu.$
\item \label{it:Iv} $\dim \fv/I=1$ and $e\notin I.$
\item \label{it:Iw} There exists a unique symplectic form $\omega$ on $\fu/\fv$ such that for any $a,b \in \fu$ we have $[a,b]-\omega(\bar a , \bar b)e\in I$, where $\bar a$ and $\bar b$  denote the classes of $a$ and $b$.
\item \label{it:IH}$\fu/I$ is a Heisenberg Lie algebra, and its center is spanned by class $\bar e$  of $e$.
\end{enumerate}
\end{lemma}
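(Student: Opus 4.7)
The strategy is to choose a neutral element $h\in\fg$ for $e$ by Lemma \ref{lem:sl2}, completing $e$ to an $\sl_2$-triple $(e,h,f)$, and reduce each of the four claims to a calculation on the $h$-weight decomposition $\fg = \bigoplus_k \fg^h_k$. I first check that the Deligne filtration coincides with the $h$-eigenspace filtration: writing $\fg^h_{\geq k} := \bigoplus_{j\geq k}\fg^h_j$, both defining properties of the Deligne filtration ($\operatorname{ad}(e)$-compatibility, and $\operatorname{ad}(e)^k$ being an isomorphism on each associated graded) are immediate from standard $\sl_2$-representation theory, so $\fg_{\geq k}=\fg^h_{\geq k}$, $\fu = \fg^h_{\geq 1}$, and $\fv = \fg^h_{\geq 2}$.

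I then prove \eqref{it:Iv} by analyzing $I\cap\fg^h_k$ weight-by-weight. For $k\geq 3$, $k-4\neq -2$ gives $e^\perp\cap\fg^h_{k-4}=\fg^h_{k-4}$, and in each $\sl_2$-isotypic summand $V(n)$ with $V(n)_k\neq 0$ one has $n\geq k\geq |k-4|$, so $\operatorname{ad}(e)^2\colon V(n)_{k-4}\to V(n)_k$ is a nonzero map between one-dimensional spaces; summing over irreducibles, $\operatorname{ad}(e)^2(\fg^h_{k-4})=\fg^h_k$, hence $\fg^h_{\geq 3}\subset I$. For $k=2$, $\operatorname{ad}(e)^2\colon \fg^h_{-2}\to\fg^h_2$ is an isomorphism and $e^\perp\cap\fg^h_{-2}$ has codimension one, so $I\cap\fg^h_2$ is a hyperplane in $\fg^h_2$; altogether $\dim \fv/I=1$. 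To see $e\notin I$: the unique preimage of $e$ under $\operatorname{ad}(e)^2|_{\fg^h_{-2}}$ is $-f/2$ (since $[e,[e,f]]=[e,h]=-2e$), and $\langle e,-f/2\rangle=-\langle e,f\rangle/2\neq 0$, so $-f/2\notin e^\perp$. Part \eqref{it:Iu} is then immediate: $[\fu, I]\subset [\fu,\fv]\subset \fg^h_{\geq 3}\subset I$.

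For \eqref{it:Iw}, since $\fv/I=\F\bar e$ is one-dimensional by \eqref{it:Iv} and $[\fu,\fu]\subset \fv$, each bracket $[a,b]$ with $a,b\in\fu$ has a unique expression $\omega(\bar a,\bar b)\bar e$ modulo $I$, giving both existence and uniqueness of $\omega$; bilinearity and antisymmetry are clear, and $\omega$ descends to $\fu/\fv$ because $[\fv,\fu]\subset\fg^h_{\geq 3}\subset I$. Non-degeneracy of $\omega$ is the only subtle point, which I establish via the identity
\[ \omega(\bar a,\bar b)\,\langle f,e\rangle = \langle f,[a,b]\rangle, \]
whose right-hand side is a non-degenerate form on $\fu/\fv=\fg^h_1$: by invariance $\langle f,[a,b]\rangle = -\langle a,\operatorname{ad}(f)b\rangle$, and $\operatorname{ad}(f)\colon \fg^h_1\to\fg^h_{-1}$ is an $\sl_2$-isomorphism paired perfectly with $\fg^h_1$ via the Killing form. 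The identity itself reduces to $\langle f, I\cap\fg^h_2\rangle=0$, which again follows from invariance: $\langle f,\operatorname{ad}(e)^2 v\rangle=\langle \operatorname{ad}(e)^2 f,v\rangle=-2\langle e,v\rangle=0$ for $v\in e^\perp\cap\fg^h_{-2}$. Finally \eqref{it:IH} is formal: $\fu/I$ has the one-dimensional central ideal $\fv/I=\F\bar e$ (central because $[\fu,\fv]\subset I$), the further quotient $(\fu/I)/(\fv/I) = \fu/\fv$ is abelian (since $[\fu,\fu]\subset\fv$), and the induced bracket $\fu/\fv\times \fu/\fv\to \F\bar e$ is precisely the symplectic form $\omega$ from \eqref{it:Iw}; this is the defining data of the Heisenberg Lie algebra on $(\fu/\fv,\omega)$. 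The main obstacle is the non-degeneracy of $\omega$, where the nilnegative $f$ enters essentially and identifies $\omega$ with a form known to be symplectic; the remainder is bookkeeping with weights driven by the behavior of $\operatorname{ad}(e)^j$ on $\sl_2$-summands.
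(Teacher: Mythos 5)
Your proof is correct and follows essentially the same route as the paper's: both pick an $\sl_2$-triple $(f,h,e)$, work with the $h$-weight grading (the paper states the key facts $\mathrm{ad}(e)^2(\fg_{\geq -2})=\fv$ and $\mathrm{ad}(e)^2(\fg_{>-2})=\fg_{>2}$ up front while you spell out the weight-by-weight argument), and establish non-degeneracy of $\omega$ by pairing with $f$ via the Killing form. The only notable presentational difference is that you derive $\fg_{\geq 3}\subset I$ and $\dim\fv/I=1$ first and deduce that $I$ is an ideal, while the paper does the ideal claim first; the underlying ideas are identical.
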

\begin{proof}
Note that $ad(e)^2(\fg_{\geq-2})=\fg_{\geq2}=\fv$ and $ad(e)^2(\fg_{>-2})=\fg_{>2}$. Pick an $\sl_2$-triple $(f,h,e)$. Using the $h$-grading it is easy to see that $\fg_{\geq k}$ is a Lie algebra filtration. Now\\
\eqref{it:Iu}
 $[\fu,I]\subset[\fu,\fv]=\fg_{>2}= ad(e)^2(\fg_{>-2})\subset I$. \\
\eqref{it:Iv} Since $e^\bot\cap \fg_{\geq -2}$ has codimension at most 1 in $\fg_{\geq -2}$, $I$ has codimension at most 1 in $\fv$. Thus it is enough to show that $e\notin I$, {\it i.e.} if $e=[e,[e,c]]$ then $\langle e,c\rangle\neq 0$. Since $ad(h)$ has integer eigenvalues, $\langle h,h\rangle \neq0$.
Now $[e,[e,-f/2]]=[e,-h/2]=e$ and $\langle e,-f/2\rangle =\langle[e,-h/2],-f/2\rangle=\langle-h/2,[e,f/2]\rangle=-1/4\langle h,h\rangle \neq0$.
Now, $c+f/2\in \ker(ad(e)^2)\subset \fg_{\geq -1}\subset e^\bot$, thus $\langle e,c\rangle=\langle e,-f/2\rangle\neq0$.\\
For \eqref{it:Iw}  let $c:=\langle f, e \rangle^{-1}f$ and define $\omega(\bar a, \bar b):=\langle c,[a,b]\rangle$. It is easy to see that $\omega$ is the only anti-symmetric form satisfying $[a,b]-\omega(\bar a , \bar b)e\in I$. Let us show that $\omega$ is non-degenerate. Let $a\in \fu$ such that $[a,b]\in I$ for any $b\in \fu$. This implies $[c,a]\in \fu^{\bot}=\fg_{>-1}$ and thus $a\in \fv$.\\
\eqref{it:IH} follows immediately from \eqref{it:Iw}.
\end{proof}


\begin{defn}
We now define a  character of the center of $\fu/I$ by requiring it to be 1 on $\bar e$, consider the corresponding oscillator representation of the Lie group $\Exp(\fu/I)$ and lift it to an irreducible representation $\Sc_e$ of $U:=\Exp(\fu)$. We then define the generalized Whittaker model associated to $e$ by $\cM_e:=\ind_U^G(\Sc_e)$.
\end{defn}

The connection to the generalized Whittaker models $\cW_f$ is given by the following straightforward lemma.

\begin{lem}\label{lem:GenGen}
Let $(f,h,e)$ be an $\sl_2$-triple and define $\varphi \in \fg^*$ by $\varphi(x):=\langle f, e \rangle^{-1} \langle f,x\rangle$. Then $\cM_e$ is naturally isomorphic to $\cW_{\varphi}$.
\end{lem}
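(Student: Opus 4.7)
The plan is to show that the two constructions produce the same induced representation by identifying the relevant subgroups, quotient Heisenberg groups, and central characters.

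First I would verify that the Deligne filtration $\fg_{\geq k}$ used in the choice-free construction coincides with the $h$-eigenspace filtration $\fg^h_{\geq k}$. This is an immediate consequence of $\sl_2$-representation theory: on an irreducible $\sl_2$-module, the image of $ad(e)^k$ is spanned exactly by the vectors of $h$-weight $\geq k-1$ (minus the lowest chunk), and the defining properties of the Deligne filtration are verified. Thus $\fu$ and $\fv$ agree in both definitions.

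The main step is the identification $I = \fn'$, which will force $\Exp(I) = N'$ and hence identify the two Heisenberg quotients. I would first observe that the radical $\fn$ of $\omega_\varphi|_\fu$ equals $\fv$: by Lemma \ref{lem:help}, this radical is $\fv \oplus (\fg^f \cap \fg_1^h)$, and the second summand vanishes because $\fg^f$ consists of lowest weight vectors, which have non-positive $h$-weight. Therefore $\fn' = \fv \cap \Ker(\varphi) = \fv \cap f^\bot$, which has codimension $1$ in $\fv$. By Lemma \ref{lem:GenGenWhit} \eqref{it:Iv}, $I$ also has codimension $1$ in $\fv$, so it suffices to show $I \subseteq f^\bot$. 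For any $Y \in e^\bot$, the invariance of the Killing form gives
\[
\langle f, [e,[e,Y]]\rangle = \langle [f,e],[e,Y]\rangle = -\langle h,[e,Y]\rangle = -\langle[h,e],Y\rangle = -2\langle e,Y\rangle = 0,
\]
so $ad(e)^2(e^\bot) \subseteq f^\bot$ and thus $I \subseteq f^\bot$. Combined with the dimension count, $I = \fn'$.

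Next I would match the central characters. In the choice-free construction the center of $\fu/I$ is $\fv/I = \F\bar e$, with character defined to send $\bar e$ to $1$, i.e.\ $\exp(y\bar e) \mapsto \chi(y)$. In the classical construction, $\chi_\varphi(\exp(ye)) = \chi(\varphi(ye)) = \chi(y \cdot \langle f,e\rangle^{-1}\langle f,e\rangle) = \chi(y)$, so both central characters agree. Since $\fu/I = \fu/\fn'$ is a Heisenberg group under both descriptions, Stone-von-Neumann (Theorem \ref{thm:StvN}) gives a unique irreducible unitary representation with this central character; its smooth vectors give $\Sc_e \simeq \sigma_\varphi$ as representations of $U$. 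Finally, Schwartz induction then yields $\cM_e = \ind_U^G(\Sc_e) \simeq \ind_U^G(\sigma_\varphi) = \cW_{h,\varphi} = \cW_\varphi$.

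The main technical point to get right is the codimension-one identification $I = \fn'$; once that is in hand, the remaining matching of Heisenberg data and central characters is bookkeeping, and no serious obstacle is expected.
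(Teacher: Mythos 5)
The paper gives no proof of this lemma, declaring it "straightforward," so your job is to fill in the gap. Your proof does so correctly. The three identifications you make — Deligne filtration equals the $h$-eigenvalue filtration $\fg^h_{\geq k}$, the ideal $I$ equals $\fn' = \fv\cap\Ker\varphi$ (via the codimension count plus the invariance computation $\langle f, ad(e)^2 Y\rangle = 0$ for $Y\in e^\perp$), and the central characters agree — are exactly the points that need checking, and all three are verified cleanly. One small remark: when you invoke Lemma \ref{lem:help}, note that it is stated in the special context of the proof of Theorem \ref{thm:main2} (with a chosen $Z$ and $S_t$), so it's cleaner to argue directly that $\fn=\fv$: since $\varphi$ pairs only with the weight-$2$ subspace, $\omega_\varphi|_\fu$ is supported on $\fg^h_1\times\fg^h_1$, and its radical there is $\fg^f\cap\fg^h_1=0$ because centralizers of nil-negatives sit in non-positive weights. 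But the substance of your argument is the same and the conclusion holds; after $I=\fn'$ and the central-character match, Stone–von Neumann plus transitivity of Schwartz induction finishes, as you say.
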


\begin{remark}
The analogous approach in positive characteristic immediately faces two problems: exponentials not being defined and the Killing form being degenerate. However, for $\fg=\fg_n$ we can replace the Killing form by the trace form, and try to replace the exponential by the map $X \mapsto \Id +X$. Then the next question is whether Lemma \ref{lem:GenGenWhit}  holds. One can show that in three cases it fails completely: if $\charc \F=2, \, n\geq 3$, or $\charc \F =3, n\geq 8$, or $n \geq \charc \F -1 > 3$. In these cases there exists $e\in \fg_n$ such that $e\in \fg_{\geq 3}$ and $I=\fv$. We also see in these cases that the Deligne filtration is not a Lie algebra filtration.

In other cases we have, for any $e\in\fg$,   $e\in \fg_{\geq 2}$ but $e\notin \fg_{\geq 3}$ . It is not clear whether $\fu$ and $\fv$ are always Lie subalgebras or whether $I$ is an ideal in $\fu$. However, any $e\in \fg_n$ can be completed to an $\sl_2$-triple. Using this triple, one can show that if $\charc \F>2$ and $e^{(\charc \F +1)/2}=0$ then Lemma \ref{lem:GenGenWhit} holds, $\Id+\fu$ forms a subgroup of $G$ which includes $\Id+I$ as a normal subgroup, $e$ defines a central character of the Heisenberg group $(\Id+\fu)/(\Id+I)$ and one can consider the corresponding oscillator representation and Whittaker model.
\end{remark}

\subsection{Degenerate Whittaker models}


Let $Z$ be  a
 rational semi-simple element that commutes with $e$. For  any $t\in \Q$ define
\begin{equation}
\fg_{\geq t}^{e,Z}:=\sum_i \left (\fg_{\geq i}  \cap \sum_{s\geq t-i}\fg_s^Z  \right ).
\end{equation}
Note the following straightforward lemma.
\begin{lemma}
If $h\in \fg_Z$ is a neutral element for  $e$ then $\fg_{\geq t}^{e,Z}=\fg^{h+Z}_{\geq t}$
\end{lemma}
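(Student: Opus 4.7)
The plan is to reduce both sides to a common joint eigenspace decomposition with respect to the commuting pair $(h,Z)$, and then to compare them index by index.

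First I would verify that when $h$ is a neutral element for $e$, the Deligne filtration coincides with the $h$-grading filtration:
\begin{equation*}
\fg_{\geq k} \;=\; \bigoplus_{j\geq k}\fg_j^h.
\end{equation*}
This follows from the uniqueness characterization of the Deligne filtration quoted from \cite{Del}: one checks that the filtration $F_k:=\bigoplus_{j\geq k}\fg_j^h$ satisfies $ad(e)(F_k)\subset F_{k+2}$ (because $[h,e]=2e$ forces $ad(e)\colon\fg_j^h\to\fg_{j+2}^h$) and that $ad(e)^k\colon F_{-k}/F_{-k+1}\to F_k/F_{k+1}$ is an isomorphism (this is the standard $\sl_2$-statement that $e^k$ induces an isomorphism from the $-k$ weight space to the $+k$ weight space in any finite-dimensional $\sl_2$-module). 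By the uniqueness in \cite[\S I.6]{Del}, $F_k=\fg_{\geq k}$.

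Next, since $h\in\fg_Z$ the operators $ad(h)$ and $ad(Z)$ commute and are both (rationally) semisimple, so $\fg$ decomposes into joint eigenspaces $\fg_{j,s}^{h,Z}$ where $ad(h)$ acts by $j$ and $ad(Z)$ by $s$. Note that $ad(h+Z)$ acts on $\fg_{j,s}^{h,Z}$ by $j+s$, so
\begin{equation*}
\fg^{h+Z}_{\geq t} \;=\; \bigoplus_{j+s\geq t}\fg_{j,s}^{h,Z}.
\end{equation*}
On the other hand, using the first step,
\begin{equation*}
\fg_{\geq i}\cap\sum_{s\geq t-i}\fg_s^Z \;=\; \bigoplus_{j\geq i,\ s\geq t-i}\fg_{j,s}^{h,Z},
\end{equation*}
and summing over $i\in\Q$ gives
\begin{equation*}
\fg^{e,Z}_{\geq t} \;=\; \bigoplus_{(j,s):\,\exists\, i\text{ with }j\geq i\text{ and }s\geq t-i}\fg_{j,s}^{h,Z}.
\end{equation*}

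Finally, the plan concludes by observing that for a pair $(j,s)$ the existence of $i$ with $j\geq i$ and $i\geq t-s$ is equivalent to $j\geq t-s$, i.e.\ $j+s\geq t$. Thus $\fg^{e,Z}_{\geq t}=\fg^{h+Z}_{\geq t}$. The argument is entirely formal once the identification of the Deligne filtration with the $h$-weight filtration is in place; I expect no genuine obstacle beyond carefully tracking indices in the double sum.
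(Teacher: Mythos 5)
Your argument is correct and is precisely the intended one: the paper states this lemma without proof (as "straightforward"), and the natural justification is exactly what you give — identify the Deligne filtration with the $h$-weight filtration via its uniqueness characterization, pass to joint $(h,Z)$-eigenspaces, and check that the index condition "$\exists\, i$ with $j\geq i$ and $i\geq t-s$" collapses to $j+s\geq t$. No gaps.
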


In fact, commuting $e$ and $Z$ is the same amount of information as the Lie algebra element $X=e+Z$. We can reformulate the filtration in terms of $X$. First define
\begin{equation}
\fg_{>k}^{X,t}:= \sum_{i\geq\max(-k,0)} (\Ker((ad(X)-t\Id)^i)\cap \Im((ad(X)-t\Id)^{i+k})).
\end{equation}
The following lemma is straightforward.
\begin{lemma}
$$\fg_{>k}^{X,t}=\fg_t^Z \cap \fg^e_{>k} \quad \text{and} \quad \fg_{>t}^{e,Z} = \sum_i\sum_{t\geq s-i} \fg_{>i}^{X,t}$$
\end{lemma}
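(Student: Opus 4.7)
The plan is to recognize the statement as an essentially formal consequence of the Jordan decomposition, applied to $\ad(X)=\ad(Z)+\ad(e)$. Since $e$ is nilpotent and commutes with the rational semisimple $Z$, the operators $\ad(Z)$ and $\ad(e)$ commute, $\ad(Z)$ is semisimple, and $\ad(e)$ is nilpotent. Thus this is literally the Jordan decomposition of $\ad(X)$. Four standard consequences will drive everything: (a) the generalized $t$-eigenspace of $\ad(X)$ coincides with the exact $t$-eigenspace $\fg_t^Z$ of $\ad(Z)$; (b) on $\fg_t^Z$ the operator $\ad(X)-t\Id$ reduces to $\ad(e)$; (c) $\ad(X)-t\Id$ preserves every $\fg_s^Z$; and (d) on $\fg_s^Z$ with $s\neq t$ the operator $\ad(X)-t\Id$ equals $(s-t)\Id+\ad(e)|_{\fg_s^Z}$, a nonzero scalar plus a nilpotent, hence is invertible.

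For the first identity I would fix $i\geq 1$ and show
$$\Ker((\ad(X)-t\Id)^{i})\cap\Im((\ad(X)-t\Id)^{i+k}) \;=\; \fg_t^Z\cap\Ker(\ad(e)^{i})\cap\Im(\ad(e)^{i+k}).$$
The kernel side follows immediately from (a) and (b). For the image side, given $v=(\ad(X)-t\Id)^{i+k}w$ lying in $\fg_t^Z$, I decompose $w=\sum_s w_s$ along the $\fg_s^Z$; by (c) each summand $(\ad(X)-t\Id)^{i+k}w_s$ stays in $\fg_s^Z$, and $v\in\fg_t^Z$ forces the $s\neq t$ summands to vanish. By (d) those $w_s$ with $s\neq t$ can then be discarded, so $v=(\ad(X)-t\Id)^{i+k}w_t=\ad(e)^{i+k}w_t$ by (b). Summing over the appropriate range of $i$ (which I read as $i\geq\max(-k,1)$; the exponent $\max(-k,0)$ in the displayed formula only differs by the $i=0$ term, which contributes nothing new once we intersect with the generalized eigenspace) yields $\fg^{X,t}_{>k}=\fg_t^Z\cap\fg^e_{>k}$.

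The second identity will be a formal manipulation of the first. Starting from
$$\fg^{e,Z}_{>t}=\fg^{e,Z}_{\geq t+1}=\sum_{i}\Bigl(\fg^e_{\geq i}\cap\sum_{s\geq t+1-i}\fg_s^Z\Bigr),$$
I use the fact that $\fg^e_{\geq i}$ is $\ad(Z)$-stable (as $\ad(e)$ commutes with $\ad(Z)$) to split it along the $\fg_s^Z$, and then apply the first identity to each piece to replace $\fg^e_{\geq i}\cap\fg_s^Z$ by $\fg^{X,s}_{\geq i}$. A harmless reindexing $i\mapsto i+1$ matches the stated form (I read the displayed formula as having $\fg^{X,s}_{>i}$ on the right, with $s$ as the summation variable running over $s\geq t-i$).

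I do not anticipate a real obstacle; the only substantive point is the image computation in the middle paragraph, which uses invertibility of $\ad(X)-t\Id$ off the generalized $t$-eigenspace to reduce matters to $\ad(e)|_{\fg_t^Z}$. Everything else is bookkeeping, and I would flag the small discrepancy between $\max(-k,0)$ and $\max(-k,1)$ in the lower bound of the summation as cosmetic: the two sums are equal after intersecting with the generalized $t$-eigenspace of $\ad(X)$, so the identity claimed in the lemma is unaffected.
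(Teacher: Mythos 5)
Your proof of the first identity is correct and is the natural argument: since $\ad(Z)$ and $\ad(e)$ commute with $\ad(Z)$ semisimple and $\ad(e)$ nilpotent, $\ad(X)=\ad(Z)+\ad(e)$ is the additive Jordan decomposition, and your four consequences (a)--(d) are exactly what is needed both for the kernel computation and for the image computation. The observation about the lower bounds $\max(-k,0)$ versus $\max(-k,1)$ is also correct: the $i=0$ term has $\Ker(\Id)=0$, so it never contributes. The paper itself supplies no proof of this lemma (calling it ``straightforward''), so there is no argument to compare to, but for the first identity your derivation is sound.

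Your treatment of the second identity, however, contains a genuine gap. You begin with the assertion $\fg^{e,Z}_{>t}=\fg^{e,Z}_{\geq t+1}$, treating it as an obvious reformulation. This is false in general. Writing $\fg^{e,Z}_{\geq t}=\fg^{h+Z}_{\geq t}$ for a neutral element $h$ for $e$ commuting with $Z$, the filtration jumps at the eigenvalues of $\ad(h+Z)$, which are rational but need not be integers since $Z$ is only assumed rational semisimple; whenever $\ad(h+Z)$ has an eigenvalue in the open interval $(t,t+1)$ the two sides of your asserted equality differ. For instance, with $e=0$ in $\gl_3$ and $Z=\diag(1,1/2,0)$ one has $\fg^{e,Z}_{>0}\supsetneq\fg^{e,Z}_{\geq 1}$. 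The substitute consistent with the notation $\fg^S_{>r}$ used elsewhere in the paper is to replace the weak inequality $s\geq t-i$ in the definition of $\fg^{e,Z}_{\geq t}$ by a strict one, i.e. $\fg^{e,Z}_{>t}=\sum_i\bigl(\fg^e_{\geq i}\cap\sum_{s>t-i}\fg^Z_s\bigr)$; applying the first identity and reindexing then gives $\fg^{e,Z}_{>t}=\sum_i\sum_{s>t-i}\fg^{X,s}_{\geq i}=\sum_i\sum_{s>t-1-i}\fg^{X,s}_{>i}$, which carries a strict inequality and a shifted threshold relative to the formula you claim to have matched. (The printed formula is itself garbled --- the inner summation variable $s$ never appears in the summand $\fg^{X,t}_{>i}$ --- so some typo-repair is unavoidable, but your particular repair rests on a false identity, and the formula one actually obtains is not the one you report.)
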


Now define

\begin{equation}
\fu:= \fg_{\geq 1}^{e,Z} \quad \fv:= \fg_{>1}^{e,Z} \quad I:= ad(X)^2(e^\bot)\cap \fv \quad J:=I + ad(X)(\fu).
\end{equation}

\begin{lemma}\label{lem:ZE}
 $J$ is an ideal in $\fu$, and the algebra $\fu/J$ is isomorphic to the Heisenberg algebra defined in \S\ref{subsec:GenGenWhit} using the  element $e$ of the Lie algebra $\fg_Z.$
\end{lemma}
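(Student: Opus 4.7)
The plan is to pick a neutral element $h \in \fg_Z$ for $e$ -- existing by Lemma \ref{lem:MW} applied inside the reductive centralizer $G_Z$ -- so that the preceding lemma gives $\fu = \fg^S_{\geq 1}$ and $\fv = \fg^S_{>1}$ for $S := h + Z$, with $ad(S)$ commuting with the semisimple $ad(Z)$. Then $\fg$ decomposes into joint weight spaces of $ad(h)$ and $ad(Z)$, and $\fu$ inherits a $Z$-grading $\fu = \fu_\fh \oplus \bigoplus_{q \neq 0} \fu_q$, where $\fu_\fh := \fu \cap \fg^Z_0$ is the $\fu$ of the Heisenberg construction of \S\ref{subsec:GenGenWhit} applied to the pair $(\fh, e)$ with $\fh := \fg_Z$, by the identity $\fg^{X,0}_{>k} = \fg^Z_0 \cap \fg^e_{>k}$ from the sublemma above. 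Lemma \ref{lem:sl2}(v) shows that the nil-negative $f$ completing $(e,h)$ also lies in $\fh$, and the Killing form of $\fg$ restricts to a non-degenerate invariant form on $\fh$ (since it pairs $\fg^Z_q$ with $\fg^Z_{-q}$), so all the Heisenberg data in $\fh$ is compatible with the ambient data.

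On each $\fg^Z_q$, the operator $ad(X)$ acts as $ad(e) + q \cdot \Id$; for $q \neq 0$ this is invertible (a nilpotent plus non-zero scalar), so $ad(X)(\fu_q) = \fu_q$ and $ad(X)^2(\fg^Z_q) = \fg^Z_q$. Since $e \in \fg^Z_0$, we also have $e^\perp \cap \fg^Z_q = \fg^Z_q$ for $q \neq 0$; intersecting with $\fv$ this gives $I \cap \fg^Z_q = \fv_q$ and likewise $ad(X)(\fu) \cap \fg^Z_q = \fu_q$. For $q = 0$, $ad(X)|_\fh = ad(e)|_\fh$, so $I \cap \fh = ad(e)^2(e^\perp \cap \fh) \cap \fv_\fh = I_\fh$ and $ad(X)(\fu) \cap \fh = ad(e)(\fu_\fh)$, which lies in $I_\fh$ by the $\sl_2$-surjectivity argument used to show $ad(e)(\fu) \subset I$ in Lemma \ref{lem:GenGenWhit}, transported verbatim to $(\fh,e)$. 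Assembling the pieces yields
\[
J \;=\; I_\fh \;\oplus\; \bigoplus_{q \neq 0} \fu_q,
\]
so the vector space $\fu/J$ is naturally identified with $\fu_\fh/I_\fh$, the Heisenberg algebra of \S\ref{subsec:GenGenWhit} for $(\fh, e)$.

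What remains is to verify that $J$ is an ideal in $\fu$ and that the induced bracket on $\fu/J$ agrees with the Heisenberg bracket on $\fu_\fh/I_\fh$. Case analysis by $Z$-grading dispatches the routine containments: $[\fu_q, \fu_{q'}] \subset \fv_{q+q'} \subset \fu_{q+q'} \subset J$ whenever $q+q' \neq 0$; $[\fu_\fh, I_\fh] \subset I_\fh$ by Lemma \ref{lem:GenGenWhit}(i) applied to $\fh$; and $[\fu_\fh, \fu_q] \subset \fv_q \subset J$ for $q \neq 0$. The decisive and hardest step is showing $[\fu_q, \fu_{-q}] \subset I_\fh$ for $q \neq 0$, where the bracket lands a priori in $\fv_\fh = I_\fh \oplus \C\cdot e$ and the central component must be ruled out. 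The key tool is the symplectic form $\omega$ on $\fu/\fv$ from Lemma \ref{lem:GenGenWhit}(iii) for the ambient $\fg$: for $a, b \in \fu$, the $\C e$-coefficient of $[a,b] \bmod I$ is $\omega(\bar a, \bar b) = c \langle f, [a,b]\rangle = c \langle [f,a], b\rangle$, and the vanishing of this pairing for $a \in \fu_q$, $b \in \fu_{-q}$ is a weight-counting argument using that $f \in \fg^Z_0$ (so $[f,a] \in \fg^Z_q$ pairs against $b \in \fg^Z_{-q}$) together with the residual $\sl_2$-structure on $\fu_{\pm q}$. Once the ideal property is established, the identification of Lie algebras, and hence the Heisenberg structure on $\fu/J$ with center $\bar e$, transports directly from the $Z = 0$ case of Lemma \ref{lem:GenGenWhit} applied to $(\fh,e)$.
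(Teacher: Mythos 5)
Your proof follows the paper's own route: choose a neutral $h\in\fg_Z$ for $e$ so that $\fu=\fg^{h+Z}_{\geq 1}$, decompose along the $Z$-grading, identify $J\cap\fg^Z_q=\fu_q$ for $q\neq 0$ and $J\cap\fg^Z_0=I_\fh$, and then check the ideal property case by case. The preliminary identifications and the easy brackets are handled correctly, and this matches the (quite terse) argument in the paper.

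The gap sits exactly in the step you call ``decisive and hardest,'' namely $[\fu_q,\fu_{-q}]\subset I_\fh$ for $q\neq 0$, and it cannot be closed by weight-counting. Weight-counting only forces the relevant $a,b$ into $\fw:=\fg^{h+Z}_1$, with $a\in\fg^h_{1-q}\cap\fg^Z_q$ and $b\in\fg^h_{1+q}\cap\fg^Z_{-q}$; within that constraint $\langle[f,a],b\rangle$ is (up to a scalar) $\omega_\varphi(\bar a,\bar b)$, and $\omega_\varphi$ is non-degenerate on $\fw$ modulo its radical $\fw\cap\fg_\varphi$, so it is generically non-zero. Concretely, in $\fg=\sl_4$ take $e=E_{12}+E_{34}$, $h=\diag(1,-1,1,-1)$, $f=E_{21}+E_{43}$, $Z=\diag(\tfrac12,\tfrac12,-\tfrac12,-\tfrac12)$, so $S=h+Z=\diag(\tfrac32,-\tfrac12,\tfrac12,-\tfrac32)$. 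Then $a=E_{13}\in\fu_1\subset J$ and $b=E_{32}\in\fu_{-1}\subset J$, while $[a,b]=E_{12}$ and $\langle f,E_{12}\rangle\neq 0$, so $[a,b]\notin I_\fh=J\cap\fg^Z_0$ and hence $[a,b]\notin J$. So the ``residual $\sl_2$/weight-counting'' argument you invoke does not exist. For what it is worth, the paper's own proof is thin at exactly this point --- it asserts without justification that $[a,b]\in(\fg_Z)_{>2}$ when the $Z$-weights cancel, and the same example shows that assertion also fails --- so the trouble you ran into reflects a real gap in the lemma (or in the definition of $J$) rather than a detail you simply failed to reproduce.
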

\begin{proof}
Note that for any $t\neq 0, \, \fg_{t}^Z\subset X^\bot$ and $ad(X)$ is invertible on $\fg_{t}^Z$. Thus 
$J \cap \fg_{t}^Z=\fu\cap \fg_{s}^Z$.

To see that $J$ is an ideal in $\fu$, let $a\in \fu\cap \fg_{s}^Z$ and $b\in J\cap \fg_{t}^Z$. Then $[a,b]\in \fu\cap \fg_{s+t}^Z$, which lies in $J$ unless $s+t=0$. If $s+t=0$ then $[a,b]\in (\fg^Z)_{>2}$, which lies in the ideal $I'$ defined by $e$ in  $(\fg^Z)_{\geq 2}$.

We also see that $\fu/J=(\fg^Z)_{\geq 2}/I'$.
\end{proof}

\begin{defn}
Using the isomorphism in Lemma \ref{lem:ZE} we define an oscillator representation of the Lie group $\Exp(u/I)$ and lift it to an irreducible representation $\sigma_e$ of $U:=\Exp(\fu)$. We then define the degenerate Whittaker model associated to $e$ and $Z$ by $\cM_{Z,e}:=\ind_U^G(\sigma_e)$.
\end{defn}

From the Lemmas \ref{lem:GenGen} and \ref{lem:ZE} we obtain

\begin{cor}
Let $Z\in \fg$ be a rational semi-simple element and let $(f,h,e)\in \fg_Z$ be an $\sl_2$-triple. Let $\varphi\in \fg^*$ be defined by pairing with $\langle f,e \rangle^{-1}f$. Then $\cM_{Z,e}$ is naturally isomorphic to $\cW_{h+Z,\varphi}$.
\end{cor}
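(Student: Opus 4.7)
The plan is to verify the isomorphism by identifying the common structure underlying both constructions and invoking Stone--von-Neumann.

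First, by the straightforward lemma stated just before Lemma \ref{lem:ZE}, we have $\fg^{e,Z}_{\geq t}=\fg^{h+Z}_{\geq t}$ for all $t\in\Q$. In particular the nilpotent subalgebra $\fu=\fg^{e,Z}_{\geq 1}=\fg^{h+Z}_{\geq 1}$ and the corresponding unipotent subgroup $U=\Exp(\fu)$ coincide in the two definitions. Thus we are reduced to identifying, as $U$-representations, the ``oscillator'' pieces $\sigma_{e}$ (used in $\cM_{Z,e}$) and $\sigma'_{\varphi}$ (used in $\cW_{h+Z,\varphi}$), and then applying Schwartz induction to $G$.

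Second, by Lemma \ref{lem:ZE} the Heisenberg algebra $\fu/J$ is naturally isomorphic to the Heisenberg algebra defined in the choice-free manner of Section \ref{subsec:GenGenWhit} using $e\in\fg_{Z}$. By Lemma \ref{lem:GenGen}, applied inside the reductive subalgebra $\fg_{Z}$ to the $\sl_{2}$-triple $(f,h,e)$, this Heisenberg algebra matches the one underlying the generalized Whittaker model $\cW_{\varphi|_{\fg_{Z}}}$ of $G_{Z}$. I would use this in tandem with Stone--von-Neumann to identify $\sigma_{e}$ with the corresponding oscillator representation coming from the original Heisenberg quotient $U/\Exp(\fn')$ appearing in the definition of $\cW_{h+Z,\varphi}$: both are irreducible unitary $U$-representations whose center acts by the same unitary character.

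Third, I would check the central character matching: the choice-free prescription is $\bar e\mapsto 1$ (composed with $\chi$ after exponentiation), whereas $\chi_\varphi(\Exp(te))=\chi(t\varphi(e))=\chi(t)$ because of the normalization $\varphi(e)=\langle f,e\rangle^{-1}\langle f,e\rangle=1$. Hence the two central characters agree, so by Stone--von-Neumann (Theorem \ref{thm:StvN}) we obtain $\sigma_{e}\cong\sigma'_{\varphi}$ as irreducible $U$-representations. Applying $\ind_{U}^{G}$ then yields the asserted $G$-equivariant isomorphism
\[
\cM_{Z,e}=\ind_{U}^{G}(\sigma_{e})\;\cong\;\ind_{U}^{G}(\sigma'_{\varphi})=\cW_{h+Z,\varphi}.
\]

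The main obstacle I expect to grapple with is step two, namely the identification of the two Heisenberg structures: the choice-free one ``lives inside $\fg_{Z}$'' via $J$, while the original one ``lives inside $\fg$'' via $\fn'$. Showing that after lifting to $U$ and fixing the common central character one obtains the same $U$-representation requires a careful bookkeeping of the $Z$-graded pieces of $\fu$, using the invertibility of $ad(X)$ on $\fu\cap\fg_{t}^{Z}$ for $t\neq 0$ (as in the proof of Lemma \ref{lem:ZE}) to see that the extra directions are absorbed into the kernel of the oscillator representation; the rest of the matching is then a standard maximal-isotropic argument as in Lemma \ref{lem:main} and Corollary \ref{cor:StvN}.
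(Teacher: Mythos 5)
Your proposal follows the paper's own route: the corollary is deduced by combining Lemmas~\ref{lem:ZE} and~\ref{lem:GenGen}, and your first step (identifying $\fu$, hence $U$, in both constructions via the lemma preceding Lemma~\ref{lem:ZE}) and the central-character computation $\varphi(e)=1$ are both correct.

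The gap is in step two, which you yourself flag as ``the main obstacle,'' but your sketched resolution does not actually close it. Invoking Stone--von-Neumann does not by itself give $\sigma_e\cong\sigma'_\varphi$: Theorem~\ref{thm:StvN} asserts uniqueness of the irreducible unitary representation of a \emph{fixed} Heisenberg group with a given central character, whereas $\sigma_e$ and $\sigma'_\varphi$ are a priori inflated to $U$ from two \emph{distinct} quotients, $U/\Exp(J)$ and $U/N'$; for them to coincide as $U$-representations one needs $J=\fn'$, since these are exactly the infinitesimal kernels. Your suggestion that the ``extra directions'' are ``absorbed into the kernel of the oscillator representation'' does not hold as stated. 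Indeed, for $t\neq 0$ the whole graded piece $\fu\cap\fg^Z_t$ lies in $J$ (precisely because $\ad(X)$ is invertible there, as in the proof of Lemma~\ref{lem:ZE}), whereas $\fn'\cap\fg^Z_t$ only captures the part lying in the radical $\fn$ of $\omega_\varphi|_\fu$, and there is no a priori reason for the weight-one component $\fg^{h+Z}_1\cap\fg^Z_t$ to be contained in $\fn$. So $\Exp(J)$ need not act trivially in $\sigma'_\varphi$, and the ``absorption'' picture fails. The needed identity $J=\fn'$ is the substantive content of the corollary; Lemma~\ref{lem:ZE} on its own only provides an abstract isomorphism of $\fu/J$ with a Heisenberg algebra built inside $\fg_Z$, not an identification of $J$ with the subalgebra $\fn'$ appearing in the definition of $\cW_{h+Z,\varphi}$. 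This is the step that must actually be proved, and the proposal leaves it open.
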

\section{Global setting}\label{sec:Glob}
\setcounter{lemma}{0}

Let $K$ be a number field and let $\A=\A_{K}$ be its ring of adeles. In this section we let $\chi$ be a non-trivial unitary character  of $\A$, which is trivial on $K$. Then $\chi$ defines an isomorphism between $\A$ and $\hat{\A}$ via the map $a\mapsto \chi_{a}$, where $\chi_{a}(b)=\chi(ab)$ for all $b\in \A$. This isomorphism restricts to an  isomorphism
\begin{equation}\label{eq:chi_isomorphism}
 \widehat{\A/K}\cong \{\psi\in \hat{\A}\, | \psi|_{K}\equiv 1\}=\{\chi_{a}\, | \, a\in K\}\cong K.
\end{equation}
Given an algebraic group $G$ defined over $K$ we will denote its Lie algebra by  $\fg$ and we will denote the group of its adelic (resp. $K$-rational) points by $G(\A)$ (resp. $G(K)$). We will also define the Lie algebras $\fg(\A)$ and $\fg(K)$ in a similar way.

Given a Whittaker pair $(S,\varphi)$ on $\g(K)$, we set $\fu=\fg_{\geq 1}^{S}$ and $\fn$ to be the radical of the form $\omega_{\varphi}|_{\fu}$, where $\omega_{\varphi}(X,Y)=\varphi([X,Y])$, as before. Let $\fl\subset \fu$ be any choice of a maximal isotropic Lie algebra with respect to this form, and let $U=\exp \fu$, $N=\exp \fn$ and $L=\exp \fl$. Observe that we can extend $\varphi$ to a linear functional on $\g(\A)$ by linearity and, furthermore, the character $\chi_{\varphi}^{L}(\exp X)=\chi(\varphi(X))$ defined on $L(\A)$ is automorphic, that is, it is trivial on $L(K)$. We will denote its restriction to $N(\A)$ simply by $\chi_{\varphi}$. 

\begin{definition}
 Let $(S,\varphi)$ be a Whittaker pair for  $\g(K)$ and let $U,L, N,\chi_{\varphi}$ and $\chi_{\varphi}^{L}$ be as above. For an automorphic function $f$, we define its \emph{$(S,\varphi)$-Whittaker-Fourier coefficient} to be
\begin{equation}\label{eq:Whittaker-Fourier-coefficient}
 \cW\cF_{S,\varphi}(f):=\int_{N(\A)/N(K)}\chi_{\varphi}(n)^{-1}f(n)dn.
\end{equation}
We also define its \emph{$(S,\varphi,L)$-Whittaker-Fourier coefficient} to be
 \begin{equation}\label{eq:extended_Whittaker-Fourier_coefficient}
\cW\cF_{S,\varphi}^{L}(f):=\int_{L(\A)/L(K)}\chi_{\varphi}^{L}(l)^{-1}f(l)dl.
 \end{equation}
Observe that $\cW\cF_{S,\varphi}$ and $\cW\cF_{S,\varphi}^{L}$ define linear functionals on the space of automorphic forms. If $(\pi,V_{\pi})$ is an automorphic representation of $G$, then we will denote their restrictions to $\pi$ by $\cW\cF_{S,\varphi}(\pi)$ and $\cW\cF_{S,\varphi}^{L}(\pi)$ respectively.
\end{definition}

In order to adapt our arguments to the global setting we will have to replace Lemma \ref{lem:main} by the following one, which is analogous to \cite[Propositions 7.2 and 7.3]{GRS_Book}.

\begin{lemma}\label{lem:Glob}
 Let $(\pi,V_{\pi})$ be an automorphic representation of $G$. Then $\cW\cF_{S,\varphi}(\pi)\neq 0$ if and only if $\cW\cF_{S,\varphi}^{L}(\pi)\neq 0$. More specifically, if $\cW\cF_{S,\varphi}(f)\neq 0$ for some $f\in \pi$ then $\cW\cF_{S,\varphi}^{L}(\pi(u)f)\neq 0$ for some $u\in U(K)$.
\end{lemma}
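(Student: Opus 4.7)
The plan is to prove the more specific "if...then..." statement by combining Fourier analysis on a compact abelian group with an action of $U(K)$ on the relevant characters. The biconditional then follows: for the reverse implication, unfolding $L(\A)/L(K)$ as an $N(\A)/N(K)$-fibration over $(L/N)(\A)/(L/N)(K)$ yields
\[
\cW\cF_{S,\varphi}^{L}(f) = \int_{(L/N)(\A)/(L/N)(K)} \chi_{\varphi}^{L}(l)^{-1}\, \cW\cF_{S,\varphi}(\pi(l) f)\, dl,
\]
which vanishes identically on $\pi$ as soon as $\cW\cF_{S,\varphi}$ does.

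First I would introduce the auxiliary function $F(l) := \cW\cF_{S,\varphi}(\pi(l) f)$ on $L(\A)$. A direct change of variable shows $F(nl) = \chi_{\varphi}(n) F(l)$ for $n \in N(\A)$, and a short Baker--Campbell--Hausdorff-type computation---using that $[\fu,\fu] \subset \fg_{\geq 2}^{S} \subset \fn$, that $\chi_{\varphi}$ is invariant under $L$-conjugation, that $\chi_{\varphi}^{L}$ is automorphic, and crucially the isotropy $\omega_{\varphi}|_{\fl} = 0$---shows that $G(l) := \chi_{\varphi}^{L}(l)^{-1} F(l)$ descends to a function on the compact abelian group $(L/N)(\A)/(L/N)(K)$, with $G(1) = \cW\cF_{S,\varphi}(f)$. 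Via \eqref{eq:chi_isomorphism}, characters of this group are parametrized as $\psi_{\beta}(\exp X) := \chi(\beta(X))$ for $\beta \in (\fl/\fn)^{*}(K)$, so Fourier expansion gives $G = \sum_{\beta} \hat G(\beta) \psi_{\beta}$ and hence $\cW\cF_{S,\varphi}(f) = G(1) = \sum_{\beta} \hat G(\beta)$, while unfolding yields
\[
\hat G(\beta) = \int_{L(\A)/L(K)} (\chi_{\varphi}^{L} \psi_{\beta})(l)^{-1} f(l)\, dl.
\]

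The second step is to identify each Fourier coefficient $\hat G(\beta)$ with $\cW\cF_{S,\varphi}^{L}(\pi(u) f)$ for a suitable $u \in U(K)$. Since $[\fu, \fl] \subset \fg_{\geq 2}^{S} \subset \fn \subset \fl$, the group $U$ normalizes $L$, so for $u \in U(K)$ conjugation by $u$ is a volume-preserving bijection of $L(\A)/L(K)$. Writing $f(lu) = f(u^{-1} l u)$ by left automorphy of $f$, substituting $l \mapsto u l u^{-1}$, and expanding $\chi_{\varphi}^{L}(u l u^{-1})$ via BCH---where higher iterated commutators $\ad(X_{u})^{k} X_{l}$ for $k \geq 2$ land in $\fg_{\geq 3}^{S} \subset \ker\varphi$, so that the only surviving correction is $\varphi([X_{u}, X_{l}]) = \omega_{\varphi}(X_{u}, X_{l})$---one obtains $\cW\cF_{S,\varphi}^{L}(\pi(u) f) = \hat G(\beta_{u})$ with $\beta_{u}(X) := \omega_{\varphi}(X_{u}, X)$ viewed as a functional on $\fl/\fn$. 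The Lagrangian property of $\fl/\fn$ in $(\fu/\fn, \omega_{\varphi})$ ensures that the $K$-linear map $\fu(K) \to (\fl/\fn)^{*}(K)$, $X_{u} \mapsto \omega_{\varphi}(X_{u}, \cdot)|_{\fl/\fn}$, has kernel exactly $\fl(K)$ and is therefore surjective, so every $\beta \in (\fl/\fn)^{*}(K)$ is of the form $\beta_{u}$ for some $u \in U(K)$.

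Concluding, if $\cW\cF_{S,\varphi}(f) = \sum_{\beta} \hat G(\beta) \neq 0$, then $\hat G(\beta_{0}) \neq 0$ for some $\beta_{0}$, and any $u \in U(K)$ with $\beta_{u} = \beta_{0}$ satisfies $\cW\cF_{S,\varphi}^{L}(\pi(u) f) \neq 0$, giving the specific statement. I expect the main technical obstacle to be the BCH-style bookkeeping in the preceding paragraph: simultaneously tracking the $N(\A)$-transformation rule, automorphy under $L(K)$, and the change-of-variables under $u$-conjugation, and verifying that the formal Heisenberg commutator formula for $\chi_{\varphi}^{L}$ holds exactly (all higher BCH corrections being annihilated either by isotropy of $\fl$ or because they lie in $\fg_{\geq 3}^{S} \subset \ker\varphi$).
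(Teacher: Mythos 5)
Your proposal is correct and follows essentially the same route as the paper's proof: both define the auxiliary function $l\mapsto\cW\cF_{S,\varphi}(\pi(l)f)$, twist by $(\chi_{\varphi}^L)^{-1}$ so that it descends to the compact abelian group $(L/N)(\A)/(L/N)(K)$, Fourier-expand, identify the dual group with $(U/L)(K)$ via $\chi$ and the symplectic pairing $\omega_\varphi$ (using the Lagrangian property of $\fl/\fn$), and then verify by the same BCH/weight-filtration argument that the $\beta$-Fourier coefficient equals $\cW\cF_{S,\varphi}^L(\pi(u)f)$ for the corresponding $u\in U(K)$. The only addition is that you spell out the easy reverse implication (unfolding $\cW\cF_{S,\varphi}^L$ over the $N$-fibration), which the paper leaves implicit.
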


\begin{proof}
\Dima{We assume that $\varphi$ is non-zero since otherwise the statement is a tautology.}
Let $f\in \pi$ be such that $\cW\cF_{S,\varphi}(f)\neq 0.$
Define a function $f_{\chi_{\varphi}^{L}}$ on $L$ by
\[
 f_{\chi_{\varphi}^{L}}(l)=\cW\cF_{S,\varphi}(\pi(l)f)
\]
and observe that the function $(\chi_{\varphi}^{L})^{-1}\cdot f_{\chi_{\varphi}^{L}}$ is left-invariant under the action of $N(\A)L(K)$. In other words, we can identify $(\chi_{\varphi}^{L})^{-1}\cdot f_{\chi_{\varphi}^{L}}$ with a function on
\begin{equation}\label{=LN}
L(\A)/  N(\A)L(K) \cong (L/N)(\A)\big/ (L/N)(K),
\end{equation}
where the  equality follows from the fact that $L/N$ is abelian.  Therefore, we have a Fourier series expansion
\begin{equation}
 f_{\chi_{\varphi}^{L}}(l)=\sum_{\psi\in (L(\A)/  N(\A)L(K) )^{\wedge}}c_{\psi,\chi_{\varphi}^{L}}(f)\psi(l)\chi_{\varphi}^{L}(l),
\end{equation}
where
\begin{equation}\label{eq:c_Whittaker-Fourier_coefficient}
 c_{\psi,\chi_{\varphi}^{L}}(f)=\int_{L(\A)/L(K)}\psi(l)^{-1}\chi_{\varphi}^{L}(l)^{-1}f(l)dl.
 \end{equation}
Since
\begin{equation}
 0\neq \cW\cF^L_{S,\varphi}(f)=f_{\chi_{\varphi}^{L}}(e)=\sum_{\psi\in (L(\A)/  N(\A)L(K) )^{\wedge}}c_{\psi,\chi_{\varphi}^{L}}(f),
\end{equation}
we conclude that at least one of the coefficients $c_{\psi,\chi_{\varphi}^{L}}(f)$ is different from 0.

Now observe that the map $X \mapsto \omega_{\varphi}(X,\cdot)=\varphi\circ \ad(X)$ induces an isomorphism between $\fu/\fl$ and $(\fl/\fn)'$. Hence, according to equations (\ref{eq:chi_isomorphism}) and \eqref{=LN}, we can use the character $\chi$ to define a group isomorphism
\[
 \begin{array}{rcl}
  (U/L)(K) & \longrightarrow & (L(\A)/  N(\A)L(K) )^{\wedge} \\
      u & \mapsto & \psi_{u},
 \end{array}
\]
where
\[
 \psi_{u}(l)=\chi(\varphi([X,Y])), \qquad  \mbox{$u=\exp X$}\quad \mbox{and}\quad \mbox{$l=\exp Y$.}
\]
Hence, for all $u\in U(k)$ and $l\in L$ we have
\begin{eqnarray*}
 \psi_{u}(l)\chi_{\varphi}^{L}(l)  =  \chi(\varphi([X,Y]))\chi(\varphi(Y))                   =  \chi(\varphi(Y+[X,Y]))\\
                   =  \chi(\varphi(e^{\ad(X)}(Y)))
                   =  \chi(\varphi(\Ad(u)Y))
                  =  \chi_{\varphi}^{L}(ulu^{-1}).
\end{eqnarray*}
Here we are taking again $u=\exp X$, $l=\exp Y$ and the middle equality follows from the vanishing of $\varphi$  on $\fg^{S}_{>2}$. But now, from formula (\ref{eq:c_Whittaker-Fourier_coefficient}) and the fact that $f$ is automorphic, we have
\begin{eqnarray*}
  c_{\psi_{u},\chi_{\varphi}^{L}}(f) & = & \int_{L(\A)/L(K)}\psi_{u}(l)^{-1}\chi_{\varphi}^{L}(l)^{-1}f(l)dl.
              =  \int_{L(\A)/L(K)}\chi_{\varphi}^{L}(ulu^{-1})^{-1}f(l)dl.\\
             & = & \int_{L(\A)/L(K)}\chi_{\varphi}^{L}(l)^{-1}f(u^{-1}lu)dl.               =  \cW\cF_{S,\varphi}^{L}(\pi(u)f),
\end{eqnarray*}
for all $u\in U(k)$. Since we have already seen that at least one of these coefficients is nonzero, we obtain the result claimed in the lemma.
\end{proof}

The rest of the proof  of Theorem \ref{thm:main} can be applied in the adelic setting, with the appropriate modifications. \DimaD{This implies Theorem \ref{thm:Glob}}.

\appendix

\section{Schwartz induction and the proof of Lemma \ref{lem:Frob}}\label{app:Frob}
\setcounter{lemma}{0}
%
%
%
%

We start with the following lemmas from functional analysis.

\begin{lemma}[{\cite[Theorem 50.1 and Proposition 50.1]{Tre}}]\label{lem:N}
Let $V$ and $W$ be Hausdorff locally convex complete topological vector spaces. Suppose that $V$ is a nuclear space. Then the projective and the injective topologies on $V \otimes W$ agree, and we will denote the completion with respect to these topologies by $V \hot W$. Moreover,
\begin{enumerate}[(i)]
\item $V^*$ is nuclear.
\item If $W$ is nuclear as well then $V\hot W$ is nuclear.
\item If $U\subset V$ is a closed subspace, then both $U$ and $V/U$ are nuclear.
\end{enumerate}
Here, $V^*$ and $W^*$ denote the strong dual.
\end{lemma}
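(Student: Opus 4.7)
The plan is to verify each assertion by appealing to the standard characterization of nuclear spaces and the functoriality of the projective (equivalently, for nuclear spaces, injective) tensor product, essentially tracking the references in Treves. Recall that $V$ is nuclear iff for every absolutely convex $0$-neighborhood $U$ there is a smaller absolutely convex $0$-neighborhood $W\subset U$ such that the canonical map between local Banach space completions $\hat V_W\to \hat V_U$ is nuclear (equivalently, factors as a product of trace-class operators through a Hilbert space). The equivalence of projective and injective tensor topologies when one factor is nuclear is then deduced by approximating the identity on $V$ by finite-rank operators built from summable sequences; completing in either topology yields the same space, which I would denote $V\hot W$.

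For part (i), I would use the fact that a nuclear factorization $\hat V_W\to H\to \hat V_U$ through a Hilbert space $H$ dualizes to a nuclear factorization between the corresponding Banach spaces defined by the polar neighborhoods in $V^*$; this transfers the nuclearity characterization from $V$ to its strong dual $V^*$.

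For part (ii), I would observe that if $\hat V_{W_1}\to \hat V_{U_1}$ and $\hat W_{W_2}\to \hat W_{U_2}$ are both nuclear, then their projective tensor product is a nuclear map between the corresponding local Banach spaces of $V\hot W$, because the tensor product of trace-class operators through Hilbert spaces is trace-class; varying the neighborhoods through a basis gives nuclearity of $V\hot W$. For part (iii), I would restrict, respectively project, the nuclear maps $\hat V_W\to \hat V_U$ to the subspace $U$ and the quotient $V/U$: restriction of a nuclear operator to a closed subspace is nuclear, and composition with a quotient map preserves nuclearity, so both $U$ and $V/U$ inherit the defining property.

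The main obstacle here is essentially bookkeeping rather than substance: one must verify that the neighborhoods chosen in $V$ give a cofinal family of neighborhoods (or polars) in $V^*$, in $U$, in $V/U$, and in $V\hot W$, so that the nuclearity witnesses assemble into the required characterization. Since all of this is developed in detail in Treves (\S50), the cleanest presentation is simply to cite Theorem~50.1 and Proposition~50.1 together with the stability results of \S50, and to record the statement in the form in which it is used later in the appendix.
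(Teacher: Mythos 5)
The paper provides no proof of this lemma: it is cited directly from Treves, and your proposal ultimately arrives at the same conclusion (``cite Theorem~50.1 and Proposition~50.1 and the stability results of \S50''), so the two approaches coincide. One small caution on your sketch of part~(i): you implicitly treat the polars $U^{\circ}\subset W^{\circ}$ of $0$-neighborhoods $W\subset U$ in $V$ as $0$-neighborhoods in the strong dual $V^{*}$, but in general those polars are only equicontinuous (hence bounded) subsets of $V^{*}$; they form a neighborhood basis for the strong topology only under extra hypotheses on $V$ (e.g.\ Fr\'echet or DF). In the paper the lemma is invoked only for nuclear Fr\'echet spaces, where this identification is unproblematic and nuclearity of the strong dual is classical, so the gap is harmless here --- but it is another reason why deferring to Treves, as both you and the paper do, is the cleanest option.
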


\begin{lemma}[{\cite[formulas (50.18) and (50.19)]{Tre}}]\label{lem:NF}
Let $V$ and $W$ be \Fre spaces. Suppose that $V$ is a nuclear space. Then \begin{enumerate}[(i)]
\item $(V\hot W)^* \cong V^* \hot W^*$
\item $L(V,W)\cong V^* \hot W.$
\end{enumerate}
Here, $L(V,W)$ denotes the space of all continuous linear maps form $V$ to $W$, endowed with the compact-open topology.
\end{lemma}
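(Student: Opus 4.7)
The plan is to address part (ii) first and then derive part (i) from it. The key technical tool is nuclearity of $V$, which by Lemma \ref{lem:N} guarantees two things at once: the projective and injective tensor product topologies on $V^* \otimes W$ agree, and $V$ enjoys the approximation property, so finite rank operators are dense in $L(V,W)$ for the compact-open topology.

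For part (ii), I would define the canonical bilinear map $\beta: V^* \times W \to L(V,W)$ by $\beta(\varphi, w)(v) := \varphi(v)\, w$. A direct check shows that $\beta$ is continuous when $V^*$ carries the strong topology and $L(V,W)$ the compact-open topology: if $K \subset V$ is compact, then $\beta$ sends the product of the polar $K^{\circ}$ with a suitable neighbourhood of $0$ in $W$ into the neighbourhood of $0$ in $L(V,W)$ defined by uniform smallness on $K$. By the universal property of the projective tensor product, $\beta$ extends to a continuous linear map $\Phi: V^* \hot W \to L(V,W)$. Injectivity is immediate because $V^* \otimes W$ sits inside $L(V,W)$ as the subspace of finite rank operators. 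Density of the image of $\Phi$ is exactly the approximation property, valid for any nuclear $V$. Finally, that $\Phi$ is a topological embedding reduces to identifying the subspace topology induced on finite rank operators from $L(V,W)$ with the injective tensor product topology on $V^* \otimes W$, and then using nuclearity of $V$ to identify the injective and projective tensor topologies.

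For part (i), the starting observation is the canonical identification of $(V \hot W)^*$ with the space $B(V,W)$ of continuous bilinear forms on $V \times W$, equipped with the topology of uniform convergence on products of bounded sets. Currying a continuous bilinear form yields a continuous linear map $V \to W^*$ and vice versa, producing an isomorphism $B(V,W) \cong L(V, W^*)$ of topological vector spaces. Part (ii), applied with $W$ replaced by the strong dual $W^*$, then yields $L(V, W^*) \cong V^* \hot W^*$, which combined with the previous identifications gives (i).

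The main obstacle is the careful bookkeeping of topologies rather than any single difficult estimate. Nuclearity enters at several distinct points and each occurrence has to be justified separately: the coincidence of the $\pi$- and $\varepsilon$-tensor topologies, the approximation property of $V$ (to obtain density of finite rank operators), and the stability of these identifications when $W$ is replaced by $W^*$. One subtlety is that $W^*$ need not be Fr\'echet, so to apply part (ii) in the derivation of (i) one must confirm that the argument for (ii) only requires completeness of the second factor together with nuclearity of the first, rather than the full Fr\'echet hypothesis on both.
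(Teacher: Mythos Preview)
The paper does not give a proof of this lemma; it is quoted directly from Tr\`eves (formulas (50.18) and (50.19)) and used as a black box, so there is nothing in the paper to compare your argument against.

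Your outline is the standard one and is essentially what Tr\`eves does. Two places deserve more care. First, in (ii) it is the nuclearity of $V^*$, obtained from Lemma~\ref{lem:N}(i), rather than of $V$ itself, that makes the $\pi$- and $\varepsilon$-topologies coincide on $V^*\otimes W$. Second, in your derivation of (i) you pass from the strong dual topology on $(V\hot W)^*$ to the bi-bounded topology on $B(V,W)$ and then to the compact-open topology on $L(V,W^*)$; matching these topologies is not automatic and uses that a nuclear Fr\'echet space is Montel (so bounded and relatively compact subsets of $V$ coincide) together with a description of bounded sets in $V\hot W$. Your closing remark that $W^*$ is not Fr\'echet, so that one must check part (ii) goes through with only completeness of the second factor, is exactly the right concern.
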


\begin{lemma}[{\cite[Corollary 1.2.5 and Proposition 1.2.6]{dCl}}]\label{lem:SG}
$\,$
\begin{enumerate}[(i)]
\item \label{it:fun} Let $G$ be an affine real algebraic group. Then $\Sc(G)$ is a nuclear \Fre space and for any smooth \Fre representation $\pi$ of $G$ of moderate growth, we have a natural isomorphism
 $$\Sc(G) \hot \pi \cong \Sc(G,\pi).$$
\item For two smooth affine semi-algebraic varieties $M,N$ we have $\Sc(M)\hot \Sc(N)\cong \Sc(M\times N)$.
\end{enumerate}
\end{lemma}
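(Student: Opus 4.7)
The plan is to reduce both parts of the lemma to the classical theorem of Grothendieck that $\Sc(\R^n)$ is a nuclear \Fre space and satisfies $\Sc(\R^m) \hot \Sc(\R^n) \cong \Sc(\R^{m+n})$. The starting point in both parts is to pin down the definition of $\Sc(G)$, respectively $\Sc(M)$: one embeds $G$ (respectively $M$) as a closed semi-algebraic submanifold of some $\R^N$ via a Nash embedding and defines $\Sc(G)$ to consist of smooth functions on $G$ all of whose derivatives along $G$, multiplied by any polynomial in the ambient coordinates, are bounded. Equivalently, $\Sc(G)$ can be realized as the quotient of $\Sc(\R^N)$ by the closed subspace of Schwartz functions vanishing to infinite order on $G$.

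For part (i), nuclearity of $\Sc(G)$ then follows at once from Lemma \ref{lem:N}(iii): a quotient of a nuclear \Fre space by a closed subspace is again nuclear \Fre. For the isomorphism $\Sc(G) \hot \pi \cong \Sc(G,\pi)$, I would construct a natural continuous map from the algebraic tensor product $\Sc(G) \otimes \pi$ to $\Sc(G,\pi)$ sending $\phi \otimes v$ to $g \mapsto \phi(g) v$, and extend it by continuity to the completed projective tensor product using nuclearity of $\Sc(G)$ (so that the projective and injective topologies coincide, by Lemma \ref{lem:N}). The remaining task is to check that the extended map is a topological isomorphism; the key point is that the defining seminorms on $\Sc(G,\pi)$, measuring the decay of $p(\phi(g)) \cdot (1+\|g\|)^N$ for $p$ a continuous seminorm on $\pi$, match the projective tensor seminorms, because the moderate-growth hypothesis on $\pi$ ensures there is no additional growth factor to spoil the matching.

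For part (ii), one embeds $M \hookrightarrow \R^m$ and $N \hookrightarrow \R^n$ as closed semi-algebraic subsets, which presents $\Sc(M)$ and $\Sc(N)$ as quotients of $\Sc(\R^m)$ and $\Sc(\R^n)$. By functoriality of the completed tensor product—specifically, the fact that $\hot$ respects quotients in the nuclear \Fre category, which uses Lemma \ref{lem:N} again—the Grothendieck isomorphism $\Sc(\R^m) \hot \Sc(\R^n) \cong \Sc(\R^{m+n})$ descends to an isomorphism between $\Sc(M) \hot \Sc(N)$ and the space of Schwartz functions on the closed semi-algebraic subset $M \times N$ of $\R^{m+n}$, which by definition is $\Sc(M \times N)$.

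The main obstacle will be making precise the identification of $\Sc(G)$ with a quotient of $\Sc(\R^N)$, and showing that this presentation is independent of the embedding and compatible with the tensor product construction. This needs a careful development of Schwartz functions on Nash (smooth semi-algebraic) manifolds: in particular, a Whitney-type extension theorem asserting that any Schwartz function on $G$ is the restriction of a Schwartz function on $\R^N$, and the fact that the ideal of Schwartz functions vanishing to infinite order along $G$ is closed. Once these technical ingredients are in place, both parts of the lemma reduce to standard manipulations with nuclear \Fre tensor products.
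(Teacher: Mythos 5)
The paper does not prove this lemma: it is stated as a citation to du~Cloux, Corollary~1.2.5 and Proposition~1.2.6, and no argument is given in the text. So there is no internal proof to compare against. Your reconstruction via a closed Nash embedding $G\hookrightarrow\R^N$ and the presentation of $\Sc(G)$ as a quotient of $\Sc(\R^N)$ by the closed ideal of functions flat on $G$ is the standard modern route (it is how Schwartz spaces on Nash manifolds are developed), and the reductions to Grothendieck's theorem for $\Sc(\R^n)$ and to Lemma~\ref{lem:N} are sound. However, the ``main obstacle'' you name at the end is not a footnote but the actual mathematical content: the Whitney-type extension theorem (every Schwartz function on a closed Nash submanifold extends to a Schwartz function on $\R^N$), the closedness of the ideal of functions vanishing to infinite order, and the independence of the resulting topology from the choice of embedding are each nontrivial theorems. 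Likewise, in part~(ii) the step where the Grothendieck isomorphism ``descends'' requires identifying the kernel of $\Sc(\R^m)\hot\Sc(\R^n)\onto\Sc(M)\hot\Sc(N)$ with the ideal of functions flat on $M\times N$; that the two natural sub-ideals generate a dense subspace of the latter is exactly what needs to be proved. As written, the proposal is a correct plan that delegates the hard part to unproved lemmas.

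One smaller inaccuracy in part~(i): the isomorphism $\Sc(G)\hot\pi\cong\Sc(G,\pi)$ is a purely topological-vector-space statement and holds for any Fr\'echet target once $\Sc(G)$ is known to be nuclear Fr\'echet (via Lemma~\ref{lem:N} and Lemma~\ref{lem:NF}). The moderate-growth hypothesis on $\pi$ is not what makes the projective seminorms match the $\Sc(G,\pi)$ seminorms; it is there so that the various $G$-module structures on $\Sc(G,\pi)$ used subsequently (as in Lemma~\ref{lem:IsActions} and Lemma~\ref{lemma:H_covariants}) are well defined and isomorphic. Appealing to moderate growth in the seminorm-matching step is therefore misplaced, though it does not derail the argument.
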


\begin{lemma}\label{lem:IsActions}
 Given $f\in \Sc(G;\pi)$ and $x$, $g\in G$, we define
\begin{eqnarray*}
(R(g)f)(x) =  f(xg)       \quad  (R\pi(g)f)(x) = \pi(g)f(xg) \\
(L(g)f)(x) =  f(g^{-1}x)  \quad  (L\pi(g)f)(x) =  \pi(g)f(g^{-1}x).
\end{eqnarray*}
Then, all of the induced $G$-module structures on $\Sc(G;\pi)$ are isomorphic.
\end{lemma}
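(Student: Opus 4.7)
The plan is to construct three explicit $G$-equivariant isomorphisms between pairs among the four representations, from which any pair is isomorphic by composition. The three natural candidates are
\[
(T_3 f)(x) := f(x^{-1}),\quad (T_1 f)(x) := \pi(x)f(x),\quad (T_2 f)(x) := \pi(x^{-1})f(x),
\]
intended to intertwine $(L,R)$, $(R\pi,R)$ and $(L\pi,L)$ respectively. I would first verify the intertwining identities $T_3 \circ L(g) = R(g)\circ T_3$, $T_1\circ R\pi(g)=R(g)\circ T_1$ and $T_2\circ L\pi(g)=L(g)\circ T_2$. These are immediate pointwise calculations; for example,
\[
(T_1\circ R\pi(g)f)(x)=\pi(x)\pi(g)f(xg)=\pi(xg)f(xg)=(R(g)\circ T_1 f)(x),
\]
and the other two are analogous. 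Each $T_i$ has an evident pointwise inverse ($T_3^2=\Id$, and $T_1$, $T_2$ are inverse to the maps $f\mapsto \pi(x^{-1})f$ and $f\mapsto \pi(x)f$ respectively), so once well-definedness and continuity are established, each $T_i$ is an isomorphism of $G$-modules, and the lemma follows.

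The only nontrivial point is that $T_1$ and $T_2$ preserve $\Sc(G;\pi)$ continuously. For $T_3$ this is immediate since $x\mapsto x^{-1}$ is a semi-algebraic diffeomorphism of $G$ which preserves $\Sc(G)$ and hence, via the identification $\Sc(G;\pi)\cong \Sc(G)\hot\pi$ of Lemma~\ref{lem:SG}\eqref{it:fun}, preserves $\Sc(G;\pi)$. For $T_1$ and $T_2$ I would use the moderate-growth hypothesis on $\pi$: the operator-valued function $x\mapsto \pi(x)\in L(\pi,\pi)$ is smooth and of moderate growth, so pointwise multiplication by it preserves the class of Schwartz functions valued in $\pi$. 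Concretely, under the isomorphism $\Sc(G;\pi)\cong \Sc(G)\hot\pi$, the operator $T_1$ is the continuous extension of $\phi\otimes v\mapsto \bigl[x\mapsto \phi(x)\pi(x)v\bigr]$; this is well defined and continuous because for fixed $v\in\pi$ the assignment $x\mapsto \pi(x)v$ is a smooth $\pi$-valued function of moderate growth, and multiplication of Schwartz scalar functions by such moderate-growth smooth vector-valued functions lands in $\Sc(G;\pi)$, with the map being continuous in the \Fre topology.

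The main (minor) obstacle is therefore the continuity of the multiplier $\pi(\cdot)$ on $\Sc(G;\pi)$, which rests essentially on the moderate growth assumption; all remaining steps are the formal pointwise verifications above.
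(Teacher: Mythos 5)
Your proposal is correct and follows essentially the same route as the paper: the paper uses exactly the maps $f\mapsto f(x^{-1})$ and $f\mapsto \pi(x)f(x)$ to identify $(R,\Sc(G;\pi))$ with $(L,\Sc(G;\pi))$ and $(R\pi,\Sc(G;\pi))$ with $(R,\Sc(G;\pi))$, invoking moderate growth to see that these preserve $\Sc(G;\pi)$, and leaves the remaining case to an analogous computation. Your extra attention to the continuity of multiplication by $\pi(\cdot)$ is a welcome elaboration of a point the paper treats in one line, but it is not a different argument.
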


\begin{proof}
Given $f\in \Sc(G;\pi),$ let
\[
\widetilde{f}(x)=f(x^{-1})\qquad \mbox{for all $x\in G.$}
\]
Then
\begin{eqnarray*}
(\widetilde{R(g)f})(x)  =  (R(g)f)(x^{-1})
 =  f(x^{-1}g)
 =  \widetilde{f}(g^{-1}x)
 =  (L(g)\widetilde{f})(x).
\end{eqnarray*}
It's clear, then, that the map $f\mapsto \widetilde{f}$ defines a $G$-intertwining isomorphism between $(R,\Sc(G;\pi))$ and $(L,\Sc(G;\pi))$. Similarly, given $f\in \Sc(G; \pi)$, we set
\[
\widehat{f}(x)=\pi(x)f(x) \qquad \mbox{for all $x\in G.$}
\]
Since $\tau$ is of moderate growth, $\widehat{f}\in \Sc(G; \pi)$ and
\begin{eqnarray*}
\widehat{R\pi(g)f}(x)& = & \pi(x)(R\pi(g)f)(x) =  \pi(x)\pi(g)f(xg)  =  \widehat{f}(xg)=(R(g)\widehat{f})(x),
\end{eqnarray*}
that is, the map $f\mapsto \widehat{f}$ defines a $G$-intertwining isomorphism between the spaces $(R\pi,\Sc(G;\pi))$ and $(R,\Sc(G;\pi))$. The other isomorphisms are similar.
\end{proof}

\begin{corollary}\label{corollary:covariants}
 Under any of the above $G$-module structures,
\[
 \Sc(G; \pi)_{G}\cong  \pi,
\]
where $\Sc(G; \pi)_{G}$ is the space of $G$-coinvariants, i.e. the quotient of $\Sc(G; \pi)$ by the joint kernel of all $G$-invariant functionals.
\end{corollary}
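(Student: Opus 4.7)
The plan is to reduce to a single $G$-action and then compute the coinvariants by exploiting the tensor-product decomposition of $\Sc(G;\pi)$. By Lemma \ref{lem:IsActions}, all four $G$-module structures on $\Sc(G;\pi)$ are mutually isomorphic, so it suffices to analyze one. I would work with the pure right-translation action $R$ because, under the identification $\Sc(G;\pi)\cong \Sc(G)\hot \pi$ provided by Lemma \ref{lem:SG}(\ref{it:fun}), it corresponds to $R\otimes \id_{\pi}$, which confines the $G$-action to the first tensor factor.

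The natural candidate for the quotient map is the integration map $\alpha\colon \Sc(G;\pi)\to \pi$, $\alpha(f)=\int_G f(g)\,dg$, where $dg$ is a fixed left Haar measure on $G$. This map is continuous (Schwartz functions are integrable), surjective (take $f=\phi\otimes v$ with $\int_G\phi=1$), and $R$-invariant by the defining left-invariance of $dg$; hence it descends to a continuous surjection $\bar\alpha\colon \Sc(G;\pi)_G \twoheadrightarrow \pi$.

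To conclude, I would identify the space of $G$-invariant continuous linear functionals on $\Sc(G;\pi)$ with $\pi^{*}$, by showing every such functional has the form $\lambda\circ \alpha$ for a unique $\lambda\in \pi^{*}$. Using Lemma \ref{lem:NF}(i), $\Sc(G;\pi)^{*}\cong \Sc(G)^{*}\hot \pi^{*}$, and since the $R$-action is only on the first factor, its invariants are $(\Sc(G)^{*})^{R}\hot \pi^{*}$. Granting that $(\Sc(G)^{*})^{R}$ is one-dimensional and spanned by left Haar integration, one obtains the claimed description of the invariant functionals. The joint kernel of these functionals is then $\bigcap_{\lambda\in\pi^{*}}\ker(\lambda\circ\alpha)=\alpha^{-1}\bigl(\bigcap_{\lambda}\ker\lambda\bigr)=\ker\alpha$, the last equality coming from the Hahn--Banach theorem applied to the Hausdorff space $\pi$. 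Hence $\bar\alpha$ is a continuous bijection $\Sc(G;\pi)_G \to \pi$, as required.

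The main obstacle will be the statement that right-translation-invariant continuous functionals on $\Sc(G)$ form a one-dimensional space spanned by the left Haar integral; this is a tempered-distribution version of uniqueness of Haar measure and for an affine real algebraic group should follow from standard harmonic analysis on Lie groups. A secondary technical point is checking that $R$-invariants commute with the completed tensor product $\Sc(G)^{*}\hot \pi^{*}$, which is a consequence of the nuclear--\Fre formalism of Lemmas \ref{lem:N}--\ref{lem:NF}, since $\pi^{*}$ is passive under the chosen action and the projective and injective topologies on the tensor product coincide.
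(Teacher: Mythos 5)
Your approach is essentially the paper's. The paper states this corollary without giving a separate proof, but from the subsequent proof of Lemma \ref{lem:Frob} (which appeals to the fact that left $G$-invariant distributions on $G$ are proportional) it is clear the intended justification is exactly what you describe: reduce to a single $G$-module structure via Lemma \ref{lem:IsActions}, use the decomposition $\Sc(G;\pi)\cong\Sc(G)\hot\pi$ from Lemma \ref{lem:SG}, and invoke one-dimensionality of the space of invariant tempered distributions on $G$ to identify the invariant functionals on $\Sc(G;\pi)$ with $\pi^{*}$.

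One small but genuine slip: you choose to work with the pure right-translation action $R$ but integrate against a \emph{left} Haar measure $dg$, asserting that $\alpha$ is ``$R$-invariant by the defining left-invariance of $dg$''. Left-invariance of $dg$ gives $\alpha(L(g)f)=\int_G f(g^{-1}x)\,dg=\alpha(f)$, i.e.\ $L$-invariance, not $R$-invariance; for the $R$-action one needs a \emph{right} Haar measure (and correspondingly $(\Sc(G)^{*})^{R}$ is spanned by right Haar integration, not left — these differ by $\Delta_G$ unless $G$ is unimodular). The fix is cosmetic: either integrate against right Haar measure, or keep left Haar and switch to the $L$-action, which under $\Sc(G;\pi)\cong\Sc(G)\hot\pi$ also acts only on the first tensor factor. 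With that adjustment, the rest — including the Hahn--Banach identification of the joint kernel with $\ker\alpha$ — goes through as you wrote it.
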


%

\begin{lemma}\label{lemma:H_covariants}
We have
\[
 \ind_H^G(\rho)\cong (\Sc(G,\rho)\otimes \Delta_H)_{H}.
\]
\end{lemma}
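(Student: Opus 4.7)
The plan is to realize the averaging map defining $\ind_H^G(\rho)$ as the projection to $H$-coinvariants, with the $\Delta_H$-twist appearing as the usual reconciliation between left and right Haar measures. First, let $\alpha : \Sc(G,\rho) \to \ind_H^G(\rho)$ denote the averaging map $f \mapsto \overline{f}$ from du Cloux's definition in the excerpt; by construction $\alpha$ is a continuous surjection. Equip $\Sc(G, \rho)$ with the $H$-action $(\sigma(h_0) f)(x) := \rho(h_0) f(x h_0)$. A direct change of variable (substituting $h' = h h_0$ and using $d(h h_0) = \Delta_H(h_0)\, dh$ for left Haar measure) gives
\[
\alpha(\sigma(h_0) f) = \Delta_H(h_0)^{-1}\, \alpha(f).
\]
Equivalently, letting $H$ act on $\Sc(G, \rho) \otimes \Delta_H$ by $\tilde\sigma(h_0) := \Delta_H(h_0)\sigma(h_0)$, the map $\alpha$ is $H$-equivariant with trivial target action, so it descends to a continuous surjection
\[
\bar\alpha : (\Sc(G, \rho) \otimes \Delta_H)_H \twoheadrightarrow \ind_H^G(\rho).
\]

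The remaining step is injectivity of $\bar\alpha$, which I would establish by localizing on the coset space $G/H$. Choose a locally finite open cover $\{U_i\}$ of $G/H$ by sets admitting smooth sections of the principal $H$-bundle $p : G \to G/H$, together with a subordinate smooth Schwartz partition of unity $\{\psi_i\}$. Over each trivialization $p^{-1}(U_i) \cong U_i \times H$, Lemma~\ref{lem:SG}(ii) gives $\Sc(p^{-1}(U_i), \rho) \cong \Sc(U_i) \hot \Sc(H; \rho)$ with $H$ acting only on the $\Sc(H; \rho)$ factor. Tensoring with $\Delta_H$, the identification $\Sc(H; \rho) \otimes \Delta_H \cong \Sc(H; \rho \otimes \Delta_H)$ as $H$-modules reduces the local coinvariant calculation to $\Sc(H; \rho \otimes \Delta_H)_H$, which by Corollary~\ref{corollary:covariants} (applied with $G$ replaced by $H$ and $\pi$ replaced by $\rho \otimes \Delta_H$) equals the underlying vector space of $\rho \otimes \Delta_H$, i.e., $\rho$. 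Tensoring back with $\Sc(U_i)$ yields $\Sc(U_i, \rho)$, which matches the local model for $\ind_H^G(\rho)|_{p^{-1}(U_i)}$. Patching via the partition of unity and using the nuclearity from Lemma~\ref{lem:SG}(i) to control topologies produces a continuous inverse of $\bar\alpha$.

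The main obstacle is the patching step: one must verify that the local isomorphisms glue into a continuous global inverse in the nuclear Fr\'echet topology and that $(\Sc(G, \rho) \otimes \Delta_H)_H$ is Hausdorff, i.e., that the span of the orbit differences $\{\tilde\sigma(h_0)f - f\}$ is in fact closed. Once $\ker\alpha$ is identified with this closed span, the quotient is automatically Hausdorff and the two topologies agree. A secondary point is verifying that Corollary~\ref{corollary:covariants} is applicable with the character twist; its proof extends naturally from $G$-modules to $H$-modules tensored with a character, since the crucial inputs (nuclearity of $\Sc(H)$, the identifications of $H$-module structures from Lemma~\ref{lem:IsActions}, and the uniqueness of invariant integration on $H$ up to scalar) are unaffected by such a twist.
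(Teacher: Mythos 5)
Your first half (the $\Delta_H$-equivariance computation, the twist, the induced surjection $\bar\alpha$) matches the paper's exactly; your $\sigma$ is the paper's $R\rho$. The injectivity step is where the two arguments diverge, and the paper's version avoids precisely the topological issues you flag. Instead of constructing a continuous global inverse of $\bar\alpha$ by patching local isomorphisms, the paper works entirely in the \emph{dual} space: it takes an arbitrary $H$-invariant continuous functional $T\in[(\Sc(G,\rho)\otimes\Delta_H)^*]^H$, pulls back the tempered partition of unity $\{\gamma_k\}$ on $G/H$ (from \cite[Lemma 2.2.5]{dCl}) to an $H$-invariant partition of unity $\{\eta_k\}$ subordinate to $\{p^{-1}(U_k)\}$, writes $T=\sum_k\eta_k T$, and uses the local trivialization $\Sc(p^{-1}(U_k))\cong \Sc(H)\hot\Sc(U_k)$ together with Corollary~\ref{corollary:covariants} to identify each $\eta_k T$ with an element of $\Sc(U_k;\rho)^*\subset \ind_H^G(\rho)^*$ precomposed with $\Phi$. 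This shows that \emph{every} $H$-invariant functional factors through $\Phi$, hence $\ker\Phi$ equals the joint kernel of all such functionals, which is exactly the subspace one quotients by in the definition of $(\cdot)_H$ used in Corollary~\ref{corollary:covariants}.

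This choice of definition is what makes your Hausdorffness worry moot: the joint kernel of a family of continuous functionals is automatically closed, so the coinvariant quotient is Hausdorff by construction, and there is no need to establish that the span of orbit differences $\{\tilde\sigma(h_0)f-f\}$ is closed. Likewise, the continuity-of-patching problem disappears, since one only needs to verify that a \emph{given} functional factors (a statement that localizes cleanly under partitions of unity), rather than building a globally continuous inverse map from local pieces. Your local identification of coinvariants $\Sc(H;\rho\otimes\Delta_H)_H\cong\rho$ is the same computation the paper does, just on the other side of the duality $(V_H)^*=(V^*)^H$. So the two proofs share all their building blocks; the dual-space formulation is the device that turns your acknowledged obstacles into non-issues.
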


\begin{proof}
Let $\Phi:\Sc(G,\rho) \to  \ind_H^G(\rho)$ denote the surjection that defines $\ind_H^G(\rho)$, see Definition \ref{def:ind}.
Observe that for all $\tilde{h}\in H$,
\begin{eqnarray*}
 (\Phi(R\rho(\tilde{h})f))(g)  =  \int_{H}\rho(h)(R\rho(\tilde{h})f(gh)dh
     =  \int_{H}\rho(h)^{}\rho(\tilde{h})f(gh\tilde{h})\, dh
 =  \Delta_{H}^{-1}(\tilde{h})\Phi(f)(g).
\end{eqnarray*}
Hence, the map $\Phi$ defines an $H$-invariant operator between $(R\rho\otimes  \Delta_{H},\Sc(G;\rho))$ and $\ind_H^G(\rho)$. Since this operator is surjective, it should factor through a surjective map
\begin{equation}\label{=map}
  (\Sc(G;\rho)\otimes \Delta_{H})_{H}\twoheadrightarrow \ind_H^G(\rho).
\end{equation}
We want to show that this map is injective. Let us fix $T\in [(\Sc(G;\rho)\otimes\Delta_{H})^{*}]^{H}$. Now according to \cite[Lemma 2.2.5]{dCl}, there exists a semi-algebraic open cover $\{U_{k}\}_{k=1}^{n}$ of $G/H$ and a tempered partition of unity $\{\gamma_{k}\}_{k=1}^{n}$ subordinated to $\{U_{k}\}_{k=1}^{n}$ such that
\begin{equation}\label{eq:good_open_cover}
 p^{-1}(U_{k})\cong  U_{k}\times H,
\end{equation}
where $p:G\longrightarrow G/H$ is the natural projection. Using the above equation, we can define a partition of unity $\{\eta_{k}\}_{k=1}^{n}$ subordinated to the open cover $\{p^{-1}(U_{k})\}$ consisting of $H$-invariant functions. Furthermore, given $f\in \Sc(G;\rho),$
\[
 \Phi(\eta_{k}f)=\gamma_{k}\Phi(f), \qquad \mbox{$k=1,\ldots,n$.}
\]
Therefore, we have a decomposition
\[
 T=\sum_{k=1}^{n}\eta_{k}T
\]
and we can identify each $\eta_{k}T$ with an element of $[(\Sc(p^{-1}(U_{k});\rho)\otimes \delta_{H}^{-1})']^{H}$. According to  (\ref{eq:good_open_cover})
\[
 \Sc(p^{-1}(U_{k}))\cong \Sc(H)\, \widehat{\otimes}\, \Sc(U_{k})
\]
and hence, according to Corollary \ref{corollary:covariants} $[(\Sc(p^{-1}(U_{k});\rho)\otimes \Delta_{H})^{*}]^{H}\cong \Sc(U_{k};\rho)^{*},$ that is, for all $k=1,\ldots,n,$ there exists $\tilde{T}_{k}\in \Sc(U_{k};\rho)^{*}$ such that
\begin{eqnarray*}
 \eta_{k}T(f) & = & \tilde{T}_{k}(\Phi(\eta_{k}f))                     =   \tilde{T}_{k}(\gamma_{k}\Phi(f))
          =  (\gamma_{k}T_{k}\circ \Phi)(f).
\end{eqnarray*}
But we can see $\gamma_{k}T_{k}$ as an element of $\ind_H^G(\rho)'$. Therefore, if we set $\tilde{T}=\sum_{k} \gamma_{k}\tilde{T}_{k}$, then
\[
 T=\sum_{k} \eta_{k}T=\sum_{k} \gamma_{k}\tilde{T}_{k}\circ \Phi=\tilde{T}\circ \Phi.
\]
We have thus shown that any element of $[(\Sc(G;\rho)\otimes\Delta_{H})^*]^{H}$ factors through $\ind_H^G(\rho)^{*}$ which proves the injectivity of the map \eqref{=map}.
\end{proof}

\begin{proof}[Proof of Lemma \ref{lem:Frob}]
We have
\begin{multline*}
\Hom_{G}(\ind_H^G(\rho),\pi^*) \cong ((\DimaA{(\Sc(G,\rho)\otimes \Delta_H)}^*)^H\hot \pi^*)^G\cong
  \Hom_{G\times H}(\Sc(G,\rho)\otimes \Delta_H,\pi^*)\cong\\
  \Hom_{G\times H}(\Sc(G,\rho)\hot \pi\otimes \Delta_H,\C) \cong \Hom_{G\times H}(\rho \hot \Sc(G,\pi)\otimes \Delta_H,\C)
\end{multline*}
Here, $G$ acts on $\Sc(G,\pi)$ by $L\pi$, while $H$ acts diagonally: on $\rho\otimes \Delta_H$ and on $\Sc(G,\pi)$ by $R$. This action is isomorphic to an action in which $G$ acts on $\Sc(G,\pi)$ by $L$ and $H$ by $R\pi$. Under this action we have $$ \Hom_{G\times H}(\rho \hot \Sc(G,\pi)\DimaA{\otimes \Delta_H},\C)\cong ((\Sc^*(G))^G\hot \pi^*\hot \rho^*\otimes \Delta_H^{-1})^H.$$
Since all left $G$-invariant distributions on $G$ are proportional and right $\Delta_G$-equivariant, the latter space is isomorphic to $\Hom_H(\rho,\pi^*\Delta_H^{-1}\Delta_G)$.
\end{proof}

\end{document}